\newcommand{\CC}{{\mathbb C}}
\newcommand{\cA}{{\mathscr A}}
\newcommand{\cB}{{\mathscr B}}
\newcommand{\cC}{{\mathscr C}}
\newcommand{\cE}{{\mathscr E}}
\newcommand{\cF}{{\mathscr F}}
\newcommand{\cG}{{\mathscr G}}
\newcommand{\cH}{{\mathscr H}}
\newcommand{\cI}{{\mathscr I}}
\newcommand{\cJ}{{\mathscr J}}
\newcommand{\cK}{{\mathscr K}}
\newcommand{\cL}{{\mathscr L}}
\newcommand{\cM}{{\mathscr M}}
\newcommand{\cN}{{\mathscr N}}
\newcommand{\cO}{{\mathscr O}}
\newcommand{\cQ}{{\mathscr Q}}
\newcommand{\cS}{{\mathscr S}}
\newcommand{\cU}{{\mathscr U}}
\newcommand{\cV}{{\mathscr V}} 
\newcommand{\cX}{{\mathscr X}} 
\newcommand{\cZ}{{\mathscr Z}}
\newcommand{\dra}{\dashrightarrow}
\newcommand{\Ext}{\text{Ext}}
\newcommand{\gquot}{/\!\!/}
\newcommand{\hra}{\hookrightarrow}
\newcommand{\la}{\langle}
\newcommand{\lra}{\longrightarrow}
\newcommand{\ov}{\overline}
\newcommand{\PP}{{\mathbb P}}
\newcommand{\QQ}{{\mathbb Q}}
\newcommand{\ra}{\rangle}
\newcommand{\RR}{{\mathbb R}}
\newcommand{\wt}{\widetilde}
\newcommand{\ZZ}{{\mathbb Z}}
\newcommand{\Gr}{\mathrm{Gr}}
\theoremstyle{plain}
\newtheorem{thm}{Theorem}[section]
\newtheorem*{thm*}{Theorem}
\newtheorem{clm}[thm]{Claim}
\newtheorem{crl}[thm]{Corollary}
\newtheorem*{hyp*}{Hypothesis}
\newtheorem{lmm}[thm]{Lemma}
\newtheorem{prp}[thm]{Proposition}
\newtheorem{prp-dfn}[thm]{Proposition-Definition}
\theoremstyle{definition}
\newtheorem{dfn}[thm]{Definition}
\theoremstyle{remark}
\newtheorem{expl}[thm]{Example}
\newtheorem*{qst*}{Main Question}
\newtheorem{rmk}[thm]{Remark}
\DeclareMathOperator{\Amp}{Amp}
\DeclareMathOperator{\Aut}{Aut}
\DeclareMathOperator{\ch}{ch}
\DeclareMathOperator{\cl}{cl}
\DeclareMathOperator{\Coh}{Coh}
\DeclareMathOperator{\coker}{coker}
\DeclareMathOperator{\Def}{Def}
\DeclareMathOperator{\Diag}{Diag}
\DeclareMathOperator{\disc}{disc}
\DeclareMathOperator{\End}{End}
\DeclareMathOperator{\ev}{ev}
\DeclareMathOperator{\Hom}{Hom}
\DeclareMathOperator{\Id}{Id}
\DeclareMathOperator{\im}{Im}
\DeclareMathOperator{\Kum}{Kum}
\DeclareMathOperator{\NS}{NS}
\DeclareMathOperator{\PGL}{PGL}
\DeclareMathOperator{\Pic}{Pic}
\DeclareMathOperator{\rk}{rk}
\DeclareMathOperator{\sing}{sing}
\DeclareMathOperator{\SL}{SL}
\DeclareMathOperator{\Sym}{Sym}
\DeclareMathOperator{\td}{Td}
\def\bw#1{\textstyle{\bigwedge\hskip-0.9mm^{#1}}}
\begin{document}
 \title{Modular sheaves on hyperk\"ahler varieties}
 \author{Kieran G. O'Grady}
 \address{Dipartimento di Matematica, 
 Sapienza Universit\`a di Roma,
 P.le A.~Moro 5,
 00185 Roma - ITALIA}
 \email{ogrady@mat.uniroma1.it}
\dedicatory{Dedicato a Titti}
\date{\today}
\thanks{Partially supported by PRIN 2015}
\begin{abstract}
A torsion free sheaf on a hyperk\"ahler manifold $X$ is  modular if its discriminant satisfies a certain condition, for example this is the case if it is a multiple of  $c_2(X)$. The definition is tailor made for torsion-free sheaves  on  a polarized hyperk\"ahler variety  $(X,h)$
 which deform to all small deformations of $(X,h)$. For hyperk\"ahler varieties  of Type $K3^{[2]}$ we prove an existence and uniqueness result for  slope-stable vector bundles with certain ranks, $c_1$ and $c_2$. As a consequence we get uniqueness up to isomorphism of the tautological quotient rank $4$ vector bundle on the variety of lines on a generic cubic $4$-dimensional hypersurface, and on the Debarre-Voisin variety associated to a generic  element of $\bigwedge^3\CC^{10}$.  The last result implies that the period map from the moduli space of Debarre-Voisin varieties to the relevant period space is birational. 
\end{abstract}
\subjclass[2000]{14J42, 14J60}
\keywords{Hyperk\"ahler varieties, stable sheaves}

  \maketitle
\bibliographystyle{amsalpha}
\section{Introduction}\label{sec:intro}
\setcounter{equation}{0}
\subsection{Background and motivation}\label{subsec:retromotivi}
\setcounter{equation}{0}
The beautiful properties of vector bundles on $K3$ surfaces play a prominent r\^ole in algebraic geometry. Since $K3$ surfaces are the two-dimensional hyperk\"ahler (HK) compact manifolds, one is tempted   to  explore the world of vector bundles on higher dimensional HK's. In the present paper we give way to this temptation. 
Our proposal is to focus  attention on vector bundles, or more generally (coherent) torsion free sheaves, whose Chern character satisfies a certain condition, see Definition~\ref{effemod}. We call such sheaves modular. The definition is tailor made for  torsion-free sheaves  on  a polarized HK  $(X,h)$
 which deform to all small deformations of $(X,h)$. With this hypothesis we may   deform 
$(X,h)$ to a Lagrangian $(X_0,h_0,\pi)$, where $\pi\colon X_0\to\PP^n$ is a Lagrangian fibration, study stable sheaves $\cF$ on  
$X_0$ by  studying the restriction of $\cF$ to a generic fiber of $\pi$ (an abelian variety of dimension $n$), and 
then deduce properties of the initial moduli space of sheaves on $(X,h)$. This strategy was  implemented in the case of $K3$ surfaces, see~\cite{ogwt2}. A successful implementation in higher dimensions requires an extension  of the known results regarding the variation of $h$ slope-stability of a sheaf. More precisely one needs to know that, given a class $\chi\in H(X;\QQ)$, there exists a decomposition of the ample cone into open chambers such that, given a sheaf $\cF$ with $\ch(\cF)=\chi$ and an open chamber $\cC$, then $\cF$   is either $h$ slope-stable for all $h\in\cC$  or else for no such $h$. Modular sheaves on HK's are exactly the sheaves for which one can prove that such a decomposition of the ample cone exists. Another remarkable consequence of our definition is the following. Let $\pi\colon X\to\PP^n$ be a Lagrangian fibration, and let $\cF$ be a modular vector bundle on $X$ 
whose restriction  to a generic fiber of $\pi$ is slope-stable; then the restriction of $\cF$ to a generic fiber is a semi-homogeneous vector bundle (according to Mukai's definition, see~\cite{muksemi}), and hence it has no infinitesimal deformations fixing the determinant. This shows that, in the case of modular sheaves, the strategy outlined above in higher dimensions  resembles that which has been implemented in the case of $K3$ surfaces (notice that a slope-stable vector bundle on an elliptic curve  has by default no infinitesimal deformations fixing the determinant, while this  certainly does not hold for vector bundles on abelian surfaces - it holds exactly for semi-homogeneous ones). 
\subsection{Modular sheaves}\label{grandef}
\setcounter{equation}{0}
Let  $\cF$ be a rank $r$ torsion-free sheaf on a manifold $X$. The \emph{discriminant} $\Delta(\cF)\in H^{2,2}_{\ZZ}(X)$ is defined to be
\begin{equation}\label{dueversioni}
\Delta(\cF):=2r c_2(\cF)-(r-1) c_1(\cF)^2=-2r \ch_2(\cF)+\ch_1(\cF)^2.
\end{equation}
Below is our key definition. 
\begin{dfn}\label{effemod}
Let $X$ be a HK manifold of dimension $2n$, and let $q_X$ be its  Beauville-Bogomolov-Fuiki (BBF) bilinear symmetric form. A torsion free sheaf $\cF$ on $X$ is \emph{modular} if 
there exists $d(\cF)\in\QQ$ such that
\begin{equation}\label{fernand}
\int_X \Delta(\cF)\smile \alpha^{2n-2}=d(\cF) \cdot (2n-3)!! \cdot q_X(\alpha)^{n-1}
\end{equation}
   for all $\alpha\in H^2(X)$.
\end{dfn}
\begin{rmk}\label{semphodge}
Let $X$ be a HK variety of dimension $2n$. Let $D(X)\subset H(X)$  be the image of the map $\Sym H^2(X)\to H(X)$ defined by cup-product. Let $D^i(X):=D(X)\cap H^i(X)$. The  pairing $D^i(X)\times D^{4n-i}(X)\to\CC$ defined by intersection product is non degenerate~\cite{verbcohk,bogcohk,rhag}, hence there is a splitting $H(X)= D(X)\oplus D(X)^{\bot}$, where orthogonality is with respect to the intersection pairing. 
Now let  $\cF$ be a torsion free sheaf on $X$. Then $\cF$ is modular if and only if the orthogonal projection of $\Delta(\cF)$ onto $D^4(X)$ is a multiple of the class $q_X^{\vee}$ dual to $q_X$.
Moreover if  $\Delta(\cF)$ is a multiple of $c_2(X)$ then $\cF$ is modular by  Fujiki's formula, see Remark 4.12 in~\cite{fujiki}. 
\end{rmk}
\begin{rmk}\label{caroverb}
Let $X$ be a  HK of Type $K3^{[2]}$. Then $H(X)= D(X)$ (notation as in Remark~\ref{semphodge}). It follows that  a vector bundle $\cF$ on $X$  is modular if and only if  $\Delta(\cF)$ is a multiple of $c_2(X)$. It follows~\cite{verbhyper} that if $\cF$ is a modular vector bundle, slope-stable for a polarization $h$, then $End^0(\cF)$ is hyperholomorphic on $(X,h)$, where  $End^0(\cF)$ is  the vector bundle of traceless endomorphisms of $\cE$. More generally, on an arbitrary HK polarized variety $(X,h)$ there should be 
a relation between the property of being modular and that of being hyperholomorphic. 
\end{rmk}
\subsection{Main results}\label{risprin}
\setcounter{equation}{0}
Let  $\cE$ be a modular torsion-free sheaf on a hyperk\"ahler manifold $X$ of Type $K3^{[2]}$. A simple argument, see Proposition~\ref{restrango}, shows that $r(\cE)$ divides the square of a generator of the ideal $\{q_X(c_1(\cE),\alpha) \mid \alpha\in H^2(X;\ZZ)\}$. We  give an existence and uniqueness statement for  slope-stable vector bundles $\cE$ such that
 $r(\cE)$ \emph{equals}  the square of a generator of the ideal defined above, and moreover
\begin{equation*}
\Delta(\cE)  =  \frac{r(\cE)(r(\cE)-1)}{12}c_2(X).  
\end{equation*}
Before formulating our  results we recall the description of the irreducible components of the moduli space of polarized 
HK's  of Type $K3^{[2]}$. Let $(X,h)$ be one such polarized HK (we emphasize that the ample class $h\in H^{1,1}_{\ZZ}(X)$ is primitive). Then either
\begin{equation}\label{divuno}
q(h,H^2(X;\ZZ))=\ZZ,\quad q(h)=e>0, \quad e\equiv 0\pmod{2}
\end{equation}
or
\begin{equation}\label{divdue}
q(h,H^2(X;\ZZ))=2\ZZ,\quad q(h)=e>0,\quad e\equiv 6\pmod{8}.
\end{equation}
Conversely, if $e$ is a positive integer which is even (respectively congruent to $6$ modulo $8$) there exists $(X,h)$ 
such that~\eqref{divuno}  (respectively~\eqref{divdue}) holds. 
Let $\cK_{e}^1$ be the moduli space of polarized
 HK's $(X,h)$ of Type $K3^{[2]}$ such that~\eqref{divuno} holds, and let $\cK_{e}^2$ be the moduli space of polarized
 HK's $(X,h)$ of Type $K3^{[2]}$ such that~\eqref{divdue} holds. 
\begin{thm}\label{unicita}
Let $i\in\{1,2\}$ and let $r_0,e$ be positive integers such that  $r_0\equiv i\pmod{2}$ and
\begin{equation}\label{econ}
e\equiv
\begin{cases}
4r_0-10 \pmod{8r_0} & \text{if $r_0\equiv 0 \pmod{4}$,} \\
\frac{1}{2}(r_0-5) \pmod{2r_0} & \text{if $r_0\equiv 1 \pmod{4}$,} \\
-10 \pmod{8r_0}  & \text{if $r_0\equiv 2 \pmod{4}$,} \\
-\frac{1}{2}(r_0+5) \pmod{2r_0}  & \text{if $r_0\equiv 3 \pmod{4}$.}
\end{cases}
\end{equation}
Suppose that $[(X,h)]\in\cK^i_e$  is a generic point. Then up to isomorphism there exists one and only one $h$ slope-stable vector bundle $\cE$ on $X$ such that
 \begin{equation}\label{ele}
r(\cE)=r_0^2,\quad c_1(\cE)=\frac{r_0}{i} h,\quad \Delta(\cE)  =  \frac{r(\cE)(r(\cE)-1)}{12}c_2(X).
\end{equation}
Moreover $H^p(X,End^0(\cE))=0$ for all $p$. 
\end{thm}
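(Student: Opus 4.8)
The plan is to follow the deformation strategy of the introduction: move $(X,h)$ to a Lagrangian model, analyse the restriction of $\cE$ to a general fibre, and import Mukai's theory of semi-homogeneous bundles. First I would use modularity to reduce to the Lagrangian case. Since $\Delta(\cE)$ is a multiple of $c_2(X)$, the bundle $\cE$ is modular (Remark~\ref{caroverb}), so the ample cone admits a chamber decomposition along which $h$-slope-stability of sheaves with the fixed Chern character is constant. For generic $[(X,h)]\in\cK^i_e$ I would deform $(X,h)$, keeping $h$ ample and inside its stability chamber, to a polarized HK $(X_0,h_0)$ of Picard number $2$ whose Picard lattice contains an isotropic class, so that $X_0$ carries a Lagrangian fibration $\pi\colon X_0\to\PP^2$; the role of the congruences~\eqref{econ} is to make the fibrewise numerology below come out integrally. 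Let $A$ be a general (abelian surface) fibre. Because $A$ is Lagrangian, $N_{A/X_0}\cong\Omega^1_A$ is trivial, whence $c_2(X_0)|_A=0$; therefore $\Delta(\cE_0|_A)=\Delta(\cE_0)|_A=\tfrac{r_0^2(r_0^2-1)}{12}\,c_2(X_0)|_A=0$, so the restriction to a general fibre has vanishing discriminant.

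Next I would run Mukai's theory on the fibre. The restriction $\cE_0|_A$ has rank $r_0^2$, first Chern class $\tfrac{r_0}{i}(h_0|_A)$ and vanishing discriminant, so it is a semi-homogeneous bundle; the divisibility recorded in Proposition~\ref{restrango} and arranged by~\eqref{econ} is exactly the condition for a \emph{simple} semi-homogeneous bundle with these invariants to exist, and Mukai's classification shows it is unique up to translation and twist by $\Pic^0(A)$. Simplicity together with $\Delta=0$ gives $H^{\bullet}(A,\End_0(\cE_0|_A))=0$: $H^0$ vanishes by simplicity, $H^2$ by Serre duality on $A$, and then $H^1=0$ because $\chi=0$. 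This is the fibrewise rigidity — no infinitesimal deformations fixing the determinant — predicted in the introduction.

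For existence I would construct $\cE_0$ globally on $X_0$. The natural route is a relative Fourier–Mukai transform, spreading Mukai's construction over the fibration to produce a locally free modular sheaf restricting on each smooth fibre to the simple semi-homogeneous bundle above. Global $h_0$-slope-stability then follows from fibrewise $\mu$-stability together with the chamber choice, which forbids destabilizing subsheaves pulled back from the base. Because $\cE_0$ is modular with rigid restriction to the general fibre it is unobstructed, so — stability being open — it deforms back along the path to an $h$-slope-stable bundle $\cE$ on $(X,h)$ with the invariants~\eqref{ele}. For uniqueness I would argue in reverse: any $h$-slope-stable $\cE$ satisfying~\eqref{ele} is modular, hence deforms to a slope-stable $\cE_0$ on $(X_0,h_0)$ whose restriction to a general fibre is slope-stable and modular, therefore semi-homogeneous with the prescribed rank, $c_1$ and $\Delta=0$; by Mukai uniqueness it agrees fibrewise with the bundle above, and since fibre automorphisms are only scalars these isomorphisms glue, up to the global $\GG_m$, to a global isomorphism on $X_0$ and hence on $X$.

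Finally, for the vanishing $H^p(X,\End_0\cE)=0$ I would reduce to $X_0$ by deformation invariance and compute through the Leray spectral sequence of $\pi$. The groups $H^0$ and $H^4$ vanish by stability and Serre duality, and the fibrewise vanishing of the second paragraph forces $R^q\pi_*\End_0(\cE_0)$ to vanish over the locus of good fibres; controlling the higher pushforwards over the discriminant of $\pi$, using the hyperholomorphic structure on $\End_0(\cE_0)$ (Remark~\ref{caroverb}), then yields $R^q\pi_*\End_0(\cE_0)=0$ and hence $H^p=0$ for all $p$. The hardest points, in my view, are twofold: the \emph{global} construction of $\cE_0$ together with the proof of its global (not merely fibrewise) slope-stability on the existence side; and, for the moreover statement, killing $H^1$ and $H^2$ separately — the Euler characteristic alone gives only one relation between them, so one genuinely needs the fibration, or the hyperholomorphic input, to control the behaviour over the singular fibres.
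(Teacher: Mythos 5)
Your overall strategy is the one sketched in the paper's introduction, but as written the uniqueness argument has two genuine gaps. First, Mukai's theory determines a simple semi-homogeneous bundle on a fibre $A_z$ only up to tensoring by a line bundle $\xi\in\Pic^0(A_z)$: for two slope-stable restrictions $\cF_z,\cG_z$ with the same rank and determinant, the set $V_z=\{[\xi]\mid \cF_z\cong\cG_z\otimes\xi\}$ is a coset of cardinality $r(\cE)^2$ inside $A_z[(mi)^2]$, and it is not at all automatic that $0\in V_z$. Your phrase ``by Mukai uniqueness it agrees fibrewise with the bundle above'' silently assumes the twist is trivial. The paper closes this by observing that $V_z$ is invariant under the monodromy action of $\pi_1(U,z)$ and then using the Tate--Shafarevich description of the Lagrangian fibration to prove $V_z=A_z[mi]\ni 0$ (Corollary~\ref{modinv}); without some such global input the fibrewise comparison fails. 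Second, your gluing step is insufficient: fibrewise isomorphisms over the smooth-fibre locus only produce an isomorphism of $\cE$ and $\cG$ away from the preimage of the discriminant curve, which is a \emph{divisor} in $X$, so Hartogs does not apply. The paper must first extend the isomorphism across all but finitely many fibres, which requires knowing that the restrictions to (the normalizations of) the singular fibres are still slope-stable and then running Lemma~\ref{oglemma} (restriction to surfaces fibred over curves in $\PP^2$, elementary modifications, strictly decreasing discriminant) before Hartogs can be invoked.

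On the existence side, ``a relative Fourier--Mukai transform spreading Mukai's construction over the fibration'' is not a construction: producing a global locally free sheaf on $X_0$ whose restriction to every fibre is the prescribed semi-homogeneous bundle is exactly the hard point, and there are Brauer/Tate--Shafarevich obstructions to doing so. The paper instead builds $\cE_0$ explicitly as $\cF[2]^{+}$ on $S^{[2]}$ for $S$ an elliptic $K3$ of Picard number $2$ and $\cF$ a rigid slope-stable bundle with $\chi(\End\cF)=2$; the congruences~\eqref{econ} come from the arithmetic condition $m_0+1=r_0s_0$ needed for such a Mukai vector of square $-2$ to exist (Lemma~\ref{champollion}), not from fibrewise numerology. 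Two further points you elide: passing from existence over (a dense union of) Noether--Lefschetz loci to existence at the \emph{generic} point of $\cK^i_e$ requires that the image of the relative moduli space be constructible, i.e., Maruyama's boundedness --- ``deforming back along the path'' only yields an analytically open set; and for the vanishing of $H^p(X,\End_0\cE)$ the paper checks simplicity of the restriction of $\cF[2]^{\pm}$ to \emph{every} fibre, including the non-reduced fibres over the diagonal of $(\PP^1)^{(2)}$, which your appeal to hyperholomorphicity does not replace.
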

\begin{rmk}
Let $[(X,h)]\in\cK^2_6$ be  generic. Then $(X,h)$ is isomorphic to the variety of lines $F(Y)$ on a generic cubic hypersurface  $Y\subset\PP^5$ polarized by the Pl\"ucker embedding, and  the vector bundle $\cE$  of Theorem~\ref{unicita} with $r_0=2$ is isomorphic to the restriction  of the tautological quotient vector bundle on $\Gr(2,\CC^6)$. Similarly, let $[(X,h)]\in\cK^2_{22}$ be generic. Then $(X,h)$ is isomorphic to the Debarre-Voisin variety 
associated to a generic $\sigma\in \bigwedge^3 V_{10}^\vee$, where 
  $V_{10}$ is a $10$ dimensional complex vector space, and 
\begin{equation}\label{eqDV}
X_\sigma:=\{[W]\in \Gr(6,V_{10}) \mid \sigma\vert_{ W}=0\}.
\end{equation}
 The vector bundle $\cE$  of Theorem~\ref{unicita} with $r_0=2$ is isomorphic to the restriction to $X_\sigma$ of the tautological quotient vector bundle on $\Gr(6,V_{10})$. These results are proved in Section~\ref{geodv}.
\end{rmk}
\begin{rmk}
We would like the congruence relations in~\eqref{econ} to be forced upon us by adding to the hypotheses on $r(\cE)$, $c_1(\cE)$, $\Delta(\cE)$ the extra hypothesis that $\chi(X,End^0(\cE))=0$. Our computations give this for some values of $r_0$, but we do not have complete results. 
\end{rmk}
The result below  replaces the  genericity hypothesis in Theorem~\ref{unicita} with a cohomological one.
\begin{crl}\label{solosolo}
Let $i\in\{1,2\}$ and let $r_0,e$ be positive integers such that  $r_0\equiv i\pmod{2}$ and~\eqref{econ} holds.   
Let $[(X,h)]\in\cK^i_e$. Suppose  
 that $\cE$ is an $h$ slope-stable vector bundle  on $X$ such  that~\eqref{ele} holds and $H^2(X,End^0(\cE))=0$. If  $\cG$ is an  $h$ slope-stable vector bundle on $X$ and  $\ch_k(\cG)=\ch_k(\cE)$ for $k\in\{0,1,2\}$ then $\cG$ is isomorphic to $\cE$. 
\end{crl}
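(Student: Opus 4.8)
The plan is to derive the corollary from Theorem~\ref{unicita} by a deformation argument in which the hypothesis $H^2(X,\End_0\cE)=0$ plays the role that genericity of $[(X,h)]$ plays in the theorem. First I would observe that $\cG$ is itself modular: since $\ch_k(\cG)=\ch_k(\cE)$ for $k\le 2$ we have $\Delta(\cG)=\Delta(\cE)=\frac{r(\cE)(r(\cE)-1)}{12}c_2(X)$, a multiple of $c_2(X)$, so modularity follows from Remark~\ref{caroverb}. In particular, both $\cE$ and $\cG$ deform to first order along every small deformation of $(X,h)$ inside $\cK^i_e$, and for each of them the obstruction to integrating these first-order deformations (fixing the determinant) lies in $H^2$ of its sheaf of traceless endomorphisms. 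The whole proof then reduces to showing that $\cE$ and $\cG$ both deform, as honest families of slope-stable bundles, over a neighbourhood of $[(X,h)]$ in $\cK^i_e$.

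Let me set up the deformation. Let $\pi\colon\cX\to B$ be the universal family over a neighbourhood $B$ of $b_0:=[(X,h)]$ in $\cK^i_e$, and let $\cM\to B$ be the relative moduli space of $h$ slope-stable bundles with the invariants $r(\cE),c_1(\cE),\Delta(\cE)$ of~\eqref{ele}; by boundedness of slope-stable sheaves with fixed $r,c_1,\Delta$ this is of finite type, and as a moduli space of stable sheaves it is separated over $B$. Both $[\cE]$ and $[\cG]$ are points of the central fibre $\cM_{b_0}$, and by Theorem~\ref{unicita} the fibre $\cM_t$ over a generic $t\in B$ is a single reduced point. Now, because $\cE$ is modular its deformation along $B$ is obstructed only by $H^2(X,\End_0\cE)=0$; hence $\cE$ extends to a family $\{\cE_t\}_{t\in B}$ of slope-stable bundles with $\cE_{b_0}=\cE$, giving a section $s_{\cE}\colon B\to\cM$. (As a byproduct, $\cM\to B$ is smooth at $[\cE]$ of relative dimension $h^1(X,\End_0\cE)$; since a neighbourhood of $[\cE]$ dominates $B$ while the generic fibre is a reduced point, this forces $h^1(X,\End_0\cE)=0$, and then stability together with Serre duality gives $H^p(X,\End_0\cE)=0$ for all $p$, in accordance with Theorem~\ref{unicita}.) Granting the analogous assertion for $\cG$, one obtains a second section $s_{\cG}\colon B\to\cM$ with $s_{\cG}(b_0)=[\cG]$; since $s_{\cE}$ and $s_{\cG}$ agree over the dense open locus where $\cM_t$ is a single point, and $\cM\to B$ is separated, they agree on all of the reduced base $B$, whence $[\cE]=s_{\cE}(b_0)=s_{\cG}(b_0)=[\cG]$ and therefore $\cG\cong\cE$.

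The heart of the matter, and the step I expect to resist purely formal arguments, is precisely the assertion that $\cG$ also deforms over $B$, that is, that $[\cG]$ does not lie on a component of $\cM$ collapsing to a proper closed subset of $B$: a priori $\cG$ could be a sporadic stable bundle living only over special points, and the vanishing $H^2(X,\End_0\cE)=0$ controls $\cE$ but says nothing directly about $H^2(X,\End_0\cG)$. To overcome this I would re-run, with $\cG$ in place of $\cE$, the degeneration used to prove Theorem~\ref{unicita}: deform $(X,h)$ to a Lagrangian $(X_0,h_0,\pi_0)$ with $\pi_0\colon X_0\to\PP^2$ a Lagrangian fibration, and restrict $\cG$ to a general fibre. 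Since $\cG$ is modular and slope-stable, this restriction is a semi-homogeneous bundle on an abelian surface (as in the discussion of Section~\ref{subsec:retromotivi}), hence has no infinitesimal deformations fixing the determinant; the analysis of such restrictions carried out in the proof of Theorem~\ref{unicita} then shows that the deformations of $\cG$ along $\cK^i_e$ are unobstructed, so that in fact $H^2(X,\End_0\cG)=0$ and $\cG$ spreads out to the required family $\{\cG_t\}$. With this in hand the separatedness argument above closes the proof.
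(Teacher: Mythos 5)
Your proposal correctly identifies the crux --- that the hypothesis controls $\cE$ but says nothing directly about $\cG$, which a priori could be a sporadic stable bundle living only over special points --- but the fix you offer for this does not work, and it is exactly here that your argument has a genuine gap. You propose to ``re-run the degeneration'' with $\cG$ in place of $\cE$: deform $(X,h)$ to a Lagrangian $(X_0,h_0,\pi_0)$ and restrict $\cG$ to a general fibre. This is circular: to restrict $\cG$ to a fibre of a Lagrangian fibration on a \emph{deformation} of $X$ you must first transport $\cG$ to that deformation, which is precisely the unobstructedness you are trying to establish (and the given $[(X,h)]$ need not itself carry a Lagrangian fibration). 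Moreover, even granting a Lagrangian fibration on $X$ itself, semi-homogeneity of $\cG$ restricted to a generic fibre (which anyway requires slope-\emph{stability} of the restriction, not just semistability --- compare Proposition~\ref{lagstab} and Corollary~\ref{caravilla}) only controls deformations of the restriction, not $H^2(X,\End_0\cG)$; in the paper the global vanishing $H^p(\End^0\cF[2]^{\pm})=0$ is Proposition~\ref{acca20}, proved for the special bundles $\cF[2]^{\pm}$ by a fibre-by-fibre Leray analysis including the singular and non-reduced fibres, and no such statement is available for an arbitrary $\cG$.

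The paper's proof sidesteps the need to deform $\cG$ altogether, and this is the idea missing from your argument. Since $T^i_e$ is normal and the generic fibre of $f\colon\cM_e(r_0)\to T^i_e$ is a single point (Theorem~\ref{unicita}), Zariski's main theorem gives that \emph{every} nonempty fibre of $f$ is connected. The hypothesis $H^2(X,\End_0\cE)=0$ forces every component of $\cM_e(r_0)$ through $[\cE]$ to dominate $T^i_e$; there is only one such component, so $\chi(X,\End_0\cE)$ equals its value at the generic point, which is $0$ by Theorem~\ref{unicita}. Combined with $h^0=h^4=0$ (stability and Serre duality) and $h^2=0$ (hypothesis), this yields $h^1=h^3=0$, so $\{[\cE]\}$ is a reduced isolated component of the fibre $f^{-1}(t)$. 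Connectedness then forces $f^{-1}(t)=\{[\cE]\}$, and since $[\cG]$ lies in this same fibre, $\cG\cong\cE$. Note that your separatedness argument, even if both sections existed, would still need this connectedness input in some form; as written, without a section through $[\cG]$ the proof does not close.
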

There is an interesting consequence of Theorem~\ref{unicita} involving Debarre-Voisin varieties.
There is a GIT moduli space 
$\cM_{DV}:=\PP(\bigwedge^3 V_{10}^\vee)\gquot \SL(V_{10})$ of Debarre-Voisin varieties, see~\cite{dhov-journal}. In~\cite{devo} it was proved that the moduli map 
$\cM_{DV}\dra\cK^2_{22}$ has finite \emph{non zero} degree.
\begin{thm}\label{perdivi}
The moduli map $\cM_{DV}\dra\cK^2_{22}$ is birational.
\end{thm}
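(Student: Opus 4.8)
The plan is to combine the finite nonzero degree result of \cite{devo} with the uniqueness clause of Theorem~\ref{unicita} in order to reconstruct $\sigma$ from the abstract polarized variety. Since \cite{devo} guarantees that the moduli map $\cM_{DV}\dra\cK^2_{22}$ is dominant and generically finite, it suffices to prove that it is generically injective, i.e.\ that a generic point of $\cK^2_{22}$ has a single preimage. We specialize Theorem~\ref{unicita} to $i=2$, $e=22$, $r_0=2$: one checks that these satisfy~\eqref{econ} (indeed $22\equiv 6\equiv-10\pmod{16}$), that $r(\cE)=r_0^2=4$, $c_1(\cE)=(r_0/i)h=h$ and $\Delta(\cE)=c_2(X)$, and that on a Debarre--Voisin variety $X_\sigma$ the unique bundle $\cE$ produced by Theorem~\ref{unicita} is the restriction $\cQ_\sigma$ of the tautological quotient bundle $\cQ$ on $\Gr(6,V_{10})$.

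The heart of the argument is a reconstruction of $\sigma$, up to $\mathrm{GL}(V_{10})$ and a scalar, from $(X_\sigma,h)$. Take a generic $[(X,h)]\in\cK^2_{22}$ and two preimages $[\sigma],[\sigma']$; then $X_\sigma\cong X_{\sigma'}$ via an isomorphism $f$ of polarized HK's. The pullback $f^{*}\cQ_{\sigma'}$ is again $h$ slope-stable with invariants~\eqref{ele}, so by the uniqueness clause of Theorem~\ref{unicita} one gets $f^{*}\cQ_{\sigma'}\cong\cQ_\sigma$. Next I would recover $V_{10}$ intrinsically: from $0\to\cS\to V_{10}\otimes\cO\to\cQ\to 0$ on $\Gr(6,V_{10})$ and Bott vanishing one has $H^0(\Gr,\cQ)=V_{10}$, and a cohomological computation should show that for generic $\sigma$ the restriction $V_{10}=H^0(\Gr,\cQ)\to H^0(X_\sigma,\cQ_\sigma)$ is an isomorphism. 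The evaluation map $H^0(X_\sigma,\cQ_\sigma)\otimes\cO_{X_\sigma}\to\cQ_\sigma$ is then intrinsic to $\cQ_\sigma$, is surjective, and its rank-$6$ kernel realizes the embedding $X_\sigma\hookrightarrow\Gr(6,V_{10})$. Finally $X_\sigma$ is the zero locus of the section of $\bigwedge^3\cS^{\vee}$ attached to $\sigma$ under the identification $\bigwedge^3 V_{10}^{\vee}=H^0(\Gr,\bigwedge^3\cS^{\vee})$, and for generic $\sigma$ this zero locus should determine $\sigma$ up to scalar.

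Assembling the pieces, $f^{*}\cQ_{\sigma'}\cong\cQ_\sigma$ induces a linear isomorphism $g\in\mathrm{GL}(V_{10})$ on global sections compatible with the two reconstructed embeddings, so $g$ carries $X_\sigma$ to $X_{\sigma'}$ inside $\Gr(6,V_{10})$; hence $g\cdot\sigma$ and $\sigma'$ have the same zero locus and are proportional, giving $[\sigma]=[\sigma']$ in $\cM_{DV}$. This proves generic injectivity. Equivalently, the assignment $(X,h)\mapsto[\sigma]$ is well defined, because changing the chosen identification $H^0(X,\cE)\cong V_{10}$ alters $\sigma$ only by $\mathrm{GL}(V_{10})$, hence not at all after passing to the $\SL(V_{10})$-quotient defining $\cM_{DV}$; this furnishes a rational inverse $\cK^2_{22}\dra\cM_{DV}$, and together with dominance and generic finiteness it yields birationality.

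The main obstacle I anticipate is the pair of cohomological inputs in the reconstruction: proving that $H^0(\Gr,\cQ)\to H^0(X_\sigma,\cQ_\sigma)$ is an isomorphism for generic $\sigma$, and proving that a generic $\sigma$ is recovered up to scalar from $X_\sigma\subset\Gr(6,V_{10})$, equivalently that the space of sections of $\bigwedge^3\cS^{\vee}$ vanishing on $X_\sigma$ is one-dimensional. Both reduce to vanishing statements for twists of $\cS$ and $\cQ$ restricted to $X_\sigma$, which can be attacked through the Koszul resolution of $\cO_{X_\sigma}$ by the exterior powers of $\bigwedge^3\cS$ together with Bott's theorem, although the bookkeeping is substantial.
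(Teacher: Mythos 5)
Your argument is correct and follows essentially the same route as the paper: the uniqueness clause of Theorem~\ref{unicita} (together with slope-stability of the tautological quotient, Corollary~\ref{genstab}) identifies all the $\cQ_{\sigma_i}$ with a single bundle $\cE$, and the isomorphism $V_{10}\overset{\sim}{\to}H^0(X_\sigma,\cQ_\sigma)$ — which is exactly the paper's Proposition~\ref{diecisez} — makes the evaluation map, hence the embedding into $\Gr(6,V_{10})$, intrinsic to $(X,h)$. The only real divergence is in how that last input is established: the paper proves it not by Koszul/Bott computations on the Grassmannian but by deforming $\cQ$ to $\cF[2]^{+}$ on $S^{[2]}$ for a degree-$2$ elliptic $K3$ and computing $h^0=\dim\Sym^2H^0(S,\cF)=10$ with $h^1=0$.
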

Theorem~\ref{unicita} is also relevant to the study of  degenerate DV varieties which was carried out in~\cite{dhov-journal}. We say a few words about this in Subsection~\ref{futuro}.  

\subsection{Outline of the paper}
\setcounter{equation}{0}
In Section~\ref{naturale} we give a few examples of modular sheaves, and we make the connection with semi-homogeneous vector bundles. In particular we give strong restrictions on the possible ranks of modular sheaves, under some hypotheses.

Section~\ref{camere} contains the results that extend to modular sheaves the known results on the variation of slope-stability for sheaves on surfaces. In particular we show that one can extend to HK's with a Lagrangian fibration the results that hold for  sheaves on surfaces which are fibered over a curve.

In Section~\ref{stablag} we prove properties of slope-stable modular vector bundles on HK's $X$ of Type $K3^{[2]}$ with a Lagrangian fibration $X\to\PP^2$. We make certain hypotheses, in particular we assume that $r(\cE)$ equals the square of a generator of the ideal 
$\{q_X(c_1(\cE),\alpha) \mid \alpha\in H^2(X;\ZZ)\}$. We show that the restriction of a slope-stable modular vector bundle on $X$  to a generic Lagrangian fiber is slope-stable, and that if $\cF$ is another such vector bundle then  the restrictions of $\cE$ and
 $\cF$ to a generic Lagrangian fiber are isomorphic. 

Section~\ref{esempi}  discusses a construction which associates to a vector bundle $\cF$ on a $K3$ surface $S$ two torsion free sheaves $\cF[n]^{\pm}$ on $S^{[n]}$ whose fibers over a reduced scheme $\{x_1,\ldots,x_n\}$ are the tensor product $\cF(x_1)\otimes\ldots\otimes\cF(x_n)$ of the fibers of $\cF$ at the points $x_1,\ldots,x_n$ - this is a generalization of a construction which was given in~\cite{dhov-journal}. We prove that if $\chi(S,\cF^{\vee}\otimes\cF)=2$, then $\cF[2]^{\pm}$ is a modular vector bundle, and we compute its Chern character. As proved in Section~\ref{ledimo} this construction gives (by deformation) the existence result of Theorem~\ref{unicita}. 
 
In Section~\ref{ambrose} we let $S\to\PP^1$ be an elliptic $K3$ surface with Picard number $2$. Then $S^{[2]}$ has an associated Lagrangian fibration $\pi\colon S^{[2]}\to(\PP^1)^{(2)}\cong\PP^2$. We prove that if  $\cF$ is a slope-stable rigid vector bundle on $S$ then  the vector bundle $\cF[2]^{\pm}$ on $S^{[2]}$ has  good properties. In particular we show that it extends to any small deformation of $S^{[2]}$ which keeps  $c_1(\cF[2]^{\pm})$ of type $(1,1)$, and that the restriction to any fiber of the Lagrangian fibration $\pi$ is simple.

Section~\ref{ledimo} contains the proof of Theorem~\ref{unicita} (and of Corollary~\ref{solosolo}). The basic idea is as follows. 
Let $\cX\to T^i_e$ be a complete family of polarized HK's of Type $K3^{[2]}$ whose moduli belong to $\cK^i_e$. 
By Gieseker and Maruyama there is a relative moduli scheme $f\colon \cM_e(r_0)  \to T^i_e$ whose fiber over $t\in T^i_e$  is  the moduli space of slope-stable vector bundles on $(X_t,h_t)$ with the given rank, $c_1$ and $c_2$. The map  $f\colon \cM_e(r_0)  \to T^i_e$ is of finite type by a result of Maruyama. Let $ \cM_e^{*}(r_0)\subset  \cM_e(r_0)$ be the (open) subset parametrizing vector bundles whose $\ch_3$ and $\ch_4$ is given by the formulae in Theorem~\ref{unicita}.
Because of the good properties of the vector bundles $\cF[2]^{\pm}$, the image  $f( \cM_e^{*}(r_0))$ contains a  dense open (in the Zariski topology) subset of $T^i_e$. On the other hand the results of Section~\ref{stablag} and~\ref{ambrose} allow us to prove that, up to isomorphism, there is a unique slope-stable vector bundle with the relevant $\ch_0,\ch_1,\ch_2$ on a generic HK parametrized by a Lagrangian Noether-Lefschetz locus with large discriminant. By density of the union of Noether-Lefschetz divisors (with large discriminant) we conclude that 
$f$ has degree $1$.

In Section~\ref{geodv} we prove Theorem~\ref{perdivi}. Once we have Theorem~\ref{unicita}, the main point is to show that the tautological quotient vector bundle on a generic DV variety is slope-stable.

In the appendix we discuss  properties of semi-homogeneous vector bundles on abelian varieties, and of Lagrangian Noether-Lefschetz divisors on moduli spaces of polarized HK's of Type $K3^{[2]}$. 
\subsection{Conventions}\label{subsec:ginevra}
\setcounter{equation}{0}
\begin{enumerate}
\item[$\bullet$] 
Algebraic variety is synonymous of complex quasi projective variety (not necessarily irreducible). 
\item[$\bullet$] 
Let $X$ be a smooth complex quasi projective variety and $\cF$ a  coherent sheaf  on $X$. We only consider  topological  Chern classes   $c_i(\cF)\in H^{2i}(X(\CC);\ZZ)$.
\item[$\bullet$] 
Let $X$ be a HK manifold of dimension $2n$. We let $q_X$, or simply $q$, be the BBF symmetric bilinear form of $X$, and we denote $q_X(\alpha,\alpha)$ by $q_X(\alpha)$. We let $c_X$ be the \emph{normalized Fujiki constant of $X$}, i.e.~the rational positive number such that for all $\alpha\in H^2(X)$ we have
\begin{equation}
\int\limits_{X}\alpha^{2n}=c_X\cdot (2n-1)!!\cdot q_X(\alpha)^n.
\end{equation}
A hyperk\"ahler (HK) \emph{variety} is a projective compact HK manifold. 
\item[$\bullet$] 
Let $\cF$ be a torsion-free sheaf on a polarized projective variety $(X,h)$. A  subsheaf  $\cE\subset\cF$ is \emph{slope-destabilizing} if $0<r(\cE)<r(\cF)$ and $\mu_h(\cE)\ge \mu_h(\cF)$, where $r(\cE),r(\cF)$ are the ranks of $\cE,\cF$, and $\mu_h(\cE),\mu_h(\cF)$ are the $h$-slopes of $\cE,\cF$.   If $\mu_h(\cE)> \mu_h(\cF)$ then 
$\cE\subset\cF$ is \emph{slope-desemistabilizing}. We use similar terminology for exact sequences $0\to\cE\to\cF\to \cG\to 0$. 
\item[$\bullet$] 
A torsion-free sheaf  on  $(X,h)$ is \emph{strictly} $h$ slope-semistable if it is $h$ slope-semistable but not $h$ slope-stable. 
\item[$\bullet$] 
Abusing notation we say that a smooth projective variety $X$ is an  abelian variety if it is isomorphic to the variety underlying an abelian variety $A$. In other words 
$X$ is a torsor of $A$. 
\end{enumerate}
\subsection{Acknowledgements}
\setcounter{equation}{0}
Many thanks go to the (anonymous) referee for his numerous suggestions. In particular I owe to the referee the proof of Proposition~\ref{prp:seirigido} via the MacKay correspondence (the original proof was done by brute force)
\section{Modular  sheaves}\label{naturale}
\setcounter{equation}{0}
\subsection{First examples}\label{trecasi}
\setcounter{equation}{0}
By Remark~\ref{semphodge} the following are  modular vector bundles:
\begin{enumerate}
\item
The tangent bundle $\Theta_X$. 
\item
Let $V_6$ be a $6$ dimensional complex vector space, and let $X\subset\Gr(2,V_6)$ be the variety of lines contained in a smooth cubic hypersurface in $\PP(V_6)$. Let
 $h\in H^{1,1}_{\ZZ}(X)$ be the Pl\"ucker polarization. Then $X$ is a HK of type $K3^{[2]}$, see~\cite{beaudon}. Let $\cQ$  be  the restriction to $X$ of the tautological rank $4$  quotient vector bundle on $\Gr(2,V_6)$. We claim that    
\begin{equation}\label{yaris}
\ch_0(\cQ)  =  4, \quad \ch_1(\cQ)  =  h, \quad \ch_2(\cQ)  =  \frac{1}{8}\left(h^2 -c_2(X)\right).
\end{equation}
 The first two equations are obvious, the last equation can be obtained as follows. Let $\cU$ be the restriction to $X$ of the tautological subbundle on $\Gr(2,V_6)$.  
The normal bundle sequence
 \begin{equation*}
0\lra \Theta_X \lra \Theta_{\Gr(2,V_6)|X} \lra \Sym^3 \cU^{\vee}\lra 0
\end{equation*}
gives that  $\ch_2(\cU)  = -\left(h^2 -c_2(X)\right)/8$. Since $\ch(\cQ)=6-\ch(\cU)$ this gives the last equation in~\eqref{yaris}.
 Thus $\Delta(\cQ)=c_2(X)$, and hence $\cQ$ is modular.  
\item
Let $X\subset \Gr(6,V_{10})$ be a smooth DV variety  and  let 
 $h\in H^{1,1}_{\ZZ}(X)$ be the  Pl\"ucker polarization, see~\cite{devo}.  Then $X$ is a HK of Type $K3^{[2]}$. Let 
$\cQ$ be the  the restriction to $X$ of the tautological rank $4$  quotient vector bundle on $\Gr(6,V_{10})$. Then   
\begin{equation}
\ch_0(\cQ)  =  4, \quad \ch_1(\cQ)  =  h, \quad \ch_2(\cQ)  =  \frac{1}{8}\left(h^2 -c_2(X)\right).
\end{equation}
The above equations follow from the computations on p.~83 of~\cite{devo}, see Lemma~\ref{classidv}. Thus $\Delta(\cQ)=c_2(X)$  and hence $\cQ$ is modular. 
\end{enumerate}
\begin{rmk}\label{primipassi}
Let $X$ be a HK variety and let $\cE,\cF$ be  modular sheaves on $X$. Then  $\cE\oplus\cF$ is not  modular in general. On the other hand $\cE\otimes\cF$ is  modular, at least if $\cE$ and $\cF$ are locally free.
\end{rmk}
\begin{rmk}\label{deltanti}
Let $X$ be a HK manifold of dimension $2n$, and let $\cF$ be a torsion free  modular sheaf on $X$.  
Then 
\begin{equation}
\int\limits_{X}\Delta(\cF)\smile\alpha_1\smile\ldots\smile \alpha_{2n-2}=d(\cF)\cdot\wt{\sum}\, q_X(\alpha_{i_1},\alpha_{i_2})\cdot\ldots\cdot q_X(\alpha_{i_{2n-3}},\alpha_{i_{2n-2}}),
\end{equation}
for all $\alpha_1,\ldots, \alpha_{2n}\in H^2(X)$, where $\wt{\sum}$ means that in the summation we avoid repeating addends which are formally equal (i.e.~are equal modulo reordering of the factors   $q_X(\cdot,\cdot)$'s and switching the entries in $q_X(\cdot,\cdot)$). 
\end{rmk}
\subsection{Restrictions on the rank}
\setcounter{equation}{0}
Below is the result that was mentioned in Subsection~\ref{risprin}.
\begin{prp}\label{restrango}
Let $X$ be a HK fourfold of Type $K3^{[2]}$ or $\Kum_2$. Let $\cF$ be a modular torsion-free sheaf on $X$. Let $m$ be a generator of the ideal 
\begin{equation*}
\{q_X(c_1(\cF),\alpha)\mid \alpha\in H^2(X;\ZZ)\}.
\end{equation*}
 Then  $r(\cF)$ divides $m^2$ if $X$ is of Type $K3^{[2]}$, and it divides $3m^2$ if $X$ is of Type $\Kum_2$. 
\end{prp}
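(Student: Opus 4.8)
The plan is to specialize the modularity identity to the fourfold case and combine the Fujiki relation with the integrality of $\int_X c_2(\cF)\smile\alpha\smile\beta$. Since $\dim X=4$ we have $n=2$, so $(2n-3)!!=1$, and polarizing the identity of Definition~\ref{effemod} gives
\[
\int_X \Delta(\cF)\smile\alpha\smile\beta=d(\cF)\,q_X(\alpha,\beta),\qquad \alpha,\beta\in H^2(X).
\]
Writing $r=r(\cF)$ and $c_1=c_1(\cF)$, I would use $\Delta(\cF)=2r\,c_2(\cF)-(r-1)c_1^2$ together with the polarized Fujiki relation $\int_X c_1^2\smile\alpha\smile\beta=c_X\big(q_X(c_1)\,q_X(\alpha,\beta)+2\,q_X(c_1,\alpha)q_X(c_1,\beta)\big)$ to rewrite everything as
\[
2r\int_X c_2(\cF)\smile\alpha\smile\beta=A\,q_X(\alpha,\beta)+B\,q_X(c_1,\alpha)\,q_X(c_1,\beta),
\]
valid for all $\alpha,\beta\in H^2(X)$, where $A:=d(\cF)+(r-1)c_X\,q_X(c_1)$ and $B:=2(r-1)c_X$.

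Now the integrality enters: for $\alpha,\beta\in H^2(X;\ZZ)$ the left-hand side lies in $2r\,\ZZ$, because $c_2(\cF)\in H^4(X;\ZZ)$. Hence the symmetric form $\Phi(\alpha,\beta):=A\,q_X(\alpha,\beta)+B\,q_X(c_1,\alpha)q_X(c_1,\beta)$ is $\ZZ$-valued and takes values in $2r\ZZ$ on the lattice $H^2(X;\ZZ)$. Two evaluations then finish the argument (the case $c_1=0$, where $m=0$, being trivial). First, picking $\beta\in c_1^\perp$ kills the $B$-term for every $\alpha$, so if $\beta$ can be chosen with $q_X(\alpha,\beta)=1$ for some $\alpha$ we obtain $2r\mid A$. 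Second, granted $2r\mid A$, evaluating at $\alpha=\beta=\alpha_0$ with $q_X(c_1,\alpha_0)=m$ gives $\Phi(\alpha_0,\alpha_0)=A\,q_X(\alpha_0)+B\,m^2\in 2r\ZZ$, whence $2r\mid B\,m^2=2(r-1)c_X\,m^2$, i.e.\ $r\mid(r-1)c_X\,m^2$. As $\gcd(r,r-1)=1$ this says $r\mid c_X m^2$, and substituting the normalized Fujiki constants $c_X=1$ for Type $K3^{[2]}$ and $c_X=3$ for Type $\Kum_2$ (the Fujiki numbers being $3$ and $9$) yields $r\mid m^2$ and $r\mid 3m^2$ respectively.

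The one genuine input is the lattice fact needed for $2r\mid A$: that $c_1^\perp\subseteq H^2(X;\ZZ)$ contains a vector $\beta$ of divisibility one in $H^2(X;\ZZ)$, equivalently that $q_X$ represents a unit on $c_1^\perp$. I would extract this from the known isometry types $U^{3}\oplus E_8(-1)^2\oplus\langle-2\rangle$ (Type $K3^{[2]}$) and $U^{3}\oplus\langle-6\rangle$ (Type $\Kum_2$): for $c_1\neq0$ the complement $c_1^\perp$ is an even indefinite lattice of rank $\ge 6$, and the abundance of hyperbolic planes forces it to still carry a primitive vector of divisibility one. This is the step I expect to demand the most care, since it is exactly where the discriminant groups $\ZZ/2\ZZ$ and $\ZZ/6\ZZ$ and the primitivity of $c_1$ must be controlled; once it is in place, the remainder is the formal Fujiki substitution plus the integrality of $\int_X c_2(\cF)\smile\alpha\smile\beta$.
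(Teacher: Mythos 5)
Your proof is correct and runs on the same engine as the paper's: the integrality of $\int_X c_2(\cF)\smile\alpha\smile\beta$ forces $2r$ to divide the Fujiki/modularity expression, and $\gcd(r,r-1)=1$ then yields $r\mid c_X m^2$, with $c_X=1$ or $3$ in the two cases. The only divergence is in how the terms proportional to $q_X(\alpha,\beta)$ are disposed of. You need two evaluations, the first of which rests on the lattice lemma you flag as delicate (a divisibility-one vector in $c_1(\cF)^{\perp}$). The paper instead evaluates once, on an integral class $\alpha$ that is \emph{isotropic} and satisfies $q_X(c_1(\cF),\alpha)=m$: since $n=2$, isotropy of $\alpha$ kills both $q_X(\alpha)^{n-1}$ in the modularity identity and the $q_X(c_1(\cF))\,q_X(\alpha)$ term in the polarized Fujiki relation, so the single identity $2r\int_X c_2(\cF)\smile\alpha^2=2(r-1)c_X m^2$ finishes the argument and your $c_1^{\perp}$ lemma is never needed (one still has to check, as the paper says, that such an $\alpha$ exists, but that is a comparably soft lattice statement). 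Your auxiliary claim does hold — $c_1(\cF)^{\perp}$ meets an orthogonally split hyperbolic summand $U\oplus U$ of $H^2(X;\ZZ)$ in a saturated sublattice of rank at least $3$, whose primitive vectors have divisibility one because $U\oplus U$ is unimodular — so once you record that, your two-step variant is complete; it simply costs an extra evaluation compared with the paper's choice of test class.
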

\begin{proof}
As is easily checked, there exists $\alpha\in H^2(X;\ZZ)$  such that  $q_X(c_1(\cF),\alpha)=m$ and $q_X(\alpha)=0$. Let $r:=r(\cF)$. Since $q_X(\alpha)=0$, Equation~\eqref{fernand} gives that
\begin{multline}
2r\int_X c_2(\cF)\smile\alpha^2=(r-1)\int_X c_1(\cF)^2\smile\alpha^2= \\
=2(r-1) c_X \cdot q_X(c_1(\cF),\alpha)^2=2(r-1) c_X\cdot m^2.
\end{multline}
The result follows because  $c_X=1$ if $X$ is of Type $K3^{[2]}$, and $c_X=3$ if $X$ is of Type $\Kum_2$.
\end{proof}
\subsection{Modular sheaves on Lagrangian fibrations}
\setcounter{equation}{0}
We recall that  a Lagrangian fibration $\pi\colon X\to\PP^n$ on a HK manifold $X$ of dimension $2n$ is  a surjective map with connected fibers whose smooth fibers  are  abelian varietes.
\begin{rmk}
For $t\in\PP^n$ we let $X_t:=\pi^{-1}(t)$ be the schematic fiber over $t$. If $X_t$ is smooth the image of the restriction map $H^2(X;\ZZ)\to H^2(X_t;\ZZ)$ has rank one, and is generated by  an ample 
class $\theta_t\in H^{1,1}_{\ZZ}(X_t)$, see~\cite{wieneck1}.  If $\cF$ is a sheaf on $X_t$ slope-(semi)stability of $\cF$ will always mean $\theta_t$ slope-(semi)stability.
\end{rmk}
If  $\pi\colon X\to\PP^n$ is  a Lagrangian fibration we let 
 \begin{equation}\label{eccoeffe}
f:=c_1(\pi^{*}\cO_{\PP^n}(1))\in H^{1,1}_{\ZZ}(X).
\end{equation}
As is well-known $q_X(f)=0$.
\begin{lmm}\label{zerene}
Let  $\pi\colon X\to\PP^n$ be a  Lagrangian fibration of a HK manifold of dimension $2n$. Suppose that $\cF$ is a modular torsion free sheaf on $X$. Let $t\in\PP^n$ be a general point, and let $\cF_t:=\cF_{|X_t}$ be the restriction of $\cF$ to $X_t$. Then 
\begin{equation}\label{intzero}
\int_{X_t}\Delta(\cF_t)\smile \theta_t^{n-2}=0.
\end{equation}
\end{lmm}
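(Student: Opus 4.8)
The plan is to rewrite the integral over the fiber $X_t$ as an integral over $X$ against the fiber class, and then to exploit the vanishing $q_X(f)=0$ together with the multilinear form of modularity recorded in Remark~\ref{deltanti}.

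First I would choose a class $\beta\in H^2(X;\ZZ)$ whose restriction to $X_t$ equals $\theta_t$; this is possible precisely because $\theta_t$ generates the image of $H^2(X;\ZZ)\to H^2(X_t;\ZZ)$. Since topological Chern classes commute with restriction and $X_t$ is a general fiber (so that $\cF_t$ is again torsion free of the same rank, and the locus where $\cF$ fails to be locally free meets $X_t$ in codimension at least $2$), one has $\Delta(\cF_t)=\Delta(\cF)_{|X_t}$. Moreover the cohomology class of the fiber is $[X_t]=f^n$, because $X_t=\pi^{-1}(t)$ and the class of a point in $\PP^n$ is $c_1(\cO_{\PP^n}(1))^n$, which pulls back to $f^n$. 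Combining these facts with the projection formula gives
\begin{equation*}
\int_{X_t}\Delta(\cF_t)\smile\theta_t^{n-2}=\int_X \Delta(\cF)\smile\beta^{n-2}\smile f^n.
\end{equation*}

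Next I would apply Remark~\ref{deltanti} to the right-hand side, taking the $2n-2$ classes to be $n-2$ copies of $\beta$ and $n$ copies of $f$. This expresses the integral as $d(\cF)$ times a sum over perfect matchings of these $2n-2$ classes into $n-1$ pairs, each summand being the product of the corresponding values $q_X(\cdot,\cdot)$. The key point is a counting one: in any such matching each copy of $\beta$ can absorb at most one copy of $f$, so at most $n-2$ of the $n$ copies of $f$ can be paired with a $\beta$; the remaining (at least two) copies of $f$ must then be paired among themselves, producing at least one factor $q_X(f,f)=q_X(f)$. Since $q_X(f)=0$, every summand vanishes and the integral is $0$.

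The steps requiring the most care are the two compatibilities invoked in the reduction: that for general $t$ the restriction $\cF_t$ is torsion free with $\Delta(\cF_t)=\Delta(\cF)_{|X_t}$, and the identification $[X_t]=f^n$ in $H^{2n}(X)$. Granting these standard facts, the heart of the argument is the elementary pigeonhole count, which forces an $(f,f)$-pairing in every term; this is exactly where both hypotheses enter, namely that $\cF$ is modular (so that Remark~\ref{deltanti} applies) and that $\pi$ is a Lagrangian fibration (so that $q_X(f)=0$).
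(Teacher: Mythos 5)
Your proof is correct and follows essentially the same route as the paper: reduce the fiber integral to $\int_X\Delta(\cF)\smile\beta^{n-2}\smile f^n$ using $\Delta(\cF_t)=\Delta(\cF)_{|X_t}$ and $[X_t]=f^n$, then kill it via the multilinear form of modularity in Remark~\ref{deltanti} together with $q_X(f)=0$. The only difference is cosmetic (the paper writes $\theta_t=\rho(\omega_{|X_t})$ rather than lifting $\theta_t$ itself, and leaves the pigeonhole count forcing a $q_X(f,f)$ factor implicit, which you usefully spell out).
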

\begin{proof}
There exists $\omega\in H^2(X;\ZZ)$ such that $\theta_t=\omega_{|X_t}$. Since $t\in\PP^n$ is a generic point, we have $\Delta(\cF_t)=\Delta(\cF)_{|X_t}$. Moreover $f^n$ is the Poincar\'e dual of $X_t$.  Hence
\begin{equation}
\int_{X_t}\Delta(\cF_t)\smile \theta_t^{n-2}=\int_{X}\Delta(\cF)\smile \omega^{n-2}\smile f^n.
\end{equation}
The integral on the right vanishes by Remark~\ref{deltanti} and the equality
 $q_X(f)=0$.
\end{proof}
\begin{expl}
Let $S$ be a $K3$ surface, and let $V$ be a vector bundle on $S$. Let $\cZ\subset S\times S^{[n]}$ be the tautological subscheme, and let $p\colon\cZ\to S$, $q\colon \cZ\to S^{[n]}$ be the projection maps. The locally free sheaf  $q_{*}(p^{*}V)$ is known as a \emph{tautological sheaf} on $S^{[n]}$. In general such a sheaf is not modular. In fact suppose that $S$ is elliptic, with elliptic fibration $S\to\PP^1$. The composition  $S^{[n]}\to S^{(n)}\to  (\PP^1)^{(n)}\cong\PP^n$ is a Lagrangian fibration with generic fiber $X_t=C_1\times\ldots\times C_n$, where $C_1,\ldots, C_n$ are generic distinct fibers of the elliptic fibration $S\to\PP^1$.  If the restriction of $V$ to the fibers of $S\to\PP^1$ has non zero degree then  Equality~\eqref{intzero} does not hold for $\cF:=q_{*}(p^{*}V)$, and hence $q_{*}(p^{*}V)$ is not modular. 
\end{expl}
\begin{prp}\label{resemi}
Let  $\pi\colon X\to\PP^n$ be a   Lagrangian fibration of a HK manifold of dimension $2n$. Let $\cF$ be a modular torsion free sheaf on $X$. 
Suppose that $t\in\PP^n$ is a regular value of $\pi$, that $\cF$ is locally-free in a neighborhood of $X_t$, and that $\cF_t$ is slope-stable. Then $\cF_t$ is a semi homogeneous vector bundle.
\end{prp}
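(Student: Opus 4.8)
The plan is to read off, via Lemma~\ref{zerene}, that the restriction $\cF_t$ has vanishing ``discriminant integral'' against the polarization, and then to upgrade this single numerical identity to a geometric statement using the Kobayashi--Hitchin correspondence: the vanishing will force the Hermite--Einstein connection on $\cF_t$ to be projectively flat, and projective flatness on an abelian variety is exactly what yields semi-homogeneity in the sense of Mukai \cite{muksemi}.

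First I would set up the reduction. Since $t$ is a regular value of $\pi$, the fiber $X_t$ is a smooth Lagrangian fiber, hence an abelian variety (torsor) of dimension $n$, and because $\cF$ is locally free in a neighborhood of $X_t$ the restriction $\cF_t$ is a genuine vector bundle on $X_t$; by hypothesis it is $\theta_t$ slope-stable, where $\theta_t$ is the ample generator of the image of $H^2(X;\ZZ)\to H^2(X_t;\ZZ)$. Lemma~\ref{zerene} then provides exactly the input we need, namely
\[
\int_{X_t}\Delta(\cF_t)\smile\theta_t^{n-2}=0 .
\]
Next, by the Kobayashi--Hitchin correspondence a $\theta_t$ slope-stable bundle on the projective manifold $X_t$ carries a Hermite--Einstein metric with respect to a Kähler form $\omega$ representing $\theta_t$. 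For such a metric the Bogomolov--Lübke identity reads
\[
\int_{X_t}\Delta(\cF_t)\smile\theta_t^{n-2}= c\int_{X_t}\lvert F_0\rvert^2\,\omega^{n}
\]
for some positive constant $c$, where $F_0$ is the trace-free part of the curvature of the Hermite--Einstein connection. Comparing with the previous display forces $F_0\equiv 0$, i.e.\ the connection on $\cF_t$ is projectively flat; in particular $\Delta(\cF_t)=0$ as a class.

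Finally I would draw the geometric conclusion. A projectively flat bundle on the abelian variety $X_t=V/\Lambda$ corresponds to a projective unitary representation of $\pi_1(X_t)=\Lambda$; since this flat projective structure is translation-invariant, for every $x\in X_t$ the translate $t_x^{*}\cF_t$ is isomorphic to $\cF_t$ as a projective bundle, equivalently $t_x^{*}\cF_t\cong\cF_t\otimes L_x$ for a suitable line bundle $L_x$. This is precisely Mukai's definition \cite{muksemi} of a semi-homogeneous vector bundle, so $\cF_t$ is semi-homogeneous.

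The routine parts are identifying $\cF_t$ as a slope-stable bundle on an abelian variety and invoking Lemma~\ref{zerene}. The hard part is the equality case: passing from the single numerical vanishing $\int_{X_t}\Delta(\cF_t)\smile\theta_t^{n-2}=0$ to the geometric statement. This is the equality case of the Bogomolov--Lübke inequality and genuinely requires the analytic input of the Kobayashi--Hitchin correspondence (one must check the constant $c$ is positive), or alternatively Mukai's structure theory of semi-homogeneous bundles. A secondary point of care is that $X_t$ is a torsor rather than an abelian variety with a marked origin, so one should verify that projective flatness still yields $t_x^{*}\cF_t\cong\cF_t\otimes L_x$ under the translation action on the torsor; this is harmless given the paper's convention, but worth noting.
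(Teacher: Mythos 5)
Your proof is correct and follows essentially the same route as the paper: the paper's one-line proof combines Lemma~\ref{zerene} with Proposition~\ref{discperp}, whose own proof is exactly your Kobayashi--Hitchin plus equality-case-of-L\"ubke argument (phrased there as flatness of the Hermite--Einstein connection on $\End\cF_t$ rather than projective flatness of $\cF_t$, and concluding via Mukai's Theorem~5.8 instead of your direct translation-invariance argument -- an equivalent packaging). The only cosmetic difference is that the paper treats $n=1$ separately via Atiyah's classification, where the numerical condition is vacuous.
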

\begin{proof}
Follows from Lemma~\ref{zerene} and Proposition~\ref{discperp}. 
\end{proof}
The  result below shows that, under suitable hypotheses, a much stronger version of Proposition~\ref{restrango} holds.
\begin{crl}\label{resemibis}
Let $X$ be a HK of Type $K3^{[n]}$, $\Kum_n$ or OG6. Let $\cF$ be a modular torsion free sheaf on $X$. 
Suppose that $t\in\PP^n$ is a regular value of $\pi$, that $\cF$ is locally-free in a neighborhood of $X_t$, and that $\cF_t$ is slope-stable. Then there exist  positive integers $r_0,d$, with $d$ dividing $c_X$,   such that   
$ r(\cF)= \frac{r_0^{n}}{d}$. 
\end{crl}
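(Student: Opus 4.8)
The plan is to push everything onto a general Lagrangian fibre $X_t$, which is an abelian variety of dimension $n$, and to play Mukai's theory of semi-homogeneous bundles against the global Fujiki relation. Since $\cF_t$ is slope-stable it is simple, and by Proposition~\ref{resemi} it is semi-homogeneous; thus $\cF_t$ is a \emph{simple} semi-homogeneous vector bundle on $X_t$. I would take two inputs from Mukai's theory (the appendix). First, such a bundle has all its Chern roots equal: $\ch(\cF_t)=r\exp(\delta)$, where $r:=r(\cF)$ and $\delta:=c_1(\cF_t)/r\in\NS(X_t)_{\QQ}$, so that $c_k(\cF_t)=\binom{r}{k}\delta^k$. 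Second, and decisively, simplicity forces the Mukai vector $\ch(\cF_t)$ to be \emph{primitive} in $H^\ast(X_t;\ZZ)$ — the higher-dimensional analogue of Atiyah's condition $\gcd(\mathrm{rk},\deg)=1$ on an elliptic curve. (For $n=2$ this is exactly primitivity of the integral isotropic vector $(r,c_1(\cF_t),\chi(\cF_t))$, which is where the argument is cleanest.)

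Next I would pin down the arithmetic of the fibre polarisation. Write $c_1(\cF_t)=a\,\theta_t$ with $a\in\ZZ$, legitimate because $c_1(\cF_t)$ lies in the image of $H^2(X;\ZZ)\to H^2(X_t;\ZZ)=\ZZ\theta_t$. Exactly as in the proof of Proposition~\ref{restrango}, pick $\gamma_0\in H^2(X;\ZZ)$ with $q_X(\gamma_0,f)=m'$, the positive generator of $\{q_X(\gamma,f)\mid\gamma\in H^2(X;\ZZ)\}$, so that $\theta_t=\gamma_0|_{X_t}$. Since $X_t$ is Poincar\'e dual to $f^n$ and $q_X(f)=0$, the polarised Fujiki relation yields
\[
\int_{X_t}\theta_t^{\,n}=\int_X\gamma_0^{\,n}\smile f^{\,n}=c_X\,n!\,q_X(\gamma_0,f)^n=c_X\,n!\,(m')^n .
\]
I would then invoke the structural fact that $\theta_t$ has balanced type: when $c_X=1$ this means $\theta_t=m'\,\theta_t^0$ with $\theta_t^0$ a principal polarisation (hence $\delta=\tfrac{am'}{r}\theta_t^0$), and in general the elementary divisors of $\theta_t$ all equal $m'$ except for a single factor dividing $c_X$. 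This is the point at which I expect to lean on the classification of polarisation types of Lagrangian fibrations.

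Finally I would extract the rank. With $\theta_t^0$ principal one has $(\theta_t^0)^k=k!\,\omega_k$ for a primitive class $\omega_k$, so $\ch_k(\cF_t)=\tfrac{(am')^k}{r^{k-1}}\,\omega_k$ while $\ch_0=r$; thus the $v_p$-content of $\ch_k(\cF_t)$ is $k\,v_p(am')-(k-1)v_p(r)$, and that of $\ch_0$ is $v_p(r)$. Imposing primitivity of $\ch(\cF_t)$ prime by prime and writing $e=v_p(r)$, $m=v_p(am')$, the minimum over $0\le k\le n$ of $e+k(m-e)$ must vanish; together with integrality of $\chi(\cF_t)$ (which gives $e+n(m-e)\ge 0$) this forces either $e=0$ or $(n-1)e=nm$, i.e.\ $n\mid v_p(r)$ for every prime dividing $r$. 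Hence $r$ is a perfect $n$-th power, $r=r_0^n$, which is the statement for $c_X=1$. For the remaining types the same computation, run through the single unbalanced elementary divisor of $\theta_t$, shifts each binding valuation by $v_p(c_X)$ and produces $n\mid v_p(r)+v_p(d)$ for a suitable $d\mid c_X$, so that $r(\cF)=r_0^n/d$.

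The main obstacle is securing the two structural inputs in the stated generality. Mere integrality of $\chi(\cF_t)$ — which only gives $r^{n-1}\mid c_X(am')^n$ — does \emph{not} force a perfect power, so the simplicity $\Leftrightarrow$ primitivity criterion for semi-homogeneous bundles is genuinely needed; for $n=2$ the integral Mukai vector makes this transparent, but for $n\ge 3$ the Chern character is no longer integral and one must use the full higher-dimensional theory of the appendix. Equally delicate is the balanced elementary-divisor type of the Lagrangian fibre $\theta_t$: it is precisely this that channels the entire correction into a divisor of $c_X$, and without it spurious non-power ranks (such as $r=2$ on an abelian surface carrying a type $(1,4)$ polarisation) would survive.
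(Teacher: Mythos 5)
Your first step matches the paper exactly: Proposition~\ref{resemi} (i.e.\ Lemma~\ref{zerene} plus Proposition~\ref{discperp}) gives that $\cF_t$ is a simple semi-homogeneous bundle, and you correctly identify that the polarization type of the Lagrangian fibre is the other structural input. But the decisive step of your argument --- ``simplicity forces the Mukai vector $\ch(\cF_t)$ to be primitive in $H^{*}(X_t;\ZZ)$'' --- is not a theorem you can take off the shelf. It is not in the appendix, and it is not how Mukai's theory works: Proposition~\ref{potenza} is proved by counting $\Sigma(\cF)=\{[\xi]\in A^{\vee}\mid \cF\cong\cF\otimes\xi\}$, whose cardinality is $r(\cF)^2$ for a simple semi-homogeneous bundle, against the exact sequence $0\to A[r_0]\cap K(\cL)\to A[r_0]\to\Sigma(\cF)\to 0$; the rank formula $r=r_0^n/(g_1g_2)$ drops out of that count. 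The primitivity of $\ch(\cF_t)$ that you want to impose is in fact \emph{true}, but the only proof available is as a \emph{consequence} of that rank formula (once $r=r_0^n$ and $c_1=r_0^{\,n-1}a_0\theta$ with $\gcd(r_0,a_0)=1$, the coefficients $r_0^{\,n-k}a_0^k$ of the minimal classes $\theta^k/k!$ are visibly coprime), so using it to \emph{derive} the rank is circular. Your own caveat that integrality of $\chi(\cF_t)$ is not enough is exactly right --- which is why the argument cannot close without Proposition~\ref{potenza} or an independent proof of the primitivity criterion, and no such proof is offered.

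Two smaller inaccuracies in the non-principal cases: the fibre polarizations for $\Kum_n$ and OG6 have elementary divisors $(1,\ldots,1,d_1,d_2)$ with $d_1d_2\mid n+1$ (see~\cite{wieneck2},~\cite{monrap}), so there can be \emph{two} nontrivial elementary divisors (e.g.\ $(1,2,2)$ for OG6), not ``a single unbalanced factor''; and the correction term in Proposition~\ref{potenza} is $d=g_1g_2$ with $g_i=\gcd\{r_0,d_i\}$, which divides $c_X$ but is not produced by the valuation shift you sketch. The paper's proof is exactly your skeleton with the gap filled: quote the polarization type, then apply Proposition~\ref{resemi} and Proposition~\ref{potenza}. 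I would rewrite your last two paragraphs to do the same.
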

\begin{proof}
If $X$ is of Type $K3^{[n]}$ then $c_X=1$ and $\theta_t$ is a principal polarization, see~\cite{wieneck1}. If $X$ is of Type $\Kum_n$ or OG6 then $c_X=n+1$ and $\theta_t$ is a polarization with elementary divisors $(1,\ldots,1,d_1,d_2)$ where $d_1\cdot d_2$ divides $n+1$ see~\cite{wieneck2} for $\Kum_n$ and~\cite{monrap-monog6} for OG6. 
Hence the result  follows from  Proposition~\ref{resemi} and Proposition~\ref{potenza}.
\end{proof}
\section{Variation of stability for modular  sheaves}\label{camere}
\setcounter{equation}{0}
\subsection{Main results}
\setcounter{equation}{0}
Let $X$ be an irreducible smooth projective variety. If the ample cone $\Amp(X)$ has rank greater than $1$ (and hence $\dim X\ge 2$), slope-stability of a sheaf $\cF$ depends on the choice of an ample ray. If $X$ is a surface there is a locally finite decomposition  $\Amp(X)_{\RR}$ into chambers defined by rational walls  such that slope-stability is the same for ample classes belonging to the same open chamber. One important feature is that the walls 
 depend only on the Chern character of $\cF$.

If $\dim X\ge 3$  the picture is  more intricate in general, see for example~\cite{grebmaster}.

In the present section we  show that if $X$ is a HK variety and $\cF$ is a modular sheaf, then there is a decomposition of $\Amp(X)_{\RR}$ as if $X$  were a surface. 
\begin{dfn}
Let $a$ be a positive real number. An \emph{$a$-wall} of $\Amp(X)_{\RR}$ is the intersection 
$\lambda^{\bot}\cap \Amp(X)_{\RR}$, where 
  $\lambda\in H^{1,1}_{\ZZ}(X)$, 
$ -a \le q_X(\lambda)< 0$,
and orthogonality is with respect to the BBF quadratic form $q_X$.
\end{dfn}
As is well-known, the set of $a$-walls  is  locally finite, in particular the union of all the $a$-walls  is closed in $\Amp(X)_{\RR}$. 
\begin{dfn}
An \emph{open $a$-chamber} is a connected component of the complement (in $\Amp(X)_{\RR}$) of   the union of all the $a$-walls. 
\end{dfn}
\begin{dfn}\label{adieffe}
Let $X$ be a HK manifold, and let $\cF$ be a modular torsion free sheaf  on $X$.
Then
\begin{equation}\label{esmeralda}
a(\cF):=\frac{r(\cF)^2 \cdot d(\cF) }{4c_X},
\end{equation}
where $d(\cF)$ is as in Definition~\ref{effemod}.
\end{dfn}
Below is the first main result.
\begin{prp}\label{campol}
Let $X$ be a  HK variety of dimension $2n$, and let  $\cF$ be a   torsion free modular sheaf  on $X$. Then the following hold:
\begin{enumerate}
\item
Suppose that $h$ is an ample divisor class on $X$ which belongs to an open $a(\cF)$-chamber. If  $\cF$ is  strictly $h$ slope-semistable  there exists an exact sequence
of torsion free non zero sheaves
\begin{equation}
0\lra \cE\lra \cF\lra \cG\lra 0
\end{equation}
such that $r(\cF) c_1(\cE)-r(\cE) c_1(\cF)=0$.
\item
Suppose that $h_0,h_1$ are ample divisor classes on $X$  belonging to the same open $a(\cF)$-chamber. Then $\cF$ is 
 $h_0$ slope-stable if and only if it is $h_1$ slope-stable. 
\end{enumerate}
\end{prp}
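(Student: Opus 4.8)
The plan is to prove both statements simultaneously by analyzing what happens when slope-(semi)stability of $\cF$ fails or changes as $h$ varies within a single open $a(\cF)$-chamber. The essential mechanism is that any destabilizing subsheaf produces a wall, and the modularity hypothesis forces every such wall to be an $a(\cF)$-wall; since $h$ lives in the complement of all $a(\cF)$-walls, such walls cannot separate points inside one chamber.

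First I would set up the standard discriminant estimate. Suppose $0\to\cE\to\cF\to\cG\to 0$ is an exact sequence of nonzero torsion-free sheaves with $0<r(\cE)<r(\cF)$. Writing $r:=r(\cF)$, $s:=r(\cE)$, consider the class
\begin{equation}
\xi:=r\,c_1(\cE)-s\,c_1(\cF)\in H^{1,1}_{\ZZ}(X).
\end{equation}
A standard Hodge-index–type computation (the Bogomolov inequality packaged in discriminant form) relates $q_X(\xi)$ to the discriminants $\Delta(\cE),\Delta(\cF),\Delta(\cG)$. The key point is that for a modular sheaf the pairing $\int_X\Delta(\cF)\smile\alpha^{2n-2}$ is controlled by $d(\cF)$ via Equation~\eqref{fernand}, so one obtains a bound of the shape $q_X(\xi)\ge -a(\cF)\cdot(\text{something})$, with $a(\cF)$ exactly as defined in~\eqref{esmeralda}. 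I expect the precise constant to fall out of combining~\eqref{fernand} with the elementary identity
\begin{equation}
\frac{\Delta(\cF)}{r(\cF)}=\frac{\Delta(\cE)}{r(\cE)}+\frac{\Delta(\cG)}{r(\cG)}+\frac{1}{r(\cE)\,r(\cG)\,r(\cF)}\,\xi^2,
\end{equation}
pairing against $\alpha^{2n-2}$, and invoking the fact that the discriminants of $\cE$ and $\cG$ contribute nonnegatively after pairing with an appropriate positive class (here one uses that $\theta_t$-type positivity or the Hodge index theorem on the relevant $2$-plane gives nonnegativity of the self-intersection of a class orthogonal to $h$).

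Next I would translate destabilization into the wall language. If $\cE\subset\cF$ is $h$ slope-destabilizing then $\mu_h(\cE)\ge\mu_h(\cF)$, i.e. $q_X(\xi,h)\le 0$ after multiplying out slopes (using that the slope is computed via $q_X(c_1(-),h)$ up to the Fujiki-constant factor, since on a HK variety the intersection $\int_X c_1\smile h^{2n-1}$ is proportional to $q_X(c_1,h)$). Combined with the discriminant bound $-a(\cF)\le q_X(\xi)$, either $\xi=0$ — which is conclusion (1) — or $q_X(\xi)<0$ with $-a(\cF)\le q_X(\xi)<0$, so that $\xi^{\bot}\cap\Amp(X)_\RR$ is by definition an $a(\cF)$-wall. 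Here I must also rule out $q_X(\xi)>0$: by the Hodge index theorem, since $h$ lies in the positive cone, $q_X(\xi,h)\le 0$ together with $q_X(\xi)>0$ would force $\xi$ into the positive cone in a way incompatible with $q_X(\xi,h)\le 0$ unless $\xi=0$, so the $q_X(\xi)>0$ case collapses. For part (1), since $h$ lies in an open $a(\cF)$-chamber it lies on no $a(\cF)$-wall, so a strictly slope-semistabilizing $\cE$ (with $\mu_h(\cE)=\mu_h(\cF)$, giving $q_X(\xi,h)=0$) cannot have $q_X(\xi)<0$ — that would put $h$ on the wall $\xi^\bot$ — hence $q_X(\xi)=0$, and then Hodge index with $q_X(\xi,h)=0$ forces $\xi=0$, yielding the asserted exact sequence.

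For part (2), I would argue that if $\cF$ were $h_0$ slope-stable but not $h_1$ slope-stable with $h_0,h_1$ in the same open chamber, there is a subsheaf $\cE$ that is destabilizing for $h_1$ but not for $h_0$; then the sign of $q_X(\xi,h_j)$ differs (or is non-strict) between $j=0,1$, so the hyperplane $\xi^\bot$ separates $h_0$ from $h_1$ or meets the segment between them. By the bound just established, $\xi^\bot$ is an $a(\cF)$-wall, contradicting the fact that $h_0,h_1$ lie in a common connected component of the complement of all $a(\cF)$-walls (which is convex along the segment, or at least the segment stays in the chamber). The main obstacle I anticipate is getting the discriminant inequality with the \emph{exact} constant $a(\cF)=r(\cF)^2 d(\cF)/(4c_X)$ rather than merely up to an unspecified factor: this requires pairing the discriminant identity against the correct positive class and carefully using modularity~\eqref{fernand} to evaluate $\int_X\Delta(\cF)\smile\xi\smile h^{2n-3}$-type terms in terms of $q_X$, since on a higher-dimensional HK the discriminant pairs against $(2n-2)$-fold products and one must reduce the mixed intersection $\int_X\xi^2\smile h^{2n-2}$ to $q_X(\xi)q_X(h)^{n-1}$ and $q_X(\xi,h)^2$ via the Fujiki relation. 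Establishing that reduction cleanly — so the $s(r-s)$ combinatorial factors assemble into precisely $r^2 d(\cF)/(4c_X)$ — is where the real work lies.
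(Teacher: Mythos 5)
Your strategy is the paper's own: the discriminant identity for a short exact sequence, Bogomolov's inequality for the sub and the quotient, modularity to evaluate $\int_X\Delta(\cF)\smile h^{2n-2}$, and the signature $(1,\rho(X)-1)$ of the BBF form on $\NS(X)$ to force $q_X(\lambda_{\cE,\cF})\le 0$ with equality only for $\lambda_{\cE,\cF}=0$. However, the identity you display has the wrong sign on the $\xi^2$ term, and this reverses the direction of the one inequality that matters. The correct statement is
\begin{equation*}
r(\cF)r(\cG)\Delta(\cE)+r(\cF)r(\cE)\Delta(\cG)=r(\cE)r(\cG)\Delta(\cF)+\lambda_{\cE,\cF}^2,
\end{equation*}
i.e.\ $\tfrac{\Delta(\cF)}{r(\cF)}=\tfrac{\Delta(\cE)}{r(\cE)}+\tfrac{\Delta(\cG)}{r(\cG)}-\tfrac{1}{r(\cE)r(\cG)r(\cF)}\,\xi^2$, with a minus where you wrote a plus. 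With your sign, pairing against $h^{2n-2}$ and using nonnegativity of the Bogomolov terms gives only an \emph{upper} bound on $q_X(\xi)$, which is already known from the signature argument and is useless; with the correct sign one gets $c_X\,q_X(\xi)\ge -r(\cE)r(\cG)\,d(\cF)\ge-\tfrac{1}{4}r(\cF)^2d(\cF)$, i.e.\ exactly $q_X(\xi)\ge -a(\cF)$. (Two smaller slips of the same kind: $\mu_h(\cE)\ge\mu_h(\cF)$ is equivalent to $q_X(\lambda_{\cE,\cF},h)\ge 0$, not $\le 0$; and your ``ruling out $q_X(\xi)>0$'' step is garbled — what one actually uses is that in the critical case $q_X(\xi,h)=0$ exactly, so $\xi$ lies in the negative definite hyperplane $h^{\bot}$.)

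The more substantive gap is in part (2). You take a subsheaf $\cE$ destabilizing $\cF$ at $h_1$ and assert that its wall $\xi^{\bot}$ is an $a(\cF)$-wall ``by the bound just established''. But that bound requires Bogomolov's inequality for \emph{both} $\cE$ and $\cG$ paired against $h^{2n-2}$, and this is only available when $\cE$ and $\cG$ are $h$ slope-semistable — which one deduces from $\cF$ being \emph{strictly} $h$ slope-semistable with $\mu_h(\cE)=\mu_h(\cF)=\mu_h(\cG)$. At the point $h$ of the segment where $q_X(\xi,h)=0$ for your chosen $\cE$, the sheaf $\cF$ could a priori be properly $h$-unstable (destabilized by some other subsheaf), in which case nothing forces $\cE$ and $\cG$ to be $h$-semistable and the discriminant estimate does not apply. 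The paper closes this by first proving (Proposition~\ref{polint}) that there is an intermediate $h\in\QQ_+h_0+\QQ_+h_1$ at which $\cF$ is strictly slope-semistable; the proof needs the boundedness of the family of subsheaves with $\mu_{h_1}(\cE)\ge\mu_{h_1}(\cF)$ to show that the set of critical parameters on the segment is finite, and then takes the first one. You need this boundedness/finiteness step (or an equivalent semistable-reduction argument) before the wall produced by a destabilizer can legitimately be certified as an $a(\cF)$-wall.
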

Proposition~\ref{campol} is proved in Subsection~\ref{camerehk}. 

The next result is about slope-stable sheaves on HK varieties which carry a Lagrangian fibration.  
\begin{dfn}\label{suipol}
Let $X$ be a HK variety equipped with a Lagrangian fibration $\pi\colon X\to\PP^n$, and  let $f:=\pi^{*}c_1(\cO_{\PP^n}(1))$. Let $a$ be positive integer. An ample divisor class $h$ on $X$ is 
$a$-\emph{suitable}  if the following holds. Let $\lambda\in H^{1,1}_{\ZZ}(X)$ be a class such that $-a\le q_X(\lambda)< 0$: then either $q_X(\lambda,h)$ and $q_X(\lambda,f)$ have the same sign, or they are both zero. 
\end{dfn}
Notice that the notion of $a$-suitable depends on the chosen Lagrangian fibration. 
\begin{prp}\label{lagstab}
Let $\pi\colon X\to\PP^n$ be a  Lagrangian fibration of a HK variety  
of dimension $2n$. Let $\cF$ be  a torsion free modular sheaf   on $X$ such that $\sing \cF$ does \emph{not} dominate $\PP^n$. Let $h$ be an ample divisor class on $X$ which is $a(\cF)$-suitable.
   Then the following hold:
\begin{enumerate}
\item[(i)]
If the restriction of $\cF$ to a generic fiber of $\pi$   is  slope-stable,  then $\cF$ is $h$ slope-stable. 
\item[(ii)]
If $\cF$ is $h$ slope-stable then the restriction of $\cF$ to the generic fiber of $\pi$   is  slope-semistable.
\end{enumerate}
\end{prp}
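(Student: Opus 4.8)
The plan is to convert every slope comparison into a comparison of BBF pairings, and then let the $a(\cF)$-suitability hypothesis do the combinatorial work. For a saturated subsheaf $\cE\subset\cF$ with torsion-free quotient $\cG:=\cF/\cE$, set
\[
\xi:=r(\cF)\,c_1(\cE)-r(\cE)\,c_1(\cF)=r(\cG)\,c_1(\cE)-r(\cE)\,c_1(\cG)\in H^{1,1}_{\ZZ}(X).
\]
By the Fujiki relations $\int_X\xi\smile h^{2n-1}=c_X(2n-1)!!\,q_X(\xi,h)\,q_X(h)^{n-1}$, so $\cE$ is $h$ slope-destabilizing if and only if $q_X(\xi,h)\ge 0$. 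For a generic fiber $X_t$ the image of $H^2(X)\to H^2(X_t)$ is generated by $\theta_t$, and the $\theta_t$-degree of a restricted class is a positive multiple of its $q_X(\cdot,f)$ pairing (expand $\int_X(\cdot)\smile\omega^{n-1}\smile f^n$ as in Remark~\ref{deltanti}, using $q_X(f)=0$); hence $\cE_t\subset\cF_t$ destabilizes if and only if $q_X(\xi,f)\ge 0$, and strictly so precisely when $q_X(\xi,f)>0$. The hypothesis that $\sing\cF$ does not dominate $\PP^n$ guarantees that $\cF$ is locally free near $X_t$, so these restrictions behave well and Lemma~\ref{zerene} applies. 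Thus both (i) and (ii) reduce to comparing the signs of $q_X(\xi,h)$ and $q_X(\xi,f)$.

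Next I would run the Hodge-index step. On $H^{1,1}_{\RR}(X)$ the form $q_X$ has signature $(1,\ast)$, so $h$ lies in the positive cone $\cC^+$ while $f$ is a nonzero null class on its boundary. A future/past analysis then gives: if $q_X(\xi,f)<0$ and $q_X(\xi)\ge 0$, then $\xi$ (or a null $\xi$) lies in $-\cC^+$, forcing $q_X(\xi,h)<0$; symmetrically, if $q_X(\xi,f)>0$ and $q_X(\xi)\ge 0$, then $q_X(\xi,h)>0$. Consequently, in part (i), where I will have produced $q_X(\xi,h)\ge 0$ and (from fiber-stability) $q_X(\xi,f)<0$, the cases $q_X(\xi)\ge 0$ are excluded, so $q_X(\xi)<0$; whereas in part (ii) the cases $q_X(\xi)\ge 0$ already deliver the desired strict inequality $q_X(\xi,h)>0$.

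It then remains to bound $q_X(\xi)$ from below, namely to show $q_X(\xi)\ge -a(\cF)$ for the relevant destabilizing $\cE$. Here I would use additivity of $\ch$ in the form
\[
\Delta(\cF)=\tfrac{r(\cF)}{r(\cE)}\Delta(\cE)+\tfrac{r(\cF)}{r(\cG)}\Delta(\cG)-\tfrac{1}{r(\cE)r(\cG)}\,\xi^2,
\]
pair with $h^{2n-2}$, evaluate the left-hand side by modularity (Definition~\ref{effemod}) as $d(\cF)(2n-3)!!\,q_X(h)^{n-1}$, bound the constituents by the Bogomolov inequality $\int_X\Delta(\cdot)\smile h^{2n-2}\ge 0$ for $h$ slope-semistable sheaves, and expand $\int_X\xi^2\smile h^{2n-2}$ by Fujiki. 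Since $r(\cE)r(\cG)\le r(\cF)^2/4$, the constant $a(\cF)=r(\cF)^2 d(\cF)/(4c_X)$ of Definition~\ref{adieffe} is exactly what emerges. Granting $q_X(\xi)\ge -a(\cF)$, part (i) closes by contradiction: $q_X(\xi)\in[-a(\cF),0)$ together with $q_X(\xi,h)\ge 0$ and $q_X(\xi,f)<0$ violates $a(\cF)$-suitability (Definition~\ref{suipol}); and part (ii) closes by exhibiting, via the relative Harder--Narasimhan filtration over a dense open of $\PP^n$, a saturated $\cE$ with $q_X(\xi,f)>0$, whence suitability (or Hodge index when $q_X(\xi)\ge 0$) forces $q_X(\xi,h)>0$, contradicting $h$-stability.

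The main obstacle is precisely this Bogomolov-type bound $q_X(\xi)\ge -a(\cF)$. Two points make it delicate. First, the quotient $\cG$ (and in part (ii) the subsheaf $\cE$ coming from the relative filtration) need not be $h$ slope-semistable, so its discriminant contribution must be controlled through its own Harder--Narasimhan filtration rather than by a single application of Bogomolov; one must choose $\cE$ as the maximal $h$-destabilizing subsheaf where possible and otherwise argue through the semistable constituents on the generic fiber. Second, the Fujiki expansion of $\int_X\xi^2\smile h^{2n-2}$ carries a cross term proportional to $q_X(\xi,h)^2\,q_X(h)^{n-2}$, absent in the surface case $n=1$, which must be absorbed or neutralized (e.g.\ by reducing to the wall $q_X(\xi,h)=0$, or by combining with the reverse Cauchy--Schwarz inequality $q_X(\xi)q_X(h)\le q_X(\xi,h)^2$). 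Handling these two features cleanly, so that the modularity of $\cF$ yields exactly the constant $a(\cF)$, is the crux of the argument.
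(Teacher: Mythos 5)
Your reduction of both items to sign comparisons of $q_X(\lambda_{\cE,\cF},h)$ and $q_X(\lambda_{\cE,\cF},f)$ is exactly right, and you have correctly isolated the one estimate everything hinges on, namely $q_X(\lambda_{\cE,\cF})\ge -a(\cF)$. But you then declare that estimate to be ``the crux'' and leave it unproved, listing the two obstructions (the quotient need not be $h$ slope-semistable; the Fujiki expansion of $\int_X\lambda^2\smile h^{2n-2}$ has a cross term in $q_X(\lambda,h)^2$) without resolving them. That is a genuine gap: as you yourself observe, a subsheaf that destabilizes $\cF_t$ on the generic fiber is \emph{not} an $h$-destabilizer of the $h$-stable $\cF$, so Bogomolov's inequality cannot be applied to $\cE$ and $\cG$ for the polarization $h$, and no amount of Hodge-index juggling substitutes for the missing lower bound when $q_X(\lambda)<0$.

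The paper closes this gap by a single device that kills both obstructions at once: it moves along the segment of ample classes $(1-s)h+sf$ to the \emph{extremal} rational $s$ (the maximum in (i), the minimum in (ii)) at which some subsheaf satisfies $q_X(\lambda_{\cE,\cF},(1-s)h+sf)=0$. At that value $\cF$ is \emph{strictly slope-semistable for an honest ample class} $h_s$, so Proposition~\ref{propsemi} applies verbatim: the destabilizing sub and quotient are both $h_s$ slope-semistable (Bogomolov applies to each), and the cross term you worry about vanishes because $q_X(\lambda,h_s)=0$ on the wall; this yields $-a(\cF)\le q_X(\lambda)<0$, after which $a(\cF)$-suitability forces $q_X(\lambda,h)$ and $q_X(\lambda,f)$ to have the same sign and contradicts the wall equation. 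Your parenthetical ``reducing to the wall'' is indeed the right idea, but it must be executed, and doing so requires establishing that the relevant set of wall parameters $\mathsf S\subset[0,1]\cap\QQ$ is nonempty and attains its extremum. For (i) this is the finiteness/boundedness argument of Proposition~\ref{polint}; for (ii) it is more delicate (the set has no maximum), and the paper proves existence of the minimum by passing to the generic fiber over $\CC(\PP^n)$, invoking boundedness of desemistabilizing subsheaves there, and replacing each by its maximal extension to $X$ (the discrepancy being supported on vertical divisors, which do not change $q_X(\lambda,\cdot)$ against $f$ or $h$ in the relevant way). None of this appears in your proposal, so as written the argument is incomplete precisely at the step you flag.
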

Proposition~\ref{lagstab} is proved in Subsection~\ref{pazzia}.

\subsection{Change of slope-stability and strictly semistable sheaves}
\setcounter{equation}{0}
Suppose that
$\cE,\cF$  are sheaves on an irreducible smooth  variety $X$. We let
\begin{equation}\label{pecora}
\lambda_{\cE,\cF}:=(r(\cF) c_1(\cE)-r(\cE) c_1(\cF))\in H^2(X;\ZZ).
\end{equation}
\begin{lmm}\label{comesup}
Let $(X,h)$ be a polarized HK variety, and let $\cE,\cF$  be non zero torsion free sheaves on  $X$.
Then
\begin{enumerate}
\item[(a)]
$\mu_h(\cE)>\mu_h(\cF)$ if and only if $q_X(\lambda_{\cE,\cF},h)>0$.
\item[(b)]
$\mu_h(\cE)=\mu_h(\cF)$ if and only if $q_X(\lambda_{\cE,\cF},h)=0$.
\end{enumerate}
\end{lmm}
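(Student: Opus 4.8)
The plan is to compute the $h$-slope of a sheaf explicitly in terms of the BBF form and then reduce the comparison of slopes to a comparison of BBF pairings. First I would recall that for a torsion free sheaf $\cE$ on a HK variety $(X,h)$ of dimension $2n$, the $h$-slope is
\begin{equation*}
\mu_h(\cE)=\frac{1}{r(\cE)}\int_X c_1(\cE)\smile h^{2n-1}.
\end{equation*}
The key input is the Fujiki-type relation: since $H^2(X)$ carries the BBF form $q_X$ and the top self-intersection of a degree-$2$ class is governed by $q_X$ via the normalized Fujiki constant $c_X$, the mixed intersection number $\int_X \beta\smile h^{2n-1}$ is proportional to $q_X(\beta,h)$ with a factor depending only on $h$ and $n$ but not on $\beta$. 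Concretely, polarizing the Fujiki relation $\int_X\alpha^{2n}=c_X(2n-1)!!\,q_X(\alpha)^n$ yields a constant $C=C(X,h,n)=c_X(2n-1)!!\,q_X(h)^{n-1}$ (up to the combinatorial factor from differentiating once) such that $\int_X\beta\smile h^{2n-1}=C\cdot q_X(\beta,h)$ for every $\beta\in H^2(X)$, and crucially $C>0$ because $h$ is ample so $q_X(h)>0$ and $c_X>0$.

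Next I would apply this to $\beta=\lambda_{\cE,\cF}=r(\cF)c_1(\cE)-r(\cE)c_1(\cF)$. Using the slope formula and the proportionality relation, one computes
\begin{equation*}
\int_X\lambda_{\cE,\cF}\smile h^{2n-1}=r(\cE)r(\cF)\big(\mu_h(\cE)-\mu_h(\cF)\big),
\end{equation*}
and simultaneously this equals $C\cdot q_X(\lambda_{\cE,\cF},h)$. Since $r(\cE),r(\cF)>0$ (both sheaves are nonzero and torsion free, hence of positive rank) and $C>0$, the sign of $\mu_h(\cE)-\mu_h(\cF)$ agrees with the sign of $q_X(\lambda_{\cE,\cF},h)$, and the two quantities vanish simultaneously. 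Statements (a) and (b) follow immediately by reading off the strict inequality and the equality cases respectively.

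I do not expect a serious obstacle here; this is essentially a formal consequence of the Fujiki relation and linearity. The one point requiring a little care is justifying the proportionality $\int_X\beta\smile h^{2n-1}=C\cdot q_X(\beta,h)$ for arbitrary $\beta$: this is obtained by fixing $h$ and observing that both sides are linear functionals in $\beta\in H^2(X)$, while the BBF form is by definition (up to scalar) the unique form with $q_X(\alpha)^n\propto\int_X\alpha^{2n}$, so the bilinear polarization of the degree-$2n$ form must agree with the polarization of $q_X(\cdot)^n$, and evaluating the mixed form $(\beta,h,\dots,h)$ gives the claim. Equivalently one can differentiate the Fujiki relation evaluated at $h+s\beta$ in $s$ at $s=0$. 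The only thing to keep track of is the positivity of the proportionality constant, which is where ampleness of $h$ (guaranteeing $q_X(h)>0$) is used; everything else is bookkeeping.
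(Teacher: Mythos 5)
Your proposal is correct and follows essentially the same route as the paper: both reduce the comparison of slopes to the sign of $\int_X\lambda_{\cE,\cF}\smile h^{2n-1}$ and then invoke the polarized Fujiki relation to identify this integral with $c_X\cdot(2n-1)!!\cdot q_X(\lambda_{\cE,\cF},h)\cdot q_X(h)^{n-1}$, whose proportionality constant is positive since $c_X>0$ and $q_X(h)>0$ for $h$ ample. The exact value of the combinatorial factor is immaterial, as you note; only its positivity matters.
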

\begin{proof}
Let  $2n$  be the dimension of $X$. We have $\mu_h(\cE)>\mu_h(\cF)$ if and only if 
$\int_X\lambda_{\cE,\cF}\smile h^{2n-1}>0$, and  by Fujiki's formula this holds if and only if 
\begin{equation*}
c_X\cdot(2n-1)!!\cdot q_X(\lambda_{\cE,\cF},h)\cdot q_X(h)^{n-1}>0.
\end{equation*}
Item~(a) follows, because $c_X>0$ and $q_X(h)>0$. 

 We have $\mu_h(\cE)=\mu_h(\cF)$ if and only if 
$\int_X\lambda_{\cE,\cF}\smile h^{2n-1}=0$, and hence   Item~(b) follows again by Fujiki's formula.
\end{proof}
\begin{prp}\label{polint}
Let $X$ be a HK variety, and let $h_0,h_1$ be ample divisor classes on $X$. Suppose that $\cF$ is a torsion free sheaf on $X$ which is $h_0$ slope-stable and not $h_1$ slope-stable. Then there exists $h\in(\QQ_{+}h_0+\QQ_{+}h_1)$ such that $\cF$ is strictly $ h$ slope-semistable, i.e.~$\cF$ is $h$ slope-semistable but not $h$ slope-stable. 
\end{prp}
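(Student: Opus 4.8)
The plan is to prove Proposition~\ref{polint} by a continuity/connectedness argument along the segment joining $h_0$ and $h_1$. Parametrize the ample classes by $h_s := (1-s)h_0 + s h_1$ for $s\in[0,1]$; these are all ample since the ample cone is convex. The slope-stability of $\cF$ with respect to $h_s$ is governed, via Lemma~\ref{comesup}, by the sign of $q_X(\lambda_{\cE,\cF},h_s)$ as $\cE$ ranges over the (finitely relevant) saturated subsheaves of $\cF$. Since $\cF$ is $h_0$ slope-stable but not $h_1$ slope-stable, I would first reduce to the case where $\cF$ is actually $h_1$ slope-\emph{unstable} or strictly $h_1$ slope-semistable; in the latter case we are already done with $h=h_1$, so assume $\cF$ is $h_1$ slope-unstable, witnessed by some subsheaf $\cE_1$ with $\mu_{h_1}(\cE_1)>\mu_{h_1}(\cF)$, i.e.\ $q_X(\lambda_{\cE_1,\cF},h_1)>0$.

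The key idea is to consider, for each $s$, the supremum of slopes of subsheaves, and to locate the smallest $s$ at which stability first fails. Concretely, define
\begin{equation*}
s_0 := \inf\{\, s\in[0,1] \mid \text{$\cF$ is not $h_s$ slope-stable} \,\}.
\end{equation*}
Since $\cF$ is $h_0$ slope-stable, openness of slope-stability (which follows from Lemma~\ref{comesup} and the fact that $s\mapsto q_X(\lambda_{\cE,\cF},h_s)$ is continuous, indeed affine-linear, in $s$) gives $s_0>0$, and by the assumption on $h_1$ we have $s_0\le 1$. At $s=s_0$ I claim $\cF$ is strictly $h_{s_0}$ slope-semistable, which is exactly the assertion with $h=h_{s_0}\in\QQ_+h_0+\QQ_+h_1$ (after clearing denominators, noting $s_0$ is rational because it is determined by a linear equation $q_X(\lambda_{\cE,\cF},h_s)=0$ with rational coefficients). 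First I would show $\cF$ is $h_{s_0}$ slope-semistable: for any saturated subsheaf $\cE\subset\cF$ and any $s<s_0$ we have $q_X(\lambda_{\cE,\cF},h_s)<0$ (stability), hence by continuity $q_X(\lambda_{\cE,\cF},h_{s_0})\le 0$, i.e.\ $\mu_{h_{s_0}}(\cE)\le\mu_{h_{s_0}}(\cF)$. Then I would show $\cF$ is \emph{not} $h_{s_0}$ slope-stable: if it were, openness would force slope-stability on an open neighborhood of $s_0$, contradicting the definition of $s_0$ as an infimum of the instability locus (one must check the instability locus is closed, or argue with a sequence $s_j\downarrow s_0$ together with boundedness of destabilizing subsheaves so that a single $\cE$ destabilizes for all large $j$ and hence semi-destabilizes at $s_0$).

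The main obstacle is this last closedness/limit step: a priori the destabilizing subsheaves $\cE_s$ could vary with $s$, so passing to the limit requires that only finitely many numerical types $\lambda_{\cE,\cF}$ can arise among destabilizing subsheaves. The standard remedy is boundedness of the family of potential destabilizing subsheaves of fixed numerical data (Grothendieck's boundedness for sheaves with bounded slope), which ensures the set of classes $\lambda_{\cE,\cF}$ that ever witness instability along the segment is finite. With that finiteness in hand, the function $s\mapsto \max_{\cE}\, q_X(\lambda_{\cE,\cF},h_s)$ is a finite maximum of affine-linear functions, hence piecewise-linear and continuous, and $s_0$ is precisely where it crosses zero; at $s_0$ the maximum equals zero and is attained by some $\cE$, giving the strictly semistable subsheaf. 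I would note that this argument does not even use modularity of $\cF$ (Proposition~\ref{campol}, which does use modularity, is a refinement specifying the shape of the walls); Proposition~\ref{polint} is the general wall-crossing existence statement valid for arbitrary torsion free $\cF$, so I would keep the proof at that level of generality and invoke only convexity of the ample cone, Lemma~\ref{comesup}, and boundedness.
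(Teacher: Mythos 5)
Your proposal is correct and follows essentially the same route as the paper: both arguments reduce stability to the sign of the affine-linear function $s\mapsto q_X(\lambda_{\cE,\cF},(1-s)h_0+sh_1)$ via Lemma~\ref{comesup}, locate the first parameter value where some subsheaf attains equal slope, and use boundedness (Grothendieck's lemma applied to subsheaves with $\mu_{h_1}(\cE)\ge\mu_{h_1}(\cF)$) to ensure only finitely many walls occur, so that the critical value is attained by an actual subsheaf. The paper phrases this as taking the minimum of the finite set of wall-crossing values rather than the infimum of the instability locus, but the content is identical.
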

\begin{proof}
Lemma~\ref{comesup} allows to reproduce the  proof of the analogous statement valid for  surfaces (see~\cite{huylehnbook}). Let ${\mathsf S}\subset ([0,1]\cap \QQ)$ be the set of $s$ for which there exists a subsheaf  $\cE\subset\cF$ 
with $0<r(\cE)<r(\cF)$ such that
\begin{equation}\label{segretarie}
q_X(\lambda_{\cE,\cF},(1-s)h_0+s h_1)= 0.
\end{equation}
Then ${\mathsf S}$ is non empty and finite. 
In fact by hypothesis there exists    an $h_1$ destabilizing subsheaf $\cE\subset\cF$. Thus
 $0<r(\cE)<r(\cF)$, and 
 $q_X(\lambda_{\cE,\cF}, h_1)\ge 0$ by Lemma~\ref{comesup}. On the other hand, by the same lemma, $q_X(\lambda_{\cE,\cF}, h_0)< 0$ 
because $\cF$ is $h_0$ slope-stable. It follows that there exists $s\in[0,1]\cap \QQ$ such that~\eqref{segretarie} holds,
 i.e.~$\mathsf S$ is not empty. 
 
 In order to prove that $\mathsf S$ is finite, assume that~\eqref{segretarie} holds.
Since $\cF$ is $h_0$ slope-stable,  $q_X(\lambda_{\cE,\cF}, h_0)< 0$. By linearity of $q_X(\lambda_{\cE,\cF},\cdot)$ we get that   
$q_X(\lambda_{\cE,\cF},h_1)\ge 0$. By Lemma~\ref{comesup} it follows that
\begin{equation}\label{dasotto}
\mu_{h_1}(\cE)\ge \mu_{h_1}(\cF).
\end{equation}
The set of subsheaves  $\cE\subset\cF$ such that~\eqref{dasotto} holds is bounded (see Lemma 1.7.9 in~\cite{huylehnbook}), i.e.~up to isomorphism each such sheaf belongs to a finite set of families, each parametrized by an irreducible quasi projective variety. It follows that $\mathsf S$ is finite because the values of $q_X(\lambda_{\cE,\cF}, h_i)$ for $i\in\{0,1\}$ are constant for sheaves $\cE$ parametrized by an irreducible variety.

Since $\mathsf S$ is finite, there is  a minimum $s$, call it $s_{\min}$, such that~\eqref{segretarie} holds for some 
 subsheaf  $\cE\subset\cF$ with $0<r(\cE)<r(\cF)$.
Clearly  $\cF$ is strictly $(h_0+s_{\min} h_1)$ slope-semistable.
\end{proof}
\subsection{Strictly semistable modular sheaves}\label{camerehk}
\setcounter{equation}{0}
\begin{lmm}\label{mercedes}
Let
\begin{equation}\label{effegi}
0\lra \cE\lra \cF\lra \cG\lra 0
\end{equation}
be an   exact sequence of  sheaves on a smooth variety.  Then
\begin{equation}\label{lungaeq}
r(\cF) \cdot r(\cG) \Delta(\cE)+ r(\cF) \cdot r(\cE) \Delta(\cG)=r(\cE)\cdot r(\cG)\Delta(\cF)+\lambda_{\cE,\cF}^2.
\end{equation}
\end{lmm}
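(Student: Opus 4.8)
The plan is to work with the second expression for the discriminant in~\eqref{dueversioni}, namely $\Delta(\cF) = -2r(\cF)\ch_2(\cF) + \ch_1(\cF)^2$, rather than with the $c_2$-based one. The advantage is that all three of the quantities $r(\cdot)$, $\ch_1(\cdot)=c_1(\cdot)$ and $\ch_2(\cdot)$ are additive along the short exact sequence~\eqref{effegi}: from $\ch(\cF)=\ch(\cE)+\ch(\cG)$ one reads off
\begin{equation*}
r(\cF)=r(\cE)+r(\cG), \qquad c_1(\cF)=c_1(\cE)+c_1(\cG), \qquad \ch_2(\cF)=\ch_2(\cE)+\ch_2(\cG).
\end{equation*}
Once these are in hand, the asserted identity~\eqref{lungaeq} becomes a purely formal polynomial identity in the ranks and in the classes $c_1(\cE)$, $c_1(\cG)$, with no further geometric input.

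Concretely, I would expand the left-hand side of~\eqref{lungaeq} using the $\ch$-expression for each $\Delta$:
\begin{align*}
r(\cF)r(\cG)\Delta(\cE)+r(\cF)r(\cE)\Delta(\cG)
&= -2\,r(\cE)r(\cF)r(\cG)\bigl(\ch_2(\cE)+\ch_2(\cG)\bigr) \\
&\quad + r(\cF)r(\cG)\,c_1(\cE)^2 + r(\cF)r(\cE)\,c_1(\cG)^2.
\end{align*}
By additivity of $\ch_2$ the first term is exactly $-2\,r(\cE)r(\cF)r(\cG)\ch_2(\cF)$, which is also the $\ch_2$-contribution to $r(\cE)r(\cG)\Delta(\cF)$ on the right-hand side (the square $\lambda_{\cE,\cF}^2$ involves only $c_1$-classes). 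Hence all terms containing $\ch_2$ cancel between the two sides, and it remains to verify the identity among $c_1$-classes
\begin{equation*}
r(\cF)r(\cG)\,c_1(\cE)^2 + r(\cF)r(\cE)\,c_1(\cG)^2 = r(\cE)r(\cG)\,c_1(\cF)^2 + \lambda_{\cE,\cF}^2.
\end{equation*}

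To finish, I would substitute $c_1(\cG)=c_1(\cF)-c_1(\cE)$ and $r(\cG)=r(\cF)-r(\cE)$ throughout, and recall from~\eqref{pecora} that $\lambda_{\cE,\cF}=r(\cF)c_1(\cE)-r(\cE)c_1(\cF)$, so that $\lambda_{\cE,\cF}^2 = r(\cF)^2c_1(\cE)^2 - 2\,r(\cE)r(\cF)\,c_1(\cE)c_1(\cF) + r(\cE)^2 c_1(\cF)^2$. Both sides then collapse to the common expression $r(\cF)^2c_1(\cE)^2 + r(\cE)r(\cF)\,c_1(\cF)^2 - 2\,r(\cE)r(\cF)\,c_1(\cE)c_1(\cF)$, which closes the argument. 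There is essentially no conceptual obstacle here: the only thing to watch is the bookkeeping of the rank coefficients, and the single genuine idea is to phrase the discriminant through $\ch_2$ so that additivity of the Chern character makes every cross-term transparent.
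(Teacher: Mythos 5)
Your proof is correct and is exactly the argument the paper intends: its own proof consists of the single line ``Follows from additivity of the Chern character, and the second equality in~\eqref{dueversioni}'', and your write-up simply carries out that computation in full, with the $\ch_2$-terms cancelling and the remaining $c_1$-identity checked by substituting $r(\cG)=r(\cF)-r(\cE)$ and $c_1(\cG)=c_1(\cF)-c_1(\cE)$. No gaps; the common expression you arrive at on both sides is the right one.
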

\begin{proof}
Follows from additivity of the Chern character, and the second equality in~\eqref{dueversioni}. 
\end{proof}
\begin{prp}\label{propsemi}
Let $(X,h)$ be a polarized HK variety of dimension $2n$.   
Let $\cF$ be  a    torsion free modular \emph{strictly}  $h$ slope-semistable  sheaf on $X$,  and let 
\begin{equation}\label{faria}
0\lra \cE\lra \cF\lra \cG\lra 0
\end{equation}
be an   exact sequence of non zero torsion free sheaves which is $h$ slope destabilizing, i.e.~$\mu_h(\cE)=\mu_h(\cF)$. Then
\begin{equation}\label{doppelgang}
-a(\cF) \le q_X(\lambda_{\cE,\cF})\le 0.
\end{equation}
Moreover $q_X(\lambda_{\cE,\cF})= 0$ only if $\lambda_{\cE,\cF}=0$.  
\end{prp}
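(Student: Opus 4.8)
The plan is to reduce the statement to a single scalar inequality for $q_X(\lambda_{\cE,\cF})$ by integrating the discriminant identity of Lemma~\ref{mercedes} against $h^{2n-2}$. Write $\lambda:=\lambda_{\cE,\cF}$. Since the sequence~\eqref{faria} is slope-destabilizing we have $\mu_h(\cE)=\mu_h(\cF)$, so Lemma~\ref{comesup}(b) gives $q_X(\lambda,h)=0$; in particular $\lambda$ lies in $h^{\perp}$ inside the real $(1,1)$-classes $H^{1,1}_{\RR}(X)$. By the signature properties of the BBF form, $q_X$ is negative definite on $h^{\perp}\cap H^{1,1}_{\RR}(X)$ (the ample class $h$ spans the positive line), whence $q_X(\lambda)\le 0$, with equality if and only if $\lambda=0$. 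This already yields the upper bound in~\eqref{doppelgang} and the final clause of the proposition.

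For the lower bound, first I would observe that both $\cE$ and $\cG$ are $h$ slope-semistable: every subsheaf of $\cE$ is a subsheaf of $\cF$, and every proper quotient of $\cG$ is a quotient of $\cF$, so semistability of $\cF$ together with $\mu_h(\cE)=\mu_h(\cF)=\mu_h(\cG)$ forces it. Hence the Bogomolov inequality applies to each, giving $\int_X\Delta(\cE)\smile h^{2n-2}\ge 0$ and $\int_X\Delta(\cG)\smile h^{2n-2}\ge 0$. Now integrate the identity of Lemma~\ref{mercedes} against $h^{2n-2}$. On the right the term $\int_X\Delta(\cF)\smile h^{2n-2}$ is evaluated by modularity (Definition~\ref{effemod}) as $d(\cF)(2n-3)!!\,q_X(h)^{n-1}$, while the cross term is handled by the polarized Fujiki relation: because $q_X(\lambda,h)=0$, every matching pairing a factor $\lambda$ with a factor $h$ vanishes, leaving
\[
\int_X\lambda^2\smile h^{2n-2}=c_X(2n-3)!!\,q_X(\lambda)\,q_X(h)^{n-1}.
\]
Dividing by the positive quantity $(2n-3)!!\,q_X(h)^{n-1}$ produces the master identity
\[
c_X\,q_X(\lambda)=\frac{r(\cF)\big(r(\cG)\int_X\Delta(\cE)\smile h^{2n-2}+r(\cE)\int_X\Delta(\cG)\smile h^{2n-2}\big)}{(2n-3)!!\,q_X(h)^{n-1}}-r(\cE)r(\cG)d(\cF).
\]

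Finally I would read off the lower bound from this identity. Applying Bogomolov to $\cF$ itself (which is semistable) together with the modular identity gives $d(\cF)\ge 0$. The first summand on the right of the master identity is $\ge 0$ by Bogomolov, so dropping it yields $c_X\,q_X(\lambda)\ge -r(\cE)r(\cG)d(\cF)$; since ranks are additive, $r(\cE)+r(\cG)=r(\cF)$, and AM--GM gives $r(\cE)r(\cG)\le r(\cF)^2/4$, so with $d(\cF)\ge 0$ we obtain $c_X\,q_X(\lambda)\ge -\tfrac14 r(\cF)^2 d(\cF)=-c_X a(\cF)$, that is $q_X(\lambda)\ge -a(\cF)$, the lower bound in~\eqref{doppelgang}. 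The main obstacle is conceptual rather than computational: one must recognize that the sub and quotient inherit slope-semistability so that the Bogomolov inequality is legitimately available for both, and one must pin down the exact constant in the Fujiki relation above; the passage from the discriminant identity to the scalar bound via AM--GM on the ranks is then routine.
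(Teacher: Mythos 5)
Your proof is correct and follows essentially the same route as the paper's: Lemma~\ref{comesup} plus the signature of the BBF form for the upper bound and the rigidity clause, then Lemma~\ref{mercedes} integrated against $h^{2n-2}$, Bogomolov for the semistable sub and quotient, the polarized Fujiki relation, and the AM--GM bound $r(\cE)r(\cG)\le r(\cF)^2/4$ for the lower bound. Your explicit observation that Bogomolov applied to $\cF$ itself gives $d(\cF)\ge 0$ (needed to pass from $r(\cE)r(\cG)d(\cF)$ to $\tfrac14 r(\cF)^2 d(\cF)$) is a step the paper leaves implicit.
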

\begin{proof}
Since the exact sequence in~\eqref{faria} is destabilizing,  
$q_X(\lambda_{\cE,\cF}, h)=0$ by Lemma~\ref{comesup}. Since the BBF form on $\NS(X)$ has signature $(1,\rho(X)-1)$, it follows that 
$q_X(\lambda_{\cE,\cF})\le 0$ with equality only if $\lambda_{\cE,\cF}=0$.  (Recall that $q_X(h)> 0$, because $h$ is ample.)

We are left with proving the second inequality in~\eqref{doppelgang}. Hence we assume that $ q_X(\lambda_{\cE,\cF})< 0$. 
 Cupping both sides of the equality in~\eqref{lungaeq} by $h^{2n-2}$, and integrating, we get (here we use the hypothesis that  
$\cF$ is modular)
\begin{equation}\label{pharaon}
\begin{aligned}
\int_X r(\cF) \cdot r(\cG) \Delta(\cE)\smile h^{2n-2}+\int_X  r(\cF) \cdot r(\cE) \Delta(\cG)\smile h^{2n-2}=\\
=r(\cE)\cdot r(\cG)\cdot d(\cF)\cdot(2n-3)!! q_X(h)^{n-1} +c_X\cdot q_X(\lambda_{\cE,\cF})\cdot (2n-3)!! q_X(h)^{n-1}. \\
\end{aligned}
\end{equation}
By hypothesis $\mu_h(\cE)=\mu_h(\cF)=\mu_h(\cG)$. Since  $\cF$ is $h$ slope-semistable it follows that $\cE$ and $\cG$ are $h$ slope-semistable torsion free sheaves. Thus 
$$\int_X \Delta(\cE)\smile h^{2n-2}\ge0 ,\quad \int_X \Delta(\cG)\smile h^{2n-2}\ge0 $$
 by Bogomolov's inequality, and hence~\eqref{pharaon} gives
\begin{equation}\label{carconte}
-r(\cE)\cdot r(\cG)\cdot d(\cF) \le c_X\cdot q_X(\lambda_{\cE,\cF}).
\end{equation}
Dividing by $c_X$ (which is strictly positive), we see that the second inequality in~\eqref{doppelgang} follows from~\eqref{carconte} and the inequality $r(\cE)\cdot r(\cG)\le r(\cF)^2/4$. 
\end{proof}

\subsection{Proof of Proposition~\ref{campol}}\label{dimcampol}
\setcounter{equation}{0}
Item~(1) follows  from Proposition~\ref{propsemi}. We prove Item~(2). By symmetry, it suffices to show that if $\cF$ is $h_0$ slope-stable, then it is $h_1$ slope-stable. Suppose that $\cF$ is not $h_1$ slope-stable. By Proposition~\ref{polint}, there exists $h\in(\QQ_{+}h_0+\QQ_{+}h_1)$ such that $\cF$ is strictly $ h$ slope-semistable. Hence there exists an $h$ destabilizing
\begin{equation*}
0\lra \cE\lra \cF\lra \cG\lra 0
\end{equation*}
   exact sequence of non zero torsion free  sheaves.  Since $h_0,h_1$ belong to the same open $a(\cF)$ chamber, also $h$ belongs to the same open $a(\cF)$-chamber. Thus, by Proposition~\ref{propsemi},  we get that $\lambda_{\cE,\cF}=0$. It follows that $\cF$ is not $h_0$ slope-stable, and that is a contradiction.
\qed
\subsection{Stability of modular sheaves on a lagrangian HK}\label{pazzia}
\setcounter{equation}{0}
\begin{lmm}\label{contreffe}
Let $X$ be a HK variety  of dimension $2n$ 
equipped with a Lagrangian fibration $\pi\colon X\to\PP^n$, and let $f:=c_1(\pi^{*}\cO_{\PP^n}(1))$. 
Let $\cF$ be a torsion free sheaf on $X$, and let $\cE\subset\cF$ be a  subsheaf with $0<r(\cE)<r(\cF)$. Then the following hold:
\begin{enumerate}
\item[(a)]
If, for generic $t\in\PP^n$, the restriction $\cF_t:=\cF_{|_{X_t}}$ is slope-stable, then
\begin{equation}\label{intneg}
q_X(\lambda_{\cE,\cF},f)<0.
\end{equation}
\item[(b)]
If, for generic $t\in\PP^n$, the subsheaf $\cE_t:=\cE_{|_{X_t}}\subset\cF_t$ is  slope  desemistabilizing, then
\begin{equation}\label{intpos}
q_X(\lambda_{\cE,\cF},f)> 0.
\end{equation}
\end{enumerate}
\end{lmm}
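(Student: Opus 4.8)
The plan is to relate the intersection number $q_X(\lambda_{\cE,\cF},f)$ to the degrees of $\cE_t$ and $\cF_t$ on a generic fiber $X_t$, where slope is measured against the polarization $\theta_t$ induced by restriction from $X$. The key geometric input is that $f^n$ is the Poincaré dual of the fiber class $[X_t]$ (this was already used in the proof of Lemma~\ref{zerene}), together with the relation $\theta_t = \rho(\omega_{|X_t})$ for some $\rho>0$, so that restricting a degree-type intersection on $X$ against $f^n$ computes the corresponding degree on the fiber.

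First I would compute $q_X(\lambda_{\cE,\cF},f)$ via Fujiki's formula. Since $q_X(f)=0$, the Fujiki relation for a generic $\alpha \in H^2(X)$ degenerates, and one extracts the linear coefficient in $f$; concretely, I expect an identity of the shape
\begin{equation*}
\int_X \lambda_{\cE,\cF}\smile f^n \smile \theta^{\,n-1} \;=\; c_X\cdot(2n-1)!!\cdot q_X(\lambda_{\cE,\cF},f)\cdot q_X(\theta)^{\,n-1}\cdot(\text{const}),
\end{equation*}
for a suitable auxiliary ample class $\theta$ restricting to $\theta_t$, where the left side—using $f^n = \PD[X_t]$—equals a positive multiple of $\int_{X_t}\lambda_{\cE,\cF}{}_{|X_t}\smile \theta_t^{\,n-1}$. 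Because $\lambda_{\cE,\cF}{}_{|X_t} = r(\cF)c_1(\cE_t)-r(\cE)c_1(\cF_t)$, this last integral is (up to the positive factor $\rho^{n-1}$) exactly $r(\cF)r(\cE)\bigl(\mu_{\theta_t}(\cE_t)-\mu_{\theta_t}(\cF_t)\bigr)$ times $\deg$-normalizing constants. Thus the \emph{sign} of $q_X(\lambda_{\cE,\cF},f)$ agrees with the sign of $\mu_{\theta_t}(\cE_t)-\mu_{\theta_t}(\cF_t)$ on the generic fiber.

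With this sign dictionary in hand, both items follow. For (a): if $\cF_t$ is slope-stable and $0<r(\cE_t)<r(\cF_t)$, then $\cE_t\subset\cF_t$ is not destabilizing, so $\mu_{\theta_t}(\cE_t)<\mu_{\theta_t}(\cF_t)$, giving $q_X(\lambda_{\cE,\cF},f)<0$. One must check that $\cE_t$ genuinely has the expected rank $r(\cE)$ and is a subsheaf of $\cF_t$ for generic $t$, which holds since $\cE\subset\cF$ restricts to a subsheaf on a generic (hence flat, away from $\sing\cF$) fiber with the same rank. For (b): the hypothesis that $\cE_t$ is slope-desemistabilizing means precisely $\mu_{\theta_t}(\cE_t)>\mu_{\theta_t}(\cF_t)$, so the sign is reversed and $q_X(\lambda_{\cE,\cF},f)>0$.

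The main obstacle is making the Fujiki computation with the degenerate class $f$ rigorous: one cannot directly plug $f$ (with $q_X(f)=0$) into a slope comparison, so I would instead work with the multilinear form of Remark~\ref{deltanti}/Fujiki and isolate the coefficient of $q_X(\lambda_{\cE,\cF},f)$, verifying that the accompanying scalar is strictly positive. The secondary technical point is controlling the restriction $\cE_t$ for generic $t$—ensuring it remains torsion-free of the correct rank and that $c_1$ commutes with restriction (valid since $\sing\cF$ does not dominate $\PP^n$ in the intended application, though here $\cE$ is arbitrary so one simply restricts to $t$ avoiding the relevant loci). Once the positivity of the Fujiki scalar is pinned down, the sign conclusions are immediate.
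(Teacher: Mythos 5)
Your proposal follows essentially the same route as the paper: one shows $\int_{X_t}\lambda_{\cE_t,\cF_t}\smile h_t^{n-1}=\int_X\lambda_{\cE,\cF}\smile h^{n-1}\smile f^n=n!\,c_X\,q_X(h,f)^{n-1}\,q_X(\lambda_{\cE,\cF},f)$ using that $f^n$ is the Poincar\'e dual of $X_t$ together with the multilinear Fujiki formula and $q_X(f)=0$, and then reads off the sign from the slope comparison on the generic fiber, exactly as you outline. The only detail to pin down in your sketch is the proportionality constant, which is $n!\,c_X\,q_X(h,f)^{n-1}$ rather than anything involving $q_X(\theta)^{n-1}$ --- this is precisely the scalar whose strict positivity (from $c_X>0$ and $q_X(h,f)>0$) you correctly flagged as the point to verify.
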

\begin{proof}
Let $h_t:=h_{|X_t}$. We have
\begin{equation}\label{fujidache}
\begin{aligned}
\int_{X_t} \lambda_{\cE_t,\cF_t}\smile h_t^{n-1}=\int_X \lambda_{\cE,\cF}\smile h^{n-1}\smile f^n= \\
=n! c_X \cdot q_X(h,f)^{n-1} \cdot q_X(\lambda_{\cE,\cF},f).
\end{aligned}
\end{equation}
In fact, the first equality holds because  $f^n$ is the Poincar\`e dual of $X_t$, and the second equality holds by Fujiki's formula and the fact that 
 $q_X(f)=0$. Items~(a) and~(b) follow, because $c_X$ and $q_X(\cO_X(h),f)$ are strictly positive.
\end{proof}

\begin{proof}[Proof of Proposition~\ref{lagstab}]
We prove Item~(i). Suppose that $\cF$ is not $h$ slope-stable. Let $\mathsf S\subset([0,1]\cap\QQ)$ be the set  of $s$ for which there exists a subsheaf $\cE\subset\cF$,  with $0<r(\cE)<r(\cF)$,  such that 
\begin{equation}\label{comenovem}
q_X(\lambda_{\cE,\cF},(1-s)h+s f)=0.
\end{equation}
Let us show that $\mathsf S$ is non empty and  finite.  Since $\cF$  is not $h$ slope-stable,  by  Lemma~\ref{comesup}  there exists  a  subsheaf 
$\cE\subset\cF$,  with $0<r(\cE)<r(\cF)$,  such that $q_X(\lambda_{\cE,\cF},h)\ge 0$. On the other hand, by  Lemma~\ref{contreffe}, the inequality in~\eqref{intneg} holds. It follows that $\mathsf S$ is not empty. The argument, in the proof of Proposition~\ref{polint}, showing that the analogous $\mathsf S$ is finite, applies also in the present case, and hence  $\mathsf S$ is finite.

Let  $s_{\min}$ be the minimum element of  $\mathsf S$. Clearly $\cF$ is strictly $h+s_{\min} f$ slope-semistable. Let $\cE\subset\cF$ be a subsheaf, with $0<r(\cE)<r(\cF)$ which is $h+s_{\min} f$ destabilizing, i.e.~$q_X(\lambda_{\cE,\cF},h+s_{\min} f)= 0$. Then $-a(\cF)\le q_X(\lambda_{\cE,\cF})\le 0$ by Proposition~\ref{propsemi}. On the other hand, $q_X(\lambda_{\cE,\cF}, f)< 0$ by Lemma~\ref{contreffe}, and hence $q_X(\lambda_{\cE,\cF}, h)< 0$ by our hypothesis on $h$. This contradicts the equality $q_X(\lambda_{\cE,\cF},h+s_{\min} f)= 0$. 

Next, we prove Item~(ii). Suppose that the restriction $\cF_{|{X_t}}$ is $h_t$ slope-unstable  for generic $t\in\PP^n$.  
As before, let ${\mathsf S}\subset([0,1]\cap\QQ)$ be the subset of $s$ such that there   
exists a subsheaf $\cE\subset\cF$, with rank $0<r(\cE)<r(\cF)$, for which~\eqref{comenovem} holds.  We claim that $\mathsf S$ is not empty, and that it has a minimum (N.B.: it does not have a maximum). 

In fact, since  $\cF_{|{X_t}}$ is $h_t$ slope-unstable  for generic $t\in\PP^n$, there exists a subsheaf  $\cE\subset\cF$, with $0<r(\cE)<r(\cF)$, such that $\cE_t\subset\cF_t$ is $h_t$ slope desemistabilizing for generic $t\in\PP^n$. By Lemma~\ref{contreffe}, we have $q_X(\lambda_{\cE,\cF},f)>0$. 
On the other hand $q_X(\lambda_{\cE,\cF},h)<0$ because $\cF$ is $h$ slope-stable. It follows  that $\mathsf S$ is not empty.

 It remains to show that $\mathsf S$ has a minimum. Suppose that~\eqref{comenovem} holds. Since
  $q_X(\lambda_{\cE,\cF},h)<0$  (because $\cF$ is $h$ slope-stable), we get that $q_X(\lambda_{\cE,\cF},f)>0$. Hence the sheaves $\cE\subset\cF$, with $0<r(\cE)<r(\cF)$, such that~\eqref{comenovem} holds for some $s\in[0,1]\cap\QQ$ are exactly those such that  $\cE_{|_{X_t}}\subset \cF_{|_{X_t}}$  is an $h_t$ slope desemistabilizing sheaf of 
 $\cF_{|_{X_t}}$, for the generic  $t\in\PP^n$. 
 
Let  $\wt{X}:=X\times_{\PP^n}\CC(\PP^n)$ be the abelian variety over $\CC(\PP^n)$ obtained from $X$ by base change. We let $\wt{h}$ be the ample divisor on $\wt{X}$ determined by $h$.
A subsheaf $\cE\subset\cF$ on $X$ determines a subsheaf $\wt{\cE}\subset\wt{\cF}$ on $\wt{X}$. 

 Then $\mu_{\wt{h}}(\wt{\cE})>\mu_{\wt{h}}(\wt{\cF})$, i.e.~$\wt{\cE}$  is $\wt{h}$ desemistabilizing for   $\wt{\cF}$ if and only if  $\cE_{|_{X_t}}\subset \cF_{|_{X_t}}$  is an $h_t$ slope desemistabilizing sheaf of 
 $\cF_{|_{X_t}}$, for the generic  $t\in\PP^n$. The set of  $\wt{h}$ desemistabilizing subsheaves  $\cA\subset\wt{\cF}$ is bounded. Given  such a subsheaf, there exists a unique maximal subsheaf 
 $\cE\subset\cF$ such that $\wt{\cE}=\cA$. The set ${\mathsf S}^0$ of $s\in([0,1]\cap\QQ)$ such that~\eqref{comenovem} holds for such a maximal subsheaf is finite (and non empty), by boundedness. Hence there is a minimum $s_{\min}^0$ element of ${\mathsf S}^0$.
 All other subsheaves $\cE\subset\cF$ (with $0<r(\cE)<r(\cF)$) such that $\wt{\cE}\subset\wt{\cF}$ is a 
 $\wt{h}$ desemistabilizing subsheaf, are contained in a maximal subsheaf $\ov{\cE}$, and the quotient $\ov{\cE}/\cE$ is supported on vertical divisors (i.e.~divisors whose image under $\pi$ is a proper subset of $\PP^n$). It follows that $s_{\min}^0$ is also the minimum element of ${\mathsf S}$. 
 
The sheaf  $\cF$ is strictly $((1-s_{\min})h+s_{\min}f)$ slope-semistable by minimality of $s_{\min}$. Let $\cE\subset\cF$ be a subsheaf, with $0<r(\cE)<r(\cF)$  such that 
$q_X(\lambda_{\cE,\cF},(1-s_{\min})h+s_{\min} f)= 0$. By Proposition~\ref{propsemi}  either $-a(\cF)\le q_X(\lambda_{\cE,\cF})< 0$ or $\lambda_{\cE,\cF}=0$.
The latter does not hold because $q_X(\lambda_{\cE,\cF},h)<0$ ($\cF$ is $h$ slope-stable). Hence $-a(\cF)\le q_X(\lambda_{\cE,\cF})< 0$ and thus
 $q_X(\lambda_{\cE,\cF}, f)< 0$ by our hypothesis on $h$. This contradicts the equality $q_X(\lambda_{\cE,\cF},(1-s_{\min})h+s_{\min} f)= 0$. 
\end{proof}
\section{Stable vector bundles on Lagrangian hyperk\"ahlers}\label{stablag}
\subsection{Main result}
\setcounter{equation}{0}
Before  stating the main result we recall that $\cK_{e}^i$ is the moduli space of polarized HK's $(X,h)$ of Type $K3^{[2]}$ with $q(h)=e$ and $h$  has divisibility $i$ (see~\eqref{divuno} and~\eqref{divdue}), which is $1$ if $e\not\equiv 6\pmod{8}$, and is either $1$ or $2$ if $e\equiv 6\pmod{8}$.

The Noether-Lefschetz divisor  $\cN^i_d(e)\subset \cK_{e}^i$  parametrizes $(X,h)$ such that there exists a saturated rank $2$ sublattice $\la h,f\ra\subset H^{1,1}_{\ZZ}(X)$, where $f$ is isotropic and $q(h,f)=d$, see Definition~\ref{ennelagr}. Assume that  $d> 10(e+1)$, that $e\not\divides 2d$ and that $d$ is even if $i=2$. By Proposition~\ref{unicafibr} $\cN^i_d(e)$  is of pure codimension $1$ (in particular non empty), and there exists an open dense subset 
$\cN^i_d(e)^0\subset \cN^i_d(e)$ such that the following holds for $[(X,h)]\in\cN^i_d(e)^0$: there exists one and only one Lagrangian fibration $\pi\colon X\to\PP^2$ 
(modulo automorphisms of $\PP^2$) such that, letting $f:=\pi^{*} c_1(\cO_{\PP^2}(1))$, the lattice $\la h,f\ra$ is as above.  
 Below is the main result of the present section.
\begin{prp}\label{propriostab}
Let $a_0,d$ be  positive integers and $i\in\{1,2\}$. Suppose that $e\not\divides 2d$, that $d$ is even if $i=2$, and that 
\begin{equation}\label{golfangora}
d>\max\left\{\frac{1}{2}a_0(e+1), 10(e+1)\right\}.
\end{equation}
If $[(X,h)]\in\cN_e^i(d)^0$ is generic the following hold:  
\begin{enumerate}
\item
Let  $\cE$ be an $h$ slope-stable vector bundle on $X$ such that 
\begin{enumerate}
\item[(a)]
$a(\cE)\le a_0$, where $a(\cE)$ is as in Definition~\ref{adieffe}, 
\item[(b)]
there exists an integer $m$ such that $r(\cE)=(mi)^2$, $c_1(\cE)=m h$, and $\gcd\{mi,\frac{d}{i}\}= 1$.
\end{enumerate}
Then
 the restriction of $\cE$ to a generic fiber of  the associated Lagrangian fibration $\pi\colon X\to\PP^2$ is slope-stable.
\item
If $\cF,\cG$ are  $h$ slope-stable vector bundles on $X$ such that Items~(a) and (b)  hold for $\cE=\cF$ and $\cE=\cG$, then for generic $z\in\PP^2$ the restrictions of $\cF$ and $\cG$ to $\pi^{-1}(z)$ are isomorphic. 
\end{enumerate}
\end{prp}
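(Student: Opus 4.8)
The plan is to prove Proposition~\ref{propriostab} by studying the restriction of $\cE$ to a generic Lagrangian fiber $X_z = \pi^{-1}(z)$, which is an abelian surface, and by exploiting the fact that, for the generic member of the Noether-Lefschetz locus $\cN^i_d(e)^0$, the Picard lattice is exactly the rank-$2$ lattice $\la h,f\ra$. The key numerical input is hypothesis~(b): since $r(\cE)=(mi)^2$, $c_1(\cE)=mh$, and $\gcd\{mi,\tfrac{d}{i}\}=1$, one computes the restriction data on the fiber. The polarization $\theta_z$ on $X_z$ is the principal polarization (as $X$ is of Type $K3^{[2]}$, so $c_X=1$ and $\theta_z$ is principal by Wieneck), and the restriction $\cE_z$ has rank $(mi)^2$ with $c_1(\cE_z)$ proportional to $m\theta_z$. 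Because $\cE$ is modular, Lemma~\ref{zerene} gives $\int_{X_z}\Delta(\cE_z)\smile \theta_z^{0}=\int_{X_z}\Delta(\cE_z)=0$, so $\cE_z$ has vanishing discriminant on the abelian surface.

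For Item~(1), first I would apply the contrapositive of Proposition~\ref{lagstab}(ii)-type reasoning: I must show that $\cE_z$ is actually slope-stable, not merely semistable. The suitability hypothesis~\eqref{golfangora}, namely $d>\tfrac{1}{2}a_0(e+1)$ together with $a(\cE)\le a_0$, is precisely what makes $h$ an $a(\cE)$-suitable polarization for the fibration $\pi$ (Definition~\ref{suipol}): any class $\lambda$ with $-a(\cE)\le q_X(\lambda)<0$ has $q_X(\lambda,h)$ and $q_X(\lambda,f)$ of the same sign. Then Proposition~\ref{lagstab} applies, and since $\cE$ is $h$ slope-stable, part~(ii) forces $\cE_z$ to be slope-semistable for generic $z$. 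To upgrade semistability to stability, I would argue that any slope-destabilizing subsheaf of $\cE_z$ of the same slope would, by the coprimality condition $\gcd\{mi,\tfrac{d}{i}\}=1$, have to violate an integrality/numerical constraint on its rank and degree: a proper subsheaf $\cA\subset\cE_z$ with $\mu(\cA)=\mu(\cE_z)$ would force a nontrivial common factor between the rank $(mi)^2$ and the degree data determined by $mh$ restricted to the fiber, contradicting the gcd hypothesis. This is where the explicit elementary-divisor structure of $\theta_z$ enters.

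For Item~(2), the strategy is to use that $\cE_z$ has zero discriminant and is slope-stable on the abelian surface $X_z$, hence (by Proposition~\ref{resemi}, via modularity) it is a \emph{semi-homogeneous} vector bundle in the sense of Mukai. The classification of simple semi-homogeneous bundles on an abelian surface with prescribed rank and $c_1$ shows that such a bundle is rigid (has no infinitesimal deformations fixing the determinant), and is determined up to isomorphism by its rank and determinant together with a translation parameter. Both $\cF_z$ and $\cG_z$ have the same rank $(mi)^2$ and the same $c_1$ (since both satisfy~(b) with the \emph{same} $m$, as dictated by the shared hypotheses); hence they are semi-homogeneous bundles with identical discrete invariants. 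The remaining freedom is a torsor under $\widehat{X_z}\times X_z$ (translation and Picard-translate), and I would show that for generic $z\in\PP^2$ this parameter must coincide for $\cF$ and $\cG$. The cleanest route is to invoke the appendix's results on semi-homogeneous bundles (Proposition~\ref{potenza} and the surrounding material) to conclude that the moduli of such bundles on $X_z$ is a single reduced point once rank and determinant are fixed in the coprime case, so $\cF_z\cong\cG_z$.

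The main obstacle I anticipate is the stability-versus-semistability gap in Item~(1): Proposition~\ref{lagstab}(ii) delivers only slope-semistability of the restriction, and converting this to genuine slope-stability requires carefully combining the zero-discriminant condition with the coprimality $\gcd\{mi,\tfrac{d}{i}\}=1$ to rule out same-slope subsheaves. A strictly semistable $\cE_z$ with vanishing discriminant would have a Jordan-Hölder filtration by stable semi-homogeneous factors of equal slope; I would need the arithmetic of the principal (or near-principal) polarization $\theta_z$ restricted from $h$ to show that no such factor with strictly smaller rank can share the slope, which is exactly forced by the gcd condition linking the fiber-degree of $c_1(\cE)=mh$ to the rank $(mi)^2$. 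Making this numerical argument airtight—tracking how $h|_{X_z}$ sits inside $\mathrm{NS}(X_z)$ relative to $\theta_z$ for the generic point of $\cN^i_d(e)^0$—is the delicate part, and it is where the genericity within the Noether-Lefschetz divisor (guaranteeing $\mathrm{Pic}$ is exactly $\la h,f\ra$) is essential.
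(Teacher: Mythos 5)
Your treatment of Item~(1) essentially reproduces the paper's argument: suitability of $h$ from~\eqref{golfangora} via Lemma~\ref{nocamere}, slope-semistability of the restriction $\cE_z$ from Proposition~\ref{lagstab}, and the upgrade to stability by combining $\Delta(\cE_z)=0$ with Mukai's structure theory (the Jordan--H\"older factors are stable of zero discriminant, hence semi-homogeneous of rank $r_0^2$ with $c_1/r=(b_0/r_0)\theta$ in lowest terms, and since $h|_{A_z}=d\theta$ the coprimality $\gcd\{mi,d/i\}=1$ forces $r_0=mi$, i.e.\ a single factor --- this is Proposition~\ref{numeretti} and Corollary~\ref{caravilla}). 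One caveat: your first formulation of the upgrade, that a same-slope subsheaf would ``force a nontrivial common factor between the rank and the degree'', fails as stated, because $\gcd\bigl(r(\cE_z),\deg\cE_z\bigr)=\gcd\bigl((mi)^2,\,2md\bigr)$ is divisible by $m$; only the refined version in your final paragraph, which exploits that the JH factors are semi-homogeneous and hence have square rank, actually closes the argument.

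The genuine gap is in Item~(2). It is not true that a simple semi-homogeneous vector bundle on the abelian surface $A_z=\pi^{-1}(z)$ is determined up to isomorphism by its rank and determinant, even in the coprime case: by Theorem~7.11 and Proposition~7.1 of~\cite{muksemi} the set $V_z=\{[\xi]\in A_z^{\vee}\mid \cF_z\cong\cG_z\otimes\xi\}$ is a non-empty coset of the group $\Sigma(\cG_z)$, which has cardinality $r(\cG_z)^2=(mi)^4$; equality of determinants only forces $V_z\subset A_z^{\vee}[(mi)^2]$, so there are many pairwise non-isomorphic simple semi-homogeneous bundles sharing the rank and determinant of $\cG_z$. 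Consequently ``same discrete invariants'' does not yield $\cF_z\cong\cG_z$, and your sketch supplies no mechanism for showing that $0\in V_z$. The mechanism in the paper is global and is the real content of Item~(2): $V_z$ is invariant under the monodromy action of $\pi_1(U,z)$ on torsion points of the smooth fibers, and Corollary~\ref{modinv} --- which rests on Proposition~\ref{torcere}, identifying $\pi$ with a Tate--Shafarevich twist of the Beauville--Mukai system of a degree~$2$ $K3$, so that the monodromy on $H_1$ of the fibers is transitive on nonzero torsion classes --- shows that the only monodromy-invariant coset of cardinality $(mi)^4$ inside $A_z[(mi)^2]$ is $A_z[mi]$, which contains $0$. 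Without this (or an equivalent global input) the proof of Item~(2) does not go through.
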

\begin{rmk}
Regarding Item~(b) of Proposition~\ref{propriostab}: according to Proposition~\ref{restrango} we always have
 $r(\cE)\divides (mi)^2$, hence the equality   is an extremal case. 
\end{rmk}
\subsection{Preliminary results}
\setcounter{equation}{0}
\begin{lmm}\label{nocamere}
Let $(\Lambda,q)$ be a non degenerate rank $2$ lattice which represents $0$, and hence $\disc(\Lambda)=-d^2$ where $d$ is a strictly positive integer. Let $\alpha\in \Lambda$ be primitive isotropic, and complete it to a basis $\{\alpha,\beta\}$ such that $q(\beta)\ge 0$. If $\gamma\in\Lambda$ has strictly negative square (i.e.~$q(\gamma)<0$) then
\begin{equation}\label{menoenne}
 q(\gamma)\le -\frac{2d}{1+q(\beta)}.
\end{equation}
\end{lmm}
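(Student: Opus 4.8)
The plan is to reduce everything to an explicit computation in the chosen basis, and then to a single elementary inequality.

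First I would record the Gram matrix of $\Lambda$ in the basis $\{\alpha,\beta\}$. Since $\alpha$ is isotropic we have $q(\alpha)=0$, so with $c:=q(\alpha,\beta)$ and $b:=q(\beta)\ge 0$ the matrix is $\left(\begin{smallmatrix} 0 & c \\ c & b\end{smallmatrix}\right)$, whose determinant is $-c^2$. As this determinant equals $\disc(\Lambda)=-d^2$, we get $c=\pm d$. Replacing $\beta$ by $-\beta$ if necessary — which changes the sign of $c$, leaves $q(\beta)=b\ge 0$ unchanged, and keeps $\{\alpha,\beta\}$ a basis — I may assume $c=d>0$.

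Next I would write an arbitrary $\gamma\in\Lambda$ as $\gamma=x\alpha+y\beta$ with $x,y\in\ZZ$, so that $q(\gamma)=2dxy+by^2=y(2dx+by)$. Since $q(-\gamma)=q(\gamma)$ I may assume $y\ge 0$; and $y=0$ forces $q(\gamma)=0$, so the hypothesis $q(\gamma)<0$ gives $y\ge 1$. Then $2dx+by<0$ together with $b,y\ge 0$ forces $x\le -1$, so I write $x=-k$ with $k\ge 1$. Setting $u:=y\ge 1$ and $v:=2dk-by$, I obtain $-q(\gamma)=uv$, and $v$ is a positive integer because $-q(\gamma)>0$ and $u\ge 1$. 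The key observation is the identity $ub+v=by+(2dk-by)=2dk\ge 2d$.

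Finally I would invoke the elementary inequality $uv(1+b)\ge ub+v$, valid for all integers $u,v\ge 1$ and $b\ge 0$, which follows at once from $uv(1+b)-(ub+v)=ub(v-1)+v(u-1)\ge 0$. Combining it with $ub+v\ge 2d$ yields $-q(\gamma)=uv\ge \tfrac{2d}{1+b}$, that is $q(\gamma)\le -\tfrac{2d}{1+q(\beta)}$, as claimed. I expect no serious obstacle: the only points requiring care are the two sign normalizations (of $c$ via $\beta\mapsto-\beta$, and of $\gamma$ via $\gamma\mapsto-\gamma$) and verifying that $v$ is a positive integer. The one genuinely clever step is the substitution making $ub+v=2dk$, which collapses the whole statement onto the one-line inequality above.
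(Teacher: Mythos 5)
Your proof is correct and follows essentially the same route as the paper: expand $q(\gamma)=y(2dx+q(\beta)y)$ in the basis and combine elementary integer inequalities on the two factors. The only cosmetic difference is that your sign normalizations ($\beta\mapsto-\beta$, $\gamma\mapsto-\gamma$) turn the paper's reverse triangle inequality $2d|x|-q(\beta)|y|\le|2dx+q(\beta)y|$ into the exact identity $ub+v=2dk$, which is a slightly cleaner packaging of the same estimate.
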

\begin{proof}
There exist integers $x,y$ such that $\gamma=x\alpha+y\beta$. Since $\disc(\Lambda)=-q(\alpha,\beta)^2$ we have $q(\alpha,\beta)=d$. Thus
\begin{equation*}
q(\gamma)=y(2d x+ q(\beta) y).
\end{equation*}
Since $q(\gamma)<0$ and since $x,y$ are integers, we have   
\begin{equation*}
0<|x|,\quad 0<|y|\le |q(\gamma)|,\quad 0<|2d x+ q(\beta) y|\le |q(\gamma)|.
\end{equation*}
It follows that
\begin{equation*}
 2d |x|-q(\beta) |y| \le | 2d x+ q(\beta) y|\le |q(\gamma)|
\end{equation*}
because  $d$ and $q(\beta)$ are non negative.
Hence  
\begin{equation*}
2d\le  2d |x| \le q(\beta) |y|+ | 2d x+ q(\beta) y| \le q(\beta)|q(\gamma)| +|q(\gamma)|=(1+q(\beta))|q(\gamma)|.
\end{equation*}
Since $q(\gamma)<0$ the above inequality is equivalent to~\eqref{menoenne}.
\end{proof}
\begin{prp}\label{numeretti}
Let $(A,\theta)$ be a principally polarized abelian  surface.  
Let $\cF$ be a $\theta$ slope-semistable vector bundle on $A$ such that $c_1(\cF)$ is a multiple of $\theta$ and
$\Delta(\cF)=0$.
Then we can write
\begin{equation}\label{dueg}
r(\cF)=r_0^2 m,\quad c_1(\cF)=r_0 b_0 m \theta, 
\end{equation}
where $r_0,m,b_0$ are integers, the first two are positive, and $\gcd\{r_0,b_0\}=1$.
If $\cF$ is  strictly $\theta$ slope-semistable, i.e.~not slope-stable, then there exists such a decomposition with $m>1$.
\end{prp}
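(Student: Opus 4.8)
The plan is to deduce everything from Hirzebruch--Riemann--Roch together with the vanishing $\Delta(\cF)=0$, bypassing semi-homogeneity entirely. I work under the assumption — automatic in the intended application, where $A$ is a \emph{generic} Lagrangian fiber and $\NS(A)=\ZZ\theta$ — that the first Chern class of $\cF$, and of each of its subsheaves, is an integral multiple of $\theta$; this is exactly what makes the conclusion $c_1(\cF)=r_0b_0x\,\theta$ meaningful.

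First I would produce the decomposition itself. Since $A$ is an abelian surface, $\td(A)=1$, so Riemann--Roch reads $\chi(\cF)=\int_A\ch(\cF)=\tfrac12 c_1(\cF)^2-c_2(\cF)$. Using $\Delta(\cF)=2r(\cF)c_2(\cF)-(r(\cF)-1)c_1(\cF)^2=0$ to eliminate $c_2(\cF)$ gives $\chi(\cF)=c_1(\cF)^2/(2r(\cF))$. Writing $c_1(\cF)=D\theta$ and using $\theta^2=2$, this becomes $\chi(\cF)=D^2/r(\cF)$, which is an integer. Now reduce the slope to lowest terms: write $D/r(\cF)=b_0/r_0$ with $r_0>0$ and $\gcd\{r_0,b_0\}=1$, so that $r(\cF)=r_0 t$ and $D=b_0 t$ for a positive integer $t$. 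Then $D^2/r(\cF)=b_0^2 t/r_0\in\ZZ$ forces $r_0\mid t$, since $\gcd\{r_0,b_0\}=1$; setting $t=r_0 x$ yields precisely $r(\cF)=r_0^2 x$ and $c_1(\cF)=r_0 b_0 x\,\theta$ with $x>0$. This also shows $(r_0,b_0,x)$ is uniquely determined.

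Second I would treat the strictly semistable case, where the assertion reduces to $x>1$. Take a Jordan--Hölder filtration of $\cF$ with stable graded pieces $G_1,\dots,G_\ell$, all of the same slope as $\cF$; since $\cF$ is not stable, $\ell\ge 2$. For a short exact sequence of sheaves of equal slope, the class in \eqref{pecora} vanishes (equality of slopes of $\cE,\cF$ means $r(\cF)c_1(\cE)=r(\cE)c_1(\cF)$, i.e. $\lambda_{\cE,\cF}=0$), so Lemma~\ref{mercedes} shows that $E\mapsto\Delta(E)/r(E)$ is additive along such sequences. Hence $0=\Delta(\cF)/r(\cF)=\sum_i \Delta(G_i)/r(G_i)$, and as each summand is $\ge 0$ by Bogomolov's inequality, every $\Delta(G_i)=0$. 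Applying the first part to each stable $G_i$ — which has the same slope $b_0/r_0$, hence the same denominator $r_0$ — gives $r(G_i)=r_0^2 x_i$ with $x_i\ge 1$, so $r(G_i)\ge r_0^2$. Summing, $r(\cF)=\sum_i r(G_i)\ge \ell\,r_0^2\ge 2r_0^2$, i.e. $x=r(\cF)/r_0^2\ge 2$.

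The one genuinely delicate point is the hypothesis that the relevant first Chern classes are multiples of $\theta$: if $\NS(A)$ had rank $>1$, a stable graded piece could share the $\theta$-slope of $\cF$ while having $c_1$ not proportional to $\theta$, and the integrality argument controlling its rank would break down. I would therefore make explicit that $\NS(A)=\ZZ\theta$ in the generic fiber to which the proposition is applied. (Alternatively, one could invoke Mukai's theory — each $G_i$ with $\Delta(G_i)=0$ is semi-homogeneous, and a simple semi-homogeneous bundle of slope-denominator $r_0$ on a surface has rank $r_0^2$ — but the Riemann--Roch computation above is shorter and self-contained.)
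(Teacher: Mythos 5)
Your proof is correct, but it takes a genuinely different route from the paper's. The paper handles the stable case by invoking Proposition~\ref{discperp} (Kobayashi--Hitchin, flatness of the Hermite--Einstein connection on $End\,\cF$) to get semi-homogeneity, and then Mukai's counting of $\Sigma(\cF)$ via Proposition~\ref{potenza} to force $r=r_0^2$; in the strictly semistable case it extracts a single slope-stable destabilizing subsheaf $\cG$, shows $\Delta(\cG)=0$ and $\lambda_{\cG,\cF}=0$ via Bogomolov plus Hodge index, applies Proposition~\ref{potenza} to $\cG$, and concludes. You replace all of the semi-homogeneity machinery by the integrality of $\chi(\cF)=c_1(\cF)^2/2r(\cF)=D^2/r$ coming from HRR and $\Delta(\cF)=0$, and you replace the single destabilizing subsheaf by a full Jordan--H\"older filtration together with additivity of $\Delta/r$ (a correct consequence of Lemma~\ref{mercedes} once $\lambda=0$). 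Your version is more elementary and self-contained, and it has the added merit of cleanly supplying the divisibility $r_0^2\mid r(\cF)$ (i.e.\ that $x$ is an integer), a point the paper's ``and the lemma follows'' passes over rather quickly. You are also right to flag the implicit hypothesis that $c_1(\cF)$ is an integral multiple of $\theta$: as stated the proposition fails for a line bundle with $c_1$ not proportional to $\theta$, and the paper's own proof silently uses this when it feeds $\cG$ into Proposition~\ref{potenza}. Two small remarks: your step ``equal slopes $\Rightarrow\lambda_{\cE,\cF}=0$'' genuinely needs the assumption $\NS(A)=\ZZ\theta$ (or, as in the paper, the Hodge-index argument $\int\lambda\smile\theta=0$, $\int\lambda^2=0\Rightarrow\lambda=0$, which you could substitute to make that step independent of the Picard rank of $A$); and the paper's heavier route is not wasted effort in context, since Propositions~\ref{discperp} and~\ref{potenza} are needed elsewhere (e.g.\ in Proposition~\ref{resemi} and in the proof of Item~(2) of Proposition~\ref{propriostab}).
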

\begin{proof}
If $\cF$ is slope-stable, then it is simple semi-homogeneous by Proposition~\ref{discperp}, and hence we may write~\eqref{dueg} with $x=1$ by Proposition~\ref{potenza}.

Suppose that $\cF$ is strictly $\theta$ slope-semistable. Hence there exists a destabilizing   exact sequence of torsion free sheaves 
\begin{equation}\label{potter}
0\lra \cG\lra \cF\lra \cH\lra 0
\end{equation}
with $\cG$ slope-stable.   Notice that  $\cG$  is locally free because $\cH$ is torsion free. 

Let us prove that $\cG$ is simple semi-homogeneous. Since~\eqref{potter} is slope destabilizing, $0<r(\cG)<r(\cF)$ and $\int_A\lambda_{\cG,\cF}\smile\theta=0$,
where $\lambda_{\cG,\cF}\in H^2(A;\ZZ)$ is defined in~\eqref{pecora}.
Since $\cF$ is slope-semistable, $\cH$ is slope-semistable. Thus  $\Delta(\cG)\ge 0$ and $\Delta(\cH)\ge 0$ by Bogomolov. Now look at Equation~\eqref{lungaeq}: since  $\int_A\lambda_{\cG,\cF}^2\le 0$ by Hodge index, we get that $\Delta(\cG)=\Delta(\cH)=0$ and $\int_A\lambda_{\cG,\cF}^2= 0$. In particular $\cG$ is simple semi-homogeneous by Proposition~\eqref{discperp}
 and $\cH$ is locally free (if it is not locally free then $\cH^{\vee}$ is a slope semistable vector bundle with  $\Delta(\cH^{\vee})<\Delta(\cH)=0$, but this contradicts Bogomolov's inequality). The equality $\int_A\lambda_{\cG,\cF}^2= 0$ gives
 (by the Hodge Index Theorem) that $\lambda_{\cG,\cF}=0$. Thus $c_1(\cG)$ is a multiple of $\theta$, and so is $c_1(\cH)$.

Since the vector bundle $\cH$ is  slope semistable, $c_1(\cH)$  is a multiple of $\theta$ and $\Delta(\cH)=0$, we can iterate this argument to get the following result. Let
\begin{equation*}
0=\cG_0\subsetneq\cG_1\subsetneq\ldots\subsetneq\cG_m=\cF
\end{equation*}
be a Jordan-H\"older filtration  (for slope semistability) of $\cF$. Then for $i\in\{1,\ldots,m\}$ 
the  quotient $\cG_i/\cG_{i-1}$  is a simple semi-homogeneous vector bundle and 
$c_1(\cG_i/\cG_{i-1})$ is a multiple of $\theta$.
Let $i\in\{1,\ldots,m\}$; by Proposition~\ref{potenza} we may write 
\begin{equation*}
r(\cG_i/\cG_{i-1})=r_i^2,\qquad c_1(\cG_i/\cG_{i-1})=r_i b_i \theta, 
\end{equation*}
where  $r_i, b_i$ are integers, $r_i>0$ and $\gcd\{r_i,b_i\}=1$. Let $i,j\in\{1,\ldots,m\}$; equating the slopes of $\cG_i/\cG_{i-1}$ and 
$\cG_j/\cG_{j-1}$ we get that
\begin{equation}\label{riduco}
\frac{ b_i}{r_i}=\frac{b_j}{r_j}.
\end{equation}
Since  $\gcd\{r_i,b_i\}=\gcd\{r_j,b_j\}=1$ it follows that $r_i=r_j$ and $b_i=b_j$. Thus $r(\cF)=m r_0^2$ and $c_1(\cF)=m r_0 b_0\theta$ where $r_0=r_i$ and $b_0=b_i$ for all $i\in\{1,\ldots,m\}$.
\end{proof}
\begin{crl}\label{caravilla}
Let $(A,\theta)$ be a principally polarized abelian  surface.  
Let $\cF$ be a $\theta$ slope-semistable vector bundle on $A$ such that 
$\Delta(\cF)=0$.
If $r(\cF)=r_0^2$, $c_1(\cF)=r_0 b_0 \theta$ where $r_0, b_0$ are \emph{coprime} integers, then
 $\cF$ is  $\theta$ slope-stable.
\end{crl}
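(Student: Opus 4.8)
Corollary \ref{caravilla} follows from Proposition \ref{numeretti} by a coprimality argument, so the plan is to invoke that proposition and then rule out the strictly semistable case. First I would observe that the hypotheses of Proposition \ref{numeretti} are exactly satisfied: $(A,\theta)$ is principally polarized and $\cF$ is $\theta$ slope-semistable with $\Delta(\cF)=0$. Hence by that proposition there exist integers $r_1, x, b_1$, with $r_1, x > 0$ and $\gcd\{r_1, b_1\} = 1$, such that
\begin{equation*}
r(\cF) = r_1^2 x, \qquad c_1(\cF) = r_1 b_1 x\,\theta.
\end{equation*}

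The plan is then to compare this decomposition with the given data $r(\cF) = r_0^2$ and $c_1(\cF) = r_0 b_0\,\theta$ with $\gcd\{r_0, b_0\} = 1$, and to argue by contradiction: if $\cF$ were \emph{not} slope-stable, then it would be strictly slope-semistable, and the last sentence of Proposition \ref{numeretti} furnishes a decomposition as above with $x > 1$. I would extract a divisibility obstruction from the two expressions for $c_1(\cF)$, namely $r_0 b_0 = r_1 b_1 x$, together with $r_0^2 = r_1^2 x$. From the rank equation, $x = (r_0/r_1)^2$ is the square of a rational number; writing $r_0 = r_1 u$ shows $x = u^2$ and $r_0 = r_1 u$ for a positive integer $u > 1$ (since $x > 1$). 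Substituting into the $c_1$ equation gives $r_1 u\, b_0 = r_1 b_1 u^2$, hence $b_0 = b_1 u$.

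The contradiction now emerges from coprimality: I would note that $u > 1$ divides both $r_0 = r_1 u$ and $b_0 = b_1 u$, so $u$ is a common divisor of $r_0$ and $b_0$ strictly greater than $1$. This contradicts the hypothesis $\gcd\{r_0, b_0\} = 1$. Therefore $x = 1$, i.e.\ $\cF$ cannot be strictly slope-semistable, and so $\cF$ is $\theta$ slope-stable, as claimed.

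The only delicate point is checking that the rank equation genuinely forces $x$ to be a perfect square of an integer dividing $r_0$; this is where the exact form $r(\cF) = r_0^2$ (rather than a general positive integer) is essential. Since $r_1^2 x = r_0^2$ with $r_1 \mid r_0$ not a priori guaranteed, I would argue that $r_1^2 \mid r_0^2$ forces $r_1 \mid r_0$ (as squaring reflects divisibility of integers), making $u := r_0/r_1$ an integer and $x = u^2$. This is the main—though entirely elementary—obstacle, after which the coprimality contradiction is immediate.
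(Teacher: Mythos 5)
Your proposal is correct and follows essentially the same route as the paper: both invoke Proposition~\ref{numeretti} to produce a decomposition with $x>1$ in the strictly semistable case and then derive a contradiction from $\gcd\{r_0,b_0\}=1$. The only (immaterial) difference is in the arithmetic: you deduce $r_1\mid r_0$ from $r_1^2\mid r_0^2$ and exhibit $u=r_0/r_1>1$ as a common divisor of $r_0$ and $b_0$, whereas the paper derives the relation $s_0b_0=c_0r_0$ and uses coprimality of both pairs to force $s_0=r_0$ and hence $x=1$.
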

\begin{proof}
By contradiction. Suppose that $\cF$ is not   $\theta$ slope-stable. By Proposition~\ref{numeretti} we may write
$r(\cF)=s_0^2 m$, $c_1(\cF)=s_0 c_0 m\theta$ where $s_0,m,c_0$ are integers (with $s_0,m>0$), $s_0,c_0$ are coprime and $m>1$. It follows that $s_0 b_0=c_0 r_0$. Since $\gcd\{r_0,b_0\}=1$ and $\gcd\{s_0,c_0\}=1$, we get that $r_0=s_0$ and hence $m=1$. This is a contradiction.
\end{proof}
\subsection{Proof of Item~(1) of Proposition~\ref{propriostab}}
\setcounter{equation}{0}
First we prove that $h$ is $a_0$-suitable (see Definition~\ref{suipol}). Suppose first that $\rho(X)=2$, i.e.
$H^{1,1}_{\ZZ}(X)=\la h,f\ra$, where $f:=\pi^{*}c_1(\cO_{\PP^2}(1))$. Apply Lemma~\ref{nocamere} to $\Lambda:=H^{1,1}_{\ZZ}(X)$, $\alpha=f$ and $\beta=h$: by~\ref{golfangora} we get that there are no $\xi\in H^{1,1}_{\ZZ}(X)$ such that $-a_0\le q(\xi)<0$. Hence every ample divisor on $X$ is $a_0$-suitable. 

Once we know that $h$ on $X$ is $a_0$-suitable if $\rho(X)=2$, it follows that the set of $[(X,h)]\in\cN^i_e(d)^0$ such that
 $h$  is \emph{not} $a_0$-suitable belongs to  the intersection of $\cN^i_e(d)^0$ with a finite union of Noether-Lefschetz 
 divisors in  $\cK^i_e$. In fact suppose that $h$  is \emph{not} $a_0$-suitable on $X$. Then there exists 
 $\gamma\in H^{1,1}_{\ZZ}(X)$ such that
 \begin{equation}\label{trediseg}
-a_0\le q(\gamma)<0,\quad q(\gamma,h)>0,\quad q(\gamma,f)<0.
\end{equation}
Let $B$ be the (finite) index of $\la h,f\ra\oplus (\la h,f\ra^{\bot}\cap H^{1,1}_{\ZZ}(X))$ in $H^{1,1}_{\ZZ}(X)$.
Then
\begin{equation}
\gamma=\frac{\gamma_1}{B}+\frac{\gamma_2}{B},\qquad \gamma_1\in\la h,f\ra,\quad \gamma_2\in\la h,f\ra^{\bot}.
\end{equation}
By the last two inequalities in~\eqref{trediseg} we have $q(\gamma_1)<0$. Hence by the first inequality in~\eqref{trediseg} it follows that there exists a positive $M$ independent of $(X,h)$ such that $-M\le q(\gamma_2)<0$. Hence the moduli point of $(X,h)$ belongs to  the intersection of $\cN^i_e(d)^0$ with a finite union of Noether-Lefschetz 
 divisors in  $\cK^i_e$, as claimed.

We have proved that if $(X,h)$  represents a generic point of  $\cN_e^i(d)^0$, then $h$ is $a_0$-suitable, and hence $a(\cE)$-suitable because $a(\cE)\le a_0$. Let $A$ be a  generic (smooth) fiber of $\pi$. By 
Proposition~\ref{lagstab} the restriction of $\cE$ to $A$ is slope-semistable with respect to the restriction of $h$. 

 We claim that the hypotheses of Corollary~\ref{caravilla} are satisfied by  $\cF:=\cE_{|A}$. In fact $\Delta(\cF)=0$   because $\cE$ is modular, see Lemma~\ref{zerene}. 
Moreover  the restriction of $h$ to $A$ is a multiple of a principal polarization $\theta$ by Theorem~1.1 in~\cite{wieneck1}. From the formula $\int_A h^2=\int_X h^2\smile f^2=2 q(h,f)^2=2d^2$ it follows that $h|_A=d\theta$. Hence $r(\cF)=(mi)^2$ and
$c_1(\cF)=m\cdot d \theta=(mi)\frac{d}{i}$. It follows that the hypotheses of Corollary~\ref{caravilla} are satisfied and hence   $\cF$ is slope-stable. 
\subsection{Proof of Item~(2) of Proposition~\ref{propriostab}}
\setcounter{equation}{0}
For $z\in\PP^2$ we let $A_z:=\pi^{-1}(z)$ and $\cF_z:=\cF_{|A_z}$, $\cG_z:=\cG_{|A_z}$.
By Item~(2) of Proposition~\ref{propriostab} there exists an open dense $U\subset\PP^2$ such that for $z\in U$ the vector bundles  $\cF_z$ and $\cG_z$ are 
both slope-stable. We claim that if  $z\in U$ then $\cF_z$ and $\cG_z$ are simple semi-homogeneous vector bundles. In fact they are simple because they are slope-stable, and they are semi-homogeneous by Lemma~\ref{zerene} and Proposition~\ref{discperp}. Let  $z\in U$. By Theorem~7.11 in~\cite{muksemi} the set
\begin{equation*}
V_z:=\{[\xi]\in A_z^{\vee} \mid \cF_z\cong \cG_z\otimes\xi\}
\end{equation*}
is not empty, and hence it has cardinality $r(\cG)^2$ by Proposition~7.1 op.~cit. Clearly $V_z$ is invariant under the monodromy action of $\pi_1(U,z)$. 
Now notice that $V_z\subset A_z[(mi)^2]$ because $\cF_z$ and $\cG_z$ have rank $(mi)^2$ and isomorphic determinants. 
Hence  by Corollary~\ref{modinv} we have $V_z=A[mi]$. Thus $0\in V_z$, and therefore $\cF_z\cong \cG_z$.

\section{Basic modular sheaves on Hilbert squares of $K3$'s}\label{esempi}
\setcounter{equation}{0}
\subsection{Main results}
\setcounter{equation}{0}
Let  $S$ be a smooth projective surface. Let $X_n(S)\to S^n$ be the blow up of the big diagonal, i.e.~the $n$-th isospectral Hilbert scheme of $S$, see Definition 3.2.4 and Proposition 3.4.2 in~\cite{haiman}. The complement of the big diagonal in $S^n$ is identified with a dense open subset $U_n(S)$ of $X_n(S)$, and the natural map $U_n(S)\to S^{[n]}$ extends to a regular map $p\colon X_n(S)\to S^{[n]}$ (this follows from Proposition 3.4.2 in~\cite{haiman}). 
  Let $\tau\colon X_n(S)\to S^n$ be the blow up map. We let $q_i\colon X_n(S)\to S$ be the composition of $\tau$ and the $i$-th projection $S^n\to S$. Given a locally free sheaf $\cF$ on $S$, let
\begin{equation*}
X_n(\cF):=q_1^{*}(\cF)\otimes\ldots\otimes q_n^{*}(\cF).
\end{equation*}
The action of the symmetric group $\cS_n$ on $S^n$ by permutation of the factors maps the big diagonal to itself, and hence lifts to  an action  $\rho_n\colon \cS_n\to\Aut(X_n(S))$.  The latter action lifts to a natural action $\rho_n^{+}$ on  $X_n(\cF)$. There is also a twisted  action $\rho_n^{-}=\rho_n^{+}\cdot\chi$ where  $\chi\colon\cS_n\to\{\pm 1\}$ is the sign character. Since $\rho_n$  maps to itself any fiber of  $p\colon X_n(S)\to S^{[n]}$,  $\rho_n^{\pm}$  descends to an action  $\ov{\rho}_n^{\pm}\colon \cS_n \to \Aut(p_{*}X_n(\cF))$. 
\begin{dfn}
Let $\cF^{\pm}[n]\subset p_{*}X_n(\cF)$ be the sheaf of $\cS_n$-invariants for $\ov{\rho}_n^{\pm}$.
\end{dfn}
Below is the first main result of the present section.
\begin{prp}\label{yaufever}
Let $S$ be a projective  $K3$ surface, and let $\cF$ be a locally free sheaf on $S$ such that $\chi(S,\End(\cF))=2$. 
Then $\cF[2]^{\pm}$ is a locally free modular sheaf of rank $r(\cF)^2$, with
\begin{eqnarray}
\Delta(\cF[2]^{\pm}) & = & \frac{r(\cF[2]^{\pm})(r(\cF[2]^{\pm})-1)}{12}c_2(S^{[2]}),  \label{disceffe} \\
d(\cF[2]^{\pm}) & = & 5\cdot {r(\cF[2]^{\pm})\choose 2}, \label{didieffe} \\
a(\cF[2]^{\pm}) & = & \frac{5}{8}r(\cF)^6(r(\cF)^2-1). \label{adieffebis}
\end{eqnarray}
(Recall that $d(\cE)$ is defined by the equality in~\eqref{fernand}.)  
\end{prp}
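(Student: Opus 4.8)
The plan is to present $\cF[2]^{\pm}$ as the eigensheaves of a finite flat double cover, deduce local freeness at once, and then compute its Chern character by Grothendieck--Riemann--Roch together with equivariant (Atiyah--Segal--Singer) localization on the fixed locus of $\cS_2$.

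\emph{Geometry and local freeness.} The isospectral Hilbert scheme $X_2(S)=\Blow_{\Delta_S}(S\times S)$ is smooth, being the blow-up of the smooth fourfold $S\times S$ along the smooth diagonal $\Delta_S\cong S$; let $\tau\colon X_2(S)\to S\times S$ be the blow-up, $j\colon\tilde\Delta\hra X_2(S)$ the exceptional divisor, so that $X_2(\cF)=\tau^{*}(\cF\boxtimes\cF)$. The generator $\sigma$ of $\cS_2$ acts by $-1$ on the normal bundle of $\Delta_S$, hence fixes $\tilde\Delta$ pointwise and acts by $-1$ on $N_{\tilde\Delta/X_2(S)}$; the quotient $p\colon X_2(S)\to S^{[2]}$ is thus finite flat of degree $2$ (both varieties being smooth fourfolds, by miracle flatness), branched along $E=p(\tilde\Delta)$ with $2\delta=[E]$. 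As $2$ is invertible, $p_{*}X_2(\cF)$ is locally free of rank $2r(\cF)^2$ and splits into its $(\pm 1)$-eigensheaves for $\sigma$, which are precisely $\cF[2]^{+}$ and $\cF[2]^{-}$; each, being a direct summand of a locally free sheaf on the connected variety $S^{[2]}$, is locally free, of generic (hence constant) rank $r(\cF)^2$. This is the first assertion.

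\emph{Chern character and discriminant.} I would compute $\ch(\cF[2]^{+})$ and $\ch(\cF[2]^{-})$ through their sum and difference. The sum is $\ch(p_{*}X_2(\cF))$, given by GRR for the finite map $p$, namely $\ch(p_{*}X_2(\cF))\td(S^{[2]})=p_{*}\bigl(\tau^{*}(\ch\cF\boxtimes\ch\cF)\,\td(X_2(S))\bigr)$, with $\td(X_2(S))$ supplied by the blow-up formula. The difference is the equivariant character $\ch^{\sigma}(p_{*}X_2(\cF))$, which localizes to $\tilde\Delta$: it is a pushforward under $p|_{\tilde\Delta}\colon\tilde\Delta\xrightarrow{\sim}E$ of a class assembled from $\td(\tilde\Delta)$, the equivariant Euler class $1+e^{-c_1(N)}$ of the $(-1)$-weighted normal bundle, and the trace of $\sigma$ on $X_2(\cF)|_{\tilde\Delta}$. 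As $\sigma$ acts on $(\cF\boxtimes\cF)|_{\Delta_S}=\cF\otimes\cF$ by the swap, this trace is $\epsilon^{*}(\ch\Sym^2\cF-\ch\bigwedge^2\cF)$ with $\epsilon\colon\tilde\Delta=\PP(T_S)\to S$, and then $\ch(\cF[2]^{\pm})=\tfrac12\bigl(\ch(p_{*}X_2(\cF))\pm\ch^{\sigma}(p_{*}X_2(\cF))\bigr)$. Extracting $\ch_1,\ch_2$ gives $\Delta(\cF[2]^{\pm})$; here the hypothesis $\chi(\End\cF)=2$ enters, since it forces $\int_S\Delta(\cF)=2(r(\cF)^2-1)$ and thereby pins down the scalar $\int_S c_2(\cF)$. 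Since $S^{[2]}$ is of Type $K3^{[2]}$, Remark~\ref{caroverb} reduces modularity to $\Delta(\cF[2]^{\pm})\in\QQ\,c_2(S^{[2]})$, which the explicit $\ch_2$ yields; the scalar is then read off by pairing with $\alpha^2$ for one $\alpha$ with $q_X(\alpha)\neq0$ and using $\int_{S^{[2]}}c_2(S^{[2]})\smile\alpha^2=30\,q_X(\alpha)$, giving $\kappa=\tfrac{r(r-1)}{12}$ with $r=r(\cF)^2$, i.e.\ \eqref{disceffe}. Then \eqref{didieffe} follows from $d(\cE)=\int_X\Delta(\cE)\smile\alpha^2/q_X(\alpha)$, and \eqref{adieffebis} from Definition~\ref{adieffe} with $c_X=1$ by the identity $a=\tfrac{r^2d}{4}=\tfrac{5}{8}r(\cF)^6(r(\cF)^2-1)$.

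\emph{Hodge type under deformation.} I would reduce the last statement to $\ch_k(\cF[2]^{\pm})$ for $k=2,3,4$. We have $\ch_2=\tfrac{1}{2r}c_1(\cF[2]^{\pm})^2-\tfrac{r-1}{24}c_2(S^{[2]})$, a combination of $c_1^2$ and $c_2(X)$; $\ch_4$ lies in $H^8(X)\cong\QQ$ and is automatically of Hodge type; and the remaining Chern class computation (Proposition~\ref{rosetta}) exhibits $\ch_3$ as a multiple of $c_1(\cF[2]^{\pm})\smile c_2(S^{[2]})$. Each of these is a cup product of the Hodge class $c_1(\cF[2]^{\pm})$ (Hodge by hypothesis) with Chern classes of $X$ (deformation invariant and always of Hodge type), hence stays of type $(k,k)$ under any deformation keeping $c_1(\cF[2]^{\pm})$ of Hodge type; invoking the Looijenga--Lunts--Verbitsky description of Hodge classes on HK fourfolds of Type $K3^{[2]}$ closes the argument. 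The condition $q_X(c_1(\cF[2]^{\pm}))\neq0$ is used only to put $\ch_3$ in this tautological form and to apply the cited result, and is expected to be removable, as the footnote notes. The main obstacle throughout is the equivariant localization bookkeeping of the ramification divisor $\tilde\Delta=\PP(T_S)$ --- its Todd and normal-bundle data and the $\Sym^2/\bigwedge^2$ splitting of $\cF\otimes\cF$ along the diagonal --- and the matching, through $\tau$ and $p_{*}$, against an explicit basis of $H^{*}(S^{[2]};\QQ)$ so as to isolate the $c_2(S^{[2]})$-component of $\Delta(\cF[2]^{\pm})$.
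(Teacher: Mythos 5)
Your argument is correct in outline but takes a genuinely different route from the paper's at both substantive points. For local freeness, the paper (Proposition~\ref{ledescend}) writes out the $\pm 1$-eigenspaces of $p_{*}(q_1^{*}\cF\otimes q_2^{*}\cF)$ explicitly at a point of the exceptional divisor; your observation that $p$ is finite flat by miracle flatness, so that $p_{*}X_2(\cF)$ is locally free and its eigensheaves are locally free direct summands of constant rank $r(\cF)^2$, is shorter and perfectly valid, but it does not produce the exact sequences \eqref{eccopi} and \eqref{eqcK}, which the paper uses both for the Chern character computation and again in Section~\ref{ambrose}. For the Chern character, the paper works upstairs on $\wt{S\times S}$, where $p^{*}$ is injective on cohomology, and applies GRR to the inclusion of the exceptional divisor via those resolutions; you instead compute the sum $\ch(\cF[2]^{+})+\ch(\cF[2]^{-})=\ch(p_{*}X_2(\cF))$ by GRR for the finite flat $p$ and the difference by Lefschetz--Riemann--Roch localization on the fixed divisor, with the correct trace $\ch\Sym^2\!\cF-\ch\bw 2\!\cF$ and denominator $1+e^{-c_1(N)}$. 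This is legitimate and buys a computation that never leaves $S^{[2]}$, at the price of the equivariant formalism; note, however, that all of the content of \eqref{disceffe} sits in the bookkeeping you explicitly defer, so as written your second paragraph is a plan for Proposition~\ref{estende} rather than a substitute for it. The endgame coincides with the paper's: $\chi(\End\cF)=2$ forces $\int_S\Delta(\cF)=2(r(\cF)^2-1)$, modularity follows from Remark~\ref{caroverb} once $\Delta(\cF[2]^{\pm})$ is exhibited as a multiple of $c_2(S^{[2]})$, and \eqref{didieffe}, \eqref{adieffebis} drop out of the second equality in~\eqref{prodcidue} together with Definition~\ref{adieffe}. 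Your treatment of the Hodge-type statement is essentially the paper's (it rests on $\ch_2,\ch_3,\ch_4$ being universal polynomials in $c_1(\cF[2]^{\pm})$ and $c_2(S^{[2]})$ when $q(c_1(\cF[2]^{\pm}))\neq 0$); writing $\ch_3$ as a multiple of $c_1\smile c_2(S^{[2]})$ rather than of $c_1^3$, as in \eqref{partre}, is harmless, since the two classes are proportional on a hyperk\"ahler fourfold of Type $K3^{[2]}$ when $q(c_1)\neq 0$.
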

The proof of Proposition~\ref{yaufever} is given in Subsection~\ref{subsec:lucilla}.
\begin{rmk}
If $S$ is a $K3$ surface, and  $\cF$ is a locally free sheaf on $S$ such that $\chi(S,\End (\cF))\not=2$, then $\cF[2]^{\pm}$ is \emph{not} modular.  
\end{rmk}
The second main result of the section is the following.
\begin{prp}\label{prp:seirigido}
Let $S$ be a projective  $K3$ surface. Let $\cF$ be a locally free sheaf on $S$ which is spherical, 
i.e.~such that $h^p(S,End^0(\cF))=0$ for all $p$,  where $End^0(\cF)\subset End(\cF)$ is the  subsheaf of traceless endomorphisms. Then for all $p$ we have
\begin{equation}
h^p(S^{[2]},End^0(\cF[2]^{\pm})) =0.
\end{equation}

\end{prp}
The proof of Proposition~\ref{prp:seirigido} is in Subsection~\ref{subsec:priscilla}. 

Below is  a remarkable consequence of Proposition~\ref{prp:seirigido}.
\begin{crl}\label{iacman}
Let $S$ be a projective  $K3$ surface and let $\cF$ be a locally free sheaf on $S$ which is spherical. Then the natural map between deformation spaces  $\Def(S^{[2]},\cF[2]^{\pm})\lra \Def(S^{[2]},\det\cF[2]^{\pm})$ is smooth.
\end{crl}
\begin{proof}
This follows from Proposition~\ref{prp:seirigido} and the main result of~\cite{man-iac-pairs}.
\end{proof}

\subsection{Another description of $\cF[2]^{\pm}$}\label{subsec:altdes}
\setcounter{equation}{0}
A different definition of the sheaf $\cF[2]^{-}$ was given in~\cite{dhov-journal}. Here we recall that construction and we give the analogous construction of $\cF[2]^{+}$. 

The isospectral Hilbert scheme $X_2(S)$ is the 
 the blow up of the diagonal in $S\times S$; we will denote it by $\wt{S\times S}$. Let $E$ be the exceptional divisor of the blow up map $\tau\colon\wt{S\times S}\to S^2$, and let  $e\in H^2(\wt{S\times S})$ its Poincar\'e dual. We let $\tau_E\colon E\to S$ be the restriction of $\tau$ to $E$.  
Let $\iota\in \Aut(\wt{S\times S})$ be the involution lifting the involution of $S^2$ exchanging the factors. Then  $S^{[2]}$ is the quotient of $\wt{S\times S}$ by the group $\la \iota\ra$, and $p\colon \wt{S\times S}\to S^{[2]}$ is the quotient map.   We recall that  the map  
$q_i\colon  \wt{S\times S}\to S$   for $i\in\{1,2\}$ is the composition of $\tau$ and the $i$-th projection    $S\times S\to S$.

The natural map $f^{\pm}\colon p^*(\cF[2]^{\pm})\to q_1^*\cF\otimes q_2^*\cF$ is an isomorphism away from $E$, in particular it is injective because $p^*(\cF[2]^{\pm})$ is torsion free. In order to write out the cokernel we notice that
there are  surjective morphisms
\begin{equation}\label{evpiu}
 q_1^*\cF\otimes q_2^*\cF \overset{\ev^+}{\lra} \tau_E^*\Sym^2\!\cF,\qquad 
 q_1^*\cF\otimes q_2^*\cF \overset{\ev^-}{\lra} \tau_E^*\bw 2\!\cF
\end{equation}
obtained by evaluating along $E$ and then projecting onto the symmetric/antisymmetric part of
$(q_1^*\cF\otimes q_2^*\cF)\vert_{ E}=\tau_E^*( \cF\otimes\cF)$. Let $\iota\colon E\hra\wt{S\times S}$ be the inclusion. 
\begin{prp}[See Lemma~4.2 in~\cite{dhov-journal}]\label{ledescend}
 The sheaves $\cF[2]^{\pm}$ are locally free of rank $r(\cF)^2$  and the following are exact sequences:
 \begin{equation}\label{eccopi}
0\lra p^*(\cF[2]^{+})\overset{f^{+}}{\lra} q_1^*\cF\otimes q_2^*\cF  \overset{\ev^{-}}{\lra}  \iota_{*}\tau_E^*(\bw 2\!\cF)\lra 0,
\end{equation}
 \begin{equation}\label{eqcK}
0\lra p^*(\cF[2]^{-})\overset{f^{-}}{\lra} q_1^*\cF\otimes q_2^*\cF \overset{\ev^+}{\lra}  \iota_{*}\tau_E^*\Sym^2\!\cF\lra 0.
\end{equation}
\end{prp}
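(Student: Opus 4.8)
The plan is to realise both $\cF[2]^{\pm}$ as the two eigensheaves of the finite double cover $p$ and then to identify the evaluation maps by an explicit computation in blow-up coordinates. Throughout set $r:=r(\cF)$.

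First I would record the geometry of $p\colon X_2(S)=\wt{S\times S}\to S^{[2]}$. Writing $\sigma$ for the generator of $\cS_2$, its lift to $X_2(S)$ acts by $-1$ on the normal direction of the diagonal, hence trivially on its projectivisation, so it fixes the exceptional divisor $E$ pointwise. Thus $p$ is the quotient map, and since $X_2(S)$ and $S^{[2]}$ are smooth of the same dimension, $p$ is finite flat of degree $2$, ramified exactly along $E$. Consequently $p_{*}X_2(\cF)$ is locally free of rank $2r^2$ and carries the descended action $\ov\rho_2^{+}$; in characteristic $0$ it splits as the direct sum of the $(\pm1)$-eigensheaves of $\ov\rho_2^{+}(\sigma)$, and $\cF[2]^{+}$ (resp. $\cF[2]^{-}$) is exactly the $(+1)$-eigensummand (resp. the $(-1)$-eigensummand, since $\ov\rho_2^{-}=\chi\cdot\ov\rho_2^{+}$ forces its invariants to be the anti-invariants of $\ov\rho_2^{+}$). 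Being a direct summand of a locally free sheaf, each $\cF[2]^{\pm}$ is locally free; its rank is constant and may be computed over the étale locus $S^{[2]}\setminus p(E)$, where it is plainly $r^2$.

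Next I would produce the maps $f^{\pm}$. The counit $p^{*}p_{*}X_2(\cF)\to X_2(\cF)$ of the $(p^{*},p_{*})$-adjunction restricts on each summand to a morphism $f^{\pm}\colon p^{*}(\cF[2]^{\pm})\to X_2(\cF)$. Over $X_2(S)\setminus E$ the map $p$ is étale, one has $p^{*}p_{*}X_2(\cF)\cong X_2(\cF)\oplus\wt\sigma^{*}X_2(\cF)$ with the counit the projection to the first factor, and each eigensummand is the graph of an isomorphism $X_2(\cF)\cong\wt\sigma^{*}X_2(\cF)$, so it maps isomorphically onto $X_2(\cF)$. Hence each $f^{\pm}$ is an isomorphism away from $E$; as the source is locally free, in particular torsion free, $f^{\pm}$ is injective and its cokernel is supported on $E$. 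It remains to identify this cokernel with $\iota_{*}\tau_E^{*}(\bw2\cF)$, resp. $\iota_{*}\tau_E^{*}\Sym^2\cF$, and to check that the induced surjection is the map $\ev^{\mp}$ of~\eqref{evpiu}.

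This final identification is the heart of the matter, and I would carry it out locally near a point of the diagonal; it is also where the bookkeeping is most delicate. Choose coordinates so that $u_i=x_i-y_i$ cut out $\Delta$; the blow-up gives a chart $u_1=t,\ u_2=tw$ with $E=\{t=0\}$, on which $\sigma$ acts by $t\mapsto-t,\ w\mapsto w$, and $s:=t^2$ is a coordinate on $S^{[2]}$. Trivialising $\cF$ by a frame $(e_i)$ near the diagonal point and using it for both factors, so that $e_i\otimes e_j:=q_1^{*}e_i\otimes q_2^{*}e_j$ is a frame of $X_2(\cF)$, the involution sends $e_i\otimes e_j$ to $e_j\otimes e_i$ while pulling functions back by $t\mapsto-t$. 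A section $\sum g_{ij}\,e_i\otimes e_j$ is then $\ov\rho_2^{+}$-invariant precisely when its symmetric (in $i,j$) part is even in $t$ and its alternating part is odd in $t$; rewriting in terms of $s$ gives the explicit local frame $\{e_i\otimes e_j+e_j\otimes e_i\}_{i\le j}$ together with $\{t(e_i\otimes e_j-e_j\otimes e_i)\}_{i<j}$ for $\cF[2]^{+}$, of cardinality $\binom{r+1}{2}+\binom{r}{2}=r^2$ (reconfirming the rank). Reducing $X_2(\cF)$ modulo $\im f^{+}$ kills the symmetric generators and the $t$-multiples of the alternating ones, leaving $\bigoplus_{i<j}(\cO_{X_2(S)}/t)\,(e_i\otimes e_j-e_j\otimes e_i)\cong\iota_{*}\tau_E^{*}(\bw2\cF)$, with quotient map the restriction to $E$ followed by the alternating projection, i.e.\ $\ev^{-}$; this is~\eqref{eccopi}. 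The $(-1)$-eigenspace computation is identical with the roles of symmetric and alternating exchanged, yielding the frame $\{e_i\otimes e_j-e_j\otimes e_i\}_{i<j}$ together with $\{t(e_i\otimes e_j+e_j\otimes e_i)\}_{i\le j}$ and the sequence~\eqref{eqcK}. The only genuine care required is the equivariant bookkeeping, namely choosing the frame compatibly for the two factors and tracking the sign $t\mapsto-t$ against the symmetric/alternating splitting, which is exactly what separates the two cases into $\bw2\cF$ and $\Sym^2\cF$.
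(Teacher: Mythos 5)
Your proof is correct and takes essentially the same approach as the paper: both arguments reduce to a local computation of the $\pm 1$-eigenspaces of the stalk of $p_{*}(q_1^{*}\cF\otimes q_2^{*}\cF)$ at a point of $p(E)$ in terms of a local equation $h$ of $E$ (your $t$), identifying them with $\bigl(\Sym^2\cF(y)\otimes\cO\bigr)\oplus\bigl(\bigwedge^2\cF(y)\otimes\cO\cdot h\bigr)$ and its mirror, and then reading off the cokernel of $f^{\pm}$ as $\ev^{\mp}$. Your finite-flat double-cover framing and explicit blow-up chart are simply a more detailed rendering of the paper's terser version of the same computation.
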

\begin{proof}
Away from $E$ the sheaves $\iota_{*}\tau_E^*(\bw 2\!\cF)$ and $ \iota_{*}\tau_E^*\Sym^2\!\cF$ are zero and the maps $f^{\pm}$ are isomorphisms. Hence~\eqref{eccopi} and~\eqref{eqcK} are exact away from $E$. In particular $\cF[2]^{\pm}$ is locally free of rank $r(\cF)^2$ away from $E$.

Let $x\in E$; then the subscheme $y:=p(x)\subset S$ is non reduced  and hence it is supported at a single point $y_{\rm red}$.
Let  
$h\in\cO_{\wt{S\times S},x}$ be a local generator of the ideal of $E$. Let $\cF(y_{\rm red})$ be the fiber of $\cF$ at $y_{\rm red}$. The $\pm 1$ eigenspaces for the action of $\rho_2^{+}$ on 
$p_{*}(q_1^*\cF\otimes q_2^*\cF)_y$ are respectively
 \begin{equation*}
\left(\Sym^2\cF(y_{\rm red})\otimes\cO_{S^{[2]},y}\right)\oplus \left(\bigwedge^2\cF(y_{\rm red})\otimes\cO_{S^{[2]},y}\cdot h \right)
\end{equation*}
and
 \begin{equation*}
\left(\bigwedge^2\cF(y_{\rm red})\otimes\cO_{S^{[2]},y}\right)\oplus \left(\Sym^2\cF(y_{\rm red})\otimes\cO_{S^{[2]},y}\cdot h \right).
\end{equation*}
Thus  $\cF[2]^{+}_x$ is free of rank $r(\cF)^2$, and we get that~\eqref{eccopi} is exact at $x$. Since the $\pm$ eigenvalues of the  action of $\rho_2^{-}$ are the $\mp$ eigenvalues of the  action of $\rho_2^{+}$ we get also that $\cF[2]^{-}_x$ is free of rank $r(\cF)^2$ and that~\eqref{eqcK} is exact at $x$. 
\end{proof}
\subsection{Preliminaries on $K3^{[n]}$}
\setcounter{equation}{0}
Let $\mu_n\colon H^2(S)\to H^2(S^{[n]})$ 
be the composition of the  natural symmetrization map $H^2(S)\to H^2(S^{(n)})$ and the pull-back $H^2(S^{(n)})\to H^2(S^{[n]})$ defined by  the Hilbert-Chow map $S^{[n]}\to S^{(n)}$. Let $\Delta^{[n]}\subset S^{[n]}$ be the prime divisor parametrizing non reduced subschemes. The class 
$\cl(\Delta^{[n]})$ is divisible by $2$ in the integral cohomology of $S^{[n]}$;  let $\delta_n\in H^{1,1}_{\ZZ}(S^{[n]})$ be the unique class such that $2\delta_n=\cl(\Delta^{[n]})$. We have an orthogonal decomposition for the BBF quadratic form
\begin{equation*}
H^2(S^{[n]};\ZZ)=\mu_n(H^2(S;\ZZ))\oplus\ZZ\delta_n.
\end{equation*}
Let  $q$ be the BBF form of $S^{[n]}$. Then for $\alpha\in H^2(S)$ we have
\begin{equation}\label{bibifu}
q(\mu_n(\alpha))=\int\limits_{S}\alpha^2,\quad q(\mu_n(\alpha),\delta_n)=0,\quad q(\delta_n)=-2(n-1).
\end{equation}
We will deal with $S^{[2]}$. In order to simplify notation we will drop the subscripts of $\delta_2$ and $\mu_2$. 
 We go through a few formulas that will be needed in the proof that $\cF[2]^{\pm}$ is a modular sheaf. Let $\eta\in H^4(S;\ZZ)$ be the orientation class. We claim that
\begin{eqnarray}
p^{*}\left(x\mu(c_1(\cF))-y\delta\right) & = & x(q_1^{*}c_1(\cF)+q_2^{*}c_1(\cF))-ye, \label{turandot} \\ 
 p^{*}c_2(S^{[2]}) & = & 24(q_1^{*}\eta+q_2^{*}\eta)-3e^2.  \label{tirocidue}
\end{eqnarray}
In fact~\eqref{turandot} follows directly from the definitions, and~\eqref{tirocidue} is the last equation on p.~84 of~\cite{devo}. 
Equation~\eqref{tirocidue} gives 
\begin{equation}\label{prodcidue}
\int_{S^{[2]}}c_2(S^{[2]})^2=828, \qquad \int_{S^{[2]}}c_2(S^{[2]})\smile\alpha^2=30 q(\alpha),\quad \alpha \in H^2(S^{[2]}).
\end{equation}
\begin{lmm}
Let $S$ be a $K3$ surface. Let $\alpha\in H^2(S)$, and  let $\alpha^2=2m_0\eta$.  Then
\begin{equation}\label{isouno}
%
%
2\left(q_1^{*}\eta\smile q_2^{*}\alpha+q_1^{*}\alpha\smile q_2^{*}\eta\right)+ 
\left(q_1^{*}\alpha+q_2^{*}\alpha\right)\smile e^2  =  0. 
\end{equation}
\end{lmm}
\begin{proof}
Since the cohomology of $\wt{S\times S}$ has no torsion it suffices to check that the cup product of the left hand side of~\eqref{isouno}  with any class in $H^2(\wt{S\times S})$ vanishes. Thus we must take the cup product with $q_{i}^{*}\beta$ where $i\in\{1,2\}$ and $\beta\in H^2(S)$, and with $e$. The easy computations are left to the reader.
\end{proof}
\subsection{Chern classes of $\cF[2]^{\pm}$}\label{subsec:lucilla}
\setcounter{equation}{0}
\begin{prp}\label{estende}
Let $S$ be a $K3$ surface, and let $\cF$ be a locally free sheaf of rank $r_0$ on $S$ such that $\chi(S,End (\cF))=2$. 
Let $h^{\pm}\in H^{1,1}_{\QQ}(S^{[2]})$ be defined by
\begin{equation}\label{accapiumeno}
h^{\pm}:=\mu(c_1(\cF))-\frac{r_0\mp 1}{2}\delta.
\end{equation}
Then 
\begin{eqnarray}
\ch_0(\cF[2]^{\pm}) & = & r_0^2, \label{parzero} \\
\ch_1(\cF[2]^{\pm}) & = & r_0 h^{\pm}, \label{paruno} \\
\ch_2(\cF[2]^{\pm})  & = & \frac{1}{2}(h^{\pm})^2   -\frac{r_0^2-1}{24}c_2(S^{[2]}). \label{pardue}
\end{eqnarray}
\end{prp}
\begin{proof}
Of course~\eqref{parzero} needs no proof.   
Let $\ch_1(\cF)^2=2m_0\eta$ where 
$\eta\in H^4(S;\ZZ)$ is the orientation class. Since $\chi(S,End (\cF))=2$, 
 HRR gives that
\begin{equation}\label{sottsass}
2r_0\ch_2(\cF)=\ch_1(\cF)^2-2(r_0^2-1)\eta=(2m_0-2(r_0^2-1))\eta.
\end{equation}
A straightforward computation shows that
\begin{equation}\label{reldiqu}
2m_0=q(h^{\pm})+\frac{(r_0\mp 1)^2}{2}.
\end{equation}
Since  the pull-back $p^{*}\colon H(S^{[2]};\ZZ)\to H^2(\wt{S\times S};\ZZ)$ is injective, we work on $\wt{S\times S}$. 
By~\eqref{eccopi} and GRR, we have 
\begin{multline}\label{carattere}
\scriptstyle  p^{*}\ch(\cF[2]^{+})    =  \\
\scriptstyle = q_1^{*}\ch(\cF)\cdot q_2^{*}\ch(\cF)- 
\iota_{*}\left(\tau_E^{*}({r_0\choose 2}+ (r_0-1)\ch_1(\cF)+(r_0-2)\ch_2(\cF)+\frac{1}{2}\ch_1(\cF)^2)\right)\cdot \left(1-\frac{1}{2}e+\frac{1}{6}e^2-\frac{1}{24}e^3\right) = \\
 \scriptstyle  = q_1^{*}\left(r_0+\ch_1(\cF)+\ch_2(\cF)\right)\cdot q_2^{*}\left(r_0+\ch_1(\cF)+\ch_2(\cF)\right) - \\
 \scriptstyle -\left({r_0\choose 2}+\frac{r_0-1}{2}\sum\limits_{i=1}^2 q_i^{*}\ch_1(\cF)+
 \frac{1}{4}\sum\limits_{i=1}^2 (q_i^{*}((2r_0-4)\ch_2(\cF)+\ch_1(\cF)^2)\right)
\cdot\left(e-\frac{1}{2}e^2+\frac{1}{6}e^3-\frac{1}{24}e^4\right).
\end{multline}
(Cup product is denoted by $\cdot$ in order to save space.) Equation~\eqref{paruno} for $\cF[2]^{+}$ follows at once. Using~\eqref{sottsass},   we get that
\begin{eqnarray*}
p^{*}\ch_2(\cF[2]^{+})  =   -(r_0^2-1)(q_1^{*}\eta+q_2^{*}\eta)+\frac{1}{2}(q_1^{*}\ch_1(\cF)^2+q_2^{*}\ch_1(\cF)^2)+ \\
 +q_1^{*}\ch_1(\cF)\cdot q_2^{*}\ch_1(\cF)-\frac{r_0-1}{2}e\cdot(q_1^{*}\ch_1(\cF)+q_2^{*}\ch_1(\cF))+\frac{1}{2}{r_0\choose 2}e^2.
\end{eqnarray*}
By~\eqref{turandot} and \eqref{tirocidue}, we get that~\eqref{pardue} holds for $\cF[2]^{+}$. 

The computations  for $\cF[2]^{-}$   are similar. 
\end{proof}
Now we prove Proposition~\ref{yaufever}. The sheaf $\cF[2]^{\pm}$ is locally free of rank $r(\cF)^2$ by Proposition~\ref{ledescend}. Equation~\eqref{disceffe} holds by Proposition~\ref{estende}. It follows that $\cF[2]^{\pm}$ is modular (see Remark~\ref{semphodge}). Equations~\eqref{didieffe} and~\eqref{adieffebis} follow from~\eqref{disceffe} and  the second equality in~\eqref{prodcidue}.

\subsection{Cohomology groups via the MacKay correspondence}\label{subsec:priscilla}
\setcounter{equation}{0}
We keep notation introduced in Subsection~\ref{subsec:altdes}. In particular
$\tau\colon\wt{S\times S}\to S^2$ is  the blow up of the diagonal and $p\colon \wt{S\times S}\to S^{[2]}$ is the quotient map for the   action of $\ZZ/(2)$ on $\wt{S\times S}$ which lifts the permutation action on $S^2$.  Let $D^{b}_{\ZZ/(2)}(S^2)$ be the bounded derived category of the (abelian) category of $\ZZ/(2)$-equivariant coherent sheaves on $S^2$. By the MacKay correspondence proved by Haiman and Bridgeland-King-Reid 
the functor
\begin{equation}
p_{*}^{\ZZ/(2)}\circ \tau^{*}\colon D^{b}_{\ZZ/(2)}(S^2)\lra D^{b}(S^{[2]})
\end{equation}
is an equivalence, see Proposition~2.8 in~\cite{krug}. (Here $p_{*}^{\ZZ/(2)}$ is the derived functor  of  the  functor $\Coh(\wt{S\times S})\to \Coh(S^{[2]})$ mapping $\cF$ to the $\ZZ/(2)$-invariant subsheaf of $p_{*}(\cF)$).
We have 
\begin{equation}
p_{*}^{\ZZ/(2)}\circ \tau^{*}(\cF^{\boxtimes 2})=\cF^{+},\qquad 
p_{*}^{\ZZ/(2)}\circ \tau^{*}(\cF^{\boxtimes 2}\otimes{\mathfrak a})=\cF^{-},
\end{equation}
where ${\mathfrak a}$ is the sign representation of $\ZZ/(2)$. 
It follows by the K\"unneth formula that
\begin{equation}\label{unduetre}
\Ext^{*}(\cF^{\pm},\cF^{\pm})\cong \Ext^{*}(\cF^{\boxtimes 2},\cF^{\boxtimes 2})^{\ZZ/(2)}\cong
\Sym^2\left(\Ext^{*}(\cF,\cF)\right)
\end{equation}
Now we prove Proposition~\ref{prp:seirigido}. Since  $H^{p}(S,End(\cF))=H^{p}(S,End^0(\cF))\oplus H^p(S,\cO_S)$ and $S$ is a $K3$, the cohomology  space $\Ext^{p}(\cF,\cF)$ is $1$ dimensional if $p\in\{0,2\}$ and vanishes otherwise. Hence
\begin{equation}
\Sym^2\left(\Ext^{*}(\cF,\cF)\right)=\Sym^2\left(\CC[0]\oplus\CC[-2]\right)=\CC[0]\oplus\CC[-2]\oplus\CC[-4].
\end{equation}
By~\eqref{unduetre} we get that $H^{p}(S^{[2]},End(\cF))$  is $1$ dimensional if $p\in\{0,2,4\}$ and vanishes otherwise.  Proposition~\ref{prp:seirigido} follows because  $h^{p}(S^{[2]},\cO_{S^{[2]}})=1$ for   $p\in\{0,2,4\}$ and 
$H^{p}(S^{[2]},End(\cF^{\pm}))=H^{p}(S^{[2]},End^0(\cF^{\pm}))\oplus H^p(S^{[2]},\cO_{^{[2]}})$.

\section{Basic modular sheaves on the Hilbert square of an elliptic $K3$}\label{ambrose}
\subsection{Contents of the section}
\setcounter{equation}{0}
We will study the vector bundle $\cF[2]^{+}$ for $\cF$ a spherical vector bundle on a an elliptic $K3$ surface $S$ with Picard number $2$ (analogous results hold for $\cF[2]^{-}$). Given positive integers $e,r_0,i$ satisfying the hypotheses of Theorem~\ref{unicita}, one can choose suitable $S$ and $\cF$ such that the equations in~\eqref{ele} 
hold  for $\cE:=\cF[2]^{+}$ - see Subsection~\ref{traduzione}. Next notice that there is a Lagrangian fibration 
$\pi\colon S^{[2]}\to\PP^2$ associated to the elliptic  fibration $S\to\PP^1$. 
In Section~\ref{subsec:restringo} we analyze the restriction of $\cF[2]^{\pm}$ to  (scheme theoretic) fibers of $\pi$. The key result   is Proposition~\ref{acca20}, which states that the restriction to every Lagrangian fiber is simple. Along the way we make another key observation:  the restriction to a generic Lagrangian fiber is slope stable, see Proposition~\ref{semgen}.
\subsection{Elliptic $K3$ surfaces and stable rigid vector bundles}\label{vectlagr}
\setcounter{equation}{0}
We recall the notions of Mukai vector and Mukai pairing for a $K3$ surface $S$.  If $\cF$ is a sheaf over $S$, the \emph{Mukai vector} of $\cF$ is $v(\cF):=\ch(\cF)\td(S)^{1/2}$. Moreover the  bilinear symmetric \emph{Mukai pairing} $\la \cdot,\cdot\ra$ on $H(S)$ has the following property: if  $\cF,\cE$ 
are sheaves on $S$ then
\begin{equation}\label{mukpair}
\la v(\cF),v(\cE)\ra=-\chi(\cF,\cE):=-\sum_{i=0}^2(-1)^i\dim\Ext^i(\cF,\cE).
\end{equation}
Let $S$ be a $K3$ surface with an elliptic fibration $S\to \PP^1$; we let $C$ be a  fiber of the elliptic fibration.  The 
claim below follows from surjectivity of the period map for $K3$ surfaces.
\begin{clm}\label{eccoell}
Let $m_0,d_0$ be positive natural numbers. There exist $K3$ surfaces $S$  with an elliptic fibration $S\to \PP^1$ such that 
\begin{equation}\label{neronsevero}
H^{1,1}_{\ZZ}(S)=\ZZ[D]\oplus\ZZ[C], \quad
D\cdot D=2m_0,\quad D\cdot C=d_0.
\end{equation}
\end{clm}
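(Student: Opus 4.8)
The plan is to produce $S$ by lattice theory together with surjectivity of the period map. First I would fix the standard $K3$ lattice $\Lambda:=U^{\oplus 3}\oplus E_8(-1)^{\oplus 2}$ of signature $(3,19)$ and build an abstract model of the desired N\'eron--Severi group. Let $M$ be the rank-two lattice with ordered basis $\{D,C\}$ and Gram matrix $\left(\begin{smallmatrix}2m_0 & d_0\\ d_0 & 0\end{smallmatrix}\right)$. It is even, since every value $(aD+bC)^2=2(a^2m_0+abd_0)$ is even; its determinant is $-d_0^2<0$; and because $D^2=2m_0>0$ it has signature $(1,1)$.

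Next I would embed $M$ primitively into $\Lambda$ \emph{explicitly}, which avoids invoking Nikulin's existence criterion. Writing two hyperbolic summands as $U_1=\ZZ e_1\oplus\ZZ f_1$ and $U_2=\ZZ e_2\oplus\ZZ f_2$ (with $e_i\cdot f_i=1$ and all other products zero), I would set
\[
C\longmapsto e_1,\qquad D\longmapsto d_0 f_1+e_2+m_0 f_2 .
\]
A direct check gives $C^2=0$, $D\cdot C=d_0$ and $D^2=2m_0$, so this is an isometric embedding of $M$; it is primitive because the coefficient of $e_2$ in $aC+bD$ is $b$ and that of $e_1$ is $a$, so any integral divisibility relation forces $a,b$ to be integral. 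Let $N:=M^{\bot}\subset\Lambda$; it is even of signature $(2,18)$.

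I would then run the standard genericity argument. Consider the period domain $\Omega_N:=\{[\omega]\in\PP(N\otimes\CC)\mid \omega\cdot\omega=0,\ \omega\cdot\bar\omega>0\}$, a nonempty complex manifold of dimension $\rk N-2=18$ (nonempty precisely because $N$ has two positive squares). For each $\mu\in N\setminus\{0\}$ the set $\{[\omega]\in\Omega_N\mid\omega\cdot\mu=0\}$ is a proper closed analytic subvariety; as $N$ is countable, a Baire-category argument yields $[\omega]\in\Omega_N$ lying in none of them. For such $\omega$ and any $\lambda\in\Lambda$ with $\lambda\cdot\omega=0$, decomposing $\lambda=\lambda_M+\lambda_N$ over $\QQ$ gives $\lambda_N\cdot\omega=0$, hence $\lambda_N=0$ and $\lambda\in M_{\QQ}\cap\Lambda=M$ by primitivity. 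By surjectivity of the period map there is a marked $K3$ surface $(S,\phi)$ with $\phi_{\CC}(H^{2,0}(S))=\CC\,\omega$, and then $H^{1,1}_{\ZZ}(S)=\phi^{-1}(\{\lambda\mid\lambda\cdot\omega=0\})\cong M$. In particular $\NS(S)\cong M$ contains the class $D$ with $D^2=2m_0>0$, so $S$ is projective.

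Finally I would turn the lattice-theoretic isotropic class $C$ into an honest elliptic pencil. The class $C\in\NS(S)$ is primitive and isotropic; after replacing $\phi$ by its composition with a sign change and with reflections in the Weyl group generated by the $(-2)$-classes, I may assume $C$ is nef. A nef primitive isotropic divisor class on a projective $K3$ surface is the fiber class of an elliptic fibration $\pi\colon S\to\PP^1$ given by the pencil $|C|$. Since these reflections are isometries of $\NS(S)$, the transformed class $D$ still satisfies $D^2=2m_0$ and $D\cdot C=d_0$, so \eqref{neronsevero} holds with $C$ the fiber class of $\pi$. \textbf{The main obstacle} is twofold: guaranteeing the \emph{equality} $H^{1,1}_{\ZZ}(S)=M$ rather than some strict overlattice (handled by the genericity/Baire step), and upgrading $C$ from a mere isotropic lattice vector to a genuine elliptic fibration (handled by the nefness reduction together with the structure theorem for nef isotropic classes); the latter is the step that actually produces the fibration demanded by the statement.
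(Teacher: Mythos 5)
Your proof is correct and follows exactly the route the paper intends: the paper offers no argument beyond the one-line remark that the claim ``follows from surjectivity of the period map for $K3$ surfaces'', and your write-up (explicit primitive embedding of the rank-two lattice into $U^{\oplus 3}\oplus E_8(-1)^{\oplus 2}$, a generic period point to force $\NS(S)=M$, and the Weyl-group/nefness reduction turning the isotropic class into a genuine elliptic pencil) is precisely the standard elaboration of that remark. No gaps.
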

The result below provides us with stable vector bundles $\cF$ on elliptic $K3$ surfaces such that $\cF[2]^{\pm}$ has good properties - see Proposition~\ref{acca20}.
\begin{prp}\label{rigsuk}
Let $m_0,r_0,s_0$ be positive integers such that  
 $m_0+1=r_0s_0$. Suppose that $d_0$ is an integer coprime to $r_0$, and that
 \begin{equation}\label{digrande}
d_0>\frac{(2m_0+1)r_0^2(r_0^2-1)}{4}.
\end{equation}
Let $S$ be an elliptic $K3$ surface as in Claim~\ref{eccoell}.
 Then there exists a vector bundle $\cF$ on $S$ such that the following hold:
 \begin{enumerate}
\item
 $v(\cF)=(r_0,D,s_0)$,
\item
$\chi(S,End(\cF))=2$,
\item
$\cF$  is $L$ slope-stable for any polarization $L$ of $S$, 
\item
and the restriction of $\cF$ to \emph{every} fiber of  the  elliptic fibration $S\to \PP^1$ is slope-stable. 
\end{enumerate}
(Notice that every fiber is irreducible by our assumptions on 
$\NS(S)$, hence slope-stability of a sheaf on a fiber is well defined, i.e.~independent of the choice of a polarization.)
\end{prp}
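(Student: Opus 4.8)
The plan is to produce $\cF$ as the unique rigid stable bundle with Mukai vector $v:=(r_0,D,s_0)$, and then to exploit the fact that $S$ has Picard number $2$ with the explicit lattice \eqref{neronsevero}, together with the lower bound \eqref{digrande}, to promote stability to all polarizations and to every fiber. First I would record the numerics. Since $\sqrt{\td S}=(1,0,1)$, the condition $v(\cF)=(r_0,D,s_0)$ is equivalent to $r(\cF)=r_0$, $c_1(\cF)=D$ and $\ch_2(\cF)=s_0-r_0$; as $D$ belongs to the basis in \eqref{neronsevero} the vector $v$ is primitive, and $m_0+1=r_0s_0$ gives $\langle v,v\rangle=D^2-2r_0s_0=2m_0-2(m_0+1)=-2$. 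By \eqref{mukpair} this is exactly $\chi(\cF,\cF)=2$, so any stable sheaf with this Mukai vector automatically satisfies (1) and (2) and is rigid. For existence and uniqueness I would invoke the theory of moduli of sheaves on a $K3$ surface (Mukai): for a $v$-generic polarization the moduli space has dimension $\langle v,v\rangle+2=0$ and consists of a single stable sheaf $\cF$, which is locally free because a rigid stable sheaf on a $K3$ equals its double dual $\cF^{\vee\vee}$.

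For (3) the key point is that $S$ carries no classes of small negative square. A direct computation of the discriminant gives $\Delta(\cF)=-2r_0\ch_2(\cF)+c_1(\cF)^2=2(r_0^2-1)$, so $\cF$ is (trivially) modular on the surface $S$ with $a(\cF)=\tfrac12 r_0^2(r_0^2-1)$ in the sense of Definition~\ref{adieffe}. I claim there is \emph{no} $\lambda=aD+bC\in\NS(S)$ with $-a(\cF)\le q(\lambda)<0$. Indeed $q(\lambda)=2a(am_0+bd_0)$, so $q(\lambda)<0$ forces $a\ne0$, $am_0+bd_0\ne0$ and (since otherwise $q(\lambda)=2a^2m_0\ge0$) also $b\ne0$; then
\[
|q(\lambda)|=2|a|\,|am_0+bd_0|\le a(\cF)=\tfrac12 r_0^2(r_0^2-1)
\]
would give $|a|\le\tfrac14 r_0^2(r_0^2-1)$ and $|am_0+bd_0|\le\tfrac14 r_0^2(r_0^2-1)$, whence
\[
d_0\le|b|d_0\le|am_0+bd_0|+m_0|a|\le(m_0+1)\tfrac14 r_0^2(r_0^2-1)\le\tfrac{(2m_0+1)r_0^2(r_0^2-1)}{4}<d_0,
\]
contradicting \eqref{digrande}. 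Now if $\cF$ were $L_0$-stable but not $L_1$-stable, Proposition~\ref{polint} would make it strictly $h$-slope-semistable for some ample $h$, and Proposition~\ref{propsemi} would then produce a destabilizing $\cE\subset\cF$ with $-a(\cF)\le q(\lambda_{\cE,\cF})\le0$, equality only if $\lambda_{\cE,\cF}=0$. The displayed impossibility rules out $q(\lambda_{\cE,\cF})<0$, while $\lambda_{\cE,\cF}=0$ forces $r_0c_1(\cE)=r(\cE)D$, impossible for $0<r(\cE)<r_0$ as $D$ is primitive. Hence $\cF$ is $L$-slope-stable for every polarization $L$, which is (3).

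Finally (4). Because $\deg(\cF|_C)=D\cdot C=d_0$ and $r(\cF|_C)=r_0$ are coprime, on each irreducible fiber $C$ slope-semistability and slope-stability coincide, so it suffices to prove semistability on every fiber. For the generic fiber this is immediate: since $S\to\PP^1$ is a Lagrangian fibration of the $K3$ surface (the case $n=1$ of Section~\ref{camere}) and, by the previous paragraph, no $a(\cF)$-wall exists, every polarization is $a(\cF)$-suitable and Proposition~\ref{lagstab}(ii) gives that $\cF$ restricts to a slope-semistable, hence stable, bundle on the generic fiber. The remaining and genuinely delicate point is to exclude instability on the finitely many special fibers $C_0$ (all irreducible by \eqref{neronsevero}). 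Here I would argue by contradiction: a maximal destabilizing subsheaf $\cA_0\subset\cF|_{C_0}$, of rank $<r_0$ and slope $>d_0/r_0$, should be propagated to a global saturated subsheaf $\cE\subset\cF$ for which $q(\lambda_{\cE,\cF},C)>0$ as in Lemma~\ref{contreffe}; combined with $q(\lambda_{\cE,\cF},h)<0$ (from the stability proved in (3)) and the Bogomolov estimate of Lemma~\ref{mercedes}, this again places $\lambda_{\cE,\cF}$ among the negative-square classes excluded above, contradicting \eqref{digrande}.

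The main obstacle is exactly this last propagation step: unlike the generic fiber, a destabilizing subsheaf living only on a special (possibly singular) fiber $C_0$ need not be the restriction of a global subsheaf, so one cannot simply pass to the base change $S\times_{\PP^1}\CC(\PP^1)$ as in the proof of Proposition~\ref{lagstab}(ii). I expect to resolve it either by extending $\cA_0$ across $C_0$ through an elementary-modification/saturation argument (using $C^2=0$ and irreducibility of all fibers) while tracking the resulting change in $\Delta$, or by constructing $\cF$ from the outset as a relative Fourier--Mukai transform along $S\to\PP^1$ of a rigid object on the relative compactified Jacobian---a route which, thanks to $\gcd(r_0,d_0)=1$, builds fiberwise stability on \emph{every} fiber into the construction and leaves only the identification of $v(\cF)$ and of global stability to check.
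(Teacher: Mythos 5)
Your treatment of items (1)--(3) is correct and essentially the paper's own argument: existence of a rigid bundle with $v^2=-2$, the computation $\Delta(\cF)=2(r_0^2-1)$, hence $a(\cF)=\tfrac12 r_0^2(r_0^2-1)$, and the verification (your displayed chain of inequalities is exactly Lemma~\ref{nocamere} specialized to the lattice \eqref{neronsevero}) that \eqref{digrande} excludes all $a(\cF)$-walls, so that Propositions~\ref{polint} and~\ref{propsemi} force slope-stability for every polarization. Likewise your derivation of stability on the \emph{generic} fiber from Proposition~\ref{lagstab}(ii) and $\gcd(r_0,d_0)=1$ matches the paper.

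The gap you flag in item (4) is genuine, and the contradiction mechanism you sketch for it would not work: a destabilizing subsheaf $\cA_0\subset\cF|_{C_0}$ living on a single special fiber does not propagate to a global subsheaf $\cE\subset\cF$ whose class $\lambda_{\cE,\cF}$ you can place among the excluded negative-square classes --- indeed any saturated extension of $\cA_0$ has $c_1$ differing from a multiple of $D$ only by a multiple of the fiber class $C$, and such $\lambda$'s pair to zero with $C$, so Lemma~\ref{contreffe} gives you nothing on a fixed fiber. The paper's resolution is your first alternative, carried out on the \emph{quotient} side and measured by the Mukai pairing rather than by $q_X$: take a desemistabilizing quotient $\cF|_{C_0}\twoheadrightarrow\cB$ (so $\mu(\cB)<\mu(\cF|_{C_0})$) and form the elementary modification $0\to\cE\to\cF\to i_{0,*}\cB\to0$. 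A direct computation with the Mukai pairing gives $v(\cE)^2=v(\cF)^2+2r_0\,r(\cB)\bigl(\mu(\cB)-\mu(\cF|_{C_0})\bigr)<-2$, hence $\chi(\cE,\cE)>2$ by \eqref{mukpair}. On the other hand $\cE\cong\cF$ away from $C_0$, so $\cE$ restricts to a slope-stable bundle on the generic elliptic fiber; this forces $\Hom(\cE,\cE)=\CC\,\Id_\cE$, hence $\dim\Ext^2(\cE,\cE)=1$ by Serre duality, and therefore $\chi(\cE,\cE)=2-\dim\Ext^1(\cE,\cE)\le2$ --- a contradiction. No extension of $\cA_0$ across $C_0$, no Fourier--Mukai construction, and no tracking of $\Delta$ through repeated modifications is needed; a single modification and the rigidity bound $v^2\ge-2$ for simple sheaves on a $K3$ close the argument.
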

\begin{proof}
(1): The Mukai vector $v=(r_0,D,s_0)\in H(S)$ has square $-2$. Let $L_0$ be  a polarization of $S$. By Theorem 2.1 in~\cite{gulexe} there exists an  $L_0$ slope-semistable  vector bundle $\cF$ on $S$ with $v(\cF)=(r_0,D,s_0)$.

(2): $\chi(S,End(\cF))=2$ because $v(\cF)^2=-2$.

(3): We claim that $\cF$ is actually   $L_0$ slope-stable, and that it is $L$ slope-stable for \emph{any} polarization $L$ of $S$. This follows from the well known results on the stability chamber decomposition of $\Amp(S)$ which have been extended to arbitrary HK varieties  in Section~\ref{camere}. In fact  $v(\cF)^2=-2$ and~\eqref{dueversioni}   give that $\Delta(\cF)=2(r_0^2-1)$. It follows that $a(\cF)=\frac{r_0^2(r_0^2-1)}{2}$, and hence
by Lemma~\ref{nocamere} and~\eqref{digrande}   there is no $a(\cF)$ wall. Thus
there is a single $a(\cF)$-chamber.  

(4): By Proposition~\ref{lagstab}  the restriction of  $\cF$ to a generic  fiber $C$ of the elliptic fibration is slope-semistable (because there is a single $a(\cF)$-chamber). Since  $d_0$, which is the degree of $\cF_{|C}$,  is  coprime to $r_0$, which is the rank of $\cF_{|C}$, it follows that the 
 restriction of $\cF$ to a generic  fiber $C$ is slope-stable. Suppose that there exists a fiber $C_0$ such that $\cF_{|C_0}$ is not slope-stable. Then $\cF_{|C_0}$ is slope-unstable because $d_0$ is coprime to $r_0$. Let 
$\cF_{|C_0}\twoheadrightarrow\cB$ be a desemistabilizing quotient, i.e.~$0<r(\cB)<r$, and 
$\mu(\cB)-\mu(\cF_{|C_0})<0$. Let $\cE$ be the elementary modification of $\cF$ associated to the quotient $\cB$, i.e.~the (torsion free) sheaf on $S$ fitting into the exact sequence 
\begin{equation}
0\lra \cE\lra \cF\lra i_{0,*}\cB \lra 0,
\end{equation}
where $i_0\colon C_0\hra S$ is the inclusion map. Then 
\begin{equation}
v(\cE)^2=v(\cF)^2+2r_0\cdot r(\cB)\cdot\left(\mu(\cB)-\mu(\cF_{|C_0})\right)<v(\cF)^2=-2.
\end{equation}
By~\eqref{mukpair} it follows that $\chi(\cE,\cE)>2$. On the other hand, since $\cE$ is isomorphic to $\cF$ outside $C_0$,  the restriction of $\cE$ to a generic elliptic fiber is slope-stable, and this implies that $\Hom(\cE,\cE)=\CC\Id_{\cE}$. By Serre duality it follows that  $\dim\Ext^2(\cE,\cE)=1$. The last two facts  contradict the inequality $\chi(\cE,\cE)>2$.
\end{proof}
\subsection{Dictionary}\label{traduzione}
\setcounter{equation}{0}
We show that, given positive integers $i,e,r_0$ satisfying the hypotheses of Theorem~\ref{unicita}, we get a vector bundle  $\cE$ on the Hilbert square of a suitable $K3$ surface such that the equations in~\eqref{ele} hold. First we need the result below. The elementary proof is  left to the reader.
\begin{lmm}\label{champollion}
Let $i\in\{1,2\}$. Let  $e,r_0$ be positive natural numbers such  that $r_0\equiv i\pmod{2}$ and~\eqref{econ} holds. Let 
\begin{equation}\label{emmezero}
m_0:=
\begin{cases}
\frac{e}{2}+\frac{(r_0-1)^2}{4} & \text{if $r_0$ is odd,} \\
\frac{e}{8}+\frac{(r_0-1)^2}{4} & \text{if $r_0$ is even.} 
\end{cases}
\end{equation}
($m_0$ is an integer by~\eqref{econ}.) There exists an integer $s_0$ such that $m_0+1=r_0 s_0$. 
\end{lmm}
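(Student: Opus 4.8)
The plan is to reduce the two assertions—first the parenthetical integrality of $m_0$, then the existence of $s_0$, i.e.\ the divisibility $r_0\mid m_0+1$—to congruences extracted from \eqref{econ}, handling the odd and even cases for $r_0$ separately. First I would dispose of integrality. When $r_0$ is odd, $(r_0-1)^2/4\in\ZZ$, so by \eqref{emmezero} it suffices that $e$ be even; reducing the two relevant lines of \eqref{econ} modulo $2$ does this, since $\tfrac12(r_0-5)$ (for $r_0\equiv1$) and $-\tfrac12(r_0+5)$ (for $r_0\equiv3$) are both even and the moduli $2r_0$ are even. When $r_0$ is even, $(r_0-1)^2$ is odd, and $m_0=\frac{e+2(r_0-1)^2}{8}$, so integrality is equivalent to $e\equiv-2(r_0-1)^2\equiv 6\pmod 8$ (using that an odd square is $\equiv1\pmod8$). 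Reducing the first and third lines of \eqref{econ} modulo $8$—where $8r_0\equiv0$, and $4r_0\equiv0\pmod8$ because $4\mid r_0$—gives $e\equiv 6\pmod 8$ in both even cases.

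For the existence of $s_0$ I would prove $r_0\mid m_0+1$, again by the four-way case split of \eqref{econ}. For $r_0$ odd, since $\gcd(4,r_0)=1$ the divisibility $r_0\mid m_0+1$ is equivalent to $r_0\mid 4(m_0+1)$, and $4(m_0+1)=2e+(r_0-1)^2+4=2e+r_0^2-2r_0+5\equiv 2e+5\pmod{r_0}$. So the task reduces to checking $2e\equiv-5\pmod{r_0}$: for $r_0\equiv1\pmod4$ line two of \eqref{econ} gives $2e\equiv r_0-5\equiv-5$, and for $r_0\equiv3\pmod4$ line four gives $2e\equiv-(r_0+5)\equiv-5$. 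Both hold.

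For $r_0$ even I would set $r_0=2\rho$; a short computation collapses \eqref{emmezero} to $m_0+1=\frac{e+10}{8}+\rho(\rho-1)$, where $\frac{e+10}{8}\in\ZZ$ because $e\equiv6\pmod8$. The point is that $\rho(\rho-1)\equiv0\pmod{2\rho}$ when $\rho$ is odd, while $\rho(\rho-1)\equiv\rho\pmod{2\rho}$ when $\rho$ is even. If $r_0\equiv2\pmod4$ then $\rho$ is odd and line three of \eqref{econ} gives $e+10\equiv0\pmod{8r_0}$, hence $\frac{e+10}{8}\equiv0\pmod{r_0}$ and $m_0+1\equiv0$. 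If $r_0\equiv0\pmod4$ then $\rho$ is even and line one gives $e+10\equiv4r_0\pmod{8r_0}$, hence $\frac{e+10}{8}\equiv\rho\pmod{r_0}$ and $m_0+1\equiv2\rho=r_0\equiv0\pmod{r_0}$. In all four cases $r_0\mid m_0+1$, so $s_0:=(m_0+1)/r_0$ is the desired integer (and it is positive since $e,r_0>0$).

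The only genuine subtlety—the hard part—is the even case: because $\gcd(8,r_0)>1$ one cannot simply clear the denominator $8$ against $r_0$ as in the odd case, so one must keep track of the residue of $\rho(\rho-1)$ modulo $2\rho$. It is precisely the parity of $\rho$, i.e.\ whether $r_0\equiv2$ or $0\pmod4$, that dictates whether this residue is $0$ or $\rho$, and this is exactly what matches the two distinct even-$r_0$ lines of \eqref{econ}. This dovetailing explains why the congruences in \eqref{econ} must take the specific shapes they do.
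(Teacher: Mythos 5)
Your proof is correct; the paper explicitly leaves this lemma to the reader, and your direct four-case verification of the congruences in \eqref{econ} (clearing the denominator $4$ in the odd case, and tracking $\rho(\rho-1)$ modulo $2\rho$ via the parity of $\rho=r_0/2$ in the even case) is exactly the intended elementary argument. All computations check out, including the integrality of $m_0$ and the positivity remark on $s_0$.
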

Let $i,e,r_0,m_0$ be as in Lemma~\ref{champollion}.
Suppose that $d_0$ is an integer  coprime to $r_0$  such that~\eqref{digrande} holds. Let $S$ be an elliptic $K3$ surface as in Claim~\ref{eccoell}, and let $\cF$ be a vector bundle $\cF$ on $S$ as in Proposition~\ref{rigsuk} (it exists by Lemma~\ref{champollion}).
Let  $\cE:=\cF[2]^{+}$ and let $h^{+}$ be as in~\eqref{accapiumeno}.  Lastly let
\begin{equation}\label{poler}
h:=ih^{+}.
\end{equation}
\begin{prp}\label{rosetta}
Keeping  notation  and hypotheses as above, the following hold:
\begin{enumerate}
\item
 $h$ is a primitive cohomology class, $q(h)=e$ and
$q(h,H^2(X;\ZZ))=(i)$,
\item
$r(\cE)$, $c_1(\cE)$ and $\Delta(\cE)$ are given by~\eqref{ele}.
\end{enumerate}
\end{prp}
\begin{proof}
Item~(1) holds because $\cl(D)$ is primitive, and  by Equation~\eqref{reldiqu}. 
Item~(2) follows from  Equations~\eqref{parzero}--\eqref{pardue} and straightforward computations.
\end{proof}
\begin{rmk}
In Proposition~\ref{rosetta} we have set  $\cE:=\cF[2]^{+}$. One gets an analogous result letting $\cE=\cF[2]^{-}$ (one needs to  replace
$\frac{(r_0-1)^2}{4}$   by $\frac{(r_0+1)^2}{4}$ in~\eqref{emmezero}).
\end{rmk}
\subsection{Restriction of  $\cF[2]^{\pm}$ to Lagrangian fibers}\label{subsec:restringo}
\setcounter{equation}{0}
\begin{dfn}\label{laghilb}
If $S\to\PP^1$ is an elliptically fibered $K3$ surface,  the \emph{associated Lagrangian fibration} is the   composition 
\begin{equation}
S^{[2]}\to S^{(2)}\to (\PP^1)^{(2)}\cong \PP^2.
\end{equation}
\end{dfn}
In the present subsection we  prove the following result.
\begin{prp}\label{acca20}
Let $S$ be a $K3$ surface with an elliptic fibration $S\to \PP^1$ as in Claim~\ref{eccoell}, and let  
$\pi\colon S^{[2]}\to\PP^2$ be the associated Lagrangian fibration. Let $\cF$ be a vector bundle on $S$ as in Proposition~\ref{rigsuk}. Then the restriction of $\cF[2]^{\pm}$ to every (scheme theoretic) fiber of $\pi$ is simple.
\end{prp}
The proof is given at the end of the  subsection. 

Let $S$ and  $\pi\colon S^{[2]}\to  \PP^2$ be as in Proposition~\ref{acca20}. We describe the fibers of $\pi$. 

For $x\in\PP^1$ we let $C_x$ be the fiber  over $x$ of the elliptic fibration $S\to\PP^1$. The set theoretic fibers of $\pi$ are as follows:
\begin{equation}
\pi^{-1}(x_1+x_2)=
\begin{cases}
C_{x_1}\times C_{x_2} & \text{if $x_1\not= x_2$,} \\
C_x^{(2)}\cup\PP(\Theta_{S|C_x})  & \text{if $x_1= x_2=x$.} 
\end{cases}
\end{equation}
As is easily checked the fiber is reduced if $x_1\not= x_2$. On the other hand,  the fiber over $2x$ is not reduced. In order to prove it, we introduce some notation.
Let  $V^{[2]}\subset S^{[2]}$ be the prime divisor parametrizing vertical 
subschemes $Z$ (i.e.~such that the scheme theoretic image $f(Z)$ is a reduced point), let $\Delta^{[2]}\subset S^{[2]}$ be the prime divisor parametrizing non reduced subschemes,   and let $D^{(2)}\subset (\PP^1)^{(2)}$ be the prime divisor parametrizing multiple $0$-cycles $2x$. 
\begin{prp}\label{duevu}
Let $S$ be a $K3$ surface with an elliptic fibration $S\to \PP^1$  as in Claim~\ref{eccoell}. With notation as above, we have the equality of (Cartier) divisors $\pi^{*}(D^{(2)})=2V^{[2]}+ \Delta^{[2]}$.
\end{prp}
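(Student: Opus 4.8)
The plan is to verify the identity by comparing the multiplicities of the two effective Cartier divisors $\pi^{*}(D^{(2)})$ and $2V^{[2]}+\Delta^{[2]}$ along each of the two prime divisors $V^{[2]}$ and $\Delta^{[2]}$. Both are effective Cartier divisors on the smooth variety $S^{[2]}$, and the description of the set-theoretic fibres of $\pi$ given above shows that $\pi^{-1}(D^{(2)})$ is supported exactly on $V^{[2]}\cup\Delta^{[2]}$: a reduced subscheme $\{p,q\}$ maps into $D^{(2)}$ iff $f(p)=f(q)$, and every non-reduced subscheme maps to a point $2f(p)\in D^{(2)}$. Since $S^{[2]}$ is smooth, a divisor is determined by its multiplicity along each prime divisor, so it suffices to show that these multiplicities are $2$ along $V^{[2]}$ and $1$ along $\Delta^{[2]}$. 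Throughout I use that, in local symmetric coordinates $(e_1,e_2)$ on $(\PP^1)^{(2)}$ near a point $2x_0$ (with $e_1,e_2$ the elementary symmetric functions of a local coordinate on $\PP^1$ at $x_0$), the reduced diagonal $D^{(2)}$ is cut out by the discriminant $e_1^2-4e_2$.

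First I would compute the multiplicity along $V^{[2]}$. Near a general point of $V^{[2]}$, namely a reduced subscheme $\{p,q\}$ with $p\neq q$ lying on a common smooth fibre $C_{x_0}$, the Hilbert--Chow morphism is an isomorphism and $S^{[2]}$ is locally isomorphic to $S\times S$ near $(p,q)$; under this identification $\pi$ becomes $(a,b)\mapsto(u+v,uv)$ with $u=f(a)$ and $v=f(b)$. Pulling back the discriminant gives $(u+v)^2-4uv=(u-v)^2=(f(a)-f(b))^2$. Since $f$ is a submersion at the general point of a smooth fibre, $f(a)-f(b)$ is a reduced local equation for the vertical locus $V^{[2]}=\{f(a)=f(b)\}$, so $\pi^{*}(D^{(2)})=2V^{[2]}$ near this point and the multiplicity along $V^{[2]}$ is $2$.

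For the multiplicity along $\Delta^{[2]}$ I would pass to the isospectral square $p\colon\wt{S\times S}\to S^{[2]}$, which is the double cover branched along $E$, so that $p^{*}\Delta^{[2]}=2E$. Near a general point of $E$ I choose local coordinates so that $f$ is the first coordinate and write the two points as $x_{1}=s+\rho/2$ and $x_{2}=s-\rho/2$, where $\rho$ is the blow-up coordinate and $E=\{\rho=0\}$. Then the pullback of the discriminant is $(x_1-x_2)^2=\rho^2$, so $(\pi\circ p)^{*}(D^{(2)})=2E$ near the generic point of $E$. Writing $\pi^{*}(D^{(2)})=m\,\Delta^{[2]}$ near the generic point of $\Delta^{[2]}$ and pulling back by $p$ gives $2E=p^{*}(\pi^{*}D^{(2)})=m\cdot p^{*}\Delta^{[2]}=2m\,E$, whence $m=1$. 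Combining the two computations yields $\pi^{*}(D^{(2)})=2V^{[2]}+\Delta^{[2]}$.

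The only genuinely delicate point is the factor-of-two bookkeeping along $\Delta^{[2]}$: the discriminant vanishes to order two on the preimage of \emph{both} prime divisors, so one might naively expect $\Delta^{[2]}$ to occur with multiplicity $2$ as well. What rescues the correct answer is that the cover $p$ is itself ramified precisely along $E$, so one factor is absorbed by the ramification of $p$ and only multiplicity $1$ survives on $S^{[2]}$. Making this interplay of the two ramifications (the branch conic $D^{(2)}$ versus the branch divisor $E$ of $p$) explicit through the local coordinate computation is the crux of the argument; the remaining verifications are routine.
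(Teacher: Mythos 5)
Your proof is correct and is essentially the argument of the paper: both determine the coefficients in $\pi^{*}(D^{(2)})=aV^{[2]}+b\Delta^{[2]}$ by exploiting the commutative square relating $\wt{S\times S}$, $S^{[2]}$, $\PP^1\times\PP^1$ and $(\PP^1)^{(2)}$, together with the two ramification relations $p^{*}\Delta^{[2]}=2E$ and $\ov{p}^{\,*}D^{(2)}=2\,\mathrm{Diag}_{\PP^1}$ (the latter appearing in your proposal as the order-two vanishing of the discriminant $e_1^2-4e_2$). The only cosmetic difference is that you compute the multiplicity along $V^{[2]}$ downstairs in local coordinates, where the paper reads off both coefficients at once from the single identity $a\wt{V}^{[2]}+2bE=\wt{\pi}^{*}(2\,\mathrm{Diag}_{\PP^1})=2\wt{V}^{[2]}+2E$ on $\wt{S\times S}$.
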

\begin{proof}
By the set theoretic equality $\pi^{-1}(D^{(2)})=V^{[2]}\cup \Delta^{[2]}$, there exists positive integers $a,b$ such that 
 $\pi^{*}(D^{(2)})=aV^{[2]}+ b \Delta^{[2]}$. We have a commutative square 
\begin{equation}
\xymatrix{
\wt{S\times S}  \ar@{->}^{p}[r]  \ar@{->}^{\wt{\pi}}[d] & S^{[2]} \ar@{->}^{\pi}[d] \\
\PP^1\times\PP^1    \ar@{->}^{\ov{p}}[r]  & \PP^2.
}
\end{equation}
Let $\wt{V}^{[2]}:=p^{-1}(V^{[2]})$, and let $\text{Diag}_{\PP^1}$ be the diagonal of $\PP^1\times\PP^1$. The proposition follows from the equalities
\begin{equation*}
a\wt{V}^{[2]}+ 2b E=p^{*}(\pi^{*}(D^{(2)}))=\wt{\pi}^{*}(\ov{p}^{*}(D^{(2)}))=\wt{\pi}^{*}(2\text{Diag}_{\PP^1})=2\wt{V}^{[2]}+ 2 E.\qedhere
\end{equation*}
\end{proof}
\begin{crl}\label{scered}
Keep hypotheses and notation as in Proposition~\ref{duevu}. Let $\cL$ be the line bundle on $S^{[2]}$ such that 
$c_1(\cL)=\delta$. Let $x\in \PP^1$, and let $Z_x=\pi^{-1}(2x)$ be the schematic fiber of $\pi$ over $2x$. Then
\begin{equation}\label{idered}
\cI_{Z_x^{\text{red}}/Z_x |C_x^{(2)}}\cong \cO_{C_x^{(2)}}(-\Xi_x)\otimes\left(\cL_{|C_x^{(2)}}\right),
\end{equation}
where $\Xi_x:=\{2p\mid p\in C_x\}\subset C_x^{(2)}$. In particular the restriction homomorphism $\cO_{Z_x}\to 
\cO_{Z_x^{\text{red}}}$ fits into the exact sequence
\begin{equation}\label{sucstand}
0 \lra \cO_{C_x^{(2)}}(-\Xi_x)\otimes\left(\cL_{|C_x^{(2)}}\right) \lra \cO_{Z_x} \lra \cO_{Z_x^{\text{red}}}  \lra 0.
\end{equation}
\end{crl}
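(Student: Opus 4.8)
The plan is to reduce the statement to an explicit description of the scheme-theoretic fibre $Z_x$ in local coordinates on $S^{[2]}=\mathrm{Hilb}^2(S)$, and then to identify the resulting nilradical as a line bundle on $C_x^{(2)}$ by equivariant descent along a double cover. Throughout I write $\cK:=\cI_{Z_x^{\mathrm{red}}/Z_x}$ for the nilradical of $\cO_{Z_x}$, so that once $\cK$ is computed the sequence~\eqref{sucstand} is nothing but $0\to\cK\to\cO_{Z_x}\to\cO_{Z_x^{\mathrm{red}}}\to 0$, and~\eqref{idered} is the identification of $\cK$ with $\cO_{C_x^{(2)}}(-\Xi_x)\otimes\cL|_{C_x^{(2)}}$.

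First I would set up coordinates. Choose an affine coordinate $u$ on $\PP^1$ near $x$ with $u(x)=0$; the elementary symmetric functions $\sigma_1,\sigma_2$ of the two $u$-values are coordinates on $(\PP^1)^{(2)}\cong\PP^2$ near $2x$, and $D^{(2)}=\{s=0\}$ with $s:=\sigma_1^2-4\sigma_2$. On $\mathrm{Hilb}^2(S)$ over a neighbourhood of $2x$ I use centre-of-mass coordinates $(c_u,c_w)$ and difference coordinates $(\epsilon_u,\epsilon_w)$ (with the blow-up of the diagonal in the $\epsilon$-variables), so that $\pi^*\sigma_1=2c_u$ and $\pi^*\sigma_2=c_u^2-\tfrac14\epsilon_u^2$, whence $\pi^*s=\epsilon_u^2$; written in the blow-up coordinates this exhibits the divisor $2V^{[2]}+\Delta^{[2]}$ of Proposition~\ref{duevu}, with $V^{[2]}=\{\epsilon_u=0\}$ and $\Delta^{[2]}$ the exceptional divisor. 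Since $Z_x$ is cut out by $\pi^*\sigma_1$ and $\pi^*\sigma_2$, i.e.\ by $c_u$ and $\epsilon_u^2=\pi^*s$, a direct computation in the two charts $\{\epsilon_w\neq0\}$ and $\{\epsilon_u\neq0\}$ shows that $Z_x^{\mathrm{red}}=C_x^{(2)}\cup\PP(\Theta_{S|C_x})$, that $Z_x$ is generically reduced along $\PP(\Theta_{S|C_x})$, and that the nilpotents are concentrated along $C_x^{(2)}$, where $\cK$ is a line bundle generated by the anti-diagonal normal direction $\epsilon_u=u_1-u_2$. In particular $\cK$ is supported on $C_x^{(2)}$, so $\cK=\cK|_{C_x^{(2)}}$ and the displayed sequence makes sense.

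To identify this line bundle I would pass to the double cover $\rho\colon C_x\times C_x\to C_x^{(2)}$, branched along $\Xi_x$, and write $\cM$ for its square-root bundle, so that $\rho_*\cO=\cO\oplus\cM^{-1}$ and $\cM^{\otimes2}=\cO_{C_x^{(2)}}(\Xi_x)$. The pullback $\rho^*\cK$ is generated by $\epsilon_u=u_1-u_2$; since $C_x$ is a fibre of the elliptic fibration we have $C_x\cdot C_x=0$, hence $N_{C_x/S}\cong\cO_{C_x}$, and therefore $\rho^*\cK\cong\cO_{C_x\times C_x}$. The swap involution sends $\epsilon_u\mapsto-\epsilon_u$, so this trivialisation carries the $(-1)$-linearisation, and equivariant descent gives $\cK\cong\cM^{-1}$, equivalently $\cK^{\otimes2}\cong\cO_{C_x^{(2)}}(-\Xi_x)$. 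It remains to match $\cM$ with $\cL$. For this I would use that $p^{-1}(C_x^{(2)})$ is the strict transform of $C_x\times C_x$ in $\wt{S\times S}$, which is isomorphic to $C_x\times C_x$ (the centre $\Delta_{C_x}$ is already a divisor in that surface), and that $p$ restricts on it to the symmetrisation map $\rho$, branched along $E\cap\widetilde{C_x\times C_x}=\Delta_{C_x}$ with image $\Xi_x$. Since $p_*\cO=\cO\oplus\cL^{-1}$ with $\cL^{\otimes2}=\cO(\Delta^{[2]})$, compatibility of the two descents yields $\cL|_{C_x^{(2)}}\cong\cM$. Combining, $\cK\cong\cM^{-1}\cong\cO_{C_x^{(2)}}(-\Xi_x)\otimes\cM\cong\cO_{C_x^{(2)}}(-\Xi_x)\otimes\cL|_{C_x^{(2)}}$, which is~\eqref{idered}, and~\eqref{sucstand} follows at once.

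I expect the main obstacle to be the analysis near $\Xi_x$, where the two components $C_x^{(2)}$ and $\PP(\Theta_{S|C_x})$ of $Z_x^{\mathrm{red}}$ meet and where $\rho$ (and $p$) ramify. There one must check carefully that the nilradical is still an honest line bundle supported on $C_x^{(2)}$ (the component $C_x^{(2)}$ enters $Z_x$ with multiplicity two and $\PP(\Theta_{S|C_x})$ with multiplicity one), and that the square-root identification $\cL|_{C_x^{(2)}}\cong\cM$ holds on the nose rather than up to a $2$-torsion twist; this is where using the \emph{same} double cover $p$, restricted to $p^{-1}(C_x^{(2)})$, is essential. The remaining bookkeeping, namely the triviality of $N_{C_x/S}$ and the local forms of $V^{[2]}$ and $\Delta^{[2]}$, is routine.
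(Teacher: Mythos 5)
Your strategy (identify the nilradical $\cK:=\cI_{Z_x^{\mathrm{red}}/Z_x}$ locally, then descend along the symmetrization double cover $\rho\colon C_x\times C_x\to C_x^{(2)}$, using the restriction of $p$ itself to pin down the square root $\cL|_{C_x^{(2)}}$) is reasonable and lands on the correct formula, but the key intermediate step is wrong as stated, and the error sits exactly at the locus $\Xi_x$ that you flag as the main obstacle and then do not actually analyse. You assert that $\cK$ is generated along all of $C_x^{(2)}$ by the anti-diagonal normal coordinate $\epsilon_u=u_1-u_2$ and conclude $\rho^{*}\cK\cong\cO_{C_x\times C_x}$. This is incompatible with your own conclusion: if $\cK\cong\cM^{-1}$ (the anti-invariant summand of $\rho_{*}\cO_{C_x\times C_x}$), then $\rho^{*}\cK\cong\cO_{C_x\times C_x}(-\Delta_{C_x})$, because the pullback of the square-root bundle of a double cover is $\cO(R)$ for $R$ the ramification divisor, here $R=\Delta_{C_x}$, which is not linearly trivial on $C_x\times C_x$. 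Indeed no line bundle on $C_x^{(2)}$ can pull back to the trivial bundle equipped with the sign linearisation, since the involution acts trivially on the fibres of a pullback at its fixed points. The source of the discrepancy is the omitted local computation at $\Xi_x$: in blow-up coordinates $(c_u,c_w,s,\epsilon_w)$ with $\epsilon_u=s\epsilon_w$ and invariants $c_u,c_w,s,v=\epsilon_w^2$ downstairs, the ideal of $Z_x$ is $(c_u,s^2v)$, its radical is $(c_u,sv)=(c_u,s)\cap(c_u,v)$, and the nilradical is freely generated over $\cO_{C_x^{(2)}}$ by $sv=\epsilon_u\epsilon_w$, not by $\epsilon_u$; the extra factor $\epsilon_w$ vanishes along $\Delta_{C_x}$ and supplies precisely the missing $\cO(-\Delta_{C_x})$ upstairs. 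Without this computation your descent argument does not establish \eqref{idered}; with it, the argument can be repaired, but the "trivial pullback with sign linearisation" framing must be dropped.

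For comparison, the paper's proof avoids all local analysis at $\Xi_x$: it takes as input the identification $\cI_{Z_x^{\mathrm{red}}/Z_x}|_{C_x^{(2)}}\cong\cO_{S^{[2]}}(-V^{[2]}-\Delta^{[2]})|_{C_x^{(2)}}$ coming from Proposition~\ref{duevu}, extracts the square root $\cO_{S^{[2]}}(V^{[2]})\cong\mu(c_1(\cO_{\PP^2}(1)))\otimes\cL^{-1}$ from the relation $\pi^{*}D^{(2)}=2V^{[2]}+\Delta^{[2]}$ using that $H^1(S^{[2]};\ZZ)=0$ (so the square root in $\Pic(S^{[2]})$ is unique), and then restricts to $C_x^{(2)}$, where $\mu(c_1(\cO_{\PP^2}(1)))$ is trivial. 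Working globally on $S^{[2]}$, where $\Pic$ has no $2$-torsion, is what lets the paper sidestep both the local generator issue and the $2$-torsion ambiguity on $\Pic(C_x^{(2)})$ that you handle by hand via the cover $p$.
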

\begin{proof}
It suffices to prove that~\eqref{idered} holds, because the kernel of the surjection $\cO_{Z_x} \lra \cO_{Z_x^{\text{red}}}$ is equal to the left hand side in~\eqref{idered}. By Proposition~\ref{duevu} we have 
\begin{equation}\label{cruciani}
\cI_{Z_x^{\text{red}}/Z_x |C_x^{(2)}}\cong \cO_{S^{[2]}}(-V^{[2]}-\Delta^{[2]})_{|C_x^{(2)}}.
\end{equation}
We have
\begin{equation}\label{delatore}
\cO_{S^{[2]}}(-\Delta^{[2]})_{|C_x^{(2)}}\cong \cO_{C_x^{(2)}}(-\Xi_x).
\end{equation}
On the other hand Proposition~\ref{duevu} gives that
\begin{equation*}
\cO_{S^{[2]}}(2V^{[2]})\cong \cO_{S^{[2]}}(\pi^{*}D^{(2)}-\Delta^{[2]})\cong \pi^{*}(\cO_{\PP^2}(2))\otimes\cL^{-2}.
\end{equation*}
Since $H^1(S^{[2]};\ZZ)=0$, it follows that 
\begin{equation*}
\cO_{S^{[2]}}(V^{[2]})\cong  \pi^{*}(\cO_{\PP^2}(1))\otimes\cL^{-1}.
\end{equation*}
 Since the restriction of $\mu(c_1(\cO_{\PP^2}(1)))$ to $C_x^{(2)}$ is trivial, we get that
\begin{equation}\label{vertigo}
\cO_{S^{[2]}}(-V^{[2]})_{|C_x^{(2)}}\cong  \cL_{|C_x^{(2)}}.
\end{equation}
Hence~\eqref{idered} follows from~\eqref{cruciani}, \eqref{delatore} and~\eqref{vertigo}.
\end{proof}
For $x\in\PP^1$  let $\cF_x:=\cF_{|C_x}$. If $x_1\not= x_2\in\PP^1$, then the restriction of $\cF$ to $\pi^{-1}(x_1+x_2)$ is equal to 
$\cF_{x_1}\boxtimes\cF_{x_2}$. Thus we need the following result.
\begin{prp}\label{quadstab}
Let $C_i$ be  polarized irreducible curves, and  let $D$ be an ample divisor on $Y:=C_1\times C_2$ such that
\begin{equation}\label{tetamenne}
c_1(\cO_Y(D))=m_1 \rho_1^{*} \cO_{Y_1}(p_1)+ m_2 \rho_2^{*} c_1(\cO_{Y_2}(p_2)),
\end{equation}
where $\rho_i\colon Y\to C_i$ is the projection, and $p_i\in C_i$. Let $\cV_i$ be a slope-stable vector bundle on 
$C_i$ for $i\in\{1,2\}$. 
Then $\cV_1\boxtimes\cV_2$ is $D$ slope-stable. 
\end{prp}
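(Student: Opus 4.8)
The plan is to test slope-stability of $E:=\cV_1\boxtimes\cV_2=\rho_1^{*}\cV_1\otimes\rho_2^{*}\cV_2$ by restricting it to the fibers of the two projections. Write $r_i:=r(\cV_i)$ and let $F_i:=\rho_i^{-1}(\mathrm{pt})$ be the class of a fiber of $\rho_i$, so that $F_1^2=F_2^2=0$, $F_1\cdot F_2=1$, and, by~\eqref{tetamenne}, $c_1(\cO_Y(D))=m_1F_1+m_2F_2$ (since the pullback of a point class on $C_i$ is a fiber of $\rho_i$). For a coherent sheaf $\cG$ on $Y$ set $\deg_{F_i}(\cG):=c_1(\cG)\cdot F_i$; for general points this is the degree of the restriction of $\cG$ to the corresponding fiber. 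Since $E_{|\{s\}\times C_2}\cong\cV_2^{\oplus r_1}$ and $E_{|C_1\times\{t\}}\cong\cV_1^{\oplus r_2}$, a first computation gives $\mu_D(E)=m_1\mu(\cV_2)+m_2\mu(\cV_1)$, where $\mu(\cV_i)$ denotes the slope of $\cV_i$ on $C_i$.

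First I would prove semistability. Let $\cG\subset E$ be a subsheaf with $0<\rho:=r(\cG)<r_1r_2$; since passing to the saturation can only increase the slope and preserves the rank, I may assume $\cG$ is saturated, so that $E/\cG$ is torsion free and the restriction of the inclusion to a general fiber stays injective with torsion-free cokernel. For general $s$ (resp.\ $t$) the sheaf $\cG_{|\{s\}\times C_2}\subset\cV_2^{\oplus r_1}$ (resp.\ $\cG_{|C_1\times\{t\}}\subset\cV_1^{\oplus r_2}$) has rank $\rho$ inside a polystable sheaf, so semistability of $\cV_2^{\oplus r_1}$ and $\cV_1^{\oplus r_2}$ yields $\deg_{F_1}(\cG)\le\rho\,\mu(\cV_2)$ and $\deg_{F_2}(\cG)\le\rho\,\mu(\cV_1)$. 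As $m_1,m_2>0$, combining these with $\mu_D(\cG)=\rho^{-1}\big(m_1\deg_{F_1}(\cG)+m_2\deg_{F_2}(\cG)\big)$ gives $\mu_D(\cG)\le\mu_D(E)$, i.e.\ $E$ is $D$ slope-semistable.

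To upgrade to stability I would analyse the equality case. Suppose $\mu_D(\cG)=\mu_D(E)$ for a saturated proper $\cG$; positivity of $m_1,m_2$ forces both fiberwise inequalities to be equalities. In particular, for general $t$ the saturated subsheaf $\cG_{|C_1\times\{t\}}\subset\cV_1^{\oplus r_2}$ has slope exactly $\mu(\cV_1)$. Since $\cV_1$ is stable, hence simple with $\End\cV_1=\CC$, a saturated subsheaf of $\cV_1\otimes\CC^{r_2}$ of slope $\mu(\cV_1)$ is forced to be of the form $\cV_1\otimes W_t$ for a subspace $W_t\subset(\cV_2)_t$ of dimension $k:=\rho/r_1$, with $0<k<r_2$. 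I would then globalize the $W_t$: over the generic point of $C_2$ these subspaces assemble into a $\CC(C_2)$-subspace of the generic fiber of $\cV_2$, which spreads out to a subsheaf $\cW\subset\cV_2$ of rank $k<r_2$ with $\cG=\cV_1\boxtimes\cW$ away from a closed subset. Applying the slope formula of the first paragraph to $\cV_1\boxtimes\cW$ gives $\mu_D(\cV_1\boxtimes\cW)=m_1\mu(\cW)+m_2\mu(\cV_1)$, and comparison with $\mu_D(E)$ forces $\mu(\cW)=\mu(\cV_2)$ for the proper subsheaf $\cW\subsetneq\cV_2$ — contradicting the stability of $\cV_2$. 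Hence no proper saturated $\cG$ realizes equality, and $E$ is $D$ slope-stable.

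The routine slope computations reduce, via the Künneth decomposition and $F_1^2=F_2^2=0$, $F_1\cdot F_2=1$, to degrees of sheaves on the $C_i$, so the possible singularities of the curves cause no difficulty: one only ever restricts $E$ to fibers isomorphic to $C_1$ or $C_2$ and uses that a stable torsion-free sheaf on an integral projective curve is simple. The main obstacle is the globalization step in the equality case, namely producing the subsheaf $\cW\subset\cV_2$ out of the fiberwise data $W_t$ and verifying that $\cG$ and $\cV_1\boxtimes\cW$ agree as saturated subsheaves of $E$; this is the point where one must argue with generic fibers and saturation. When the $C_i$ are smooth one may instead bypass the equality analysis entirely by invoking the Kobayashi--Hitchin correspondence: each $\cV_i$ carries a Hermite--Einstein metric, the external tensor product metric is Hermite--Einstein for the product Kähler form representing $D$, so $E$ is $D$-polystable, and $E$ is simple by Künneth (since $\End\cV_1=\End\cV_2=\CC$), whence stable.
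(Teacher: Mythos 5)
Your proof is correct and is built on the same skeleton as the paper's: restrict a would-be destabilizer to the fibres of the two rulings, use polystability of $E_{|\{s\}\times C_2}\cong\cV_2^{\oplus r_1}$ and $E_{|C_1\times\{t\}}\cong\cV_1^{\oplus r_2}$ together with the slope decomposition $\mu_D(\cG)=\rho^{-1}(m_1\deg_{F_1}(\cG)+m_2\deg_{F_2}(\cG))$ to force equality of both fibrewise slopes, and then invoke the structure of equal-slope subsheaves of a polystable bundle. The difference is in the endgame. The paper uses \emph{both} rulings simultaneously: a destabilizing $\cE\subset\cV_1\boxtimes\cV_2$ restricts on a general $C_1\times\{p_2\}$ to $\cV_1\otimes A$ and on a general $\{p_1\}\times C_2$ to $B\otimes\cV_2$ with $A,B\neq 0$, so at a general point the fibre of $\cE$ is simultaneously of the form $(\cV_1)_{p_1}\otimes A$ and $B\otimes(\cV_2)_{p_2}$; elementary linear algebra about such subspaces of a tensor product forces it to be the whole fibre, contradicting $r(\cE)<r_1r_2$. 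This is purely pointwise and entirely sidesteps the globalization of the subspaces $W_t$ into a subsheaf $\cW\subset\cV_2$, which you rightly flag as the delicate step of your version: one must actually construct $\cW$ (e.g.\ as the saturation of the image of $\rho_{2*}(\rho_1^{*}\cV_1^{\vee}\otimes\cG)\to\cV_2$) and check that $\cV_1\boxtimes\cW$ and $\cG$ coincide as \emph{saturated} subsheaves before comparing slopes — if they only agree away from finitely many fibres of $\rho_2$, the naive comparison yields merely $\mu(\cW)\le\mu(\cV_2)$, which is no contradiction. Your Kobayashi--Hitchin shortcut is fine but, as you note, requires the $C_i$ smooth; in the paper's application (Proposition~\ref{semgen}, used in the proof of Proposition~\ref{chiave}) the factors are fibres of an elliptic $K3$ and may be nodal, so that route does not cover the cases actually needed.
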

\begin{proof}
By contradiction. Suppose the contrary. Hence exists an injection $\alpha\colon\cE\to \cV_1\boxtimes\cV_2$  with  torsion free cokernel such that $0<r(\cE)<r(\cV_1\boxtimes\cV_2)$ and
\begin{equation}\label{destabo}
\mu_{D}(\cE)\ge \mu_{D}(\cV_1\boxtimes\cV_2).
\end{equation}
Let  $p_i\in C_i$  be generic; the restrictions of $\alpha$ to $\{p_1\}\times C_2$ and  $C_1\times \{p_2\}$    are injective maps of vector bundles (that is, injective on fibers). We have
\begin{equation}\label{sanmartino}
\mu_{D}(\cE)=m_2\mu(\cE_{|{C_1}\times \{p_2\}})+m_1 \mu(\cE_{|\{p_1\}\times C_2}).
\end{equation}
On the other hand 
\begin{equation}\label{pendab}
\mu_{D}(\cV_1\boxtimes\cV_2)=m_2 \mu(\cV_1)+m_1\mu(\cV_2).  
\end{equation}
Since the restrictions of $\cV_1\boxtimes\cV_2$  to $C_1\times \{p_2\}$ and $\{p_1\}\times C_2$ are isomorphic to the polystable vector bundles   $\cV_1\otimes_{\CC}\CC^{r(\cV_2)}$ and $\cV_1\otimes_{\CC}\CC^{r(\cV_1)}$ respectively,  it follows from~\eqref{destabo}, \eqref{sanmartino} and~\eqref{pendab}  that $\mu(\cE_{|C_1\times \{p_2\}})=\mu(\cV_1)$ and $\mu(\cE_{|\{p_1\}\times C_2})=\mu(\cV_2)$. In turn, these equalities give that there exist vector subspaces 
$0\not=A\subset \CC^{r(\cV_2)}$ and $0\not=B\subset \CC^{r(\cV_1)}$ such that 
$\cE_{|C_1\times \{p_2\}}=\cV_1\otimes_{\CC} A$ and $\cE_{|\{p_1\}\times C_2}=B\otimes_{\CC}\cV_2$ respectively. It follows that $\im\alpha=\cV_1\boxtimes\cV_2$. This is a contradiction.
\end{proof}

\begin{prp}\label{semgen}
Let $S$ be a $K3$ surface with an elliptic fibration $S\to \PP^1$  as in Claim~\ref{eccoell}, and  let $\cF$ be a vector bundle on $S$ as in Proposition~\ref{rigsuk}. 
If $x_1\not= x_2$
the restriction of $\cF[2]^{\pm}$ to  $\pi^{-1}(x_1+x_2)=C_{x_1}\times C_{x_2}$ is slope-stable for any product polarization, and it is  a simple semi-homogeneous  vector bundle.
\end{prp}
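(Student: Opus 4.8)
The plan is to obtain Proposition~\ref{semgen} as a short consequence of Proposition~\ref{quadstab}, once the restriction of $\cF[2]^{\pm}$ to the fiber $\pi^{-1}(x_1+x_2)$ has been identified. The first step is exactly that identification. Since $x_1\neq x_2$ the curves $C_{x_1}$ and $C_{x_2}$ are disjoint, so every length-two subscheme parametrized by $\pi^{-1}(x_1+x_2)$ is reduced with one point on each curve; hence $\pi^{-1}(x_1+x_2)\cong C_{x_1}\times C_{x_2}$ and this fiber lies entirely in the complement of the non-reduced locus $\Delta^{[2]}$, in particular away from the image of the exceptional divisor $E$. On this open locus the torsion cokernels $\iota_{*}\tau_E^{*}(\bw 2\!\cF)$ and $\iota_{*}\tau_E^{*}\Sym^2\!\cF$ in the exact sequences~\eqref{eccopi} and~\eqref{eqcK} vanish, so $p^{*}(\cF[2]^{\pm})\cong q_1^{*}\cF\otimes q_2^{*}\cF$ there. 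The preimage $p^{-1}(C_{x_1}\times C_{x_2})$ is a disjoint union of two copies of $C_{x_1}\times C_{x_2}$ (the ordered pairs), each mapping isomorphically onto the fiber and swapped by $\iota$; restricting $q_1^{*}\cF\otimes q_2^{*}\cF$ to either component yields $\cF_{x_1}\boxtimes\cF_{x_2}$, and taking $\cS_2$-invariants (for the $\rho_2^{+}$ or the $\rho_2^{-}$ action) projects isomorphically onto one such component. This recovers the identification $\cF[2]^{\pm}_{|\pi^{-1}(x_1+x_2)}\cong\cF_{x_1}\boxtimes\cF_{x_2}$ stated just before the proposition, for both signs.

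With this in hand the rest is a direct citation of the preliminary results. Each factor $\cF_{x_i}=\cF_{|C_{x_i}}$ is a slope-stable vector bundle on the irreducible curve $C_{x_i}$ by item~(4) of Proposition~\ref{rigsuk}. Next I would verify that an arbitrary product polarization $D$ on $C_{x_1}\times C_{x_2}$ meets the numerical hypothesis~\eqref{tetamenne}: an ample divisor on a curve has positive degree, hence is linearly equivalent to a positive multiple of a point, so the pullback to the product of an ample class from the $i$-th factor is $m_i\rho_i^{*}(p_i)$ with $m_i>0$, and a product polarization is by definition a sum of two such pullbacks. Proposition~\ref{quadstab} then applies verbatim and gives that $\cF_{x_1}\boxtimes\cF_{x_2}$ is $D$ slope-stable. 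Finally, a slope-stable sheaf is simple, since its endomorphism algebra is a finite-dimensional division algebra over $\CC$ and hence equals $\CC$; this yields the concluding assertion.

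There is no genuine obstacle here: Proposition~\ref{quadstab} carries the analytic content, and the only point demanding care is the first step. The delicate part is to confirm that $\pi^{-1}(x_1+x_2)$ really avoids $E$ when $x_1\neq x_2$, so that the cokernels in~\eqref{eccopi} and~\eqref{eqcK} contribute nothing and no line-bundle twist appears in the restriction, and that the $\cS_2$-invariants for the two opposite signs give the same answer. Both features hold precisely because the fiber is disjoint from the diagonal: the $\pm$ distinction between $\cF[2]^{+}$ and $\cF[2]^{-}$ only affects the behaviour of the sheaf along $E$, so the two restrictions coincide with $\cF_{x_1}\boxtimes\cF_{x_2}$ simultaneously, which is why the statement holds for both $\cF[2]^{+}$ and $\cF[2]^{-}$.
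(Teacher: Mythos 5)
Your proof is correct and follows the same route as the paper: identify $\cF[2]^{\pm}_{|\pi^{-1}(x_1+x_2)}$ with $\cF_{x_1}\boxtimes\cF_{x_2}$ (the paper states this identification without comment, while you justify it by noting the fiber avoids $E$), invoke Proposition~\ref{rigsuk} for slope-stability of each factor, and conclude by Proposition~\ref{quadstab}. The extra care you take over the identification and over checking the hypothesis~\eqref{tetamenne} is sound but not a different argument.
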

\begin{proof}
We have
\begin{equation}\label{scatper}
\cF[2]^{\pm}_{|\pi^{*}(x_1+x_2)}\cong \cF_{x_1}\boxtimes \cF_{x_2}. 
\end{equation}
By Proposition~\ref{rigsuk}, both $\cF_{x_1}$ and $\cF_{x_2}$ are slope-stable. Hence the statement about slope-stability follows from Proposition~\ref{quadstab}. Of course this implies that the restriction  of $\cF[2]^{\pm}$  to  
$\pi^{-1}(x_1+x_2)=C_{x_1}\times C_{x_2}$ is simple. Moreover it is semi-homogeneous because every stable vector bundle on an elliptic curve is semi-homogeneous. (One could argue that 
 the discriminant  vanishes by  Lemma~\ref{zerene}, and conclude by Proposition~\ref{discperp}.)
\end{proof}
For $x\in\PP^1$ let $\Delta^{[2]}_x:=\PP(\Theta_{S|C_x})\subset\Delta^{[2]}$, and we let $\cO_{\Delta^{[2]}_x}(1)$ be the dual of the tautological line subbundle $\Delta^{[2]}_x$.
Let  $\tau_x\colon\Delta^{[2]}_x\to C_x$ be the restriction of the  Hilbert-Chow map. 
\begin{lmm}\label{somdir}
Let $S$ be a $K3$ surface with an elliptic fibration $S\to \PP^1$, and  let $\cF$ be a vector bundle on $S$ as in Proposition~\ref{rigsuk}. Let $x$ be a regular value of the elliptic fibration. Then 
\begin{eqnarray}\label{spezza}
\cF[2]^{+}_{|\Delta^{[2]}_x} &  \cong  & 
\left(\cO_{\Delta^{[2]}_x}(1) \otimes   \tau_{x}^{*}\bigwedge^2\cF_x  \right) \oplus \tau_{x}^{*}(\Sym^2\cF_x), \\
\cF[2]^{-}_{|\Delta^{[2]}_x} &  \cong  & 
\left(\cO_{\Delta^{[2]}_x}(1) \otimes   \tau_{x}^{*}\Sym^2\cF_x  \right) \oplus \tau_{x}^{*}\left(\bigwedge^2\cF_x\right),
\end{eqnarray}
and the space of traceless endomorphisms of
 $\cF[2]^{\pm}_{|\Delta^{[2]}_x}$ has dimension $1$.
\end{lmm}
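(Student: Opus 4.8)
The plan is to compute $\cF[2]^{\pm}_{|\Delta^{[2]}_x}$ by pulling back to the isospectral square $\wt{S\times S}$ and restricting the exact sequences of Proposition~\ref{ledescend}. Since $\iota$ fixes $E$ pointwise, $p_{|E}\colon E\to\Delta$ is an isomorphism, so $\cF[2]^{\pm}_{|\Delta}\cong (p^{*}\cF[2]^{\pm})_{|E}$, and restricting over $C_x$ identifies $\cF[2]^{\pm}_{|\Delta^{[2]}_x}$ with $(p^{*}\cF[2]^{\pm})_{|E_x}$, where $E_x:=\tau_E^{-1}(C_x)=\PP(\Theta_{S|C_x})$ and $\tau_E{}_{|E_x}$ corresponds to $\tau_x$. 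Thus the whole computation takes place on the $\PP^1$–bundle $\tau_x\colon E_x\to C_x$.

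First I would restrict \eqref{eccopi} to $E$. As $p^{*}\cF[2]^{+}$ and $q_1^{*}\cF\otimes q_2^{*}\cF$ are locally free, tensoring with $\cO_E$ gives a four–term exact sequence
\begin{equation*}
0\lra \mathcal{T}or_1(\iota_{*}\tau_E^{*}\bw2\!\cF,\cO_E)\lra (p^{*}\cF[2]^{+})_{|E}\lra (q_1^{*}\cF\otimes q_2^{*}\cF)_{|E}\overset{\ev^{-}}{\lra} \tau_E^{*}\bw2\!\cF\lra 0.
\end{equation*}
On $E$ one has $q_1=q_2=\tau_E$, hence $(q_1^{*}\cF\otimes q_2^{*}\cF)_{|E}=\tau_E^{*}\Sym^2\!\cF\oplus\tau_E^{*}\bw2\!\cF$, the map $\ev^{-}$ is the projection onto the antisymmetric summand, and $\mathcal{T}or_1(\iota_{*}G,\cO_E)=G\otimes\cO_E(-E)$ for any sheaf $G$ on $E$. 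So $(p^{*}\cF[2]^{+})_{|E}$ fits into a short exact sequence with sub $\tau_E^{*}\bw2\!\cF\otimes\cO_E(-E)$ and quotient $\tau_E^{*}\Sym^2\!\cF$; the same computation with \eqref{eqcK} interchanges $\Sym^2$ and $\bw2$. Restricting over $C_x$, the twisting line bundle is the conormal bundle $\cO_E(-E)_{|E_x}=\cO_{\PP(\Theta_{S|C_x})}(1)$, which (using $\det\Theta_S=\cO_S$) one expresses through the relative tangent bundle $\Theta_{\Delta_x/C_x}$ to match the statement.

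The next step is to split this two–step filtration. The key point is that $\cF[2]^{\pm}_{|\Delta^{[2]}_x}$ is the $\rho_2^{\pm}$–invariant part of $(p_{*}(q_1^{*}\cF\otimes q_2^{*}\cF))_{|\Delta^{[2]}_x}$, and along the fixed divisor this invariant bundle carries a $\ZZ/2\ZZ$–grading by the parity of the order of vanishing in the conormal coordinate $h$ of $E$ — exactly the decomposition recorded in the proof of Proposition~\ref{ledescend}. The conormal–degree–$0$ part is one summand and the conormal–degree–$1$ part is the other, twisted by $\cO_E(-E)$; living in different parities, they split canonically. Concretely I would realise the splitting through the $\iota$–eigenspace decomposition of $q_1^{*}\cF\otimes q_2^{*}\cF$ on the first-order neighbourhood of $E$, which descends to a direct-sum decomposition of $\cF[2]^{\pm}_{|\Delta^{[2]}_x}$.

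Finally I would count endomorphisms. Writing $\cF[2]^{\pm}_{|\Delta^{[2]}_x}=A\oplus B$, I push $\mathcal{H}om$ forward along $\tau_x$. Since $\tau_{x*}\cO_{E_x}=\cO_{C_x}$, the diagonal terms reduce to $\Hom_{C_x}(\Sym^2\cF_x,\Sym^2\cF_x)$ and $\Hom_{C_x}(\bw2\cF_x,\bw2\cF_x)$, and one off–diagonal term vanishes immediately because the relevant negative power of $\cO(1)$ has no sections on the fibres. The remaining off–diagonal term becomes, after pushforward, a Hom–group on the elliptic curve $C_x$ between $\Sym^2\cF_x$ and $\bw2\cF_x$ twisted by a symmetric power of $\Theta_{S|C_x}^{\vee}$; by Riemann–Roch its Euler characteristic is $0$, so its vanishing is not formal. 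This vanishing, together with the simplicity of each summand, is the main obstacle: I expect to deduce both from the fact that $\cF_x$ is slope-stable on the smooth elliptic curve $C_x$ with $\gcd(r_0,d_0)=1$ (Proposition~\ref{rigsuk}), so that by Atiyah's classification $\cF_x$ is semi-homogeneous and $\Sym^2\cF_x$, $\bw2\cF_x$ decompose into distinct stable summands sharing no common factor, while $\Theta_{S|C_x}$ is slope-$0$ with trivial determinant. Granting these vanishings one gets $\dim\Hom(\cF[2]^{\pm}_{|\Delta^{[2]}_x},\cF[2]^{\pm}_{|\Delta^{[2]}_x})=2$, so that the space of traceless endomorphisms is $1$–dimensional.
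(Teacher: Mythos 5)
Your overall route is the paper's: restrict the defining sequence of Proposition~\ref{ledescend} to the exceptional divisor, exhibit $\cF[2]^{\pm}_{|\Delta^{[2]}_x}$ as an extension with graded pieces $\tau_x^{*}\Sym^2\cF_x$ and $\tau_x^{*}\bw2\cF_x$ (one of them twisted), split the extension, and count endomorphisms. Your Tor computation of the restriction is correct. The genuine gap is the splitting. The ``parity of the order of vanishing in $h$'' is a \emph{filtration}, not a grading: the decomposition in the proof of Proposition~\ref{ledescend} depends on an $\iota$-equivariant identification of $q_1^{*}\cF\otimes q_2^{*}\cF$ with $\tau_E^{*}(\cF\otimes\cF)$ on a neighbourhood of $E$, i.e.\ on a local frame of $\cF$; under a frame change $e\mapsto ae$ one has $q_1^{*}a\otimes q_2^{*}a=\tau^{*}(a\otimes a)+hT+O(h^2)$ with $T$ anticommuting with the swap, so the degree-$1$ component of a section with nonzero degree-$0$ part is frame-dependent. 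Only the sub (sections vanishing to order $\ge 1$) and the quotient are canonical, so nothing ``splits canonically''. The paper proves the splitting by showing the extension class vanishes: by Leray for $\tau_x$ it lies in an $H^1$ on $C_x$ of a $\Hom$-bundle between $\Sym^2\cF_x$ and $\bw2\cF_x$ tensored with $\tau_{x*}$ of the twist, which (since $\det\Theta_S\cong\cO_S$) is an iterated extension of copies of $\cO_{C_x}$ because $C_x$ is a fibre of the elliptic fibration; one is then reduced to $\Hom(\bw2\cF_x,\Sym^2\cF_x)=0$, coming from stability of $\cF_x$. These are exactly the ingredients you list in your last paragraph — they are needed here, not only for the endomorphism count.

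The endomorphism count also does not close. Granting the splitting and the vanishing of both cross terms, you are left with $\hom(\Sym^2\cF_x,\Sym^2\cF_x)+\hom(\bw2\cF_x,\bw2\cF_x)$, and these summands cannot both be simple: $\End(\cF_x)\cong\bigoplus_{\xi\in\Pic^0(C_x)[r_0]}\xi$, so $\End(\cF_x\otimes\cF_x)$ has $r_0^2$ independent global sections, all of which land in the two diagonal blocks once the cross terms vanish (for $r_0=2$, $\Sym^2\cF_x$ is a direct sum of three distinct line bundles). This contradicts your own (correct) remark that $\Sym^2\cF_x$ and $\bw2\cF_x$ decompose into several distinct stable summands, so ``the simplicity of each summand'' fails and $\dim\Hom=2$ does not follow; the paper's own justification of the one-dimensionality is equally terse, so this step deserves scepticism rather than reproduction. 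A smaller point: your computation puts the twist $\cO_E(-E)_{|E_x}=\cO_{\PP(\Theta_{S|C_x})}(1)$ on the $\bw2$ summand of $\cF[2]^{+}$, whereas \eqref{spezza} puts $\Theta_{\Delta_x/C_x}\cong\cO_{\PP(\Theta_{S|C_x})}(2)$ on the $\Sym^2$ summand; these are different line bundles on different summands, so saying the conormal bundle can be ``expressed through the relative tangent bundle to match the statement'' hides a discrepancy you should have flagged explicitly.
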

\begin{proof}
The proofs  for $\cF[2]^{\pm}$ are similar. We provide the proof  for $\cF[2]^{+}$.
 Restricting the defining exact sequence~\eqref{eccopi} to $E$, we get the exact sequence
\begin{equation}\label{resdel}
0\lra \tau_{E}^{*}(\bigwedge^2\cF)\otimes \cO_{E}(-E) \lra \cF[2]^{+}_{|E}\lra  \tau_{E}^{*}(\Sym^2\cF)\lra 0.
\end{equation}
 Restricting to $\Delta^{[2]}_x$ the exact sequence in~\eqref{resdel}, we get the exact sequence
\begin{equation}\label{resixe}
0\lra \tau_{x}^{*}( \bigwedge^2\cF_x )\otimes \cO_{\Delta^{[2]}_x}(1) \lra \cF[2]^{+}_{|\Delta^{[2]}_x} \overset{\beta}{\lra}  
\tau_{x}^{*}(  \Sym^2\cF_x   )\lra 0.
\end{equation}
We claim that the above exact sequence splits. The extension class belongs to 
\begin{equation}\label{claest}
H^1(\Delta^{[2]}_x, \tau_{x}^{*}(\Sym^2\cF^{\vee}_x\otimes \bigwedge^2\cF_x)\otimes \cO_{\Delta^{[2]}_x}(1)).
\end{equation}
We compute the above cohomology group via the Leray spectral sequence  of $\tau_x$. Since  
$(\Sym^2\cF^{\vee}_x\otimes \bigwedge^2\cF_x)\otimes R^1\tau_{x,*} \cO_{\Delta^{[2]}_x}(1)=0$,  
  it suffices to show that
\begin{multline}\label{singapore}
H^1(C_x, (\Sym^2\cF^{\vee}_x\otimes \bigwedge^2\cF_x)\otimes\tau_{x,*} \cO_{\Delta^{[2]}_x}(1))=\\
=H^1(C_x, (\Sym^2\cF^{\vee}_x\otimes \bigwedge^2\cF_x)\otimes \Theta^{\vee}_{S|{C_x}})=0.
\end{multline}
Since $C_x$ is smooth, the  sheaf $\Theta^{\vee}_{S|{C_x}}$ has a filtration with associated graded the direct sum of two copies of $\cO_{C_x}$.  Hence the vector bundle appearing in~\eqref{singapore} has a filtration whose  associated graded is  the direct sum of two copies of 
$\Sym^2\cF^{\vee}_x\otimes \bigwedge^2\cF_x$. The latter vector bundle has no non zero section because it is slope-stable of slope $0$. By HRR  it follows that
\begin{multline*}\label{singapore}
H^0(C_x, (\Sym^2\cF^{\vee}_x\otimes \bigwedge^2\cF_x)\otimes \Theta^{\vee}_{S|{C_x}})=\\
=H^1(C_x, (\Sym^2\cF^{\vee}_x\otimes \bigwedge^2\cF_x)\otimes \Theta^{\vee}_{S|{C_x}})=0.
\end{multline*}
Hence the extension group in~\eqref{claest} vanishes.

The result about traceless endomorphisms of $\cF[2]^{\pm}_{|\Delta^{[2]}_x}$ follows from the direct sum decomposition in~\eqref{spezza} and the vanishing in~\eqref{singapore}.
\end{proof}
\begin{lmm}\label{cidue}
Let $S$ be a $K3$ surface with an elliptic fibration $S\to \PP^1$  as in Claim~\ref{eccoell}, and  let $\cF$ be a vector bundle on $S$ as in Proposition~\ref{rigsuk}. Let $x$ be a regular value of the elliptic fibration. Let $p_x\colon C_x^2\to S^{[2]}$ be composition of the quotient map $C_x^2\to C_x^{(2)}$ and the inclusion $C_x^{(2)}\hra S^{[2]}$. Then  $p_x^{*}\cF[2]^{\pm}$ is simple, and hence 
 the restriction of
$\cF[2]^{\pm}$ to $C_x^{(2)}$ is simple.
\end{lmm}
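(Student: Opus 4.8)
The plan is to identify $p_x^{*}\cF[2]^{\pm}$ as an elementary modification of $\cF_x\boxtimes\cF_x$ along the diagonal of $C_x^2$, and then to show that this modification carries only scalar endomorphisms. Write $\cF_x:=\cF_{|C_x}$, and let $j\colon C_x\hra C_x^2$ be the diagonal embedding, with image the divisor $\Delta_{C_x}$. First I would locate $p^{-1}(C_x^{(2)})$ inside $\wt{S\times S}$. Since $C_x\times C_x$ meets the diagonal $\Delta_S\subset S\times S$ along the Cartier divisor $\Delta_{C_x}$ of the surface $C_x\times C_x$, its strict transform $Z$ under the blow-up $\tau$ is the blow-up of $C_x\times C_x$ along $\Delta_{C_x}$, hence $Z\cong C_x\times C_x$; moreover $p^{-1}(C_x^{(2)})=Z$ and $p_{|Z}$ is identified with $p_x$. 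Therefore $p_x^{*}\cF[2]^{\pm}=(p^{*}\cF[2]^{\pm})_{|Z}$, which I would compute by restricting the defining sequence~\eqref{eccopi} (resp.~\eqref{eqcK}) of Proposition~\ref{ledescend} to $Z$.

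Since $Z$ is integral and not contained in $E$, the local equation of $E$ restricts to a non zero divisor on $Z$, so $\mathcal{T}or_1(\iota_{*}\tau_E^{*}\bw2\cF,\cO_Z)=0$ and the restricted sequence stays short exact. This gives
\begin{equation*}
0\lra p_x^{*}\cF[2]^{+}\lra \cF_x\boxtimes\cF_x\overset{\ev^-}{\lra} j_{*}(\bw2\cF_x)\lra 0,
\end{equation*}
and the analogous sequence with $\Sym^2\cF_x$ in place of $\bw2\cF_x$ for $\cF[2]^{-}$, the surjection being restriction to $\Delta_{C_x}$ followed by projection to the (anti)symmetric part of $\cF_x\otimes\cF_x$. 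By Proposition~\ref{rigsuk} the bundle $\cF_x$ is slope-stable on $C_x$, so $\cF_x\boxtimes\cF_x$ is slope-stable by Proposition~\ref{quadstab}, and in particular $\End(\cF_x\boxtimes\cF_x)=\CC$ by the K\"unneth formula.

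Finally I would compute $\End(p_x^{*}\cF[2]^{\pm})$. Away from $\Delta_{C_x}$ the sheaf $p_x^{*}\cF[2]^{\pm}$ agrees with $\cF_x\boxtimes\cF_x$, so every endomorphism is generically a scalar and the real content is its behaviour along the diagonal: near $\Delta_{C_x}$, using $(\cF_x\boxtimes\cF_x)_{|\Delta_{C_x}}=\Sym^2\cF_x\oplus\bw2\cF_x$, the subsheaf $p_x^{*}\cF[2]^{+}$ is cut out by the condition that its restriction to $\Delta_{C_x}$ be symmetric, and an endomorphism must respect this. I would make this precise through the long exact sequences obtained by applying $\Hom(p_x^{*}\cF[2]^{\pm},-)$ and $\Hom(-,\cF_x\boxtimes\cF_x)$ to the sequence above, which exhibit $\End(p_x^{*}\cF[2]^{\pm})$ as the kernel of a map whose remaining terms are built from $\Hom$ and $\Ext^1$ groups of $\bw2\cF_x$, $\Sym^2\cF_x$ and $\cF_x\otimes\cF_x$ on $C_x$, twisted by the trivial normal bundle $N_{\Delta_{C_x}/C_x^2}\cong\cO_{C_x}$. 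The main obstacle is precisely this diagonal bookkeeping; it is resolved by the vanishings $\End(\cF_x)=\CC$ (as $\cF_x$ is simple) together with $\Hom(\bw2\cF_x,\Sym^2\cF_x)=\Hom(\Sym^2\cF_x,\bw2\cF_x)=0$, the latter being the slope-$0$ vanishing already established in the proof of Lemma~\ref{somdir}. These force every endomorphism to be a scalar, so $p_x^{*}\cF[2]^{\pm}$ is simple. Since $p_x$ is finite and surjective, $p_x^{*}$ is injective on global endomorphisms, whence $\cF[2]^{\pm}_{|C_x^{(2)}}$ is simple as well.
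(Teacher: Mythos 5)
Your reduction to the surface $C_x^2$ matches the paper's and is in fact carried out more carefully: the identification of $p^{-1}(C_x^{(2)})$ with $C_x\times C_x$ (the strict transform is unchanged because $\Diag_x$ is already a Cartier divisor in $C_x^2$), the Tor argument for exactness of the restricted sequence, and the resulting presentation of $K:=p_x^{*}\cF[2]^{+}$ as the kernel of $\ev^-\colon F:=\cF_x\boxtimes\cF_x\to j_{*}(\bw2\cF_x)$ all agree with the paper's~\eqref{etciu}, and your appeal to Proposition~\ref{quadstab} for simplicity of $F$ is the same input the paper uses. The gap is in the last step, which is where the content lies. Your two long exact sequences identify $\End(K)$ with the kernel of $\Hom(K,F)\to\Hom(K,j_{*}Q)$ and embed $\Hom(K,F)/\CC$ into $\Ext^1(j_{*}Q,F)$, where $Q=\bw2\cF_x$. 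But $\Hom(K,j_{*}Q)\cong\Hom(K|_{\Diag_x},Q)$ and $\Ext^1(j_{*}Q,F)\cong\Hom(Q,F|_{\Diag_x})$ (trivial normal bundle), and after the cross-vanishings both reduce to $\End(\bw2\cF_x)$, which is \emph{not} one-dimensional in general; likewise $\End(\Sym^2\cF_x)$ is not. Already for $r_0=2$ one has $\cF_x\otimes\cF_x\cong\bigoplus_{\xi\in\Pic^0(C_x)[2]}\xi\otimes\det\cF_x$, so $\Sym^2\cF_x$ is a sum of three pairwise non-isomorphic line bundles and $\End(\Sym^2\cF_x)\cong\CC^3$; more to the point, $\Hom(F(-\Diag_x),F)=H^0\bigl((\End\cF_x\boxtimes\End\cF_x)(\Diag_x)\bigr)$ has dimension $r_0^2$, so there is a genuinely large space of non-scalar maps $K\to F$ to rule out. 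The vanishings you list ($\End(\cF_x)=\CC$ and $\Hom(\Sym^2\cF_x,\bw2\cF_x)=\Hom(\bw2\cF_x,\Sym^2\cF_x)=0$) do not by themselves force your kernel to be $\CC$: one would additionally have to prove that the relevant composite maps are injective on these polystable endomorphism algebras, which the sketch does not do.

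The paper closes the argument by making precise exactly the sentence you leave informal (``an endomorphism must respect the symmetry condition along the diagonal''). It forms the second elementary modification $0\to F(-\Diag_x)\to K\to j_{*}(\Sym^2\cF_x)\to 0$ and restricts it to $\Diag_x$, obtaining the sequence $0\to\Sym^2\cF_x\to K|_{\Diag_x}\to\bw2\cF_x\to 0$, which splits by Lemma~\ref{somdir}. The single vanishing $\Hom(\Sym^2\cF_x,\bw2\cF_x)=0$ then shows that any $\varphi\in\End(K)$ preserves the subsheaf $F(-\Diag_x)$; since $F$, hence $F(-\Diag_x)$, is simple, $\varphi$ acts there as a scalar $c$, and $\varphi-c\,\Id_K$ factors through a torsion sheaf mapping into the torsion-free $K$, hence vanishes. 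Replacing your Hom/Ext bookkeeping with this argument repairs the proof; everything else in your proposal, including the final descent from $C_x^2$ to $C_x^{(2)}$, stands.
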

\begin{proof}
The proofs for $\cF[2]^{\pm}$ are similar. We give the proof for $\cF[2]^{+}$.  

Let $q_x,r_x\colon C_x^2\to C_x$ be the two projections, and let $\Diag_x\subset C_x^2$ be the diagonal.
By the exact sequence in~\eqref{eqcK}, we have the following exact sequence
 \begin{equation}\label{etciu}
0\lra p_x^*\cF[2]^{+}\lra q_x^*\cF_x\otimes r_x^*\cF_x \overset{\ev_x^-}{\lra}  \bigwedge^2\cF_{x|\Diag_x}\lra 0.
\end{equation}
It follows that we have the following exact sequence
 \begin{equation}\label{traslo}
0\lra  q_x^*\cF_x\otimes r_x^*\cF_x\otimes\cO_{C_x^2}(-\Diag_x)  \overset{\lambda}{\lra} p_x^*\cF[2]^{+}
\overset{\ev_x^{+}}{\lra} \Sym^2\cF_{x|\Diag_x}\lra 0.
\end{equation}
Notice that the restriction of $\lambda$ to $\Diag_x$ is identified with the natural map 
$\ev_x^{-}\colon \cF_x\otimes\cF_x\to \bigwedge^2\cF_x$ (notice that the normal bundle of $\Diag_x$ in $C_x^2$ is trivial because $C_x$ is an elliptic curve), in particular its image is $\bigwedge^2\cF_x$ (we identify $\Diag_x$ and $C_x$). Equivalently, the restriction to $\Diag_x$ 
of the exact sequence in~\eqref{traslo} gives rise to the exact sequence
 \begin{equation}\label{traslo2}
0\lra   \bigwedge^2\cF_x \lra p_x^*\cF[2]^{+}_{|\Diag_x}
\overset{\ev_x^{+}}{\lra}    \Sym^2\cF_x   \lra 0,
\end{equation}
which is split by Lemma~\ref{somdir}. Now let  $\varphi$ be an endomorphism of  $p_x^*\cF[2]^{+}$. The restriction of $\varphi$ to $\Diag_x$ preserves the exact sequence in~\eqref{traslo2} because the vector bundles $\Sym^2\cF_x$,  $\bigwedge^2\cF_x$ are slope-stable of equal slopes.  It follows that  $\varphi$ induces an endomorphism of the kernel of $\ev_x^{+}$, i.e.~of 
$q_x^*\cF_x\otimes r_x^*\cF_x\otimes\cO_{C_x^2}(-\Diag_x)$. Since $\cF_x$ is simple, the latter is a simple sheaf. It follows that $\varphi$ is a scalar.
\end{proof}
\begin{prp}\label{sempdueix}
Let $S$ be a $K3$ surface with an elliptic fibration $S\to \PP^1$  as in Claim~\ref{eccoell}, and  let $\cF$ be a vector bundle on $S$ as in Proposition~\ref{rigsuk}. Let $x$ be a regular value of the elliptic fibration. Then
the restriction of $\cF[2]^{\pm}$ to the scheme theoretic fiber $\pi^{-1}(2x)$ is a simple sheaf.
\end{prp}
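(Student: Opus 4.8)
The plan is to show directly that $H^0\bigl(Z_x,\mathcal{E}nd(\cF[2]^\pm)_{|Z_x}\bigr)=\CC$, i.e.\ that every endomorphism of the restriction is a scalar. Write $\mathcal{E}:=\mathcal{E}nd(\cF[2]^\pm)=(\cF[2]^\pm)^\vee\otimes\cF[2]^\pm$, a locally free sheaf on $S^{[2]}$, and recall $Z_x^{\text{red}}=C_x^{(2)}\cup\Delta^{[2]}_x$ with $C_x^{(2)}\cap\Delta^{[2]}_x=\Xi_x$. First I would tensor the defining sequence~\eqref{sucstand} of Corollary~\ref{scered} by the locally free $\mathcal{E}$. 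By~\eqref{idered} the ideal $\cI_{Z_x^{\text{red}}/Z_x}$ is the line bundle $\cO_{C_x^{(2)}}(-\Xi_x)\otimes\cL_{|C_x^{(2)}}$ on $C_x^{(2)}$, and since $\cL^2_{|C_x^{(2)}}\cong\cO_{C_x^{(2)}}(\Xi_x)$ by~\eqref{delatore}, this ideal is simply $\cL^{-1}_{|C_x^{(2)}}$. Thus I obtain
$$0\lra \mathcal{E}_{|C_x^{(2)}}\otimes\cL^{-1}_{|C_x^{(2)}}\lra \mathcal{E}_{|Z_x}\lra \mathcal{E}_{|Z_x^{\text{red}}}\lra 0.$$
Passing to $H^0$, the simplicity of $\cF[2]^\pm_{|Z_x}$ reduces to two statements: (a) $H^0(\mathcal{E}_{|Z_x^{\text{red}}})=\CC$, and (b) $H^0(C_x^{(2)},\mathcal{E}_{|C_x^{(2)}}\otimes\cL^{-1}_{|C_x^{(2)}})=0$. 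Indeed, given (a) and (b) the sequence shows $H^0(\mathcal{E}_{|Z_x})$ injects into $\CC$, and it already contains the scalars, so it equals $\CC$.

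For (a) I would use the Mayer--Vietoris sequence $0\to\cO_{Z_x^{\text{red}}}\to\cO_{C_x^{(2)}}\oplus\cO_{\Delta^{[2]}_x}\to\cO_{\Xi_x}\to 0$ of the two components meeting along $\Xi_x$, tensor by $\mathcal{E}$, and take $H^0$. Now $H^0(\mathcal{E}_{|C_x^{(2)}})=\CC$ because $\cF[2]^\pm_{|C_x^{(2)}}$ is simple by Lemma~\ref{cidue}, while $H^0(\mathcal{E}_{|\Delta^{[2]}_x})$ is $2$-dimensional by Lemma~\ref{somdir} and hence equals $\CC\,\Id_{P_1}\oplus\CC\,\Id_{P_2}$ for the splitting $\cF[2]^\pm_{|\Delta^{[2]}_x}=P_1\oplus P_2$ of that lemma. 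The restriction map to $H^0(\mathcal{E}_{|\Xi_x})$ sends a scalar $\lambda$ on $C_x^{(2)}$ together with $(\lambda_1,\lambda_2)$ on $\Delta^{[2]}_x$ to $\lambda\,\Id-(\lambda_1\Id_{P_1}+\lambda_2\Id_{P_2})$, an endomorphism of the single bundle $\cF[2]^\pm_{|\Xi_x}$; since $P_1$ and $P_2$ are both nonzero on $\Xi_x$, this vanishes only when $\lambda=\lambda_1=\lambda_2$. Hence the kernel is $\CC$, giving (a).

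Statement (b) is where the genuine input enters, and here I would descend to the double cover $\nu\colon C_x^2\to C_x^{(2)}$ branched along $\Xi_x$. Pulling back injects $H^0(C_x^{(2)},\mathcal{E}_{|C_x^{(2)}}\otimes\cL^{-1}_{|C_x^{(2)}})$ into $H^0\bigl(C_x^2,\mathcal{E}nd(\cW)\otimes\nu^*(\cL^{-1}_{|C_x^{(2)}})\bigr)$, where $\cW:=p_x^*\cF[2]^\pm$. Since $\nu^*\Xi_x=2\Diag_x$ we have $\nu^*\cL_{|C_x^{(2)}}\cong\cO_{C_x^2}(\Diag_x)$, so the target is $H^0\bigl(C_x^2,\mathcal{E}nd(\cW)\otimes\cO_{C_x^2}(-\Diag_x)\bigr)$. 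The canonical inclusion $\cO_{C_x^2}(-\Diag_x)\hookrightarrow\cO_{C_x^2}$ turns any such section into a homomorphism $\cW\to\cW\otimes\cO_{C_x^2}(-\Diag_x)\hookrightarrow\cW$, that is, into an element of $\End(\cW)=\CC$ by the simplicity of $\cW$ from Lemma~\ref{cidue}; but a nonzero scalar endomorphism of $\cW$ cannot have image inside the proper subsheaf $\cW\otimes\cO_{C_x^2}(-\Diag_x)$, so the section is $0$. This proves (b), and assembling (a) and (b) in the displayed sequence yields $H^0(\mathcal{E}_{|Z_x})=\CC$.

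I expect the main obstacle to be the bookkeeping in part (a): because $\cF[2]^\pm_{|\Delta^{[2]}_x}$ is \emph{not} simple but carries the two-dimensional endomorphism algebra of Lemma~\ref{somdir}, one must check carefully that the gluing condition along $\Xi_x$ — comparing the scalar coming from $C_x^{(2)}$ with the block-diagonal endomorphism coming from $\Delta^{[2]}_x$ — really cuts $H^0(\mathcal{E}_{|Z_x^{\text{red}}})$ down to the scalars, and that the Mayer--Vietoris sequence is exact (equivalently, that the two components meet along $\Xi_x$ with reduced intersection). Part (b), by contrast, is formal once the identifications $\cI_{Z_x^{\text{red}}/Z_x}=\cL^{-1}_{|C_x^{(2)}}$ and $\nu^*(\cL^{-1}_{|C_x^{(2)}})=\cO_{C_x^2}(-\Diag_x)$ are in place.
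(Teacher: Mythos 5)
Your argument is correct and is essentially the paper's own proof: the paper tensors the sequence of Corollary~\ref{scered} with the sheaf of \emph{traceless} endomorphisms, kills $H^0$ on the reduced fibre using Lemmas~\ref{somdir} and~\ref{cidue} (your Mayer--Vietoris step, done implicitly there), and kills $H^0$ of the kernel by pulling back to $C_x^2$, where $p_x^{*}\bigl(\cO_{C_x^{(2)}}(-\Xi_x)\otimes\cL_{|C_x^{(2)}}\bigr)\cong\cO_{C_x^2}(-\Diag_x)$ and simplicity of $p_x^{*}\cF[2]^{\pm}$ applies. Working with the full endomorphism sheaf instead of $End^0$ only changes the target from $0$ to $\CC$ and does not alter the argument.
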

\begin{proof}
Let $Z_x:=\pi^{-1}(2x)$ be  the scheme theoretic fiber.  By Corollary~\ref{scered} we have an exact sequence
\begin{equation}
0\lra  End^0(\cF)_{|C_x^{(2)}}\otimes\cO_{C_x^{(2)}}(-\Xi_x)\otimes\left(\cL_{|C_x^{(2)}}\right) \lra  End^0(\cF)_{|Z_x}  \lra  End^0(\cF)_{|Z^{\text{red}}_x} \lra 0.
\end{equation}
 Taking global sections, we get an isomorphism
\begin{equation}
H^0(C_x^{(2)},  End^0(\cF)_{|C_x^{(2)}}\otimes\cO_{C_x^{(2)}}(-\Xi_x)\otimes\left(\cL_{|C_x^{(2)}}\right))\overset{\sim}{\lra} 
H^0(C_x^{(2)},  End^0(\cF)_{|Z_x}).
\end{equation}
because of  Lemmas~\ref{somdir} and~\ref{cidue}. On the other hand, since 
\begin{equation*}
p_x^{*}\left(\cO_{C_x^{(2)}}(-\Xi_x)\otimes\left(\cL_{|C_x^{(2)}}\right)\right)\cong \cO_{C_x^2}(-\Diag_x), 
\end{equation*}
we have an embedding
\begin{equation}
H^0(C_x^{(2)},  End^0(\cF)_{|C_x^{(2)}}\otimes\cO_{C_x^{(2)}}(-\Xi_x)\otimes\left(\cL_{|C_x^{(2)}}\right) \hra
H^0(C_x^2,  End^0(p_x^{*}\cF_x)(-\Diag_x)),
\end{equation}
and the latter vanishes by Lemma~\ref{cidue}.
\end{proof}
\begin{proof}[Proof of  Proposition~\ref{acca20}]
 Let $B\subset\PP^2$ be the (finite) set parametrizing $2x\in (\PP^1)^{(2)}$, where $x$ is a critical value of the elliptic fibration $S\to\PP^1$.  Let us prove that  if
  $(x_1+x_2)\in(\PP^2\setminus B)$ then
\begin{equation}\label{tuttinulli}
h^p(\pi^{-1}(x_1+x_2), End^0 \cF[2]^{\pm}_{|\pi^{-1}(x_1+x_2)})=0 \quad \forall p. 
\end{equation}
 To see this, first notice that since $\pi^{-1}(x_1+x_2)$ is a local complete intersection with trivial dualizing sheaf, and $\chi(\pi^{-1}(x_1+x_2), End^0 \cF[2]^{\pm}_{0|\pi^{-1}(x_1+x_2)})=0$, it suffices to check that~\eqref{tuttinulli} holds for $p=0$. 
 
 If $x_1\not=x_2$, then~\eqref{tuttinulli} holds for $p=0$ by Proposition~\ref{semgen}. If $x_1=x_2=x$, and $x$ is a regular value of the elliptic fibration, 
then~\eqref{tuttinulli} holds for $p=0$ by Proposition~\ref{sempdueix}. This proves that~\eqref{tuttinulli} holds for all $(x_1+x_2)\in(\PP^2\setminus B)$. Since  $B$ is a finite set we get that
\begin{equation}\label{imdirzero}
R^p\pi_{*}End^0 \cF[2]^{\pm}=0, \quad p\in\{0,1\}.
\end{equation}
(See Proposition 2.26 in~\cite{mukvb}.) Now suppose that  there exist $x_1+x_2\in\PP^2$
such that  the restriction of $\cF[2]^{\pm}$ to $\pi^{-1}(x_1+x_2)$ is not simple. As shown above such points are contained in the finite set $B$, and hence it follows
 (since the fibers of $\pi$ are surfaces) that 
$R^2\pi_{*}End^0 (\cF[2]^{\pm})$ is a non zero Artinian sheaf.  By the Leray spectral sequence for $\pi$, and the vanishing in~\eqref{imdirzero},  it follows that $H^2(S^{[2]},End^0 (\cF[2]^{\pm}))\not=0$. This contradicts Proposition~\ref{prp:seirigido}.
\end{proof}

 \section{Proof of Theorem~\ref{unicita} and Corollary~\ref{solosolo}}\label{ledimo}
\subsection{Summary}
\setcounter{equation}{0}
In  Subsection~\ref{baggio} we prove that  if $d\gg 0$ there exists an irreducible component 
$\cN^i_e(d)^{\text {good}}\subset\cN^i_e(d)$, where $\cN^i_e(d)\subset\cK^i_e$ is the Lagrangian Noether-Lefschetz divisor defined in Definition~\ref{ennelagr}, with the following property: 
if $[(X,h)]\in\cN^i_e(d)^{\text {good}}$ is generic,
 there exists an $h$ slope-stable vector bundle $\cE$ on $X$ with   $\ch_0(\cE),\ch_1(\cE),\ch_2(\cE)$ given by~\eqref{ele} 
  whose restriction to Lagrangian fibers is slope-stable  with the possible exception of a finite set of fibers. We also prove that there exists one  component of the relative moduli space of slope-stable vector bundles on polarized HK's parametrized by $\cK^i_e$ with 
   $\ch_0,\ch_1,\ch_2$ given by~\eqref{ele}  which dominates the moduli space $\cK^i_e$.  
  
 In Subsection~\ref{delpiero} we prove that if $[(X,h)]\in\cN^i_e(d)^{\text {good}}$ is as above, there is a single $h$ slope-stable vector bundle  with the relevant Chern character.  

In  Subsection~\ref{dimfin} we will prove Theorem~\ref{unicita} and  Corollary~\ref{solosolo}. 
\subsection{Good vector bundles over Lagrangian HK's}\label{vblagruno}\label{baggio}
\setcounter{equation}{0}

Below is the first main result of the present subsection.
\begin{prp}\label{buonacompt}
Let $i\in\{1,2\}$ and let $r_0\ge 2$ such that $i\equiv r_0\pmod{2}$. Suppose  that~\eqref{econ}  holds, that $e\notdivides 2d$ and that 
\begin{equation}\label{pazienza}
d>\frac{5}{16}r_0^6(r_0^2-1)(e+1).
\end{equation}
There exists an irreducible component 
$\cN^i_e(d)^{\text {good}}\subset\cN^i_e(d)$, where $\cN^i_e(d)\subset\cK^i_e$  is as in Definition~\ref{ennelagr}, such that the following holds.
Let $[(X,h)]\in \cN_e^i(d)^{\text {good}}$ be  generic. (Notice that  the hypotheses of Proposition~\ref{unicafibr} hold, and hence there is an associated Lagrangian fibration 
$\pi\colon X\to\PP^2$.)
Then 
\begin{enumerate}
\item
there exists  an $h$ slope-stable vector bundle $\cE$ on $X$ such that~\eqref{ele} holds, and
\item
except possibly for a finite set of $z\in\PP^2$,    the restriction of $\cE$ to $\pi^{-1}(z)$ 
  is    slope-stable for the restricted polarization.
\end{enumerate}
\end{prp}
The proof of Proposition~\ref{buonacompt} is given at the end of the subsection.

Let $S$ be an elliptic $K3$ surface as in Subsection~\ref{traduzione}, and let us adopt the notation of that subsection. Let $X_0=S^{[2]}$, and let $\cE_0:=\cF[2]^{+}$ be the vector bundle on $X_0$ of loc.~cit. Let 
 $h_0:=h$, where $h$ is given by~\eqref{poler}. Let $C\subset S$ be a fiber of the elliptic fibration and let 
$f_0:=\mu(\cl(C))$. Lastly let $d_0$ be as in~\eqref{neronsevero} and set
\begin{equation}
d:=i d_0.
\end{equation}
Then the sublattice $\la f_0,h_0\ra\subset H^{1,1}_{\ZZ}(X_0)$ is saturated and
\begin{equation}\label{edizero}
q(f_0)=0,\quad q(h_0,f_0)=d,\quad q(h_0)=e.
\end{equation}
Let $\pi_0\colon X_0\to\PP^2$ be the Lagrangian fibration associated to the elliptic fibration of $S$, see Definition~\ref{laghilb}. Notice that 
$f_0=c_1(\pi_0^{*}\cO_{\PP^2}(1))$. 

Let $\varphi\colon\cX\to B$ be an analytic representative of the deformations space of $(X,\la h_0,f_0\ra)$ i.e.~deformations of $X_0$ that keep $h_0$ and $f_0$ of Hodge type. 
We assume that $B$ is contractible. Let  $0\in B$ the base point, in particular $X_0$ is isomorphic to $\varphi^{-1}(0)$. For $b\in B$ we let $X_b:=\varphi^{-1}(b)$. 
If $B$ is small enough, then by  Proposition~\ref{acca20} and Corollary~\ref{iacman} the vector bundle $\cE_0$ on $X_0$ deforms to a vector bundle $\cE_b$ on $X_b$ (unique up to isomorphism because $H^1(X_0,End^0(\cE_0))=0$). Notice that $\la h_0,f_0\ra$ deforms  by Gauss-Manin parallel transport to a saturated sublattice 
\begin{equation}\label{lambdabi}
\Lambda_b:=\la h_b,f_b\ra\subset H^{1,1}_{\ZZ}(X_b).
\end{equation}
Possibly after shrinking $B$ around $0$ there exists a map $\pi\colon\cX\to \PP^2$ which  restricts to  a Lagrangian fibration on every $X_b$, and is equal to  $\pi_0$  on $X_0$. We let $\pi_b$ be the restriction of $\pi$ to $X_b$. 
Notice that the fiber of $\varphi\times\pi\colon\cX\to B\times\PP^2$ over 
$(b,z)$ is the Lagrangian fiber of $X_b\to\PP^2$ over $z$; we denote it by $X_{b,z}$. Of course $f_b=c_1(\pi_b^{*}\cO_{\PP^2}(1))$.

\begin{prp}\label{chiave}
With the hypotheses of Proposition~\ref{buonacompt}, the following holds.  For $b\in B$ outside a proper analytic subset  
$h_b$ is ample and hence $[(X_b,h_b)]\in\cN^i_e(d)$ where $i\equiv r_0\pmod{2}$. 
Moreover $\cE_b$ is $h_b$ slope-stable and Items~(1), (2) of Proposition~\ref{buonacompt} hold for $\cE=\cE_b$.
\end{prp}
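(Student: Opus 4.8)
The plan is to transport every relevant property from the central fibre $X_0=S^{[2]}$, where Sections~\ref{esempi} and~\ref{ambrose} did the work, to a generic member $X_b$, using openness and semicontinuity throughout. First I would fix the genericity locus: discard from $B$ the (countably many) loci where $\rho(X_b)>2$ and the locus where $h_b$ is non-ample, keeping the $b$ with $H^{1,1}_{\ZZ}(X_b)=\Lambda_b=\la h_b,f_b\ra$. Since $h_0$ is ample, and since Lemma~\ref{nocamere} applied to $\Lambda_b$ together with~\eqref{pazienza} shows there is no $\xi\in H^{1,1}_{\ZZ}(X_b)$ with $-a(\cE_b)\le q(\xi)<0$ (note $a(\cE_b)=a(\cE_0)=\tfrac58 r_0^6(r_0^2-1)$ is a deformation invariant, given by~\eqref{adieffebis}), for generic $b$ the class $h_b$ is ample and moreover $a(\cE_b)$-suitable in the sense of Definition~\ref{suipol}. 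The lattice $\Lambda_b$ carries the numerical data~\eqref{edizero} by Gauss--Manin transport, so $[(X_b,h_b)]\in\cN^i_e(d)$; and because $r(\cE_b),c_1(\cE_b),\Delta(\cE_b),c_2(X_b)$ are the parallel transports of the corresponding classes on $X_0$, the relations~\eqref{ele} for $\cE_b$ follow from Proposition~\ref{rosetta}. This disposes of the ampleness and membership statements and of Item~(1) of Proposition~\ref{buonacompt}.

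Next I would establish global $h_b$ slope-stability of $\cE_b$. The generic Lagrangian fibre of $\pi_0$ is a product $C_{x_1}\times C_{x_2}$ with $x_1\ne x_2$, to which $\cE_0=\cF[2]^{+}$ restricts as a slope-stable bundle by Proposition~\ref{semgen}. Slope-stability of the restriction to fibres is open in the flat family $\varphi\times\pi\colon\cX\to B\times\PP^2$ (flat by miracle flatness, the fibres being the equidimensional Lagrangian fibres inside a smooth total space), and the generic point of $\{0\}\times\PP^2$ lies off the diagonal; hence for generic $b$ the restriction of $\cE_b$ to a generic fibre of $\pi_b$ is slope-stable. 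As $h_b$ is $a(\cE_b)$-suitable, Proposition~\ref{lagstab}(i) then gives that $\cE_b$ is $h_b$ slope-stable.

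For Item~(2) the engine is Proposition~\ref{acca20}(2): the restriction of $\cE_0$ to \emph{every} fibre of $\pi_0$ is simple, so $h^0(\mathcal{E}nd^0)$ vanishes identically on $\{0\}\times\PP^2$. Upper semicontinuity of $(b,z)\mapsto h^0\bigl(X_{b,z},\mathcal{E}nd^0(\cE_b)_{|X_{b,z}}\bigr)$ over the flat base $B\times\PP^2$, together with compactness of $\PP^2$, then forces this function to vanish on $B'\times\PP^2$ for some open $B'\ni 0$; thus for generic $b$ \emph{every} fibre restriction is simple. Off the discriminant curve $\mathcal{D}_b\subset\PP^2$ the fibre is an abelian surface with $\Delta(\cE_b|_{X_{b,z}})=0$ by Lemma~\ref{zerene}, and global slope-stability together with Proposition~\ref{lagstab}(ii) gives slope-semistability of the generic such restriction, which Corollary~\ref{caravilla} (using $r(\cE_b)=(mi)^2$ with $\gcd\{mi,d/i\}=1$) upgrades to slope-stability. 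So the non-stable locus $Z_b$ is a \emph{proper} closed subset of $\PP^2$, and on smooth fibres (where the normalization is the fibre itself) both assertions of Item~(2) hold on a dense open set.

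The hard part is to promote this to finiteness of $Z_b$, i.e.\ to control codimension one. Here the classical surface-over-a-curve heuristic breaks down: a failure of fibrewise semistability over a curve $W\subset\PP^2$ corresponds to a destabilization over the \emph{divisor} $\pi_b^{-1}(W)\subset X_b$, which yields only a rank-preserving elementary modification of $\cE_b$ rather than a proper-rank destabilizing subsheaf, and so is \emph{not} excluded by the global slope-stability of $\cE_b$. Nor does simplicity of the fibres rule it out, since simple sheaves with $\Delta=0$ on an abelian surface need not be semistable, so one cannot invoke the stable$\,\Rightarrow\,$semi-homogeneous results (Proposition~\ref{discperp}, \cite{muksemi}) fibrewise. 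The plan is instead a specialization argument: show that no component of the total bad locus $\mathcal{Z}\subset B\times\PP^2$ dominates $B$ with one-dimensional generic fibre, by observing that such a component would specialize at $b=0$ into the bad locus of $\cE_0$, which by Propositions~\ref{semgen} and~\ref{sempdueix} is contained in the diagonal $D^{(2)}\cong\PP^1$, forcing the limit to be all of $D^{(2)}$; one then excludes this using the normalization $\nu_z\colon\widetilde{X_{b,z}}\to X_{b,z}$ and the explicit description of the fibres $\pi_0^{-1}(2x)$ from Lemmas~\ref{somdir} and~\ref{cidue}, checking that $\nu_z^{*}\cE_b$ remains simple with $\Delta=0$ on the (generic, smooth) normalized fibre so that Corollary~\ref{caravilla} applies there too. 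Making this promotion over the deformed diagonal rigorous, and in particular verifying that simplicity and vanishing discriminant survive pullback to the normalization of the generic singular fibre, is the main obstacle; the smooth-fibre case and the global stability are formal consequences of Sections~\ref{camere}, \ref{esempi} and~\ref{ambrose}.
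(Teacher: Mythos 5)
Your treatment of ampleness, $a(\cE_b)$-suitability, Item~(1), global $h_b$ slope-stability and the simplicity of every fibre restriction coincides with the paper's. The genuine gap is in Item~(2), and it originates in your explicit dismissal of the semi-homogeneity route. The paper does \emph{not} invoke ``stable $\Rightarrow$ semi-homogeneous'' fibrewise: it invokes it only on the \emph{generic} smooth fibre (via Propositions~\ref{lagstab}, \ref{semgen} and~\ref{discperp}), and then uses two facts absent from your argument --- that semi-homogeneity passes to specializations inside a family of \emph{simple} bundles (this is where the fibrewise simplicity from Proposition~\ref{acca20} is actually needed), and that a simple semi-homogeneous bundle on an abelian variety is Gieseker--Maruyama stable, hence slope-semistable, by Proposition~6.16 of~\cite{muksemi}. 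Corollary~\ref{caravilla} then upgrades this to slope-stability on \emph{every} smooth Lagrangian fibre, so the bad locus is contained in the discriminant curve from the outset; your argument only yields a dense open set of good smooth fibres, which does not give the required finiteness.

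For the singular fibres the paper's mechanism is the irreducibility of the discriminant curve of $\pi_b$ for generic $b$ (the dual of a smooth sextic, Proposition~\ref{torcere}): the locus of points of that curve over which $\cE_b$ restricts to a slope-stable bundle is open, and non-empty because generic points of the $24$ lines in the discriminant curve of $\pi_0$ are good by Proposition~\ref{semgen} and good points persist under deformation; an open non-empty subset of an irreducible curve has finite complement. Your proposed substitute --- specializing a one-dimensional component of the bad locus to $b=0$ and excluding a limit equal to the diagonal conic $D^{(2)}$ --- cannot close: nothing makes the restriction of $\cE_0$ over $D^{(2)}$ slope-stable (Proposition~\ref{sempdueix} gives only simplicity, and those fibres are non-reduced), so a limit cycle supported on $D^{(2)}$ with multiplicity is not excluded by anything you prove. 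Moreover Corollary~\ref{caravilla} takes slope-\emph{semistability} as a hypothesis, which simplicity plus $\Delta=0$ does not provide, and the normalization of a generic singular fibre is $C_x\times\PP^1$, not an abelian surface, so that corollary is the wrong tool there; the paper instead restricts to the fibres of $C_x\times\PP^1\to\PP^1$, where stability comes from Proposition~\ref{rigsuk}, and rules out strict semistability for any polarization using $\Delta=0$ and the identity~\eqref{lungaeq}.
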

\begin{proof}
Since $r_0\ge 2$, Inequality~\eqref{pazienza} implies that $d>10(e+1)$. Hence the hypotheses of Proposition~\ref{unicafibr} hold. In the proof of that proposition we showed that $h_b$ is ample for $b$ outside a proper analytic subset  of $B$.

Next we prove that $h_b$ is $a(\cE_b)$-suitable, see Definition~\ref{suipol}. We claim that the hypotheses of Proposition~\ref{propriostab} hold  with $a_0=a(\cE_b)$. This is clear for all the hypotheses, except perhaps for the inequality in~\eqref{golfangora}.  
By Proposition~\ref{yaufever} we have $a(\cE_b)=\frac{5}{8} r_0^6(r_0^2-1)$, and hence the inequality in~\eqref{golfangora} follows from~\eqref{pazienza}.  

Since $h_b$ is $a(\cE_b)$-suitable, in order to prove that $\cE_b$ is $h_b$ slope-stable it suffices to show that the restriction to a generic fiber of the Lagrangian fibration is slope-stable, see Proposition~\ref{lagstab}. This is true for $\cE_0$ by Proposition~\ref{semgen}. By openness of slope-stability, it follows that it is true also for $b\in B$ outside a proper analytic subset.

Next we prove that Items~(1) and (2) of Proposition~\ref{buonacompt} hold for $\cE=\cE_b$. 

Item~(1) holds by Proposition~\ref{rosetta}.   

Let us  prove that Item~(2) holds. First we notice that for $b\in B$ outside a proper closed analytic subset  the restriction of $\cE_b$ to 
every Lagrangian fiber is simple. In fact this holds for $b=0$ by Proposition~\ref{acca20}, and hence the assertion we made holds by openness of \lq\lq simpleness\rq\rq. Let us prove that for $b\in B$ outside a proper closed analytic subset  the restriction of $\cE_b$ to 
a smooth Lagrangian fiber is slope-stable. By Proposition~\ref{semgen} the restriction of $\cE_0$ to a generic Lagrangian fiber  is slope-stable, and hence the restriction of 
$\cE_b$ (for $b\in B$ outside...) to a generic Lagrangian fiber  is slope-stable by openness of slope-stability. By Proposition~\ref{discperp} we get that 
the restriction of $\cE_b$ (for $b\in B$ outside...) to a generic smooth Lagrangian fiber  is semi-homogeneous. It follows that the restriction to any smooth Lagrangian fiber  is simple semi-homogeneous (note: the fact that the restriction is simple 
is crucial). By Proposition~6.16 in~\cite{muksemi} the restriction of $\cE_b$  to any smooth Lagrangian fiber is Gieseker-Maruyama stable, and hence slope-semistable. By Corollary~\ref{caravilla} it follows that it is actually  slope-stable.

Next we  claim  that 
the restriction of $\cE_b$ (for $b\in B$ outside...) to a generic singular Lagrangian fiber  is slope-stable, except possibly for a finite set of fibers. The singular Lagrangian fibers of $X_b$ are parametrized by the discriminant curve of $\pi_b$, which, for  $b\in B$ outside a proper closed analytic subset, is the dual curve of a generic sextic plane curve, see Proposition~\ref{torcere}.   
On the other hand the discriminant curve of $\pi_0$ is the union of $24$ lines (each corresponding to a critical value of the elliptic fibration) and a conic (the \lq\lq diagonal\rq\rq). The restriction of $\cE_0$ to the  Lagrangian surface parametrized by a generic point of one of the lines is slope-stable by Proposition~\ref{semgen}. By openness of slope-stability, it follows that the locus of singular Lagrangian fibers on which $\cE_b$ restricts to a slope-stable vector bundle is non empty (for $b\in B$ outside...). Since (for $b\in B$ outside...) the discriminant curve is irreducible, this proves that, with the possible exception of a finite set of singular fibers, the restriction of $\cE_b$ (for $b\in B$ outside...) to a generic singular Lagrangian fiber  is slope-stable.

\end{proof}
\begin{proof}[Proof of Proposition~\ref{buonacompt}]
Let $\cX\to T_{e}^1$ and $\cX\to T_{e}^2$ be  complete families of  polarized 
 HK's of Type $K3^{[2]}$ such that~\eqref{divuno}, respectively ~\eqref{divdue}, holds - e.g.~the families parametrized by the relevant  open subsets of  suitable Hilbert schemes. We may, and will, assume that $T_{e}^i$ is irreducible.  
 For $t\in T_{e}^i$ we let $(X_t,h_t)$ be the corresponding polarized HK of Type $K3^{[2]}$. Let $m\colon T_{e}^i\to \cK^i_e$ be the moduli map, which sends $t$ to $[(X_t,h_t)]$.

By fundamental results of Gieseker and Maruyama  there exists a  map of schemes
\begin{equation}\label{vbrel}
f\colon \cM_{e}(r_0)\to T^i_e 
\end{equation}
 such that for every $t\in T^i_e$ the (scheme theoretic) fiber $f^{-1}(t)$  is isomorphic to the (coarse) moduli space  of 
  $h_t$ slope-stable vector bundles $\cE$ on $X_t$ such that~\eqref{ele} holds. Moreover  $f\colon \cM_{e}(r_0)\to T^i_e$ is  of finite type by Maruyama~\cite{marubound}, and hence  $f(\cM_{e}(r_0))$ is a  constructible subset of $T^i_e$.
  
  By Proposition~\ref{chiave}, the image of 
  $f\colon \cM_{e}(r_0)\to T^i_e$ contains a non empty subset of $m^{-1}(\cN^i_e(d))$ which is open in the classical topology. Since the image of $f$ is a constructible set, it follows that it contains a Zariski open dense subset of an irreducible component of $m^{-1}(\cN^i_e(d))$. Since the image of $f$ is the inverse image of a subset of the moduli map $m$, it follows that there exists an irreducible component $\cN^i_e(d)^{\text{good}}\subset \cN^i_e(d)$ such that the image of $f$ contains a Zariski open dense subset of  $m^{-1}(\cN^i_e(d)^{\text{good}})$.

\end{proof}
Next we state the second main result of the present subsection. Let us agree that a map of quasi-projective varieties is dominant if the image is Zariski-dense in the codomain (usually the attribute dominant is reserved to maps between irreducible varieties). 
\begin{prp}\label{lastminute}
With notation as above, the map $f\colon \cM_{e}(r_0)\to T^i_e$ is dominant.
\end{prp}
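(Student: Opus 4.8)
The plan is to deduce dominance from the existence statement of Proposition~\ref{buonacompt} together with the finite-type property of $f$ and the fact that the relevant Noether-Lefschetz divisors form an infinite family in $\cK^i_e$. First I would record that, by Maruyama's boundedness result, the morphism $f\colon\cM_e(r_0)\to T^i_e$ is of finite type, so its image $f(\cM_e(r_0))$ is a constructible subset of $T^i_e$; consequently its Zariski closure $Z:=\overline{f(\cM_e(r_0))}$ is a closed subset with only finitely many irreducible components. Since $T^i_e$ is irreducible, it suffices to prove $Z=T^i_e$.

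Next I would feed in Proposition~\ref{buonacompt}. For every integer $d$ satisfying $e\nmid 2d$, the parity condition ($d$ even when $i=2$), and the lower bound~\eqref{pazienza}, a generic point of $\cN^i_e(d)$ carries an $h$ slope-stable vector bundle $\cE$ with the Chern data~\eqref{ele}. Transporting this to $T^i_e$ through the moduli map $m\colon T^i_e\to\cK^i_e$: a dense open subset $m^{-1}(U_d)$ of $m^{-1}(\cN^i_e(d))$ is contained in $f(\cM_e(r_0))$, so $Z$ contains at least one irreducible component $W_d$ of $m^{-1}(\cN^i_e(d))$ whose image $m(W_d)$ is dense in $\cN^i_e(d)$. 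Here I use that $\cN^i_e(d)$ is a nonempty divisor of pure codimension one in $\cK^i_e$ (Proposition~\ref{unicafibr}) and that $m$ is dominant with equidimensional generic fibers, which forces $W_d$ to have codimension one in $T^i_e$.

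The final step is a counting argument reflecting the density of Noether-Lefschetz divisors. The set of admissible $d$ is infinite, and distinct values of the invariant $d=q(h,f)$ cut out distinct divisors $\cN^i_e(d)$; hence the $W_d$ are pairwise distinct codimension-one subvarieties of $T^i_e$, all contained in $Z$. If $Z$ were a proper closed subset of the irreducible variety $T^i_e$, each of its finitely many irreducible components would have dimension strictly smaller than $\dim T^i_e$, so $Z$ could contain only finitely many distinct irreducible subvarieties of codimension one. This contradicts the existence of infinitely many distinct $W_d\subset Z$. Therefore $Z=T^i_e$, i.e.\ $f$ is dominant.

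I expect the only delicate point to be the middle step: making precise that the generic existence over each $\cN^i_e(d)$ furnished by Proposition~\ref{buonacompt} really forces $Z$ to contain a full codimension-one component lying over $\cN^i_e(d)$, and that these components are genuinely distinct as $d$ varies. Both of these rest on $\cN^i_e(d)$ being a nonempty pure-codimension-one locus for each admissible $d$ (Proposition~\ref{unicafibr}) and on the standard fact that a subvariety of $T^i_e$ of dimension $<\dim T^i_e$ cannot absorb the infinite union $\bigcup_d m^{-1}(\cN^i_e(d))$.
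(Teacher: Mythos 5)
Your argument is correct, but it takes a genuinely different route from the paper's, which disposes of Proposition~\ref{lastminute} in one line: it ``follows at once from Corollary~\ref{iacman}''. The point there is that $\Def(S^{[2]},\cF[2]^{\pm})\to\Def(S^{[2]},\det\cF[2]^{\pm})$ being smooth means the bundle spreads out over the \emph{full} deformation space of the polarized pair, i.e.\ over a classical-open neighbourhood in $T^i_e$ of a point representing a Hilbert square (with ampleness of $h_t$ and slope-stability of the deformed bundle holding off a proper analytic subset, as in Proposition~\ref{chiave}); a constructible subset of the irreducible $T^i_e$ containing a nonempty classical-open set is Zariski dense, so $f$ is dominant. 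You instead use only the existence statement over the codimension-one loci $m^{-1}(\cN^i_e(d))$ furnished by Proposition~\ref{buonacompt}, for the infinitely many admissible $d$, together with constructibility of $f(\cM_e(r_0))$ and the fact that a proper closed subset of the irreducible $T^i_e$ contains only finitely many distinct irreducible divisors. That argument is sound: the $W_d$ are pairwise distinct because a very general point of any component of $\cN^i_e(d)$ has Picard lattice exactly the saturated $\la h,f\ra$, whose discriminant $-d^2$ determines $d$; the admissible $d$ are infinite in number; and~\eqref{pazienza} implies the bound $d>10(e+1)$ of Proposition~\ref{unicafibr} since $r_0\ge 2$, so each $\cN^i_e(d)$ is a nonempty divisor of pure codimension one. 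What the paper's route buys is brevity and the slightly stronger conclusion that the image contains a Zariski-open dense subset meeting the Hilbert-square locus; what yours buys is that dominance is deduced purely from the Lagrangian (Noether--Lefschetz) existence result, by essentially the same density-of-divisors argument that the paper defers to the proof of Theorem~\ref{unicita}.
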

\begin{proof}
Follows at once from Corollary~\ref{iacman}.
\end{proof}
\subsection{Unicity of stable vector bundles on lagrangian HK's}\label{delpiero}
\setcounter{equation}{0}
We will prove the result below.
\begin{prp}\label{unico}
Let $i\in\{1,2\}$. Suppose that $r_0\ge 2$, that $r_0\equiv i\pmod{2}$,  that~\eqref{econ}  holds, that $e\notdivides 2d$ and that 
\begin{equation}\label{santapi}
d>\frac{5}{16}r_0^6(r_0^2-1)(e+1).
\end{equation}
Let $[(X,H)]\in \cN_e^i(d)^{\text{good}}$ be generic, where $\cN_e^i(d)^{\text{good}}$ is as in Proposition~\ref{buonacompt}. Then, up to isomorphism,  there exists  one and only one $h$ slope-stable vector bundle 
$\cE$ on $X$ such that~\eqref{ele} holds. 
\end{prp}
We first prove the following auxiliary result.
\begin{lmm}\label{oglemma}
Let $(Y,h)$ be a polarized irreducible smooth projective variety, and let $\rho\colon Y\to T$ be a dominant map to a smooth curve with integral fibers of dimension $n$. For $t\in T$ set $Y_t:=\rho^{-1}(t)$ and $h_t:=h_{|Y_t}$.
Let $\cF$ and $\cG$ be locally free sheaves on $Y$ such that the following hold:
\begin{enumerate}
\item
$\Delta(\cF)\smile h^{n-1}=\Delta(\cG)\smile h^{n-1}$.
\item
The restriction $\cF_t:=\cF_{|Y_t}$  is $h_t$ slope-stable for all $t\in T$. 
\item
The restrictions $\cF_t$ and $\cG_t:=\cG_{|Y_t}$ are isomorphic for generic $t\in T$.
\end{enumerate}
Then $\cF_t$ and $\cG_t$ are isomorphic for  \emph{all}  $t\in T$. 
\end{lmm}
\begin{proof}
The sheaf $\cL:=Hom_{\rho}(\cG,\cF)$ is torsion-free because $\cG$ and $\cF$ are locally free, and its fiber over    a generic point of $T$ is $1$ dimensional by Items~(2) and~(3). Since $T$ is a smooth curve it follows that  $\cL:=Hom_{\rho}(\cG,\cF)$ is an invertible sheaf.   The tautological map 
$\cG\otimes \rho^{*}(\cL)\to\cF$  gives rise to an exact sequence 
\begin{equation}
0\lra \cG\otimes\cL\overset{\alpha}{\lra} \cF\lra \cQ \lra 0.
\end{equation}
 It suffices to show that $\cQ=0$. Suppose that $\cQ\not=0$; we claim that
 \begin{equation}\label{deltacresce}
\int\limits_{Y}\Delta(\cG\otimes\rho^{*}(\cL))\smile h^{n-1}>
\int\limits_{Y}\Delta(\cF)\smile h^{n-1}.
\end{equation}
In order to prove this we notice that the (set theoretic) support of $\cQ$ is  a disjoint union of fibers of $\rho$, say 
$Y_{t_1},\ldots,Y_{t_d}$ and that $\alpha_t\colon \cG_t\to\cF_t$ is \emph{not} an isomorphism if and only if $t\in\{t_1,\ldots,t_d\}$. Thus we have $\cQ=\oplus_{i=1}^d\cQ_i$ where the set-theoretic support of $\cQ_i$ is 
$Y_{t_i}$. Let $r=\rk(\cF)=\rk(\cG)$. We have
 \begin{equation}\label{pincetto}
\Delta(\cG\otimes\rho^{*}(\cL))=
\Delta(\cF)+2\sum\limits_{i=1}^d\left(r\ch_2(\cQ_i)-c_1(\cF)\smile c_1(\cQ_i)\right).
\end{equation}
Let $\epsilon_i\in\cO_{T,t_i}$ be a local parameter at $t_i$. There exists $m_i>0$ such that $\epsilon_i^{m_i}\cQ_i=0$. For each $t_i$  we have a filtration 
$\cQ\supset\epsilon_i\cdot\cQ\supset\ldots\supset \epsilon_i^{m_i}\cdot\cQ=0$ and
\begin{multline}\label{tantipezzi}
r\ch_2(\cQ_i)-c_1(\cF)\smile c_1(\cQ_i)= \\
=\sum\limits_{\ell=1}^{m_i}\left(r\ch_2(\epsilon_i^{\ell}\cdot\cQ_i/\epsilon_i^{\ell+1}\cdot\cQ_i)-
c_1(\cF)\smile c_1(\epsilon_i^{\ell}\cdot\cQ_i/\epsilon_i^{\ell+1}\cdot\cQ_i)\right).
\end{multline}
The sheaf $\epsilon_i^{\ell}\cdot\cQ_i/\epsilon_i^{\ell+1}\cdot\cQ_i$ is annihilated by $\epsilon_i$, hence it is the pushforward of a sheaf on $Y_{t_i}$:
\begin{equation*}
\epsilon_i^{\ell}\cdot\cQ_i/\epsilon_i^{\ell+1}\cdot\cQ_i=i_{Y_{t_i},*}(\ov{\cQ}_{i,\ell}).
\end{equation*}
By GRR we get that 
\begin{multline*}
\ch_2(i_{Y_{t_i},*}(\ov{\cQ}_{i,\ell}))=i_{Y_{t_i},*}(c_1(\ov{\cQ}_{i,\ell})), \quad
c_1(\cF)\smile c_1(\ov{\cQ}_{i,\ell}))= i_{Y_{t_i},*}(\rk(\ov{\cQ}_{i,\ell}) i_{Y_{t_i}}^{*}c_1(\cF)).
\end{multline*}
Hence 
\begin{multline}\label{intpend}
\int_Y\left(r\ch_2(\epsilon_i^{\ell}\cdot\cQ_i/\epsilon_i^{\ell+1}\cdot\cQ_i)-
c_1(\cF)\smile c_1(\epsilon_i^{\ell}\cdot\cQ_i/\epsilon_i^{\ell+1}\cdot\cQ_i)\right)\smile h^{n-1}=\\
=\int_{Y_{t_i}}\left(r c_1(\ov{\cQ}_{i,\ell})-\rk(\ov{\cQ}_{i,\ell}) c_1(\cF_{t_i})\right)\smile h_{t_i}^{n-1}.
\end{multline}
We have surjections
\begin{equation}\label{grancasino}
\begin{matrix}
 \cF/\epsilon_i\cdot\cF & \overset{\phi_{i,\ell}}{\twoheadrightarrow} & 
\epsilon_i^{\ell}\cdot\cQ_i/\epsilon_i^{\ell+1}\cdot\cQ_i \\
 s & \mapsto & \epsilon_i^{\ell}\cdot s
\end{matrix}
\end{equation}
Notice that we may view $\phi_{i,\ell}$ as map of sheaves on $Y_{t_i}$, namely as 
$\phi_{i,\ell}\colon \cF_{t_i}\to \ov{\cQ}_{i,\ell}$.  By $h_{t_i}$ slope stability of $\cF_{t_i}$ it follows that if
\begin{equation}\label{tratra}
0<\rk(\ov{\cQ}_{i,\ell})<r=\rk(\cF)=\rk(\cG)
\end{equation}
the integral in~\eqref{intpend}  is strictly positive. 

Let us prove  that~\eqref{tratra} holds if $\ell=0$.  
The map 
$\alpha_{t_i}\colon\cG_{t_i}\to\cF_{t_i}$ 
 is non zero by hypothesis and $\ov{\cQ}_{i,0}=\coker(\alpha_{t_i})$. If $\rk_{Y_{t_i}}(\ov{\cQ}_{i,0})=r$ then 
 $\alpha_{t_i}$ vanishes at the generic point of $Y_{t_i}$ and hence   it vanishes at all points of $Y_{t_i}$, contradiction. 
 Now suppose that $\rk_{Y_{t_i}}(\ov{\cQ}_{i,0})=0$, i.e.~that $\alpha_{t_i}$ is an isomorphism at the generic point. By Item~(3) we have $c_1(\cG_{t_i})=c_1(\cF_{t_i})$ and it follows that $\alpha_{t_i}$ is an isomorphism, contradiction. We have proved that~\eqref{tratra} holds if $\ell=0$.  

 If $\rk(\ov{\cQ}_{i,0})=0$ then 
$c_1(\ov{\cQ}_{i,\ell})$ is effective, and hence the integral in~\eqref{intpend}  is positive or zero, and if 
$\rk(\ov{\cQ}_{i,0})=r$ then $\cF_{t_i}\cong \ov{\cQ}_{i,0}$ and hence  the integral in~\eqref{intpend}  vanishes. 

By~\eqref{pincetto} and~\eqref{tantipezzi} we get that~\eqref{deltacresce} holds.

Since $\Delta(\cG\otimes\rho^{*}(\cL))=\Delta(\cG)$ the  inequality in~\eqref{deltacresce} contradicts Item~(1).
\end{proof}
\begin{proof}[Proof of Proposition~\ref{unico}]
Existence has been proved in Proposition~\ref{buonacompt}. Let $\cE$ be the vector bundle of that proposition. Then  $[\cE]\in \cM_{e}(r_0)$.

Now let $\cA$ be any $h$ slope-stable vector such that
\begin{equation}\label{bobbysolo}
\ch_i(\cA)=\ch_i(\cE),\quad \forall i\in\{0,1,2\}.
\end{equation}
 We must prove that $\cA$ is isomorphic to $\cE$.

Let $\pi\colon X\to\PP^2$ be the associated Lagrangian fibration of $[(X,h)]$. Since $[(X,h)]$ is a generic point of 
$\cN^i_e(d)^{\text{good}}$ the discriminant divisor of $\pi$ is the dual of a smooth plane sextic curve (see Proposition~\ref{torcere}), and hence it is smooth away from a finite set $B_0\subset\PP^2$.  By Item~(2) of Proposition~\ref{buonacompt} there is a  finite (possibly empty) 
 $B_1\subset\PP^2$  of $z$ such that the restriction of $\cE$ to   $\pi^{-1}(z)$  is not slope-stable.

Let  $z_0\in(\PP^2\setminus (B_0\cup B_1))$. We claim  that  $\cA_{|\pi^{-1}(z_0)}$ is isomorphic to $\cE_{|\pi^{-1}(z_0)}$. 
In fact let $T\subset\PP^2$ be a smooth curve containing $z_0$ and  intersecting transversely the discriminant 
divisor of $\pi$. Thus $Y:=\pi^{-1}(T)$ is a smooth threefold and the restriction of $\pi$ to $Y$ defines a dominant map $\rho\colon Y\to T$. We apply Lemma~\ref{oglemma} to $\cF:=\cE_{|Y}$ and $\cG:=\cA_{|Y}$ (the polarization of $T$ is the restriction  of the polarization of $X$). Let us check that the hypotheses of that lemma are satisfied: Item~(1) holds by~\eqref{bobbysolo}, Item~(2) holds by  
Item~(2) of Proposition~\ref{buonacompt}, and Item~(3)  holds by Proposition~\ref{propriostab}. In fact by that proposition  the set of $z\in\PP^2$ such that $\cE_{|\pi^{-1}(z)}$ is not isomorphic to 
$\cA_{|\pi^{-1}(z)}$ is contained in a proper closed subset $Z\subset\PP^2$, and hence it suffices to choose 
 $T$ so that it is not contained in $Z$. This shows that the hypotheses of  Lemma~\ref{oglemma} hold, and hence  we get that the restrictions  of $\cE$ and $\cA$ to any fiber of $Y\to T$ are isomorphic. In particular $\cE_{|\pi^{-1}(z_0)}$ is isomorphic to 
 $\cA_{|\pi^{-1}(z_0)}$. 

Let $z\in(\PP^2\setminus (B_0\cup B_1))$. We have proved that  $\cA_{|\pi^{-1}(z)}$ is isomorphic to 
 $\cE_{|\pi^{-1}(z)}$, and hence in particular  $\cA_{|\pi^{-1}(z)}$ and  
 $\cE_{|\pi^{-1}(z)}$ are slope-stable. Since $c_1(\cE)=c_1(\cA)$ it follows that 
 the restrictions of $\cE$ and $\cA$ to $\pi^{-1}(\PP^2\setminus (B_0\cup B_1))$  are isomorphic, see the proof of Lemma~\ref{oglemma}. By Hartogs it follows that $\cE$ is isomorphic to $\cA$.
\end{proof}
\subsection{Proofs of Theorem~\ref{unicita} and Corollary~\ref{solosolo}}\label{dimfin}
\setcounter{equation}{0}
\begin{proof}[Proof of  Theorem~\ref{unicita}]
If $r_0=1$ the result is trivially true, hence we may assume that $r_0\ge 2$. Let $\cX\to T_{e}^1$ and $\cX\to T_{e}^2$ be  complete families of  polarized 
 HK's of Type $K3^{[2]}$ such that~\eqref{divuno}, respectively ~\eqref{divdue}, holds - e.g.~the families parametrized by the relevant  open subsets of  suitable Hilbert schemes. Since $\cK^i_e$ is irreducible we may, and will, assume that $T_{e}^i$ is irreducible.  By passing to normalization if necessary we may  assume that $T_{e}^i$ is normal. For $t\in T_{e}^i$ we let $(X_t,h_t)$ be the corresponding polarized HK of Type $K3^{[2]}$. We let $m\colon T_e^i\to\cK^i_e$ be the moduli map, sending $t$ to $[(X_t,h_t)]$.
 
 Let $f\colon \cM_{e}(r_0)\to T^i_e$  be the relative moduli space that we have introduced, see~\eqref{vbrel}. 
 
 By Proposition~\ref{unico} for $t$ in   a dense subset of  $\bigcup\limits_{d\gg 0}m^{-1}(\cN_e^i(d)^{\text{good}})$ the preimage $f^{-1}(t)$ is a singleton. Since 
 $\bigcup\limits_{d\gg 0}m^{-1}(\cN_e^i(d)^{\text{good}})$ is Zariski dense in $T^i_e$ (it is the union of an infinite collection of pairwsie distinct divisors), and since
$f(\cM_{e}(r_0))$ is a  constructible subset of $T^i_e$, it follows that for  generic $t\in  T^i_e$ the fiber 
$f^{-1}(t)$ is a singleton.

Let $[\cE]$ be the unique point of $f^{-1}(t)$ for $t$ a generic point of $m^{-1}(\cN_e^i(d)^{\text{good}})$, where $d\gg 0$. Then $H^p(X_t, End^0(\cE))=0$ by 
 Proposition~\ref{acca20}. Hence the last sentence of Theorem~\ref{unicita} follows from upper semicontinuity of cohomology.
\end{proof}
\begin{proof}[Proof of  Corollary~\ref{solosolo}]
  Since  $T^i_e$ is normal, it follows by  Theorem~\ref{unicita} and Zariski's main Theorem that every fiber of $f$ is either empty or connected. 
 
 Now let $t\in T^i_e$ such that $(X_t,h_t)$ is isomorphic to $(X,h)$; we identify $X_t$ with $X$. Let $x\in f^{-1}(t)$ be the point representing the vector bundle  $\cE$. 
  Since $h^2(X,End^0(\cE))=0$,   every irreducible component of $\cM_{e}(r_0)$ containing $x$ dominates $T^i_e$. By  Theorem~\ref{unicita} there is a single such component. Hence  
 \begin{equation}\label{chichi}
  \chi(X,End^0(\cE))=\chi(X_s,End^0(\cG)),
\end{equation}
 where $s\in T^i_e$ is generic
  and  $\cG$ is the unique (up to isomorphism) $h_s$ slope-stable vector bundle on $X_s$ such that~\eqref{ele} holds (with $\cG$ replacing $\cE$). 
  
  By~\eqref{chichi} and Theorem~\ref{unicita} we get that $\chi(X,End^0(\cE))=0$. Now $h^0(X,End^0(\cE))=0$ by stability, hence $h^4(X,End^0(\cE))=0$ by Serre duality, and 
 $h^2(X,End^0(\cE))=0$ by hypothesis. It follows that $H^1(X,End^0(\cE))=0$ (notice that $H^1(X,End^0(\cE))$ is Serre dual to $H^3(X,End^0(\cE))$), and hence 
 $\{x\}$ is a component of $f^{-1}(t)$. Since   $f^{-1}(t)$ is not empty it is  connected, and hence it  equals $\{x\}$. This proves Corollary~\ref{solosolo}.  
\end{proof}
\section{Moduli of DV  varieties}\label{geodv}
\setcounter{equation}{0}
\subsection{Debarre-Voisin vector bundles}
\setcounter{equation}{0}
Let $X\subset \Gr(6,V_{10})$ be a DV variety, and let 
\begin{equation}\label{tautdv}
0\lra \cS \lra \cO_{X}\otimes V_{10}\lra \cQ\lra 0
\end{equation}
be the restriction to $X$ of the tautological exact sequence of vector bundles  on $\Gr(6,V_{10})$. Thus $r(\cS)=6$ and $r(\cQ)=4$.  
\begin{lmm}\label{classidv}
Let $X$ be a DV variety, and let  $h\in H^{1,1}_{\ZZ}(X)$ be the  Pl\"ucker polarization. 
 Then
\begin{eqnarray}
\ch_0(\cQ) & = & 4, \label{superovvio} \\
\ch_1(\cQ) & = & h, \label{ovvio} \\
\ch_2(\cQ) & = & \frac{1}{8}\left(h^2 -c_2(X)\right), \label{qui} \\
\end{eqnarray}
where $\eta_X\in H^8(X;\ZZ)$ is the fundamental class.
\end{lmm}
\begin{proof}
The first two equations are obvious, and equation~\eqref{qui} follows from the third-to-last equation on p.~83 of~\cite{devo} (beware that  $c_i=c_i(\cS^{\vee})$). 
\end{proof}
\begin{rmk}\label{stessi}
Let $(X,h)$ be a smooth DV variety. Then $q(h)=22$ and the divisibility of $h$ is $2$, i.e.~$[(X,h)]\in\cK^2_{22}$. 
If one sets $r_0=2$, $i=2$ and $e=22$, then the equations in~\eqref{ele} give for the vector bundle $\cE$ the same rank, $\ch_1$ and $\ch_2$ as  
for the quotient vector bundle $\cQ$ described above.
\end{rmk}
\begin{rmk}\label{anchecub}
Let $X$ be the variety of lines in a smooth cubic fourfold  in $\PP(V_6)$. Let 
\begin{equation}\label{aridaje}
0\lra \cS \lra \cO_{X}\otimes V_{6}\lra \cQ\lra 0
\end{equation}
be the restriction to $X$ of the tautological exact sequence of vector bundles  on $\Gr(2,V_{6})$. Hence the rank of $\cQ$ is $4$ . Let  $h\in H^{1,1}_{\ZZ}(X)$ be the  Pl\"ucker polarization. 
 Then (see the equations in~\ref{yaris}) we have
\begin{equation*}
\ch_0(\cQ)  =  4,\ \ch_1(\cQ)  =  h,\   
\ch_2(\cQ)  =  \frac{1}{8}\left(h^2 -c_2(X)\right). 
\end{equation*}
Next notice that $q(h)=6$ and the divisibility of $h$ is $2$, i.e.~$[(X,h)]\in\cK^2_{6}$. The equations above show  that 
 the Chern character of $\cQ$ is identified with the Chern character appearing in Theorem~\ref{unicita} for 
  $r_0=2$,  $i=2$ and $e=6$.  
\end{rmk}
\begin{prp}\label{piripicchio}
If $X$ is a smooth DV variety with cyclic Picard group both $\cQ$ and $\cS$ (see~\eqref{tautdv}) are   slope-stable vector bundles. 
\end{prp}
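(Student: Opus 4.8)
The plan is to use the hypothesis $\rho(X)=1$ to reduce slope-stability to a short list of cohomological vanishings, and then to read off those vanishings from the geometry of the family of $6$-planes $\{W\mid [W]\in X\}$. Since $\rho(X)=1$ and $h$ is primitive we have $\Pic(X)=\NS(X)=\ZZ h$ (see Remark~\ref{stessi}), so every subsheaf $\cE$ of $\cQ$ or of $\cS$ satisfies $c_1(\cE)=kh$ for some $k\in\ZZ$, and slopes are governed by the ratio $k/r(\cE)$. By Lemma~\ref{classidv}, $r(\cQ)=4$, $c_1(\cQ)=h$, $r(\cS)=6$ and $c_1(\cS)=-h$. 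A destabilizing subsheaf $\cE\subset\cQ$ of rank $s\in\{1,2,3\}$ forces $4k\ge s$, hence $k\ge 1$; a destabilizing $\cE\subset\cS$ of rank $s\in\{1,\dots,5\}$ forces $6k\ge -s$, hence $k\ge 0$. Replacing $\cE$ by its saturation and passing to top exterior powers, such a subsheaf yields a nonzero map $\cO_X(kh)\to\bigwedge^s\cQ$ (resp. $\cO_X(kh)\to\bigwedge^s\cS$); multiplying by a nonzero section of $\cO_X((k-1)h)$ (resp. $\cO_X(kh)$), which exists because $h$ is ample, I may take $k=1$ (resp. $k=0$). Thus stability of $\cQ$ follows from $H^0(\bigwedge^s\cQ(-h))=0$ for $s=1,2,3$, and stability of $\cS$ from $H^0(\bigwedge^s\cS)=0$ for $s=1,\dots,5$. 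Using $\det\cQ=\cO_X(h)$ and the rank-$4$ duality $\bigwedge^s\cQ(-h)\cong\bigwedge^{4-s}\cQ^\vee$, the first family becomes $H^0(\bigwedge^t\cQ^\vee)=0$ for $t=1,2,3$.

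Next I identify these groups with spaces of invariant tensors. Dualizing the tautological sequence~\eqref{tautdv} gives $\cQ^\vee\hookrightarrow\cO_X\otimes V_{10}^\vee$ with fiber $W^\perp$ over $[W]$, so $\bigwedge^s\cQ^\vee\hookrightarrow\cO_X\otimes\bigwedge^s V_{10}^\vee$ and a global section is a constant tensor lying in every fiber, whence $H^0(\bigwedge^s\cQ^\vee)=\bigcap_{[W]\in X}\bigwedge^s W^\perp$. Likewise $\cS\hookrightarrow\cO_X\otimes V_{10}$ gives $H^0(\bigwedge^s\cS)=\bigcap_{[W]\in X}\bigwedge^s W$. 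Now I invoke the standard fact that a nonzero $\omega\in\bigwedge^s V$ admits a smallest subspace $U_\omega$ with $\omega\in\bigwedge^s U_\omega$, and that $\omega\in\bigwedge^s U$ forces $U_\omega\subseteq U$: a nonzero element of $\bigcap_{[W]}\bigwedge^s W$ would have $U_\omega\subseteq\bigcap_{[W]}W$, and symmetrically on the dual side. Consequently all the required vanishings reduce to two purely geometric statements, namely (I) $\bigcap_{[W]\in X}W=0$, and (II) $\sum_{[W]\in X}W=V_{10}$ (equivalently $\bigcap_{[W]\in X}W^\perp=0$). In fact $\cS$ is slope-stable as soon as (I) holds, and $\cQ$ is slope-stable as soon as (II) holds.

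The main obstacle is therefore establishing (I) and (II) for a smooth $X$. Statement (II) fails precisely when all the $W$ lie in a fixed hyperplane $V_9\subset V_{10}$, i.e. $X\subseteq\Gr(6,V_9)$, and statement (I) fails precisely when all the $W$ contain a fixed vector $v$, i.e. $X$ lies in the Schubert variety $\{[W]\mid v\in\langle v\rangle\subseteq W\}$. I would rule out both degenerate positions: splitting $V_{10}=V_9\oplus\langle e\rangle$ (resp. projecting modulo $\langle v\rangle$) and decomposing $\sigma\in\bigwedge^3 V_{10}^\vee$ accordingly exhibits such an $X$ as a degenerate Debarre--Voisin locus of the type analyzed in~\cite{dhov}; I would then argue that such a locus is either singular or carries an algebraic divisor class independent of $h$ coming from the Schubert geometry of the ambient sub-Grassmannian, contradicting smoothness together with $\rho(X)=1$. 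This non-degeneracy input is the crux of the argument; granting (I) and (II), the stability of both $\cQ$ and $\cS$ follows at once from the reductions above.
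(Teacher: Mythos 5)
Your reduction of slope-stability to cohomological vanishing is sound: using $\Pic(X)=\ZZ h$, Hoppe's criterion, and the rank-four duality $\bigwedge^s\cQ(-h)\cong\bigwedge^{4-s}\cQ^{\vee}$, together with the identification of $H^0(\bigwedge^t\cQ^{\vee})$ and $H^0(\bigwedge^s\cS)$ with $\bigcap_{[W]}\bigwedge^t W^{\perp}$ and $\bigcap_{[W]}\bigwedge^s W$ and the support lemma for decomposable-support of a tensor, everything correctly collapses to your statements (I) $\bigcap_{[W]\in X}W=0$ and (II) $\sum_{[W]\in X}W=V_{10}$. But (I) and (II) are exactly where the proof has to happen, and you do not prove them: the paragraph beginning ``I would rule out both degenerate positions'' is a plan, not an argument. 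Worse, the specific mechanism you propose is unlikely to work as stated. If all $W$ contain a fixed $v$, then $X$ sits in the Schubert subvariety $\{[W]\mid v\in W\}\cong\Gr(5,V_{10}/\langle v\rangle)$, and if all $W$ lie in a hyperplane then $X\subset\Gr(6,V_9)$; both ambient varieties have cyclic Picard group whose generator restricts to a multiple of $h$ on $X$, so there is no ``algebraic divisor class independent of $h$ coming from the Schubert geometry'' to contradict $\rho(X)=1$. Nor is it clear that such an $X$ must be singular: the restricted section of $\bigwedge^3\cS^{\vee}$ has rank exceeding the dimension of the ambient sub-Grassmannian, which gives no a priori control on its zero locus. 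So the crux is genuinely missing.

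For comparison, the paper closes this gap with much less machinery: since $\cQ$ is globally generated, any slope-destabilizing quotient $\cB$ of $\cQ$ is globally generated with $c_1(\cB)=(1-x)h$ and $x\ge 1$, forcing $x=1$ and $\cB$ trivial, whence $c_4(\cQ)=0$; this contradicts the computation $c_4(\cQ)=c_1^4-3c_1^2c_2+c_2^2+2c_1c_3-c_4=9\eta_X$ from~\cite{devo} (an analogous argument, using that $\cS^{\vee}$ is globally generated, handles $\cS$). Note that your statement (II) is precisely the case $t=1$, i.e.\ the nonexistence of a trivial quotient $\cQ\twoheadrightarrow\cO_X$, so even on your route you would ultimately need this Chern-class (or some equivalent numerical) input; compare Remark~\ref{cubstab}, where the same issue for cubic fourfolds (there $c_4(\cQ)=0$) is resolved by a different cohomological argument. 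To salvage your proof you should prove (I) and (II) directly, e.g.\ by deriving $c_4(\cQ)=0$ (respectively an analogous numerical contradiction for $\cS$) from the degenerate position, rather than appealing to an unavailable extra divisor class.
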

\begin{proof}
Let $h\in H^{1,1}_{\ZZ}(X)$ be the  Pl\"ucker polarization. Let us prove that $\cQ$ is $h$ slope-stable. Suppose that 
\begin{equation*}
0\to \cA\to\cQ\to\cB\to 0
\end{equation*}
 is a desemistabilizing sequence. Thus $0<r(\cA)<4$, 
\begin{equation}
\frac{c_1(\cA)\cdot h^3}{r(\cA)}=\mu(\cA)\ge \mu(\cQ):=\frac{h^4}{4}=363,
\end{equation}
and we may assume that $\cB$ is torsion-free. By  hypothesis $c_1(\cA)=xh$ for some $x\in\ZZ$, and hence $x\ge 1$. It follows that $c_1(\cB)=(1-x)h$. Since $\cQ$ is globally generated, so is $\cB$. Thus  $x=1$, i.e.~$c_1(\cB)=0$, and $\cB$ is trivial because it is globally generated. Hence $c_4(\cQ)=0$. This is a contradiction.   In fact, following the notation on p.~83 of~\cite{devo}, we let  $c_i=c_i(\cS^{\vee})$. Then (using the formulae in Equation (11) of loc.~cit.)
\begin{equation*}
c_4(\cQ)=c_1^4-3c_1^2c_2+c_2^2+2c_1 c_3-c_4=9\eta_X.
\end{equation*}
 An analogous proof gives slope-stability of $\cS$.
\end{proof}
By openness of slope-stability we also get the following result.
\begin{crl}\label{genstab}
If $X$ is a generic  DV variety both  $\cQ$ and $\cS$ are  slope-stable  vector bundles (for the Pl\"ucker polarization). 
\end{crl}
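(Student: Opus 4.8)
The plan is to deduce this immediately from Proposition~\ref{piripicchio} together with the openness of slope-stability in families. The only gap between the two statements is that having cyclic Picard group is a \emph{very general} condition --- it fails precisely on the points lying on one of the countably many Noether-Lefschetz divisors --- whereas in our convention \emph{generic} means \emph{belonging to a Zariski-dense open subset}. Openness of stability is exactly what bridges this gap.

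First I would set up the relative picture. Let $U\subset\PP(\bigwedge^3 V_{10}^\vee)$ be the non-empty Zariski-open locus parametrizing $\sigma$ such that $X_\sigma$ is a smooth DV variety, and let $\cX\to U$ be the universal family, carrying the relative tautological exact sequence whose restriction to each fiber is~\eqref{tautdv}; denote by $\cQ$ and $\cS$ the two relative tautological bundles. The relative Pl\"ucker polarization equips every fiber with its ample class $h$. Since $U$ is irreducible, any non-empty Zariski-open subset of $U$ is automatically dense, i.e.~is a generic subset. Now recall that for a flat family of coherent sheaves over an irreducible base endowed with a relative polarization, the locus over which the sheaf is fiberwise slope-stable is Zariski-open (see~\cite{huylehnbook}). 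Applying this to $\cQ$ (respectively $\cS$) over $U$ produces a Zariski-open subset $U_{\cQ}\subset U$ (respectively $U_{\cS}\subset U$) over which the corresponding fiber is slope-stable.

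It remains to verify that $U_{\cQ}$ and $U_{\cS}$ are non-empty. But a very general $\sigma\in U$ yields a DV variety with $\rho(X_\sigma)=1$, hence with cyclic Picard group, and for such $\sigma$ both $\cQ$ and $\cS$ are slope-stable by Proposition~\ref{piripicchio}. Thus $U_{\cQ}$ and $U_{\cS}$ each contain this very general locus, so they are non-empty; being non-empty open subsets of the irreducible variety $U$, they are dense. Therefore a generic DV variety lies in $U_{\cQ}\cap U_{\cS}$, and for it both $\cQ$ and $\cS$ are slope-stable, as claimed. All the genuine mathematical content sits in Proposition~\ref{piripicchio}; the only point one must be careful about is that cyclic Picard group is merely a very general property, and the role of openness of stability is precisely to upgrade this to a statement on a Zariski-dense open subset.
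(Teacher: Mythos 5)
Your proposal is correct and is essentially the paper's own argument: the paper deduces the corollary from Proposition~\ref{piripicchio} in one line, ``by openness of slope-stability,'' which is exactly the bridge you spell out (very general fibers have cyclic Picard group, hence stable tautological bundles, and the fiberwise-stable locus is Zariski-open and non-empty in the irreducible parameter space, hence dense).
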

\begin{rmk}\label{cubstab}
Let $X$ be the variety of lines in a smooth cubic fourfold  in $\PP(V_6)$, and let $\cQ$ be  quotient vector bundle appearing in~\eqref{aridaje}. If  $\Pic(X)$ is cyclic one can prove that $\cQ$ is slope-stable by proceeding as in the proof of Proposition~\ref{piripicchio}, except that in the end one does not conclude by a Chern class computation (we have $c_4(\cQ)=0$).  Rather one shows directly that there is no trivial quotient $\cQ\twoheadrightarrow \cO_X$. In fact, assume there is such a quotient; then there is a non zero section of $\cQ^{\vee}$, and one gets that the latter is false by considering the dual of the exact sequence in~\ref{aridaje}. 
\end{rmk}
\begin{prp}\label{diecisez}
If $X$ is a generic DV variety, then  the map  $V_{10}\to H^0(X,\cQ)$ induced by~\eqref{tautdv} is an isomorphism. 
\end{prp}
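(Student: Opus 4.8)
The plan is to take cohomology in the tautological sequence~\eqref{tautdv} and reduce the statement to a cohomology computation that I would carry out on the ambient Grassmannian. Write $G:=\Gr(6,V_{10})$ and denote by $\cS,\cQ$ also the tautological sub- and quotient bundles on $G$. Recall that $X=X_\sigma$ is the zero locus on $G$ of the section of $\bw 3\cS^\vee$ determined by $\sigma$, and that for generic $\sigma$ this section is transverse, so $X$ is smooth of the expected dimension $4$.

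First I would take the long exact cohomology sequence of~\eqref{tautdv}. Since $X$ is a hyperk\"ahler fourfold we have $H^1(X,\cO_X)=0$, so the sequence reads
\begin{equation*}
0\lra H^0(X,\cS)\lra V_{10}\lra H^0(X,\cQ)\lra H^1(X,\cS)\lra 0.
\end{equation*}
The group $H^0(X,\cS)$ vanishes: a nonzero section would embed $\cO_X$ (of slope $0$) into $\cS$, whose saturation would then be a subsheaf of slope $\ge 0$, contradicting slope-stability of $\cS$ (Corollary~\ref{genstab}) together with $\mu(\cS)<0$ (note $c_1(\cS)=-h$ by Lemma~\ref{classidv}). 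Hence $V_{10}\to H^0(X,\cQ)$ is injective, and it is an isomorphism if and only if $H^1(X,\cS)=0$. I would in fact aim for the stronger assertion that restriction from $G$ induces isomorphisms $H^\bullet(G,\cQ)\overset{\sim}{\lra}H^\bullet(X,\cQ)$; since $\cQ$ on $G$ is acyclic in positive degrees with $H^0(G,\cQ)=V_{10}$ (Borel--Weil--Bott), this yields at once $H^0(X,\cQ)=V_{10}$ and $H^{>0}(X,\cQ)=0$, which contains the desired statement.

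To obtain this I would resolve $\cO_X$ on $G$ by the Koszul complex of the section of $\cE:=\bw 3\cS^\vee$ (rank $\binom 63=20=\cod_G X$),
\begin{equation*}
0\lra \bw{20}\cE^\vee\lra\cdots\lra \bw{2}\cE^\vee\lra \cE^\vee\lra\cO_G\lra\cO_X\lra 0,\qquad \cE^\vee=\bw 3\cS,
\end{equation*}
tensor it with $\cQ$, and run the resulting hypercohomology spectral sequence. Its $E_1$-terms are the groups $H^q\!\big(G,\,\cQ\otimes\bw{p}(\bw 3\cS)\big)$, each computable by Bott's theorem after decomposing $\bw{p}(\bw 3\cS)$ into Schur functors of $\cS$. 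The main work — and the principal obstacle — is to check that for every $p\ge 1$ these ambient groups either vanish or cancel along the differentials, so that only the $p=0$ column survives and $H^\bullet(X,\cQ)\cong H^\bullet(G,\cQ)$. This is a finite but lengthy representation-theoretic verification; the delicate point is that the twists $\bw{p}(\bw 3\cS)$ generate many weights, and one must rule out surviving contributions in cohomological degrees $\ge 1$.

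Finally, as an independent consistency check I would compute $\chi(X,\cQ)$ by Hirzebruch--Riemann--Roch. Using the Chern character of $\cQ$ from Lemma~\ref{classidv}, the Todd class of a $K3^{[2]}$-type fourfold (so $c_1=c_3=0$ and $\int_X\td_4=\chi(\cO_X)=3$), the relations $\int_X c_2(X)^2=828$ and $\int_X c_2(X)\smile\alpha^2=30\,q(\alpha)$ from~\eqref{prodcidue}, together with $q(h)=22$, one finds
\begin{equation*}
\chi(X,\cQ)=-\tfrac14-\tfrac74+12=10 .
\end{equation*}
Combined with the vanishing $H^{>0}(X,\cQ)=0$, this forces $h^0(X,\cQ)=10=\dim V_{10}$, confirming that the injection $V_{10}\hookrightarrow H^0(X,\cQ)$ is an isomorphism.
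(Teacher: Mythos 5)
Your reduction is sound and agrees with the paper's opening move: $H^0(X,\cS)=0$ because $\cS$ is slope-stable of negative slope (Corollary~\ref{genstab}), so $V_{10}\to H^0(X,\cQ)$ is injective and the statement is equivalent to $h^0(X,\cQ)=10$, i.e.\ to $H^1(X,\cS)=0$. Your Euler characteristic computation $\chi(X,\cQ)=10$ is also correct. But the proof has a genuine gap exactly where you flag ``the principal obstacle'': the assertion that in the Koszul spectral sequence for $\cQ\otimes\bw{\bullet}(\bw 3\cS)$ on $\Gr(6,V_{10})$ all the terms with $p\ge 1$ vanish or cancel is never verified, and it is the entire content of the proposition. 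Without it you have no control on $H^1(X,\cQ)$, $H^2(X,\cQ)$, $H^3(X,\cQ)$; Serre duality and stability of $\cQ^\vee$ kill $H^4(X,\cQ)$ only. The HRR consistency check cannot substitute for the missing vanishing: $\chi=10$ together with the injection $V_{10}\hookrightarrow H^0(X,\cQ)$ gives only $h^0\ge 10$, and the alternating sum leaves the middle cohomology undetermined. Decomposing $\bw{p}(\bw 3\cS)$ into Schur functors for $p$ up to $20$ and applying Bott to each weight is a very large plethysm computation, not a routine one, so announcing it does not discharge it.

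For comparison, the paper avoids the ambient Grassmannian entirely. It proves $h^0(X,\cQ)=10$ by deformation: it exhibits the bundle on a special member of $\cK^2_{22}$ as $\cF[2]^{+}$ on $S^{[2]}$ for an elliptic $K3$ surface $S$ with $\cF$ of Mukai vector $(2,D,2)$, where $H^0(S^{[2]},\cF[2]^{+})\cong\Sym^2H^0(S,\cF)$ has dimension $10$ and $H^1$ vanishes because $h^1(S,\cF)=0$; it then transports these values to the generic DV variety using Proposition~\ref{acca20}, Corollary~\ref{iacman} and the uniqueness statement of Theorem~\ref{unicita} to identify the deformed bundle with $\cQ$. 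If you want to keep your route, you must actually carry out (or cite) the Borel--Weil--Bott verification; otherwise you should fall back on the deformation argument.
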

\begin{proof}
The vector bundle $\cS$ has no global sections because  it is slope-stable (by Corollary~\ref{genstab}) with negative slope. Hence
 it suffices to prove that  $h^0(X,\cQ)=10$. We do this by considering a $K3$ surface $S$ as in Claim~\ref{eccoell} with $m_0=3$ and large odd $d_0$. The vector bundle $\cF$ on $S$ of Proposition~\ref{rigsuk} has Mukai vector $v(\cF)=(2,D,2)$.  We claim that 
 \begin{equation}\label{treacca}
h^0(S,\cF)=4,\qquad  h^1(S,\cF)=0,\qquad  h^2(S,\cF)=0.
\end{equation}
  at least for $d\gg 0$ and \lq\lq most\rq\rq\ $S$. In fact let $(S',D')$ be a generic polarized $K3$ surface with  $D'$ of square $6$. Then there exists a unique $D'$ slope-stable vector bundle $\cF'$ on $S'$ with Mukai vector $(2,D',2)$. As is easily checked
  $h^0(S',\cF')=4$. Since  moduli of  elliptic $K3$'s that we are considering are dense in the moduli space of polarized $K3$'s of degree $6$, we get that $h^0(S,\cF)=4$ for \lq\lq most\rq\rq\ $S$.
 By stability of $\cF$ we have $h^2(S,\cF)=0$. Hence also the middle equality in~\eqref{treacca} holds because  $\chi(S,\cF)=4$.

Let $\cE_0:=\cF[2]^{+}$. By definition there is a canonical isomorphism $H^0(S^{[2]},\cE_0)\cong\Sym^2 H^0(S,\cF)$ and hence 
$h^0(S^{[2]},\cE_0)=10$. From the second equality in~\eqref{treacca} we also get that $h^1(S^{[2]},\cE_0)=0$. Now let $X_0:=S^{[2]}$ and let $h_0:=h$ where $h$ is given by~\eqref{poler}. Notice that $q(h_0)=22$ and the divisibility of $h_0$ is $2$.

Let $\cX\to T$ be an analytic representative of the deformations space of $(X_0,h_0)$. Let $t\in T$; by Proposition~\ref{acca20} and Corollary~\ref{iacman} there is one and only one vector bundle  $\cE_t$ on $X_t$ which is a deformation of $\cE_0$.  By Proposition~\ref{chiave} $h_t$ is a polarization on $X_t$ for $t$ generic in $T$, and $\cE_t$ is $h_t$ slope-stable. But for $t\in T$ generic  $(X_t,h_t)$ is isomorphic to a DV variety parametrized by an analytic open subset of $\PP(\bigwedge^3 V_{10}^{\vee})$. Hence $\cE_t$ is isomorphic to the corresponding quotient DV vector bundle $\cQ_t$ on $X_t$ by Theorem~\ref{unicita} and Corollary~\ref{genstab}. Hence  $h^0(X_t,\cE_t)=h^0(X_0,\cE_0)=10$ because  $h^1(X_0,\cE_0)=0$. 
\end{proof}
\subsection{Proof of Theorem~\ref{perdivi}}
\setcounter{equation}{0}
\begin{proof}
Let $d$ be the degree of the moduli map 
\begin{equation}
\cM_{DV}\dra \cK^2_{22}.
\end{equation}
 We have $d\ge 1$ because the moduli map is dominant. We need to prove that $d=1$.  Let $[(X,h)]\in\cK^2_{22}$ be a generic point. Then there exist $[\sigma_1],\ldots,[\sigma_d]\in\PP(\bigwedge^3 V_{10}^{\vee})$ such that the corresponding polarized DV varieties $(X_1,h_1)\ldots,(X_d,h_d)$ are
smooth and all isomorphic to $(X,h)$, but the $\PGL(V_{10})$-orbits of $[\sigma_1],\ldots,[\sigma_d]$ are   pairwise distinct. Let $\cQ_i$ be the DV quotient vector bundle on $X$ determined by 
$\sigma_i$. By Corollary~\ref{genstab} each $\cQ_i$ is $h$ slope-stable, and hence  by Theorem~\ref{unicita} all the $\cQ_i$ are isomorphic to a single vector bundle $\cE$. By Proposition~\ref{diecisez}  the surjection 
$\cO_{X_i}\otimes V_{10}\twoheadrightarrow \cQ_i$ is identified with the canonical map
$\cO_{X}\otimes H^0(X,\cE)\lra \cE$. It follows that $d=1$.
\end{proof}
\begin{rmk}
 Let  $|\cO_{\PP^5}(3)| \gquot\PGL(6)\dra\cK^2_6$ be the moduli map one gets by associating to a smooth cubic $4$-fold the variety of its lines. This map is birational by Voisin's Global Torelli Theorem for cubics. Charles~\cite{charles-torelli} inverted the argument: he proved independently that the moduli map $|\cO_{\PP^5}(3)| \gquot\PGL(6)\dra\cK^2_6$ is birational and obtained Global Torelli for cubic $4$-folds from Global Torelli for HK's.  
 
Here we notice that Charles' result can also be obtained  arguing as in the proof of Theorem~\ref{perdivi}. 
\end{rmk}

\subsection{Relation with degenerate DV varieties}\label{futuro}
\setcounter{equation}{0}
In short, one may approach the subject of~\cite{dhov-journal} from the \lq\lq opposite\rq\rq\ direction. That means starting from  $(X,h,\cE)$ where $X$ is of Type $K3^{[2]}$ (either $S^{[2]}$ for a suitable $K3$, or a HK birational  to  $S^{[2]}$), $h$  is a big and nef class  such that $q(h)=22$ and  $q(h,H^2(X;\ZZ))=(2)$, and  $\cE$
is a slope-stable vector bundle (or more generally  a GM stable torsion-free sheaf)  such that~\eqref{ele} holds with $r_0=2$. Then $(X,h)$ should correspond to a degenerate $\sigma\in\bigwedge^3 V_{10}^{\vee}$ (\lq\lq degenerate\rq\rq\  means that the corresponding Debarre-Voisin variety is not smooth of dimension $4$) if $\cE$ is not as good as possible, e.g.~$h^0(X,\cE)>10$, or $h^0(X,\cE)=10$ but $\cE$ is not globally generated, or it is globally generated but the corresponding map $X\to\Gr(6,H^0(\cE))$ is not an embedding, or  $\cE$ is not locally free (one should also  take into account the possibility of getting a degenerate $\sigma$ because $h$ is not ample). An example: in the proof of Proposition~\ref{diecisez} we discussed a case in which  $h^0(X,\cE)=10$ and $\cE$ is globally generated but the corresponding map $X\to\Gr(6,H^0(\cE))$ is not an embedding. The \lq\lq inverse\rq\rq\ approach should allow to complete the discussion of the family appearing in Section~8 of~\cite{dhov-journal}.

\appendix

\section{(Semi)homogeneous vector bundles on abelian varieties}\label{mezzomo}
\subsection{Basics}
\setcounter{equation}{0}
 Let $A$ be an abelian  variety, and let $A^{\vee}:=\Pic^0(A)$ be its dual abelian variety.  For $a\in A$, let $T_a\colon A\to A$ be the translation by $a$.  For  an invertible sheaf $\xi$ on $A$, we let $[\xi]\in A^{\vee}$ be its isomorphism class. 
\begin{dfn}
  A vector bundle  $\cF$ on $A$ is \emph{homogeneous} if $T_a^{*}\cF\cong \cF$  for every $a\in A$, 
it    is \emph{semi-homogeneous} if, for every $a\in A$, there exists $[\xi]\in \Pic^0(A)$ such that $T_a^{*}\cF\cong \cF\otimes\xi$. 
\end{dfn}
\begin{prp}\label{discperp}
Let $(A,\theta)$ be a polarized abelian variety  of dimension $n$, and let   $\cF$ be a $\theta$ slope-stable vector bundle on $A$.  If
\begin{equation}\label{delper}
\int_A \Delta(\cF)\smile \theta^{n-2}=0
\end{equation}
(the condition is to be understood to be empty if $n=1$) then $\cF$ is simple semi-homogeneous. Moreover  $\Delta(\cF)=0$.
\end{prp}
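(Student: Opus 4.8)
The plan is to deduce both conclusions from the equality case of the Bogomolov inequality, realised analytically. First, since $\cF$ is $\theta$ slope-stable it is in particular simple: a nonzero endomorphism is injective with torsion-free image of the same slope, and stability forces it to be an isomorphism, so $\End(\cF)=\CC\cdot\Id$. This settles simplicity for free and reduces the problem to proving that $\cF$ is semi-homogeneous with $\Delta(\cF)=0$. The case $n=1$ is a separate base case: then $\Delta(\cF)\in H^4(A)=0$ automatically, and a slope-stable bundle on an elliptic curve is semi-homogeneous by Atiyah's classification (for such a bundle $r$ and $\deg$ are coprime, and $T_a^{*}\cF\cong\cF\otimes\xi$ for some $[\xi]\in\Pic^0(A)$). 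So I assume $n\ge 2$ from now on.

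Next I would invoke the Kobayashi--Hitchin correspondence (Donaldson--Uhlenbeck--Yau): fixing a K\"ahler form $\omega$ on $A$ with $[\omega]=\theta$, slope-stability of $\cF$ provides a Hermitian--Einstein metric on $\cF$ with respect to $\omega$. The key analytic input is then the pointwise Bogomolov--L\"ubke identity: writing $F_0$ for the trace-free part of the curvature of this metric, one has $\int_A \Delta(\cF)\smile\theta^{n-2}=c\int_A |F_0|^2\,\omega^n$ for a positive constant $c$ depending only on $n$ and $r$. The hypothesis says the left-hand side vanishes, hence $F_0\equiv 0$, i.e.\ $\cF$ is projectively flat. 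Since $c_2(\End_0\cF)=2r c_2(\cF)-(r-1)c_1(\cF)^2=\Delta(\cF)$ and projective flatness makes $\End_0\cF$ a flat bundle (so all its Chern classes vanish), this gives $\Delta(\cF)=0$ as a class in $H^{2,2}_{\ZZ}(A)$, which is the second assertion.

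Finally, semi-homogeneity would follow from projective flatness together with the group structure of $A$. A projectively flat unitary bundle is governed by a projective representation $\rho\colon\pi_1(A)\to PU(r)$, and since every translation $T_a$ is homotopic to the identity it acts trivially on $\pi_1(A)$; thus $T_a^{*}\PP(\cF)$ and $\PP(\cF)$ carry the same projective monodromy $\rho$ and are isomorphic as flat projective bundles. Consequently $T_a^{*}\cF\cong\cF\otimes\xi_a$ for a line bundle $\xi_a$ inheriting a flat unitary structure, so $[\xi_a]\in\Pic^0(A)$, which is exactly semi-homogeneity. The main obstacle is the equality case of Bogomolov--L\"ubke: I must ensure that vanishing of the \emph{single} intersection number $\int_A\Delta(\cF)\smile\theta^{n-2}$ forces $F_0$ to vanish identically rather than only on average. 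This is precisely what the pointwise form of the inequality (an $L^2$-norm) combined with the existence of the Hermitian--Einstein metric delivers, and it is what upgrades the one-number hypothesis to the full vanishing of the class $\Delta(\cF)$ and hence to projective flatness; a restriction argument to a general complete-intersection surface, by contrast, would only reproduce the single number and would not by itself yield vanishing of $\Delta(\cF)$ as a cohomology class.
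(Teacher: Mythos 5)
Your proposal is correct and follows essentially the same route as the paper: both rest on the Kobayashi--Hitchin correspondence together with the equality case of the L\"ubke--Bogomolov inequality for Hermitian--Einstein metrics, which upgrades the vanishing of the single number $\int_A\Delta(\cF)\smile\theta^{n-2}$ to projective flatness of $\cF$ (equivalently, flatness of the induced connection on $\End\cF$). The only divergence is in the endgame, and it is cosmetic: the paper quotes Mukai's Theorem~5.8 (homogeneity of $\End\cF$ implies semi-homogeneity of $\cF$) and his structure theorem for homogeneous bundles to conclude $\Delta(\cF)=0$, whereas you derive both conclusions directly from projective flatness via the monodromy representation of $\pi_1(A)$ and Chern--Weil; the two finishes are interchangeable.
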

\begin{proof}
Of course $\cF$ is simple because it is slope-stable.

If $n=1$ then $\cF$ is  semi-homogeneous by Atiyah's classification of simple vector bundles on elliptic curves. 

Suppose that $n\ge 2$. By the Kobayashi-Hitchin correspondence, $\cF$ has a $\theta$ Hermitian-Einstein metric, and hence so does the vector bundle $End (\cF)$.  Equation~\ref{delper} is equivalent to $\int_A c_2(End(\cF))\smile \theta^{n-2}=0$. Since $c_1(End(\cF))=0$, 
the Hermite-Einstein connection on  $End (\cF)$ is flat, see \S 4 in~\cite{kobstab}.  Hence  $End (\cF)$ is homogeneous, and thus  
$\cF$ is semi-homogeneous by Theorem 5.8 in~\cite{muksemi}. 

Since $End (\cF)$ is homogeneous, it is a direct sum of vector bundles which have  filtrations whose associated graded bundles are  direct sums of a topologically trivial line bundles - see Theorem 4.17 in~\cite{muksemi}. 
Thus  $\Delta(\cF)=c_2(End(\cF))=0$.
\end{proof}
\subsection{Rank  of semi-homogeneous vector bundles}
\setcounter{equation}{0}
The rank of a simple semi-homogeneous vector bundle with assigned first Chern class is not arbitrary.  Below we extend a result 
of Mukai, see Theorem 7.11 and Remark 7.13 in~\cite{muksemi}, so that we cover all the canonical polarizations that occur on Lagrangian fibrations in the known deformation classes of HK's with the exception of OG10.  
\begin{prp}[Mukai~\cite{muksemi}]\label{potenza}
Let $(A,\theta)$ be a  polarized abelian variety of dimension $n$. Suppose that the elementary divisors of $\theta$ are $(1,\ldots,1,d_1,d_2)$ where $d_1$ divides $d_2$.  Let $\cF$ be a simple semi-homogeneous vector bundle  on $A$ such that $c_1(\cF)=a\theta$. Then there exists a positive integer $r_0$    such that, letting $g_i:=\gcd\{r_0,d_i\}$ we have   
\begin{equation}\label{controllo}
\gcd\{r(\cF),a\}=\frac{r_0^{n-1}}{g_1\cdot g_2},\quad  r(\cF)=\frac{r_0^{n}}{g_1\cdot g_2}.
\end{equation}
\end{prp}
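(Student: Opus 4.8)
The plan is to combine Mukai's structural description of simple semi-homogeneous bundles with an explicit computation in the reduction of the symplectic lattice of $(A,\theta)$ modulo a suitable integer. Write $A=V/\Lambda$ and let $E\colon\Lambda\times\Lambda\to\ZZ$ be the alternating form attached to $\theta$; in a symplectic basis $E$ has matrix $\begin{pmatrix}0 & D\\ -D & 0\end{pmatrix}$ with $D=\operatorname{diag}(1,\ldots,1,d_1,d_2)$. Set $r:=r(\cF)$, $g:=\gcd\{r,a\}$ and $r_0:=r/g$, so that $\tfrac{a}{r}=\tfrac{a/g}{r_0}$ is in lowest terms. By Mukai's theory (see \cite{muksemi}, Sections~6 and~7) a simple semi-homogeneous $\cF$ with $c_1(\cF)=a\theta$ is of the form $\cF\cong\pi_{*}M$, where $\pi\colon B\to A$ is an isogeny with $B=V/\Gamma$ for a sublattice $\Gamma\subseteq\Lambda$ of index $r=\deg\pi$, and $M$ is a line bundle on $B$. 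First I would record the Chern-class bookkeeping: since $\pi$ is \'etale one has $r(\pi_{*}M)=\deg\pi$, and from $\pi^{*}\pi_{*}M\cong\bigoplus_{g\in\ker\pi}T_g^{*}M$ one gets $\pi^{*}(a\theta)=r\,c_1(M)$, i.e.\ $c_1(M)=\tfrac{a}{r}\,(E|_\Gamma)$. Thus the only constraint on $\Gamma$ is that $\tfrac{a}{r}E|_\Gamma$ be an integral class on $\Gamma$, which, after clearing $g$, is exactly the condition
\[
E(\gamma,\gamma')\equiv 0\pmod{r_0}\qquad(\gamma,\gamma'\in\Gamma).
\]

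Next I would translate this into a problem about the finite symplectic module $(\Lambda/r_0\Lambda,\bar E)$. Since $E(r_0\lambda,-)\in r_0\ZZ$ automatically, any admissible $\Gamma$ may be enlarged to contain $r_0\Lambda$, so that $\bar\Gamma:=\Gamma/r_0\Lambda$ is precisely a totally isotropic subgroup of $((\ZZ/r_0)^{2n},\bar E)$ and $[\Lambda:\Gamma]=|\Lambda/r_0\Lambda|/|\bar\Gamma|=r_0^{2n}/|\bar\Gamma|$. Decomposing $\bar E$ into the orthogonal sum of the $n$ hyperbolic planes carrying the weights $1,\ldots,1,d_1,d_2$, a direct computation (the radical of the weight-$e$ plane is $(\ZZ/\gcd(e,r_0))^2$, and a maximal isotropic subgroup of that plane has order $\gcd(e,r_0)\cdot r_0$) shows that the largest totally isotropic subgroup has order
\[
|\bar\Gamma|_{\max}=r_0^{\,n-2}\cdot(g_1r_0)(g_2r_0)=g_1g_2\,r_0^{\,n},\qquad g_i=\gcd\{r_0,d_i\}.
\]
Hence the minimal index of an admissible $\Gamma$ is $r_0^{2n}/(g_1g_2\,r_0^{\,n})=r_0^{\,n}/(g_1g_2)$.

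Finally I would invoke Mukai's simplicity criterion to pin down which $\Gamma$ occurs: $\pi_{*}M$ is simple exactly when $\Gamma$ is as large as possible, i.e.\ when $\bar\Gamma$ is a maximal isotropic subgroup (a strictly larger isotropic overlattice would exhibit $\pi$ as a composition and $\cF$ as an iterated pushforward, destroying simplicity). Therefore $r=r(\cF)=[\Lambda:\Gamma]=r_0^{\,n}/(g_1g_2)$, and consequently $\gcd\{r,a\}=r/r_0=r_0^{\,n-1}/(g_1g_2)$, which is the asserted pair of formulas. The step I expect to be the crux is precisely this last one: making rigorous, with reference to Mukai's results (Theorem~7.11 and the surrounding analysis of the group $\Sigma(\cF)=\{\xi\mid\cF\otimes\xi\cong\cF\}$), the equivalence between simplicity of $\cF$ and maximality of the isotropic subgroup $\bar\Gamma$; once this structural input is in place, the remaining content is the purely lattice-theoretic count above, which is routine. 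I would also need to verify the converse realizability --- that for the maximal $\Gamma$ a line bundle $M$ with $c_1(M)=\tfrac{a}{r}E|_\Gamma$ exists (Appell--Humbert) and that $\pi_{*}M$ is semi-homogeneous with $c_1=a\theta$ --- but this is formal, given the identity $\pi^{*}c_1(\pi_{*}M)=r\,c_1(M)$ and the injectivity of $\pi^{*}$ on $H^2$.
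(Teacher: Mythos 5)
Your route is genuinely different from the paper's. The paper black-boxes more of Mukai: it takes the exact sequence $0\to A[r_0]\cap K(\cL)\to A[r_0]\to\Sigma(\cF)\to 0$ of Theorem~7.11 together with $|\Sigma(\cF)|=r^2$ (Proposition~7.1), and the only computation left is $|A[r_0]\cap K(\cL)|=g_1^2g_2^2$, which is the same elementary-divisor count as your radical computation. You instead start from the presentation $\cF\cong\pi_*M$ and count maximal isotropic subgroups of $(\Lambda/r_0\Lambda,\bar E)$; that count, and the reduction of integrality of $\tfrac{a}{r}E|_\Gamma$ to $E(\Gamma,\Gamma)\subseteq r_0\ZZ$, are correct. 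Your approach is more self-contained but pushes the real content into the step you yourself flag as the crux.

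That crux is where the gap is, and the justification you offer for it is not valid. It is false in general that a strictly larger overlattice ``exhibits $\cF$ as an iterated pushforward, destroying simplicity'': on an elliptic curve the pushforward of a degree~$1$ line bundle along a composite degree~$4$ isogeny is a simple (rank~$4$, degree~$1$) bundle, even though the isogeny factors. The correct mechanism is the adjunction computation
\begin{equation*}
\End(\pi_*M)\cong\bigoplus_{g\in\ker\pi}\Hom(T_g^*M,M),
\end{equation*}
which shows that $\pi_*M$ is simple if and only if $\ker\pi\cap K(M)=0$, i.e.\ if and only if $\Gamma=\{\lambda\in\Lambda\mid E(\lambda,\Gamma)\subseteq r_0\ZZ\}$. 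This self-perpendicularity (rather than an ``iterated pushforward'' principle) is what is equivalent, for admissible lattices, to maximality of $\Gamma$: a strictly larger admissible $\Gamma'$ satisfies $\Gamma'/\Gamma\subseteq\ker\pi\cap K(M)$, and conversely any $\lambda$ in the perp but not in $\Gamma$ generates a larger admissible lattice. Note also that this characterization is what legitimately forces $\Gamma\supseteq r_0\Lambda$; your stated reduction (``enlarge $\Gamma$ to contain $r_0\Lambda$'') changes the bundle and so does not by itself give that normalization. Finally, your argument presupposes that \emph{every} simple semi-homogeneous $\cF$ with $c_1(\cF)=a\theta$ is of the form $\pi_*M$; this is indeed in Mukai, but it must be cited as such, whereas the paper's proof needs only $\Sigma(\cF)=\varphi_{\cL}(A[r_0])$ and $|\Sigma(\cF)|=r^2$. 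With the adjunction computation supplied, your proof closes and gives the same formulas.
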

\begin{proof}
Let $r:=r(\cF)$ and let $c:=\gcd\{r,a\}$. We let $r_0:=\frac{r}{c}$ and  $a_0:=\frac{a}{c}$. Let us prove that~\eqref{controllo} holds. Let $\cL$ be a line bundle on $A$ such that $c_1(\cL)=a_0\theta$, and let $\varphi_{\cL}\colon A\to A^{\vee}$ be the homomomorphism  defined by  $\varphi_{\cL}(a):=T_a^{*}(\cL)\otimes\cL^{-1}$. Let $K(\cL)$ be the kernel of $\varphi_{\cL}$. Lastly, following Mukai, we let
\begin{equation}\label{eccosigma}
\Sigma(\cF):=\{[\xi]\in A^{\vee} \mid \exists \cF\overset{\sim}{\lra}\cF\otimes\xi\}.
\end{equation}
(Since we are in characteristic $0$, the above set-theoretic definition coincides with the schematic one, see Proposition~5.9 in~\cite{muksemi}.) By Corollary~7.8 in~\cite{muksemi} we have an exact sequence of groups:
\begin{equation}\label{utensile}
0 \lra A[r_0]\cap K(\cL) \lra A[r_0] \overset{\varphi_{\cL}}{\lra} \Sigma(\cF) \lra 0.
\end{equation}
Because of our hypothesis on the elementary divisors of $\theta$ we have
\begin{equation}
K(\cL)\cong(\ZZ/(a_0))^2\oplus\ldots\oplus (\ZZ/(a_0 d_1))^2\oplus (\ZZ/(a_0 d_2))^2.
\end{equation}
Since $a_0,r_0$ are coprime, it follows that $A[r_0]\cap K(\cL)\cong (\ZZ/(g_1))^2\oplus (\ZZ/(g_2))^2$. 
Thus $|\Sigma(\cF)|=\frac{r_0^{2n}}{g_1^2 \cdot g_2^2}$. 
On the other hand the cardinality of $\Sigma(\cF)$ is equal to $r^2$ by Proposition 7.1 in~\cite{muksemi}. Thus we get that  
$r=\frac{r_0^n}{g_1\cdot g_2}$. Since $c:=\frac{r}{r_0}$ it follows that $\gcd\{r,a\}=c=\frac{r_0^{n-1}}{g_1\cdot g_2}$.  
\end{proof}
\section{Polarized Lagrangian HK's of Type $K3^{[2]}$}
\subsection{Lagrangian Noether-Lefschetz loci}
\setcounter{equation}{0}
We recall that $\cK^i_e$ is the moduli space of polarized HK's of Type $K3^{[2]}$ with polarization of BBF square $e$ and divisibility given by $i$ (which is either $1$ or $2$) - see Subsection~\ref{risprin}. 
\begin{dfn}\label{ennelagr}
For $d$ a strictly positive integer let $\cN_e^i(d)\subset \cK_e^i$ be the closure of the locus parametrizing polarized HK's $(X,h)$ such that $H^{1,1}_{\ZZ}(X)$ 
contains a  saturated rank $2$ sublattice generated by $h,f$, where 
\begin{equation}\label{disez}
q(f)=0,\quad q(h, f)=d.
\end{equation}
\end{dfn}
\begin{prp}\label{unicafibr}
 Keeping notation as above, suppose in addition that $d$ is even if $i=2$, and that 
 \begin{equation}\label{chiarello}
d>5(e+1),\quad e\notdivides 2d.
\end{equation}
Then $\cN_e^i(d)$ is closed of pure codimension $1$ (in particular non empty), and  if 
$[(X,h)]\in\cN_e^i(d)$ is  generic  there is one and only one Lagrangian fibration $\pi\colon X\to\PP^2$ (modulo automorphisms of $\PP^2$) such that, letting  $f:=c_1(\pi^{*}\cO_{\PP^2}(1))$,   the equalities in~\eqref{disez} hold and the sublattice  
$\la h,f\ra\subset H^{1,1}_{\ZZ}(X)$ is saturated. 
\end{prp}
\begin{proof}
Before starting the proof we emphasize that $\cN_e^i(d)$ might have several irreducible components, and that \lq\lq generic point\rq\rq\ of $\cN_e^i(d)$ means belonging to an open \emph{dense} subset of $\cN_e^i(d)$. 
By surjectivity of the period map there exists a HK $X$ of Type $K3^{[2]}$ such that $H^{1,1}_{\ZZ}(X)=\la h,f\ra$ where 
\begin{equation}
q(h)=e,\quad \{q(h,\alpha) \mid \alpha\in H^2(X;\ZZ)\}=(i),
\end{equation}
 and the equalities in~ \eqref{disez} hold. 
There are no  $\xi\in H^{1,1}_{\ZZ}(X)$ with $-10\le q(\xi)<0$ by Lemma~\ref{nocamere} and the inequality in~\eqref{chiarello}.
It follows that the  ample cone of $X$ is equal to the intersection of $H^{1,1}_{\ZZ}(X)$ and the positive cone. Hence either $h$ or $-h$ is ample. If the former holds then $[(X,h)]\in \cN_e^i(d)$, if the latter holds, we may replace $h,f$ by $-h,-f$ respectively, and again we get that  $[(X,h)]\in\cN_e^i(d)$.
Moreover  $\cN_e^i(d)$ is closed of pure codimension $1$ because it is a Noether-Lefschetz divisor.

A straightforward computation shows  that there are exactly two primitive nef isotropic classes, namely  $f$ and $\alpha:=\frac{1}{\gcd\{d,e\}}(2d h-e f)$. 
  By our \lq\lq non divisibility\rq\rq\ hypothesis in~\eqref{chiarello}, we get that  $q(\alpha,h)=\frac{de}{\gcd\{d,e\}}$ is not equal to $d$. Hence $f$ is the unique primitive nef isotropic  class  such that $q(h, f)=d$. 
  
  By Theorem~1.3 in~\cite{markman-lagr} (see also Remark~1.8) there 
exist a Lagrangian fibration $\pi\colon X\to\PP^2$ such that  $f:=c_1(\pi^{*}\cO_{\PP^2}(1))$. By Theorem~1.2 in~\cite{matsushita-iso-div} it follows that there exists a Zariski open neighborhood $\cU$ of $[(X,h)]$ in $\cN_e^i(d)$ such that each representative $(X',h')$ of a point in $\cU$ has a Lagrangian fibration as required. 
Since the set of points of $\cN_e^i(d)$ representing $(X,h)$ such that   $\rho(X)=2$  is dense, this proves the result about existence of the required Lagrangian fibration.  

It remains to prove that if $[(X,h)]\in\cN_e^i(d)$ is generic then there is a unique isotropic class $f$ such that $q(h,f)=d$. We checked that this is the case if $\rho(X)=2$. It follows that the statement holds for the generic 
point of $\cN_e^i(d)$; the argument is similar to that given to show that $h$ is $a(\cE)$-suitable in the proof  of Proposition~\ref{propriostab}.
\end{proof}
\begin{dfn}\label{fibrass}
Suppose that~\eqref{chiarello} holds. We let $\cN_e^i(d)^0\subset\cN_e^i(d)$ be an open dense subset such that the thesis of Proposition~\ref{unicafibr} holds for any
 $[(X,h)]\in\cN_e^i(d)^0$.  For  $[(X,h)]\in\cN_e^i(d)^0$ the \emph{associated Lagrangian fibration}  $\pi\colon X\to\PP^2$  is the unique fibration (modulo automorphisms of $\PP^2$) of Proposition~\ref{unicafibr}. 
\end{dfn}
\subsection{Tate-Shafarevich twists}
\setcounter{equation}{0}
A basic example of Lagrangian fibration is constructed as follows. Let $S\to\PP^2$ be the double cover ramified over a smooth sextic curve $B$, i.e.~a polarized $K3$ surface of degree $2$. Let $\cJ(S)$ be the moduli space of rank $0$ pure $\cO_S(1)$ semistable sheaves $\xi$ with $\chi(\xi)=-1$. The generic point of $\cJ(S)$ is represented by $i_{*}\cL$, where $i\colon C\hra S$ is the inclusion of a smooth $C\in \cO_S(1)$, and $\cL$ is a line bundle of degree $0$. Then for generic $B$ every semistable sheaf is stable (the precise condition is that $B$ have no tritangents), and hence $\cJ(S)$ is smooth. In fact  it is a HK of Type $K3^{[2]}$, and 
the support map $\cJ(S)\to(\PP^2)^{\vee}$ is a Lagrangian fibration. 

A  Lagrangian fibration parametrized by a generic point of  $\cN_e^i(d)$ is related to a generic $\cJ(S)$ via a Tate-Shafarevich twist. In order to be more precise, we recall a result of Markman. 
Let $(X,h)$ be a 
representative of a generic point of $\cN_e^i(d)$. Then there is an associated  polarized $K3$ surface  $(S,D)$ of degree $2$, and  moreover  $(S,D)$ is  generic  - see Subsection~4.1 in~\cite{markman-lagr}.
\begin{prp}\label{torcere}
Keep the hypotheses of Proposition~\ref{unicafibr}. Let $[(X,h)]$ be a generic point of $\cN_e^i(d)$. Let $\pi\colon X\to\PP^2$ and  $S$ be the associated Lagrangian fibration and $K3$ surface of degree $2$ respectively. Then $X$ is isomorphic to a Tate-Shafarevich twist of $\cJ(S)\to(\PP^2)^{\vee}$ via an identification $\PP^2\overset{\sim}{\to}(\PP^2)^{\vee}$. 
\end{prp}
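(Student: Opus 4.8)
The statement is in essence a repackaging of Markman's structure theorem for Lagrangian fibrations of Type $K3^{[2]}$, so the plan is to assemble the relevant objects and then invoke it. First I would set up the reference fibration attached to $S$. Since $[(X,h)]$ is a generic point of $\cN_e^i(d)$, the associated degree $2$ polarized $K3$ surface $(S,D)$ produced in Subsection~4.1 of~\cite{markman-lagr} is itself generic; in particular the double cover $S\to\PP^2$ is branched along a smooth sextic $B$ with no tritangent lines, so that every rank $0$ pure $\cO_S(1)$-semistable sheaf with $\chi=-1$ is stable. Hence $\cJ(S)$ is a smooth HK of Type $K3^{[2]}$ and the support map $\cJ(S)\to(\PP^2)^{\vee}$ is a Lagrangian fibration, where $(\PP^2)^{\vee}$ is identified with the linear system $|\cO_S(1)|$ of curves pulled back from lines. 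I would record that $\cJ(S)$ is the Beauville--Mukai moduli space $M_v(S)$ with Mukai vector $v=(0,D,-1)$: since $v^2=D^2=2$ one has $\dim_{\CC}\cJ(S)=v^2+2=4$, and the fibre over a smooth $C\in|\cO_S(1)|$ is the degree $0$ Jacobian of the genus $2$ curve $C$.

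Next I would match $\pi\colon X\to\PP^2$ with this reference fibration. By Proposition~\ref{unicafibr} the pair $(X,h,f)$ determines $\pi$ uniquely, and by Wieneck~\cite{wieneck1} (as recalled in Corollary~\ref{resemibis}) the canonical polarization $\theta_t$ on a smooth fibre $X_t$ is principal, so $X_t$ is the Jacobian of a genus $2$ curve. The core step is to apply the analysis of~\cite{markman-lagr} (Theorem~1.3 and \S4): it shows that the Hodge-theoretic datum $\langle h,f\rangle^{\bot}$ recovered from $X$ is the transcendental Hodge structure of $S$, that the dual base $\PP^2$ of $\pi$ is canonically identified with $|\cO_S(1)|=(\PP^2)^{\vee}$ so that $X_t$ is the Jacobian of the curve $C_t\in|\cO_S(1)|$ indexed by $t$, and that under this identification the associated Jacobian (Albanese) fibration of $\pi$ agrees with that of $\cJ(S)\to(\PP^2)^{\vee}$. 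Two Lagrangian fibrations with abelian-surface fibres over the same base and with isomorphic associated Jacobian fibrations differ by a class in the analytic Tate--Shafarevich group, i.e.\ each is a Tate--Shafarevich twist of the other; applying this to $X$ and $\cJ(S)$ yields the proposition.

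As a byproduct, since a Tate--Shafarevich twist alters only the torsor structure and not the locus of singular fibres, the discriminant of $\pi$ equals that of $\cJ(S)\to(\PP^2)^{\vee}$, namely the dual curve $B^{\vee}$ of the smooth sextic $B$; this is precisely the consequence of Proposition~\ref{torcere} used in Section~\ref{ledimo}. The main obstacle is bookkeeping rather than a new geometric idea: I must verify that the genericity built into \lq\lq generic point of $\cN_e^i(d)$\rq\rq\ is strong enough to place $[(X,h)]$ inside the genericity locus required by~\cite{markman-lagr} (so that $S$ is generic of degree $2$, $\cJ(S)$ is smooth, and the twist is defined), and I must pin down the exact identification $\PP^2\cong(\PP^2)^{\vee}$ of bases together with the Mukai vector $v=(0,D,-1)$, so that the reference object is exactly $\cJ(S)$ as defined here and not another moduli space $M_v(S)$ or a translate thereof.
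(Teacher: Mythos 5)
Your overall strategy --- reduce to Markman's structure theory for Lagrangian fibrations of Type $K3^{[2]}$ and the associated degree $2$ $K3$ surface --- is the same as the paper's, but there is a genuine gap at the decisive step. The assertion that ``two Lagrangian fibrations with abelian-surface fibres over the same base and with isomorphic associated Jacobian fibrations differ by a Tate--Shafarevich twist'' is not an elementary torsor-bookkeeping fact in this setting: it is essentially the content of Markman's Theorem~7.13, and what that theorem delivers a priori is only that $X$ is \emph{bimeromorphic} to a Tate--Shafarevich twist of $\cJ(S)\to(\PP^2)^{\vee}$, not isomorphic to one. (The Ogg--Shafarevich picture over the locus of smooth fibres does not by itself control what happens after compactifying over the discriminant, and distinct birational models of a HK fourfold can carry the ``same'' Lagrangian fibration.) Your proposal treats the passage from bimeromorphic to biregular as automatic, and that is exactly where the proposition could fail without an extra input.

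The paper closes this gap as follows: it first restricts to the dense locus where $\rho(X)=2$, and observes (as in the proof of Proposition~\ref{unicafibr}) that the inequality $d>10(e+1)$ together with Lemma~\ref{nocamere} rules out any class $\xi\in H^{1,1}_{\ZZ}(X)$ with $-10\le q_X(\xi)<0$; hence the ample cone of $X$ coincides with the whole positive cone, $X$ has no nontrivial birational HK models, and every bimeromorphic map from $X$ to a HK is an isomorphism. Only then does Theorem~7.13 of~\cite{markman-lagr} yield an actual isomorphism with a Tate--Shafarevich twist, and the general case follows by density of the $\rho=2$ locus in $\cN_e^i(d)$. So the numerical hypothesis~\eqref{chiarello} is not mere ``bookkeeping'' about genericity, as your last paragraph suggests --- it is the precise ingredient that upgrades Markman's birational statement to the biregular one claimed. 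Your remarks identifying $\cJ(S)$ as the Beauville--Mukai system with $v=(0,D,-1)$ and the discriminant with the dual sextic are correct and consistent with how the proposition is used later, but they do not substitute for this missing step.
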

\begin{proof}
Suppose first that $\rho(X)=2$. Then, as shown in the proof of Proposition~\ref{unicafibr}, the  ample cone of $X$ is equal to the positive cone (because of the inequality in~\eqref{chiarello}), and hence every  bimeromorphic map $X\dra X'$, where $X'$ is a $HK$,  is actually an isomorphism. It follows that $X$ is isomorphic to a Tate-Shafarevich twist of $\cJ(S)\to(\PP^2)^{\vee}$ by Theorem~7.13 in~\cite{markman-lagr}. The result follows from this because the locus in $\cN_e^i(d)$ parametrizing $(X,h)$ such that $\rho(X)=2$ is dense.
\end{proof}
Let $X\to\PP^2$ be as in Proposition~\ref{torcere}, and let
$\Pic^0(X/\PP^2)$  be the relative Picard scheme  (notice that all fibers of $X\to\PP^2$ are irreducible by Proposition~\ref{torcere}). 
If $U\subset\PP^2$ is the open dense set of regular values of $X\to\PP^2$ and 
 $z\in U$, the fiber of $\Pic^0(X/\PP^2)\to\PP^2$ over $z$ is an abelian surface $A_z$ and
the fundamental group $\pi_1(U,z)$ acts by monodromy on the subgroup $A_{z,tors}$ of torsion points. 
\begin{crl}\label{modinv}
Keep hypotheses as above, and suppose that 
 $V\subset A_z[r_0^2]$ is a coset (of a subgroup) of cardinality $r_0^4$ invariant under the action of monodromy. 
 Then $V= A_z[r_0]$.
\end{crl}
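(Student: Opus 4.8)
The plan is to translate the statement into a question about the monodromy representation on the torsion of the associated Jacobian fibration, and then to settle that question by a largeness property of the monodromy group together with elementary module theory over $\ZZ/r_0^2$. First I would pin down the relevant local system. By Proposition~\ref{torcere} the fibration $\pi\colon X\to\PP^2$ is a Tate--Shafarevich twist of $\cJ(S)\to(\PP^2)^{\vee}$, and a Tate--Shafarevich twist is a torsor under the same associated abelian scheme; hence $\Pic^0(X/\PP^2)$ is isomorphic over $U$ to $\Pic^0(\cJ(S)/(\PP^2)^{\vee})$, whose fibre over a regular value is the Jacobian $\Pic^0(C_\ell)$ of the genus $2$ curve $C_\ell$, the double cover of the line $\ell$ branched along the six points $\ell\cap B$, where $B\subset\PP^2$ is the (generic) branch sextic. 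Consequently the monodromy action of $\pi_1(U,z)$ on $A_z[r_0^2]$ is identified with the monodromy of this family of genus $2$ Jacobians on $\Pic^0(C_\ell)[r_0^2]\cong(\ZZ/r_0^2)^4$, and it preserves the Weil pairing; let $G\subseteq\mathrm{Sp}_4(\ZZ/r_0^2)$ denote its image.

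The crux of the argument is the claim that $G$ is large: concretely, that $G$ acts irreducibly on $A_z[p]\cong\FF_p^4$ for every prime $p\mid r_0$ (in fact that $G=\mathrm{Sp}_4(\ZZ/r_0^2)$). Granting this, the Corollary is pure algebra. Write $V=v_0+H$ with $H:=V-V$; then $H$ is a $G$-invariant subgroup of $(\ZZ/r_0^2)^4$ of order $r_0^4$. Working one prime at a time (by CRT) and using that the reduction mod $p$ is irreducible, a Nakayama argument shows that the only $G$-invariant submodules of $(\ZZ/p^{2k})^4$ are the $p^{i}(\ZZ/p^{2k})^4$; matching orders forces $H=r_0(\ZZ/r_0^2)^4=A_z[r_0]$. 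Finally, invariance of $V$ gives $g v_0-v_0\in A_z[r_0]$ for all $g\in G$, i.e. the class of $v_0$ in $A_z[r_0^2]/A_z[r_0]\cong A_z[r_0]$ is $G$-fixed; since a nontrivial irreducible representation mod $p$ has no nonzero fixed vector, a short lifting argument over $\ZZ/p^k$ shows there is no nonzero $G$-fixed vector in $A_z[r_0]$, whence $v_0\in A_z[r_0]$ and $V=A_z[r_0]$.

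The hard part will be the largeness of $G$. I would obtain it by Picard--Lefschetz theory: restricting to a generic pencil of lines turns the family into a Lefschetz pencil of genus $2$ curves whose discriminant is the dual curve $B^{\vee}\subset(\PP^2)^{\vee}$. For generic $B$ the curve $B^{\vee}$ is irreducible, so all vanishing cycles lie in a single $\pi_1$-orbit, and one checks that they span the vanishing cohomology, the fixed part being zero since the generic $C_\ell$ carries no part of $H_1$ constant along the family. A symplectic transvection is attached to each vanishing cycle, and the standard theorem that the monodromy generated by a single $\pi_1$-orbit of spanning vanishing cycles is the full symplectic group (Beauville) then yields that the integral monodromy is $\mathrm{Sp}_4(\ZZ)$; strong approximation gives surjectivity onto $\mathrm{Sp}_4(\ZZ/r_0^2)$, and a fortiori the irreducibility used above. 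This is precisely the step where the genericity of $[(X,h)]$, equivalently of the sextic $B$, is essential.
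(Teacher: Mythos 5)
Your argument follows the same route as the paper's: both identify $\Pic^0(X/\PP^2)$ over $U$ with the Jacobian fibration of the genus-$2$ double covers of lines $\ell$ branched on $\ell\cap B$ (via the Tate--Shafarevich description of Proposition~\ref{torcere}), use the largeness of the resulting symplectic monodromy to show that the only invariant subgroup of order $r_0^4$ in $A_z[r_0^2]$ is $A_z[r_0]$, and then dispose of the general coset by passing to $V-V$ and observing that the induced monodromy-fixed point of $A_z[r_0^2]/A_z[r_0]$ must vanish. The only differences are in the bookkeeping: where the paper combines transitivity of the monodromy on elements of a given order with the structure theorem for finite abelian groups, you use irreducibility mod $p$ plus Nakayama, and you additionally supply a justification (Lefschetz pencils, irreducibility of $B^{\vee}$, Beauville's theorem and strong approximation) for the largeness of the monodromy, which the paper asserts without proof.
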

\begin{proof}
Let $S$ be the polarized $K3$ surface of degree $2$ associated to $X$ following Markman, and let $S\to\PP^2$ be the double cover ramified over a sextic curve $B$. 
Let $\cJ(S)_0\subset \cJ(S)$ be the open dense subset of   smooth points (i.e.~smooth points of $\cJ(S)$ with surjective differential) of the map $\cJ(S)\to(\PP^2)^{\vee}$. 
By Proposition~\ref{torcere}, $\Pic^0(X/\PP^2)\to\PP^2$ is isomorphic to $\cJ(S)_0\to(\PP^2)^{\vee}$, for a certain identification $\PP^2\overset{\sim}{\lra} (\PP^2)^{\vee}$. Under this identification $z\in\PP^2$ corresponds to a line $R\in (\PP^2)^{\vee}$
 transverse to $B$, and the corresponding Lagrangian fiber $A_z$   is the Jacobian of the curve $C$ which is the double cover of $R$ ramified over $R\cap B$. Hence we have a natural isomorphism 
\begin{equation}\label{toromo}
H_1(C;\QQ)/H_1(C;\ZZ)\overset{\sim}{\lra}A_{z,tors},
\end{equation}
 and the identification is compatible with the monodromy actions.

First we prove the result under the assumption that $V$ is a subgroup $G$. By the structure theorem for finite
abelian groups $G\cong \ZZ/(d_1)\oplus\ldots \oplus \ZZ/(d_r)$, where $r\le 4$ (because 
$A_z[r_0^2]\cong \ZZ/(r_0^2)^{\oplus 4}$) and $d_i| r_0^2$ for all $i$.   Since 
the monodromy action on  $H_1(C;\ZZ)$ is transitive on non zero elements, 
it follows from the isomorphism in~\eqref{toromo} that $r=4$ and $d_1=\ldots=d_r$. Thus $d_i=r_0$ for all $i\in\{1,\ldots,4\}$ because  $|G|=r_0^4$. This proves the result under the assumption that $V$ is a subgroup.

Now let $V$ be a translate of a group $G$. Then $G=\{a-b \mid a,b\in V\}$, and hence $G$ is also  invariant for the monodromy action. Thus $G= A_z[r_0]$, and hence the coset
$V$ gives a point of the quotient $A_z[r_0^2]/A_z[r_0]$ which is invariant for the monodromy action. By the isomorphism in~\eqref{toromo} it follows that $0$ is the unique invariant element, and hence $V= A_z[r_0]$.
\end{proof}
%

 \bibliography{ref-vbs-on-hks}
 \end{document}